\documentclass[microtype]{gtpart}

\usepackage[usenames, dvipsnames]{xcolor}

%

\interfootnotelinepenalty=10000



\theoremstyle{definition} 
\newtheorem{proposition}[equation]{Proposition}
\newtheorem{construction}[equation]{Construction}
\newtheorem{definition}[equation]{Definition} 
\newtheorem{theorem}[equation]{Theorem}
\newtheorem{example}[equation]{Example} 
\newtheorem{corollary}[equation]{Corollary} 
\newtheorem{conjecture}[equation]{Conjecture} 
\newtheorem{lemma}[equation]{Lemma} 
\theoremstyle{remark} 
\newtheorem{remark}[equation]{Remark} 
\newtheorem{question}[equation]{Question} 

\usepackage{lscape}
\usepackage{setspace}
\usepackage{amssymb,amsfonts,amsmath,amsthm}
\usepackage{cite, url} 
\usepackage{tikz}
\usepackage{tikz-cd}   
\usepackage{cleveref} 
\usepackage{mathtools} 
\usepackage{xy} 
\usepackage{rotating}
\usepackage{xcolor}
\usepackage{bbm}
\usepackage{mathrsfs}  
\xyoption{all} 
\newdir{ >}{{}*!/-12pt/@{>}} 
\newdir{ -}{{}*!/-5pt/@{}} 
\newdir{- }{{}*!/-5pt/@{}} 
\newdir{ ^(}{{}*!/-5pt/@^{(}} 
\newdir{ _(}{{}*!/-5pt/@_{(}}
\newdir{ |}{{}*!/-5pt/@{|}}
\newdir{_(}{{}*!/0pt/@_{(}}
\xyoption{arc}
\xyoption{ps}
\AtBeginDocument{
  \setlength{\abovedisplayskip}{4pt}
  \setlength{\belowdisplayskip}{4pt}
}
\DeclareMathOperator{\Spec}{Spec}

\newcommand{\KMW}{\underline{\mathbf{K}}^{\mathrm{MW}}}

\newcommand{\SL}{\mathrm{SL}}

\newcommand{\Pic}{\mathrm{Pic}}

\newcommand{\bbC}{\mathbb{C}} 

\newcommand{\PP}{\mathbb{P}^1}
\newcommand{\AAA}{\mathbb{A}^1}
\renewcommand{\J}{\mathcal{J}}

\newcommand{\ato}{\mathbb{A}^2\setminus \{0\}}

\newcommand{\naif}{^\mathrm{N}}

\newcommand{\cc}{\xi}
\newcommand{\ff}{\varphi}
\newcommand{\jj}{\mathbf{j}}
\newcommand{\nc}{\chi}

\newcommand{\se}{s}
\newcommand{\wc}{\upsilon}
\newcommand{\GG}{\mathbf{G}}

\newcommand{\FF}{\mathscr{F}}
\newcommand{\XX}{X}
\newcommand{\qq}{\omega}
\newcommand{\topp}{\mathrm{top}}
\newcommand{\degt}{\deg^{\topp}}

\newcommand{\point}{*}
\newcommand{\Ho}{\mathcal{H}}
\newcommand{\Hop}{\Ho_{\point}}

\newcommand{\sPre}{\mathrm{sPre}}
\newcommand{\sPrek}{\sPre(\Sm_k)}
\newcommand{\sPrep}{\mathrm{sPre}_{\point}}
\newcommand{\sPrepk}{\sPrep(\Sm_k)}

\newcommand{\Sing}{\mathrm{Sing}}
\newcommand{\Singp}{\Sing_{\point}}
\newcommand{\pn}{\calP_n}
\newcommand{\qn}{\calQ_n}

\newcommand{\inv}{^{-1}}
\newcommand{\isom}{\cong}
\newcommand{\fd}{\rightarrow}
\newcommand{\td}{\mapsto}

\newcommand{\bbA}{\mathbb{A}}
\newcommand{\bbF}{\mathbb{F}} 
\newcommand{\bbP}{\mathbb{P}} 
\newcommand{\bbG}{\mathbb{G}} 
\newcommand{\bbZ}{\mathbb{Z}}
\newcommand{\bbN}{\mathbb{N}}
\newcommand{\bbR}{\mathbb{R}}

\newcommand{\calL}{\mathcal{L}}
\newcommand{\calF}{\mathcal{F}}
\newcommand{\calJ}{\mathcal{J}}
\newcommand{\calP}{\mathcal{P}}
\newcommand{\calQ}{\mathcal{Q}}
\newcommand{\calO}{\mathcal{O}}
\newcommand{\calY}{\mathcal{Y}}

\newcommand{\unitm}{\varepsilon}

\newcommand{\stable}{\mathfrak{s}}

\newcommand{\frakm}{\mathfrak{m}}

\newcommand{\st}{ \, \vert \, }
\newcommand{\GW}{\mathrm{GW}}
\newcommand{\MW}{\mathrm{MW}}
\newcommand{\res}{\mathrm{res}}

\newcommand{\Syl}{\mathrm{Syl}}

\DeclareMathOperator{\im}{Im}

\DeclareMathOperator{\id}{id}
\newcommand{\unit}{\mathbbm{1}}

\newcommand{\Sm}{\mathrm{Sm}}


\author{Viktor Balch Barth}
\givenname{Viktor}
\surname{Balch Barth}
\email{viktorbb@math.uio.no}
\address{Department of Mathematics, UiO, Norway.}
\author{William Hornslien}
\givenname{William}
\surname{Hornslien}
\email{william.hornslien@ntnu.no}
\address{Department of Mathematical Sciences, NTNU, Norway.}
\author{Gereon Quick}
\givenname{Gereon}
\surname{Quick}
\email{gereon.quick@ntnu.no}
\address{Department of Mathematical Sciences, NTNU, Norway.}
\author{Glen Matthew Wilson}
\givenname{Glen Matthew}
\surname{Wilson}
\email{glen.wilson@uit.no}
\address{Department of Mathematics and Statistics, UiT, Norway.}



\title{Making the motivic group structure on the endomorphisms of the projective line explicit}

\begin{document}
\date{\today}

\begin{abstract}  
We construct a group structure on the set of pointed naive homotopy classes of scheme morphisms from the Jouanolou device to the projective line. 
The group operation is defined  via matrix multiplication on generating sections of line bundles and only requires basic algebraic geometry. 
In particular, it is completely independent of the construction of the motivic homotopy category. 
We show that a particular scheme morphism, which exhibits the Jouanolou device as an affine torsor bundle over the projective line, induces a monoid morphism from Cazanave's monoid to this group. Moreover, we show that this monoid morphism is a group completion to a subgroup of the group of  scheme morphisms from the Jouanolou device to the projective line. 
This subgroup is generated by a set of morphisms that are very simple to describe. 
\end{abstract}

\maketitle

\tableofcontents

\section{Introduction}

Let $[\PP, \PP]^{\AAA}$ denote the set of self-maps of the projective line in the pointed $\AAA$-homotopy category over a field $k$ introduced by Morel and Voevodsky \cite{MV99}. 
The set $[\PP, \PP]^{\AAA}$ admits the structure of an abelian group and plays the role of the fundamental group of the circle in motivic homotopy theory.

We briefly recall how the group operation on $[\PP, \PP]^{\AAA}$ is defined. 
The standard covering of $\PP$ by two affine lines with intersection $\bbG_m$ yields an $\AAA$-weak equivalence $\PP \simeq S^1 \wedge \bbG_m$.  
The simplicial circle $S^1$ (or some suitable homotopy equivalent model of it, like $\partial \Delta^2$) admits the structure of an h-cogroup. 
The h-cogroup structure on $S^1$ makes it possible to define a group operation on $[S^1 \wedge \bbG_m, \PP]^{\AAA}$ in an analogous way that one defines the fundamental group of a topological space. 
Although the construction mimics the usual construction in algebraic topology, the set of $\bbA^1$-homotopy classes of maps $\bbP^1 \to \bbP^1$ is not simply the set of morphisms $\bbP^1 \to \bbP^1$ modulo an equivalence relation. 
It is unsettling that such an important group does not arise 
in some elementary way as a set of morphisms up to a homotopy relation with some geometrically defined group operation. 
The purpose of the present paper is to remedy this defect. 

Our results build on the work of Asok, Hoyois, and Wendt 
and the work of Cazanave. 
In \cite{Caz} Cazanave defines an operation $\oplus\naif$ which turns the set $[\PP,\PP]\naif$ of pointed naive\footnote{A naive homotopy between two pointed morphisms $f, g \colon  \J \to \PP$ is given by a morphism $H \colon  \J \times \AAA \to \PP$ for which the evident restrictions satisfy $H_0 = f$ and $H_1 = g$. 
We note that $H$ must be pointed in the sense that $* \times \AAA$ maps to the basepoint of $\PP$. For more details we refer to Section \ref{sec:naive_htpy_relation}.} 
homotopy classes into a monoid and shows that the canonical map from naive to $\AAA$-homotopy classes $\nu_{\PP}\colon  [\PP,\PP]\naif \to [\PP,\PP]^{\AAA}$ is a group completion. 
However, this approach cannot yield candidates for scheme morphisms that represent inverses of $\AAA$-homotopy classes.  
In \cite{AHW2} Asok, Hoyois, and Wendt show that the set $[\PP,\PP]^{\AAA}$ is in bijection with an explicit set of maps modulo the naive homotopy relation by using the larger set  $\Sm_k(\J, \PP)$ of morphisms of smooth $k$-schemes where $\J$ denotes the Jouanolou device of $\PP$. 
Recall that the smooth affine scheme $\J$ has the concrete description as the spectrum of the ring  
\[
R = \frac{k[x,y,z,w]}{\left(x+w-1,xw-yz\right)}. 
\]
We consider $\J$ equipped with a morphism $\pi \colon  \J \to \PP$ that exhibits $\J$ as an affine bundle torsor.\footnote{See Definition \ref{def:pi} in Section \ref{subsec:def_of_J_and_pi} for the definition of  $\pi$.}  
More precisely, the work by Asok, Hoyois, and Wendt    
can be used\footnote{We explain in Appendix \ref{sec:appendix} how the unpointed results of \cite{AHW1} and \cite{AHW2} imply that the canonical map $[\J,\PP]\naif \rightarrow [\J,\PP]^{\AAA}$ between sets of homotopy classes of pointed morphisms is a bijection.}  
to show that there is a bijection $\cc \colon [\calJ , \PP ]\naif \xrightarrow{\cong} [ \PP, \PP]^{\AAA}$. 
The map $\cc$ is the composite of the canonical map 
$\nu \colon [\J,\PP]\naif \to [\J,\PP]^{\AAA}$  and the inverse of the map $\pi_{\AAA}^* \colon [\PP,\PP]^{\AAA} \to [\J,\PP]^{\AAA}$ induced by a scheme morphism $\pi \colon \J \to \PP$. 
The set $[\J, \PP]\naif$ is concrete in the following sense: 
it is the set of pointed scheme morphisms $\J \to \PP$ modulo an equivalence relation generated by naive homotopies.   
This resolves the problem of a lack of candidates of morphisms which may  represent inverses in $[\PP,\PP]^{\AAA}$. 
However, it is not clear at all how the group operation on $[\PP,\PP]^{\AAA}$ or the operation $\oplus\naif$ of \cite{Caz} may be lifted.

In the present paper we define an explicit group structure on the set $[\J,\PP]\naif$ of pointed naive homotopy classes. 
The construction of this group operation is independent of the general machinery of motivic homotopy theory and only uses basic algebraic geometry.   
We then show that the induced map $\pi_{\mathrm{N}}^*\colon  [\PP,\PP]\naif \to [\J,\PP]\naif$ is a morphism of monoids where $[\PP,\PP]\naif$ has the monoid structure from Cazanave \cite[\S 3]{Caz}. 
Moreover, we show that $\pi_{\mathrm{N}}^*$ has image in a concrete subgroup $\GG$ and that the map $\pi_{\mathrm{N}}^*\colon  [\PP,\PP]\naif \to \GG$ is a group completion. 
Hence there are canonical isomorphisms between $\GG$ and $[\PP,\PP]^{\AAA}$ which are compatible with $\pi_{\mathrm{N}}^*$ and $\nu_{\PP}$. 
A key feature of the group $\GG$ is that it is defined by explicit generating scheme morphisms $\J \to \PP$ that are defined in terms of very simple $(2\times 2)$-matrices. 
Hence our result provides a concrete and simple set of scheme morphisms whose images provide generators together with their inverses for the group $[\PP,\PP]^{\AAA}$.

We will now describe our results in more detail.  
Recall that Cazanave shows in \cite[Theorem 3.22]{Caz} that there is an operation $\oplus\naif$ which provides $[\PP,\PP]\naif$ with the structure of a commutative monoid and that the canonical map $[\PP,\PP]\naif \to [\PP,\PP]^{\AAA}$ is a group completion. 
We now state our first main result: 

\begin{theorem}\label{thm:piN_is_monoid_morphism_intro} 
There is an operation $\oplus$ which makes $\left([\J,\PP]\naif,\oplus\right)$ an abelian group such that 
the morphism $\pi \colon \J \to \PP$ induces a morphism of commutative monoids 
\begin{equation*}
\pi_{\mathrm{N}}^* \colon \left([\PP,\PP]\naif,\oplus\naif\right) \to \left([\J,\PP]\naif,\oplus\right)
\end{equation*}
where the left-hand side denotes the monoid of \cite[\S 3]{Caz}.  
\end{theorem}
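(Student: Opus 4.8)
The plan is to reduce the theorem to a compatibility, under pullback along $\pi$, between Cazanave's explicit formula for $\oplus\naif$ and the explicit definition of $\oplus$. We first dispose of the formal points. The map $\pi_{\mathrm{N}}^{*}$ is precomposition with the fixed pointed morphism $\pi$, so it is well defined on pointed naive homotopy classes: a pointed naive homotopy $H\colon\PP\times\AAA\to\PP$ between $f$ and $g$ yields the pointed naive homotopy $H\circ(\pi\times\id_{\AAA})\colon\J\times\AAA\to\PP$ between $f\circ\pi$ and $g\circ\pi$. The unit of Cazanave's monoid is the unique degree-$0$ class in $[\PP,\PP]\naif$, the class of the constant morphism at the basepoint; its image under $\pi_{\mathrm{N}}^{*}$ is the constant morphism $\J\to\PP$, which is the neutral element of $\left([\J,\PP]\naif,\oplus\right)$ by definition \ref{def:group_action_homotopy} (it is the class $[\mathrm{triv}]\oplus[0\pi]$ with the trivial unimodular row). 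Moreover $\pi_{\mathrm{N}}^{*}$ preserves the degree: if $f^{*}\calO(1)\cong\calO_{\PP}(n)$ then $(f\circ\pi)^{*}\calO(1)\cong\pi^{*}\calO_{\PP}(n)$, which is the class $n\in\bbZ\cong\Pic(\J)$ under the isomorphism induced by $\pi^{*}$. So the claim reduces to compatibility with the binary operations.

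We would then exploit the degree grading. By definition \ref{def:group_action_homotopy}, any class in $[\J,\PP]\naif$ of degree $n$ has the form $[f_{0}]\oplus[n\pi]$ with $[f_{0}]\in[\J,\ato]\naif$, and $\oplus$ is computed by adding degrees and multiplying the $\ato$-parts, i.e.\ multiplying the corresponding unimodular rows (equivalently, their completions in $\SL_2(R)$). On the other side, recall from \cite[\S3]{Caz} that a pointed rational function of degree $n$ is encoded by a representing $2\times 2$ matrix over $k[X]$, built via a Bézout identity from its defining polynomials $(A,B)$, with determinant a nonzero scalar (the resultant of $(A,B)$, up to sign), and that $\oplus\naif$ is computed, up to naive homotopy, by multiplying these representing matrices. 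We would then proceed in two steps. First, for $f\colon\PP\to\PP$ of degree $n$, pull its generating sections back along $\pi$ to generating sections of $\pi^{*}\calO_{\PP}(n)\cong\calL^{\otimes n}$ on $\J$, where $\calL\defeq\pi^{*}\calO_{\PP}(1)$; using section \ref{sec:action_of_0_maps_on_n_maps} (transitivity of the action of $[\J,\ato]\naif$ on the degree-$n$ classes) one extracts a class $\delta(f)\in[\J,\ato]\naif$ with $\pi_{\mathrm{N}}^{*}f\simeq\delta(f)\oplus n\pi$, obtained concretely by comparing $\pi^{*}$ of Cazanave's representing matrix of $f$ with the generating sections of $n\pi$. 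Second, show that $\delta$ is multiplicative, $\delta(f\oplus\naif g)\simeq\delta(f)\oplus\delta(g)$; this should follow because $\pi^{*}$ is multiplicative and the entries of Cazanave's product matrix are polynomial expressions in the coefficients of the two rational functions, so $\pi^{*}$ carries that product matrix to the product of the two comparison matrices. Granting these together with the identity $[n\pi]\oplus[m\pi]=[(n+m)\pi]$ (immediate from definition \ref{def:group_action_homotopy}), one obtains $\pi_{\mathrm{N}}^{*}(f\oplus\naif g)\simeq(\delta(f)\oplus\delta(g))\oplus(n+m)\pi=\bigl(\delta(f)\oplus n\pi\bigr)\oplus\bigl(\delta(g)\oplus m\pi\bigr)=\pi_{\mathrm{N}}^{*}f\oplus\pi_{\mathrm{N}}^{*}g$, as desired.

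The main obstacle is the normalization mismatch. Cazanave's representing matrices have determinant the resultant, a nonzero scalar rather than $1$, whereas the action of $[\J,\ato]\naif$ on the reference morphisms $n\pi$ is by unimodular rows, i.e.\ by $\SL_2(R)$; one must check that passing from Cazanave's matrices to determinant $1$ --- possible since the resultant is a unit of $k$, hence of $R$ --- is compatible with $\oplus\naif$ up to naive homotopy, and that the resulting class $\delta(f)$ is independent of the choices involved: the lift of $f$ to a representing matrix, the reference morphism $n\pi$, and the homotopy $\pi_{\mathrm{N}}^{*}f\simeq\delta(f)\oplus n\pi$. A related subtlety is that Cazanave's product formula is only associative, commutative, and unital up to naive homotopy on $[\PP,\PP]\naif$; transporting these homotopies along $\pi_{\mathrm{N}}^{*}$ is harmless, but the bookkeeping must be carried out throughout at the level of naive homotopy classes, which is exactly where the structural results of sections \ref{sec:degree_0_maps} and \ref{sec:action_of_0_maps_on_n_maps} --- the identification of degree-$0$ morphisms with those factoring through $\ato$, and the transitive action of $[\J,\ato]\naif$ on the degree-$n$ classes --- do the real work.
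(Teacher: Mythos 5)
Your overall shape --- split a degree-$n$ class as a degree-$0$ part acting on $n\pi$, and reduce the theorem to a multiplicativity statement for the degree-$0$ parts --- does match the paper's, but the step you label ``$\delta$ is multiplicative'' is not a formality that follows from ``$\pi^*$ is multiplicative''; it is the entire content of the theorem, and the reason you give for it would not go through. Two concrete problems. First, the comparison class $\delta(f)$ with $\pi_{\mathrm{N}}^*f\simeq\delta(f)\oplus n\pi$ is not obtained by pulling back Cazanave's representing matrix: the reference morphisms $n\pi$ are defined only as representatives of $n[\id]$ under the bijection $\cc$ (definition \ref{def:minus_pi_and_n_pi}), their generating sections are the recursive expressions of remark \ref{rem:construction_of_npi} rather than $(1,0:0,1)_n$ (which is \emph{not} homotopic to $n\pi$ for $n>1$), and the matrix produced by theorem \ref{thm:deg0action_factorization} comes from solving a B\'ezout identity in $R$, so it is neither canonical nor polynomial in the coefficients of $f$. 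Second, even in the simplest instance the pullback of Cazanave's product matrix and the product of the corresponding comparison matrices differ by a nontrivial elementary matrix over $R$; one must exhibit an interpolating family of section pairs and verify, via the resultant lemmas of appendix \ref{sec:appendix_resultants}, that every member of the family still generates $\calP_{n+1}$, i.e.\ is a morphism. This is exactly proposition \ref{prop:gu1_oplus_acts_on_caz}, the technical heart of the paper's proof, and nothing in your sketch produces those homotopies or those generation checks.

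The paper avoids treating arbitrary $f,g$ by invoking Cazanave's result that $[\PP,\PP]\naif$ is generated by the degree-one classes $[X/u]$ (\cite[Lemma 3.13]{Caz}): it suffices to peel off one factor $X/u$ at a time, which is what proposition \ref{prop:gu1_oplus_acts_on_caz} accomplishes, combined with the morphism-level identity $g_{u,1}\oplus\pi=\pi_{\mathrm{N}}^*(X/u)$ of lemma \ref{lem:g_u1_oplus_pi} and with lemma \ref{lemma:pi_n_times_X1} identifying $\pi_{\mathrm{N}}^*$ of the $n$-fold sum of $X/1$ with $n\pi$. Your proposal omits this reduction entirely, and without it the multiplicativity of $\delta$ for general $f,g$ would require homotopies you have no mechanism to construct. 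The issues you do flag (the determinant normalization, independence of choices) are real but secondary; the missing ingredients are the explicit homotopy of proposition \ref{prop:gu1_oplus_acts_on_caz} and the degree-one reduction that makes a single such homotopy suffice.
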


We will now outline the ideas that lead to the proof of Theorem \ref{thm:piN_is_monoid_morphism_intro}. 
First we describe the construction of the explicit group structure $\oplus$ on $[\J,\PP]\naif$.   
Recall that a morphism $ f \colon \J \to \PP$ is determined by an invertible sheaf $\calL$ over $\J$ and a choice of two generating sections $s_0, s_1 \in \Gamma(\calL, \J)$. 
The invertible sheaf $\calL$ is the pullback $f^* \calO(1)$. 
We say that a morphism $f \colon \J \to \PP$ has degree 0 if $f^*\calO(1)$ is the structure sheaf on $\J$. 
As we will show in Section \ref{sec:degree_0_maps}, the maps $\J \to \PP$ of degree 0 are exactly the maps which factor through the Hopf map $\eta \colon  \ato \to \PP$. 
Let $R$ denote the ring such that $\J = \Spec{R}$. 
The set $[\J, \ato]\naif$ has an apparent group structure: 
A morphism $\J \to \ato$ is given by a unimodular row $(A, B)$ in $R^2$, i.e., there exist $U,V \in R$ for which $AU+BV=1$. 
Any such unimodular row can be completed to a $(2\times 2)$-matrix over $R$, and the product of these matrices defines a group operation on $[\J, \ato]\naif$.  
We describe the details of this construction in Section \ref{subsec:operations_deg_zero}. 
The subgroup of degree 0 maps $\J \to \PP$ is quite large. 
In fact, the degree map  
fits into an exact sequence of pointed \emph{sets} of the form 
\begin{align*}
1 \to [\calJ, \ato]\naif \to [\J,\PP]\naif \xrightarrow{\deg}  \Pic(\calJ) \to 1.   
\end{align*}
Hence, in order to turn $[\J, \PP]\naif$ into a group, it  suffices to define an action of $[\J, \ato]\naif$ on $[\J,\PP]\naif$.  
The key idea is that any morphism $f \colon \J \to \PP$ is given by the choice of a line bundle together with two generating sections. 
We can then let a morphism $\J \to \ato$ given by a $(2\times2)$-matrix act on the sections via matrix multiplication. 
We explain the details of this operation in Section \ref{sec:action_of_0_maps_on_n_maps} 
and we complete the construction of the group structure on $[\J, \PP]\naif$ in Section \ref{sec:proposed_group_structure}, see Definition \ref{def:group_action_homotopy} and Theorem \ref{thm:group_structure} where we show that there is an isomorphism of groups $[\J,\PP]\naif \xrightarrow{\ff} [\PP,\PP]^{\AAA}$.

Next we describe the main idea for the proof that $\pi_N^*$ is a morphism of monoids.  
Let $u\in k^{\times}$. 
As in \cite{Caz} we identify a rational function $X/u$ in the indeterminate $X$ with the morphism $\PP \to \PP$ defined by $[x_0:x_1] \mapsto [x_0:ux_1]$. 
For $u,v\in k^{\times}$, let $g_{u,v} \colon \J \to \ato$ denote the morphism given by the unimodular row $\left(x+ \frac{v}{u}w, (u-v)y\right)$ in $R^2$, where $x$, $y$, $z$, and $w$ are the polynomial generators of the ring 
\[
R = \frac{k[x,y,z,w]}{\left(x+w-1,xw-yz\right)}
\]
defining $\J = \Spec{R}$. 
For the rational functions $X/u$ and $X/v$ we then have the identity 
\begin{equation}\label{eq:guv_plus_Xv_is_Xu_intro}
g_{u,v} \oplus \pi_{\mathrm{N}}^*\left(X/v\right) = \pi_{\mathrm{N}}^*\left(X/u\right)
\end{equation}
which we emphasize is an identity of morphisms not just homotopy classes. 
In the particular case where $v=1$, we have $\pi_{\mathrm{N}}^*\left(X/1\right)=\pi$ and Formula \eqref{eq:guv_plus_Xv_is_Xu_intro} reads $g_{u,1} \oplus \pi = \pi_{\mathrm{N}}^*\left(X/u\right)$. This reduces computations for $\pi_{\mathrm{N}}^*\left(X/u\right)$ to computations for $g_{u,1}$ and $\pi$. 
The key technical result needed to prove Theorem \ref{thm:piN_is_monoid_morphism_intro} is that, for every pointed morphism $f \colon \PP \to \PP$, we have an explicit naive homotopy 
\begin{align}\label{eq:piN_Xu_plus_f_is_gu1_piN_X1_f_intro}
\pi_{\mathrm{N}}^*\left(X/u \oplus\naif f \right) \simeq 
g_{u,1}  \oplus \left( \pi_{\mathrm{N}}^*\left(X/1 \oplus\naif f \right) \right). 
\end{align}
The construction of the concrete homotopy in Formula \eqref{eq:piN_Xu_plus_f_is_gu1_piN_X1_f_intro} is based on computations of the resultants of certain morphisms which we provide in Section \ref{sec:computations_for_degree_0_maps} and Appendix \ref{sec:appendix_resultants}. 
Theorem \ref{thm:piN_is_monoid_morphism_intro} then follows from the fact that the set of homotopy classes $[X/u]$ for all $u\in k^{\times}$ 
generates $[\PP,\PP]\naif$ and a successive application of Formula  \eqref{eq:piN_Xu_plus_f_is_gu1_piN_X1_f_intro}. 
The details of this argument are explained in Section  \ref{sec:compatibility_with_Cazanaves_monoid},  see Theorem \ref{thm:pi*_is_a_monoid_morphism}.


Our second main result is then based on the observation that Identity \eqref{eq:guv_plus_Xv_is_Xu_intro} implies that the image of $\pi_{\mathrm{N}}^*$ is contained in the subgroup $\GG \subseteq [\J,\PP]\naif$ generated by the homotopy classes $[g_{u,v}]$ for all $u,v \in k^{\times}$ and $[\pi]$. Theorem \ref{thm:piN_is_monoid_morphism_intro} and the work of Cazanave \cite[Theorem 3.22]{Caz} then imply that there is a unique group homomorphism $\psi \colon [\PP,\PP]^{\AAA} \to \GG$ such that $\psi \circ \nu_{\PP}=\pi_{\mathrm{N}}^*$. 
In Section \ref{sec:group_completion} we show the  following key result, see Theorem \ref{thm:canonical_isomorphism_from_GG}:

\begin{theorem}\label{thm:piN_is_group_completion_intro} 
The monoid morphism $\pi_{\mathrm{N}}^* \colon [\PP,\PP]\naif \to \GG$ is a group completion. 
There is a unique 
isomorphism $\nc \colon \GG \to [\PP,\PP]^{\AAA}$ such that the diagram below commutes, where $\nc$ and $\psi$ are mutual inverses to each other.  
\begin{align*}
\xymatrix{
\GG  \ar@/^0.8pc/[dr]^-{\nc} &  \\
[\PP,\PP]\naif \ar[u]^-{\pi_{\mathrm{N}}^*} \ar[r]_-{\nu_{\PP}} & 
\ar@{.>}[ul]^-{\psi}
[\PP,\PP]^{\AAA} 
}
\end{align*}
\end{theorem}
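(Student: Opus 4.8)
The plan is to reduce everything to showing that the group homomorphism $\psi$ (constructed just above from the universal property of $\nu_{\PP}$) is an isomorphism. Suppose this is done and set $\nc \defeq \psi\inv$. Then $\nc$ is a group isomorphism with $\nc \circ \pi_{\mathrm{N}}^* = \nc \circ \psi \circ \nu_{\PP} = \nu_{\PP}$, so the triangle commutes and $\nc,\psi$ are mutually inverse; and since $\pi_{\mathrm{N}}^* = \psi \circ \nu_{\PP}$ is the composite of the group completion $\nu_{\PP}$ of \cite[Theorem 3.22]{Caz} with an isomorphism, it is itself a group completion (given a monoid morphism $\beta \colon [\PP,\PP]\naif \to H$ into a group, the unique $\delta \colon [\PP,\PP]^{\AAA} \to H$ with $\delta \circ \nu_{\PP} = \beta$ produces $\delta \circ \psi\inv$ as the unique group morphism factoring $\beta$ through $\pi_{\mathrm{N}}^*$). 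Uniqueness of $\nc$ among isomorphisms making the triangle commute will follow from the fact, checked next, that the image of $\pi_{\mathrm{N}}^*$ generates $\GG$.

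That $\im(\pi_{\mathrm{N}}^*)$ generates $\GG$ is immediate from \eqref{eq:guv_plus_Xv_is_Xu_intro}: by construction $\GG$ is generated by $[\pi] = \pi_{\mathrm{N}}^*([X/1])$ together with the classes $[g_{u,v}]$, and inside the abelian group $\GG$ identity \eqref{eq:guv_plus_Xv_is_Xu_intro} reads $[g_{u,v}] = \pi_{\mathrm{N}}^*([X/u]) \ominus \pi_{\mathrm{N}}^*([X/v])$ (where $\ominus$ denotes subtraction), so every generator of $\GG$ lies in $\langle\im(\pi_{\mathrm{N}}^*)\rangle$. Consequently $\im(\psi)$, being a subgroup of $\GG$ containing $\im(\pi_{\mathrm{N}}^*)$, equals $\GG$, so $\psi$ is surjective.

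For injectivity I would first pass to degree-$0$ parts. Both $\nu_{\PP}$ and $\pi_{\mathrm{N}}^*$ respect the degree, hence so does $\psi$, and $\psi([\id_{\PP}]) = \psi(\nu_{\PP}([X/1])) = \pi_{\mathrm{N}}^*([X/1]) = [\pi]$. Since $\Pic(\J) \cong \bbZ \cong \Pic(\PP)$ is free, the two rows of \eqref{eq:diagram_group_morphism_intro} are split exact, and choosing the splittings determined by $[\pi]$ and $[\id_{\PP}]$ identifies $\psi$ with $\psi_0 \times \id_{\bbZ}$, where $\psi_0 \colon [\PP,\ato]^{\AAA} \to \GG_0$ is the restriction to degree-$0$ parts and $\GG_0 \defeq \GG \cap [\J,\ato]\naif = \langle [g_{u,v}] : u,v \in k^{\times}\rangle$. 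Thus it suffices to show $\psi_0$ is injective, for which I would produce a left inverse. The group homomorphism $\cc_0 \colon [\J,\ato]\naif \to [\PP,\ato]^{\AAA}$ of \eqref{eq:diagram_group_morphism_intro} (the restriction of $\phi$, by theorem \ref{thm:main_theorem_abelian_group_and_phi_intro}) restricts further to $\GG_0$, and it is enough to verify $\cc_0 \circ \psi_0 = \id$. The group $[\PP,\ato]^{\AAA}$ is generated by the degree-$0$ elements $[X/u]^{\AAA} \ominus [X/1]^{\AAA}$ for $u \in k^{\times}$ (the classes $[X/u]^{\AAA} = \nu_{\PP}([X/u])$ generate $[\PP,\PP]^{\AAA}$), and using $\psi \circ \nu_{\PP} = \pi_{\mathrm{N}}^*$ and \eqref{eq:guv_plus_Xv_is_Xu_intro} one gets $\psi_0\bigl([X/u]^{\AAA} \ominus [X/1]^{\AAA}\bigr) = \pi_{\mathrm{N}}^*([X/u]) \ominus \pi_{\mathrm{N}}^*([X/1]) = \bigl([g_{u,1}] \oplus [\pi]\bigr) \ominus [\pi] = [g_{u,1}]$. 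So injectivity of $\psi$ reduces to the single family of identities
\begin{equation*}
\cc_0\bigl([g_{u,1}]\bigr) = [X/u]^{\AAA} \ominus [X/1]^{\AAA} \quad \text{in } [\PP,\ato]^{\AAA}, \qquad u \in k^{\times}.
\end{equation*}

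This last identity is where the real work lies, and I expect it to be the main obstacle. Unwinding the definition of $\cc_0$ as $(\pi_{\AAA}^*)\inv \circ \nu$, it asks one to identify the $\AAA$-homotopy class of the explicit morphism $g_{u,1} \colon \J \to \ato$ given by the unimodular row $\bigl(x + \tfrac{1}{u} w,\, (u-1) y\bigr)$ with the degree-$0$ difference of the units $X/u$ and $X/1$ — equivalently, to exhibit a pointed morphism $c \colon \PP \to \ato$ whose composite $c \circ \pi$ is naively homotopic to $g_{u,1}$ and whose class is $[X/u]^{\AAA} \ominus [X/1]^{\AAA}$. I would carry this out using the description of degree-$0$ morphisms from section \ref{sec:degree_0_maps} together with the resultant computations of section \ref{sec:computations_for_degree_0_maps} and appendix \ref{sec:appendix_resultants}, writing down the homotopy explicitly in the same spirit as the homotopy \eqref{eq:piN_Xu_plus_f_is_gu1_piN_X1_f_intro} used for theorem \ref{thm:piN_is_monoid_morphism_intro}. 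Once this is in hand, $\cc_0 \circ \psi_0 = \id$, so $\psi_0$ is injective; combined with the surjectivity of $\psi$ this makes $\psi_0$, and hence $\psi$, an isomorphism, and the theorem follows as explained in the first paragraph with $\nc = \psi\inv$.
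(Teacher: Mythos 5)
There is a genuine gap at the decisive step. Your architecture (prove that $\psi$ is an isomorphism and set $\nc=\psi^{-1}$) is sound, and your surjectivity argument is correct: the identity $[g_{u,v}]=\pi_{\mathrm{N}}^*([X/u])\ominus\pi_{\mathrm{N}}^*([X/v])$ in the group $\GG$ shows that the image of $\pi_{\mathrm{N}}^*$ generates $\GG$. But your injectivity argument reduces everything to the identity $\cc_0([g_{u,1}])=[X/u]^{\AAA}\ominus[X/1]^{\AAA}$ in $[\PP,\ato]^{\AAA}$, which you explicitly defer (``I would carry this out\ldots''). This is not a routine computation to be filled in with resultants: the right-hand side is a difference for the conventional operation $\oplus^{\AAA}$, so the identity asks you to upgrade the equation of morphisms $g_{u,1}\oplus\pi=\pi_{\mathrm{N}}^*(X/u)$ to the equation $\cc_0([g_{u,1}])\oplus^{\AAA}[\id]=[X/u]^{\AAA}$ of $\AAA$-homotopy classes, i.e.\ it is precisely an instance of the compatibility of $\cc$ with $\oplus$ and $\oplus^{\AAA}$ that the paper cannot prove and records as conjecture \ref{conj:main_conjecture}. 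As written, your proof silently assumes a piece of the open conjecture.

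The gap is repairable without invoking $\cc_0$ at all, and the repair is essentially what the paper does. Since $\nu_{\PP}$ is an injective group completion (injectivity follows from Witt cancellation, as in proposition \ref{prop:pi_N_injective}), every element of $[\PP,\PP]^{\AAA}$ is a difference $\nu_{\PP}(a)\ominus\nu_{\PP}(b)$; if $\psi$ kills such an element then $\pi_{\mathrm{N}}^*(a)=\pi_{\mathrm{N}}^*(b)$, and injectivity of $\pi_{\mathrm{N}}^*$ gives $a=b$, hence the element is zero. The paper packages the same input differently: proposition \ref{prop:GG_is_group_completion} verifies the universal property of group completion for $\pi_{\mathrm{N}}^*\colon[\PP,\PP]\naif\to\GG$ directly, defining $\widetilde{\mu}([\pi])=\mu([X/1])$ and $\widetilde{\mu}([g_{u,v}])=\mu([X/u])-\mu([X/v])$ and using the injectivity of $\pi_{\mathrm{N}}^*$ to check that every relation among the classes $[g_{u,v}]$ is respected; theorem \ref{thm:canonical_isomorphism_from_GG} then follows formally from the uniqueness of group completions. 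In either form the only substantive input is the injectivity of $\pi_{\mathrm{N}}^*$ --- no identification of $\cc_0([g_{u,1}])$ with a difference for $\oplus^{\AAA}$ is needed, and none is available with the paper's techniques.
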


Theorem \ref{thm:piN_is_group_completion_intro} gives a very concrete description of all pointed endomorphisms of $\PP$ in the unstable $\AAA$-homotopy category in the following sense:  
the group $\GG$ is given by a simple set of generating morphisms, and 
the group operation $\oplus$ in $\GG$ inherited from $[\J,\PP]\naif$ is defined in basic algebro-geometric terms. 

Finally, we note that the isomorphisms $\GG \xrightarrow{\nc} [\PP,\PP]^{\AAA} \xleftarrow{\ff} [\J,\PP]\naif$ do not imply that $\GG$ equals $[\J,\PP]\naif$. 
However, we conjecture that the inclusion $\GG \subseteq [\J,\PP]\naif$ is an equality and we show in Section \ref{sec:Milnor_Witt_K_theory_and_degree_0} that this is true for all finite fields by computing the first Milnor--Witt K-theory group $K_1^{\mathrm{MW}}(\bbF_q)$, which is isomorphic to $[\PP,\ato]^{\AAA}$ and to the subgroup generated by all classes $[g_{u,v}]$ in $[\J,\ato]\naif$. \\

Below we provide a list of frequently used and important notation together with a reference where the notation is first used after the introduction. \\


{\bf Acknowledgments.} 
We thank Aravind Asok, Christophe Cazanave, Marc Hoyois, Marc Levine, Kirsten Wickelgren, Ben Williams, and Paul Arne Østvær for helpful comments, suggestions and clarifications. 
We thank the anonymous referee for many comments and suggestions which helped improve the article.  

The first-named author is supported by the  RCN grant no.\,300814 Young Research Talents of Trung Tuyen Truong. 
The second-named author has received support from the project \emph{Pure Mathematics in Norway} funded by the Trond Mohn Foundation.  
The third- and fourth-named authors gratefully acknowledge the partial support by the RCN Project no.\,313472 {\it Equations in Motivic Homotopy}. 
The authors would like to thank the Centre for Advanced Study in Oslo for its hospitality where parts of the work on the paper were carried out. \\


\section*{List of important notation}


\begin{tabular}{l|l|l}
Notation & Brief description & First discussed \\
\hline
$k$ & a field & Section \ref{subsec:def_of_J_and_pi}\\ 
$\PP$ & projective line over $k$ pointed at $\infty$ & Section \ref{subsec:def_of_J_and_pi} \\
$R$ & The ring $R:=k[x,y,z,w]/(xw-yz,x+w-1)$ & Definition \ref{def:J}\\
$\J$ & the Joualolou device $\J = \Spec{R}$ & Definition \ref{def:J} \\
$\jj$ & basepoint $(x-1,y,z,w) \subseteq R$ of $\J$ & Definition \ref{def:J}\\
$\jj'$ & basepoint $(x-1,y,z,w) \subseteq R[T]$ of $\J \times \AAA$ & Remark \ref{rem:def_of_jj'}\\
$\calP_1$ & line bundle over $\J$, image of  $\begin{pmatrix}
    x & y \\ z & w
\end{pmatrix}$ & Section \ref{subsec:def_of_J_and_pi} \\
$\calQ_1$ & line bundle over $\J$, image of  $\begin{pmatrix}
x & z \\ y & w
\end{pmatrix}$ & Section \ref{subsec:def_of_J_and_pi} \\ 
$[s_0, s_1]$ & map $\calJ \to \PP$ $\leftrightarrow$ line bundle $\calP_n$ with sections $s_0$, $s_1$ & Construction \ref{con:J_to_P_map} \\ 
$X/u$ & morphism $\PP \to \PP$, $[x_0:x_1] \mapsto [x_0:ux_1]$ & Section \ref{sec:computations_for_degree_0_maps} \\
$\res(A,B)$ & resultant of polynomials $A$, $B$ & Proposition \ref{prop:Caz_thm_pointed_P1_endo} \\
$(a_0,a_1:b_0,b_1)_n$ & morphism  $\J \to \PP$ & Definition \ref{def:notation_maps} \\
$\deg$ & degree of $\calL$ of a morphism $(\calL, s_0,s_1) \colon \J \to \PP$ & Section \ref{sec:degree_0_maps} \\ 
$\eta$ & the Hopf map $\eta \colon \ato \to \PP$ & Section \ref{sec:degree_0_maps} \\
$\SL_2$ & $\SL_2 := \Spec \left( k[a,b,c,d]/(ad-bc-1)\right)$ & Section \ref{sec:degree_0_maps} \\ 
$M$ & an $\SL_2(R)$-matrix & Section \ref{sec:degree_0_maps} \\
$\phi $ & first column morphism $\SL_2 \to \ato$ & Definition \ref{def:def_of_phi}  \\
$g_{u,v}$ & unimodular row $\left(x+ \frac{v}{u}w, (u-v)y\right)$ in $R$ & Definition \ref{def:g_uv} \\
$m_{u,v}$ & $\SL_2(R)$-matrix $\begin{pmatrix}
x+\frac{v}{u}w & \frac{u-v}{uv}z \\ (u-v)y & x + \frac{u}{v}w
\end{pmatrix}$ 
& Definition \ref{def:g_uv} \\
$\Sm_k$ & the category of smooth finite type $k$-schemes & Section \ref{sec:naive_htpy_relation} \\
$\Sm_k(X,Y)_*$ & set of pointed morphisms in $\Sm_k$ & Section \ref{sec:naive_htpy_relation} \\
$\simeq$ & naive homotopy relation & Definition \ref{def:pointed_naive_homotopy} \\
$[X,Y]\naif$ & set of pointed naive homotopy classes 
& Definition \ref{def:pointed_naive_homotopy} \\
$[\J,\PP]_n\naif$ & set of pointed naive homotopy classes of degree $n$ & Section \ref{sec:degree_0_maps} \\
$[X,Y]^{\AAA}$ & set of pointed $\AAA$-homotopy classes & Section \ref{sec:group_completion} \\
$\nu$ & canonical map $[\J, \PP]\naif \to [\J, \PP]^{\AAA}$ & Section \ref{sec:group_completion} \\
$\pi_{\mathrm{N}}^*$ & map $[\PP,\PP]\naif \to [\J,\PP]\naif$ induced by $\pi$ & Section \ref{sec:compatibility_with_Cazanaves_monoid} \\
$\pi^*_{\AAA}$ & map $[\PP,\PP]^{\AAA} \to [\J,\PP]^{\AAA}$ induced by $\pi$ & Section \ref{sec:group_completion} \\
$\cc$ & map $\cc = (\pi^*_{\AAA})^{-1} \circ \nu \colon  [\J, \PP]\naif \to [\PP, \PP]^{\AAA}$  & Equation \eqref{eq:jpn_ppa} \\
$\oplus$ & group operation on $[\J, \PP]\naif$ & Definition \ref{def:full_group} \\
$\oplus\naif$ & monoid operation on $[\PP, \PP]\naif$ & Section \ref{sec:compatibility_with_Cazanaves_monoid} \\
$\oplus^{\AAA}$ & group operation on $[\PP,\PP]^{\AAA}$ & Section \ref{sec:group_completion} \\
$\ff$ & group isom. $\left([\J, \PP]\naif,\oplus\right) \xrightarrow{\cong} \left([\PP, \PP]^{\AAA},\oplus^{\AAA}\right)$ & Theorem \ref{thm:group_structure}\\ 
$\deg^{\AAA}$ & Morel's $\AAA$-Brouwer degree $[\bbP^1, \bbP^1]^{\AAA} \to \GW(k)$ & Section \ref{sec:motivic_Brouwer_degree} 
\end{tabular}


\section{The Jouanolou device and morphisms to \texorpdfstring{$\bbP^1$}{P1}}\label{sec:Jouanolou_device_and_maps}

In this section we work out the details needed about the Jouanolou device $\J$, morphisms $\J \to \PP$, and the pointed naive homotopy relation. 
We keep the terminology as elementary as possible and hope that the details provided help make our approach accessible. 

\subsection{Definition of \texorpdfstring{$\calJ$}{J},  \texorpdfstring{$\pi$}{pi}, and \texorpdfstring{$\xi$}{xi}}\label{subsec:def_of_J_and_pi}

Throughout this paper $k$ will always denote a field. 
All schemes are schemes over $\Spec{k}$.  
We denote by $\PP$ the projective line over $k$ pointed at $\infty := [1:0] \in \PP$. 
The letter $R$ will always denote the following ring. 

\begin{definition}\label{def:J}
Let $R$ denote the ring
\begin{equation*}
   R = \frac{k[x,y,z,w]}{\left(x+w-1,xw-yz\right)}.
\end{equation*}
The Jouanolou device of $\PP$ is the smooth affine $k$-scheme $\calJ = \Spec{R}$. 
We consider $\calJ$ to be pointed at $\jj = (x-1, y, z, w) $. 
\end{definition}

The ring $R$ may be interpreted as the ring representing $(2\times 2)$-matrices with trace $1$ and determinant $0$. 
Namely, a ring homomorphism $R \to S$ is equivalent to a $(2 \times 2)$-matrix over $S$ with trace $1$ and determinant $0$.  

While we will discuss morphisms $\calJ \to \PP$ in more detail later, we point out that there are two evident morphisms that exhibit $\calJ$ as an affine torsor bundle over $\PP$.   
The matrices 
\begin{equation*}
    \begin{pmatrix}
    x & y \\ z & w
    \end{pmatrix}\,
    \text{ and } \,
    \begin{pmatrix}
    x & z \\ y & w
    \end{pmatrix} 
\end{equation*}
over $R$ are idempotent. When viewed as linear transformations from $R^2$ to $R^2$, the image of each matrix defines a projective module, denoted by $\calP_1$ and $\calQ_1 $ respectively. Both $\calP_1$ and $\calQ_1$ have rank 1 and so they yield invertible sheaves on $\calJ$. 

\begin{definition}\label{def:pi}
We define the morphism of schemes $\pi \colon \calJ \to \PP$ by selecting the invertible sheaf associated to $\calP_1$ and the generating sections 
\begin{equation*}
s_0 = \begin{pmatrix}
x \\ z
\end{pmatrix}\,
\text{ and }\,
s_1 = \begin{pmatrix}
y \\ w
\end{pmatrix}.
\end{equation*}
Similarly, we define the morphism of schemes $\tilde{\pi} \colon \calJ \to \PP$ by using $\calQ_1$ and the choice of generating sections 
\begin{equation*}
s_0 = 
\begin{pmatrix}
x \\ y
\end{pmatrix}
\text{ and } 
s_1 = 
-\begin{pmatrix}
z \\ w
\end{pmatrix}. 
\end{equation*}
\end{definition}

We intuitively understand the map $\pi$ as sending a point in $\calJ$ corresponding to a matrix $\begin{pmatrix}
x & y \\ 
z & w
\end{pmatrix}$
to either the point with homogeneous coordinates $[x : y]$ or $[z : w]$, depending on which is defined. 
When both points make sense in $\PP$, they agree, so the map is well-defined. 
A similar argument shows that $\tilde{\pi}$ is well-defined. 
Both $\pi$ and $\tilde{\pi}$ exhibit $\calJ$ as an affine torsor bundle over $\PP$, hence they are $\AAA$-homotopy equivalences. 
It follows that
\begin{equation}\label{eq:ppa_jpa}
\pi_{\AAA}^* \colon \left[\PP, \PP \right]^{\AAA} \to \left[\calJ, \PP \right]^{\AAA}
\end{equation}
is a bijection.  
We show in Proposition \ref{prop:pointed_naive_htpy_conclusion_appendix} in Appendix \ref{sec:appendix} that the  canonical map $\nu \colon [\J, \PP]\naif \to [\J, \PP]^{\AAA}$ is a bijection because $\calJ$ is affine and $\PP$ is $\AAA$-naive. 
Thus, the composition of the bijection $\nu$ and the inverse of $\pi_{\AAA}^*$ is a bijection
\begin{equation}\label{eq:jpn_ppa}
\cc \colon \left[\calJ, \PP \right]\naif \fd \left[\PP, \PP \right]^{\AAA}.
\end{equation}
This bijection may be described as follows. 
A naive pointed homotopy class of maps $[f]$ represented by the pointed scheme morphism $f \colon \calJ \to \PP$ is sent to $\cc([f]) = [f\circ \pi^{-1}]^{\AAA}$, the pointed $\AAA$-homotopy class of the zig-zag $\PP \xleftarrow{\pi} \calJ \xrightarrow{f}\PP$. 

In the following sections we will investigate the set $[\J,\PP]\naif$ of pointed naive homotopy classes of pointed morphisms $\J \to \PP$. 


\subsection{Convenient coordinates for \texorpdfstring{$\calJ$}{J}}

The map $\pi \colon \calJ \to \PP$ encourages the choice of a convenient set of coordinate charts for $\calJ$. 
For $\PP$, we use the standard notation $U_0 = \PP \setminus \{[0:1]\}$ and $U_1 = \PP \setminus \{[1:0]\}$. 
It is straightforward to verify that the preimages under $\pi$ of $U_0$ and $U_1$ are $\pi^{-1}(U_0) = D(x) \cup D(z)$ and $\pi^{-1}(U_1) = D(y) \cup D(w)$. Both of these open sets are isomorphic to $\bbA^2$ under the following maps. 

\begin{lemma}\label{lem:coords}
The open set $D(x) \cup D(z)\subseteq \calJ$ is isomorphic to $\Spec (k[a,b])$ under the map $\Psi_0 \colon \bbA^2 \to \calJ$ given by 
$x \mapsto 1-ab$, 
$y \mapsto a(1-ab)$,
$z \mapsto b$, and 
$w \mapsto ab$. 

Similarly, the open set $D(y) \cup D(w) \subseteq \calJ$ is isomorphic to $\Spec{k[c,d]}$ under the map $\Psi_1 \colon \bbA^2 \to \calJ$ given by $x \mapsto cd$, $y \mapsto d$, $z\mapsto c(1-cd)$, and $w \mapsto 1-cd$. 
\end{lemma}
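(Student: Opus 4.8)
The plan is to verify directly that the two proposed morphisms $\phi_0, \phi_1 \colon \bbA^2 \to \calJ$ are well-defined closed immersions whose images are the open subschemes $D(x)\cup D(z)$ and $D(y)\cup D(w)$ respectively, and that they are in fact isomorphisms onto these opens. I will treat $\phi_0$ in detail; the argument for $\phi_1$ is symmetric under the involution of $R$ swapping $y \leftrightarrow z$ and simultaneously relabeling coordinates, so I will only indicate the changes.

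First I would check that $\phi_0$ is a morphism at all, i.e.\ that the assignment $x \mapsto 1-ab$, $y \mapsto a(1-ab)$, $z \mapsto b$, $w \mapsto ab$ respects the two defining relations of $R$. For $x + w - 1$ we get $(1-ab) + ab - 1 = 0$, and for $xw - yz$ we get $(1-ab)(ab) - a(1-ab)\cdot b = ab(1-ab) - ab(1-ab) = 0$. So there is a well-defined ring homomorphism $R \to k[a,b]$, hence a morphism $\Spec(k[a,b]) \to \calJ$. Next I would confirm that this morphism factors through the open subscheme $D(x)\cup D(z)$: the image of $x+z = (1-ab)+b$ is a unit precisely where\dots\ actually the cleaner statement is that on $\Spec k[a,b]$ the ideal generated by the images of $x$ and $z$, namely $(1-ab, b) = (1,b) = (1)$, is the unit ideal, so the image of $\Spec k[a,b]$ lands in $D(x)\cup D(z)$.

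The heart of the proof is producing the inverse. On $D(x)\cup D(z)$ I would define a map back to $\bbA^2$ by specifying $b \mapsto z$ (a global function on $\calJ$, hence on this open) and $a \mapsto y/x$ on $D(x)$ and $a \mapsto w/z$ on $D(z)$; these agree on the overlap $D(xz)$ because $xw = yz$ in $R$ gives $w/z = y/x$ there, so they glue to a morphism $\psi_0 \colon D(x)\cup D(z) \to \bbA^2$. Then I would check $\psi_0 \circ \phi_0 = \id$ and $\phi_0 \circ \psi_0 = \id$. The first is the computation $z \mapsto b$ and $y/x \mapsto a(1-ab)/(1-ab) = a$ (valid on $D(x) = D(1-ab)$), together with the matching computation on $D(z) = D(b)$; the second requires checking that pulling back $x,y,z,w$ around the loop returns them, e.g.\ on $D(x)$ the composite sends $x \mapsto 1 - (y/x)z = (x - yz)/x = (x - xw)/x = 1-w = x$ using both relations, and similarly for $y, z, w$, with the analogous check on $D(z)$.

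The main obstacle, such as it is, is purely bookkeeping: one must consistently track which computations are valid on $D(x)$ versus $D(z)$ and confirm the gluing on the overlap, making repeated use of the two relations $w = 1-x$ and $yz = xw$ (equivalently $yz = x(1-x)$). There is no conceptual difficulty — everything is a finite ring-theoretic verification — but it is worth being careful that the formulas for $\phi_0$ are symmetric enough under $x \leftrightarrow w$, $y \leftrightarrow z$ combined with $a \leftrightarrow b$ to deduce the $\phi_1$ statement formally rather than redoing it. I expect the whole proof to be about a page of elementary commutative algebra.
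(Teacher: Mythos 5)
Your proposal is correct and follows essentially the same route as the paper's (quite terse) proof: verify the map locally by checking it induces isomorphisms over $D(x)$ and $D(z)$, use that $D(1-ab)$ and $D(b)$ cover $\bbA^2$, and obtain the inverse by gluing, with $\phi_1$ handled by symmetry. Your version merely writes out the inverse and the composite checks explicitly; the only blemish is the initial phrase ``closed immersions,'' which should read ``open immersions'' (as your own argument in fact establishes).
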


\begin{proof}
The proof proceeds by studying the map locally. 
For instance, $\Psi_0$ induces an isomorphism of rings $k[a,b][(1-ab)^{-1}] \to R[x^{-1}]$ and also of $k[a,b][b^{-1}] \to R[z^{-1}]$. 
The open sets $D(1-ab)$ and $D(b)$ cover $\bbA^2$, so it follows that $\Psi_0$ maps surjectively onto $D(x) \cup D(z)$. The inverse map is obtained by gluing the maps that are defined on $D(x)$ and $D(z)$, giving the result. 
A similar argument works for $\Psi_1$. 
\end{proof}

\begin{remark}
The open affine subschemes $D(x)\cup D(z)$ and $D(y) \cup D(w)$ of $\Spec{R}$ have the odd property that  their ring of global sections is not a localization of $R$. 
\end{remark}


\subsection{Invertible sheaves on \texorpdfstring{$\calJ$}{J}}
\label{sec:invertible_sheaves}

By \cite[Theorem II.7.1]{Hartshorne}, a morphism $\J \to \PP$ is determined by an invertible sheaf $\calL$ on $\J$ and two generating global sections of $\calL$. 
We now take the time to study the invertible sheaves on $\J$ to enable our study of the morphisms $\J \to \PP$.  
We will assume familiarity with the basic terminology presented in both \cite[Chapter 1]{Kbook} and \cite{AtiyahMacdonald}. 

Since $\calJ = \Spec{R}$ is an irreducible affine scheme, the invertible sheaves on $\calJ$ correspond to projective $R$-modules of rank 1. We have already seen the projective modules $\calP_1$ and $\calQ_1$ used to define $\pi$ and $\tilde{\pi}$ above. 
Since the map $\pi \colon \calJ \to \PP$ is an $\AAA$-weak equivalence 
and the Picard group functor is homotopy invariant, the induced map on Picard groups is an isomorphism $\pi^* \colon \Pic(\PP) \to \Pic(\calJ)$. 
Since $\pi^*(\calO(1)) = \calP_1$ and $\Pic(\calJ)\cong \bbZ$, it follows that $\calP_1$ generates the Picard group of $\calJ$. For future reference, we state this as a lemma.

\begin{lemma} \label{lem:pic_J_is_Z}
The Picard group of $\calJ$ is isomorphic to $\bbZ$ and $\calP_1$ is a generator. 
\end{lemma}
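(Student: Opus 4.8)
The plan is to organize into a proof the facts already recalled in the paragraph preceding the lemma. First I would note that $\pi \colon \calJ \to \PP$ exhibits $\calJ$ as an affine torsor bundle over $\PP$ under a vector bundle; such a bundle is Zariski-locally a trivial affine space (torsors under $\calE$ are classified by $H^1(-,\calE)$, which vanishes on affines where $\calE$ is free), so $\pi$ is an $\AAA$-weak equivalence of smooth $k$-schemes. Since the Picard group defines an $\AAA$-homotopy invariant functor on $\Sm/k$ that is representable in the $\AAA$-homotopy category (for instance $\Pic(-)\cong H^1_{\mathrm{Zar}}(-,\bbG_m)$, which is $\AAA$-invariant on smooth schemes), the induced map $\pi^* \colon \Pic(\PP) \to \Pic(\calJ)$ is an isomorphism.

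Next I would invoke the classical computation $\Pic(\PP)\cong\bbZ$, generated by the tautological line bundle $\calO(1)$. Combined with the previous step this gives $\Pic(\calJ)\cong\bbZ$, generated by $\pi^*\calO(1)$. It then remains only to identify $\pi^*\calO(1)$ with $\calP_1$, and this is immediate from the construction of $\pi$: under the correspondence of \cite[Theorem II.7.1]{Hartshorne} between morphisms to $\PP$ and pairs consisting of an invertible sheaf together with two generating global sections, the morphism $\pi$ was \emph{defined} by choosing the invertible sheaf associated to $\calP_1$ and the sections $s_0,s_1$; hence $\pi^*\calO(1)\cong\calP_1$ by definition. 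This proves the lemma.

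For readers who prefer to avoid homotopy-theoretic input, I would also sketch a self-contained algebraic alternative. By Lemma \ref{lem:coords} the two opens $\pi^{-1}(U_0)=D(x)\cup D(z)$ and $\pi^{-1}(U_1)=D(y)\cup D(w)$ are each isomorphic to $\mathbb{A}^2$, so each has trivial Picard group and unit group $k^\times$; their intersection $\pi^{-1}(U_0\cap U_1)=\pi^{-1}(\bbG_m)$ is a trivial affine bundle over $\bbG_m$, hence has unit group $k^\times\times\bbZ$ and trivial Picard group. Feeding this into the Mayer--Vietoris exact sequence relating units and Picard groups for the cover $\{\pi^{-1}(U_0),\pi^{-1}(U_1)\}$ yields $\Pic(\calJ)\cong\bbZ$, and tracking the transition function identifies $\calP_1$ with a generator.

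I do not anticipate a genuine obstacle here: the argument is soft. The only points demanding care are the correct invocation of homotopy invariance of $\Pic$ (legitimate since $\calJ$ and $\PP$ are smooth) and the bookkeeping that carries a generator of $\Pic(\PP)$ to $\calP_1$ under $\pi^*$; both are routine.
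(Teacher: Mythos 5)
Your main argument is exactly the paper's: $\pi$ is an $\AAA$-weak equivalence since it exhibits $\calJ$ as an affine torsor bundle over $\PP$, homotopy invariance of the Picard functor gives $\pi^*\colon \Pic(\PP)\to\Pic(\calJ)$ an isomorphism, and $\pi^*\calO(1)=\calP_1$ by the very definition of $\pi$. The additional Mayer--Vietoris sketch is a correct, more elementary alternative, but the core proof coincides with the one in the text.
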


Furthermore, $\calQ_1 = - \calP_1$ in $\Pic(\calJ)$ as the following proposition shows. 

\begin{proposition}\label{prop:Q_n_is_inverse_to_P_n}
There is an isomorphism $\mathcal{P}_1\otimes\mathcal{Q}_1 \cong R$.    
\end{proposition}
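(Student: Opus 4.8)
The plan is to exhibit an explicit $R$-module isomorphism $\calP_1 \otimes_R \calQ_1 \xrightarrow{\cong} R$ by writing down generators of the two rank-one projective modules and checking that the induced map is surjective (hence, being a map of invertible modules, an isomorphism). Recall $\calP_1 = \im(p_1)$ where $p_1 = \begin{pmatrix} x & y \\ z & w \end{pmatrix}$, so $\calP_1$ is generated by the columns $\begin{pmatrix} x \\ z \end{pmatrix}$ and $\begin{pmatrix} y \\ w \end{pmatrix}$, and likewise $\calQ_1 = \im(q_1)$ with $q_1 = \begin{pmatrix} x & z \\ y & w \end{pmatrix}$ is generated by $\begin{pmatrix} x \\ y \end{pmatrix}$ and $\begin{pmatrix} z \\ w \end{pmatrix}$. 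The tensor product $\calP_1 \otimes_R \calQ_1$ is then generated by the four elementary tensors of these column vectors.

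First I would define a bilinear pairing $\calP_1 \times \calQ_1 \to R$ that sends a column $v \in \calP_1$ and a column $u \in \calQ_1$ to a bilinear expression in their entries — the natural candidate is $\left( \begin{pmatrix} a \\ b \end{pmatrix}, \begin{pmatrix} c \\ d \end{pmatrix} \right) \mapsto ad + bc$ or a close variant (the "off-diagonal" contraction, chosen so that it respects the defining relations $x + w = 1$, $xw = yz$). One checks this is well-defined on the images of $p_1$ and $q_1$ — i.e. it kills the kernels, equivalently it factors through the idempotent projections — and hence descends to an $R$-linear map $\Phi \colon \calP_1 \otimes_R \calQ_1 \to R$. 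Evaluating $\Phi$ on the four generating tensors produces explicit elements of $R$; for instance the pair $\left(\begin{pmatrix} x \\ z\end{pmatrix}, \begin{pmatrix} z \\ w \end{pmatrix}\right)$ and $\left(\begin{pmatrix} y \\ w\end{pmatrix}, \begin{pmatrix} x \\ y \end{pmatrix}\right)$ should give values whose $R$-linear combination, using $x + w = 1$, equals $1$. This proves $\Phi$ is surjective.

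To conclude that a surjection of invertible $R$-modules is an isomorphism: $\calP_1 \otimes_R \calQ_1$ is projective of rank $1$, so $\ker\Phi$ is a direct summand, and a finitely generated projective module of rank $1$ cannot surject onto $R$ with nonzero kernel — localizing at any prime, a surjection of free rank-one modules over a local ring is an isomorphism, so $\ker\Phi$ is zero locally everywhere, hence zero. Alternatively one can simply invoke that $[\calP_1]+[\calQ_1]$ has rank $2$ minus the rank of $\calP_1\oplus\calQ_1 \cong \calP_1\otimes\calQ_1 \oplus R$... — but the cleanest route is the local argument just given, or equivalently citing the standard fact that for invertible sheaves an epimorphism is an isomorphism.

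The main obstacle I anticipate is purely computational bookkeeping: verifying that the chosen pairing is genuinely well-defined on the images of the idempotents $p_1, q_1$ (not just on all of $R^2$), which amounts to checking compatibility with the two relations defining $R$, and then hitting the unit $1 \in R$ on the nose with a combination of the generator-values. This is elementary but requires care in juggling the identifications $x+w=1$ and $xw = yz$; there is no conceptual difficulty, only the risk of a sign or indexing slip. A convenient sanity check along the way is to base change to the two coordinate charts of Lemma \ref{lem:coords}, where $\calP_1$ and $\calQ_1$ become visibly free and the pairing becomes a visible unit, confirming the global computation.
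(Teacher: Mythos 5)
Your strategy is essentially the paper's: the paper also restricts the componentwise multiplication map $R^2\otimes R^2\to R^2$ to $\calP_1\otimes\calQ_1$, observes that the image is the free rank-one module generated by $\begin{pmatrix}x\\w\end{pmatrix}$ (using $x+w=1$ and $xw=yz$), and concludes by the same local argument that a surjection of rank-one projectives is an isomorphism. (Composing with the coordinate-sum map $R^2\to R$ turns this into exactly the kind of pairing $\calP_1\times\calQ_1\to R$ you describe, since $\begin{pmatrix}x\\w\end{pmatrix}\mapsto x+w=1$.) One concrete correction, though: the pairing you lead with, the off-diagonal contraction $\bigl(\begin{pmatrix}a\\b\end{pmatrix},\begin{pmatrix}c\\d\end{pmatrix}\bigr)\mapsto ad+bc$, does \emph{not} work. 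On the four generating pairs it evaluates to $x(y+z)$, $z(y+z)$, $y(y+z)$, $w(y+z)$, so its image is the proper ideal $(y+z)$ (which lies in the maximal ideal $\jj=(x-1,y,z,w)$). The correct choice is the diagonal contraction $ac+bd$, which sends the four generating pairs to $x,z,y,w$ respectively (e.g.\ $x\cdot x+z\cdot y=x^2+xw=x$), and these generate the unit ideal because $x+w=1$. Also note that well-definedness is a non-issue: $\calP_1$ and $\calQ_1$ are \emph{submodules} of $R^2$ (images of the idempotents, not quotients), so the pairing is simply a restriction and there is no kernel to kill. With the pairing corrected, your argument goes through and coincides with the paper's proof.
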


\begin{proof}
The $R$-module $\mathcal{P}_1 \otimes \mathcal{Q}_1$ is generated by 
\begin{equation*}
    \left\{ \begin{bmatrix}x\\z\end{bmatrix}\otimes\begin{bmatrix}x\\y\end{bmatrix}, \begin{bmatrix}x\\z\end{bmatrix}\otimes\begin{bmatrix}z\\w\end{bmatrix}, \begin{bmatrix}y\\w\end{bmatrix}\otimes\begin{bmatrix}x\\y\end{bmatrix},\begin{bmatrix}y\\w\end{bmatrix}\otimes\begin{bmatrix}z\\w\end{bmatrix} \right\}.
\end{equation*}
Consider the module homomorphism  $m \colon R^2 \otimes R^2 \longrightarrow R^2$ induced by component-wise multiplication  $m \left(\begin{bmatrix}a\\b\end{bmatrix}\otimes\begin{bmatrix}c\\d\end{bmatrix}\right) = \begin{bmatrix}ac\\bd\end{bmatrix}$. 
We restrict $m$ to $\calP_1\otimes \calQ_1$ and observe that the image of $\calP_1\otimes \calQ_1$ under $m$ is the submodule $\left\langle \begin{bmatrix}x \\ w \end{bmatrix}\right\rangle \subseteq R^2$ (use $x+w = 1$). 
This is a free $R$-module of rank 1. 
As $m \colon \calP_1\otimes \calQ_1 \to \left\langle \begin{bmatrix}x \\ w \end{bmatrix}\right\rangle$ is surjective, it follows that it is locally an isomorphism at all maximal ideals $\frakm \subseteq R$. 
Hence the map $m$ itself restricted to $\calP_1\otimes \calQ_1$ is an isomorphism onto its image.  
\end{proof}

We would like to understand the tensor powers of $\mathcal{P}_1$ and $\mathcal{Q}_1$.

\begin{definition} \label{def: pn and qn def as submodules of R2}
Let $\calP_n$ and $\calQ_n$ denote the submodules of $R^2$ generated by 
\begin{align*} 
\mathcal{P}_n & = \left\langle \begin{bmatrix}x^{n}\\z^{n}\end{bmatrix}, \begin{bmatrix}x^{n-1}y\\z^{n-1}w\end{bmatrix}, \ldots, \begin{bmatrix}y^n\\w^n\end{bmatrix}  \right\rangle ~ \text{and} ~
\calQ_n  = \left\langle  \begin{bmatrix}x^{n}\\y^{n}\end{bmatrix}, \begin{bmatrix}x^{n-1}z\\y^{n-1}w\end{bmatrix}, \ldots, \begin{bmatrix}z^n\\w^n\end{bmatrix} \right\rangle. 
\end{align*}
\end{definition}

The following lemma is useful for simplifying proofs. It shows that what we prove about $\calP_n$ by symmetry holds for $\calQ_n$.
\begin{lemma} \label{lem:suffices_to_prove_for_pn}
Define the involution $\tau \colon R \fd R$ by 
\begin{align*}
\tau( x) &= x & \tau(y) &= z \\
\tau(z) &= y & \tau(w) &= w. 
\end{align*}
Pulling back along $\tau$ gives $R$-module isomorphisms $\tau^*\pn \isom \qn $ and $\tau^*\qn \isom \pn $.
\end{lemma}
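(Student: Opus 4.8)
The plan is to exhibit the automorphism $\tau$ explicitly on $R$, check it is well-defined and involutive, and then trace through what pullback along $\tau$ does to the generating sets of $\pn$ and $\qn$ given in definition \ref{def: pn and qn def as submodules of R2}. First I would verify that $\tau$ respects the defining relations of $R$: since $\tau$ fixes $x$ and $w$ and swaps $y$ and $z$, the relation $x + w - 1$ is sent to itself and $xw - yz$ is sent to $xw - zy = xw - yz$, so $\tau$ descends to a ring homomorphism $R \to R$; applying it twice is the identity on generators, so $\tau$ is an involutive automorphism. This also makes $\Spec \tau \colon \calJ \to \calJ$ an isomorphism of schemes.

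Next I would recall that for a ring automorphism $\tau \colon R \to R$ and an $R$-module $M \subseteq R^2$, the pullback $\tau^* M$ is $M$ viewed as an $R$-module via restriction of scalars along $\tau$ (equivalently, $R \otimes_{\tau, R} M$), and that for a submodule of $R^2$ generated by explicit column vectors, $\tau^*$ of it is generated by the column vectors obtained by applying $\tau$ entrywise. So I would simply apply $\tau$ to each generator $\begin{bmatrix} x^{n-i} y^{i} \\ z^{n-i} w^{i}\end{bmatrix}$ of $\pn$: this produces $\begin{bmatrix} x^{n-i} z^{i} \\ y^{n-i} w^{i}\end{bmatrix}$, which is exactly the $i$-th listed generator of $\qn$. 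Hence $\tau^*\pn = \qn$ as submodules of $R^2$ (after transporting the module structure), and symmetrically $\tau^*\qn = \pn$ since $\tau$ is an involution. The one careful point is to phrase this so that "$\tau^*\pn \cong \qn$ as $R$-modules" is a genuine $R$-module isomorphism for the standard $R$-structure on $\qn$ — this is immediate because restriction of scalars along an isomorphism, followed by the identity map of underlying sets, is an isomorphism of modules over the target ring.

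The main obstacle, such as it is, is purely bookkeeping: being precise about which $R$-module structure is meant on each side (the "twisted" one on $\tau^*\pn$ versus the standard one on $\qn$) so that the stated isomorphism is unambiguous, and confirming that entrywise application of $\tau$ genuinely sends a generating set to a generating set — this follows because $\tau$ is a ring isomorphism, so it carries $R$-linear combinations to $R$-linear combinations after the twist. I expect no real difficulty here; the lemma is essentially a symmetry observation, and the proof will be a short paragraph verifying $\tau$ is a well-defined involution on $R$ and then observing its effect on the generators of definition \ref{def: pn and qn def as submodules of R2}.
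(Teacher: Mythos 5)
Your proposal is correct and follows essentially the same route as the paper: the paper's proof also defines the isomorphism $\tau^*\pn \to \qn$ by applying $\tau$ entrywise to the generators of definition \ref{def: pn and qn def as submodules of R2} (sending $\begin{bmatrix}x^{n-i}y^i\\ z^{n-i}w^i\end{bmatrix}$ to $\begin{bmatrix}x^{n-i}z^i\\ y^{n-i}w^i\end{bmatrix}$), checks $R$-linearity for the twisted module structure, and deduces $\tau^*\qn \cong \pn$ from $\tau\circ\tau = \id$ exactly as you do. Your extra verification that $\tau$ preserves the relations $x+w-1$ and $xw-yz$ is a harmless addition the paper leaves implicit.
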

\begin{proof}
To more easily distinguish between them, we give the domain and codomain of $\tau $ different names and write $\tau \colon R \to R'$.
Pulling back the $R'$-module $\pn$, we get the $R$-module $\tau^*\pn$, where the multiplication is defined by $r\cdot_R p = \tau(r)\cdot_{R'} p $. 
The map $\tau^*\pn \fd\qn $ is defined on basis elements by
\begin{equation*}
\begin{bmatrix}x^{n-i}y^i\\z^{n-i}w^i\end{bmatrix} \td \begin{bmatrix}x^{n-i}z^i\\y^{n-i}w^i\end{bmatrix}.
\end{equation*}
It is easily checked that $f$ is bijective and $R$-linear and hence an $R$-module isomorphism. 

To see that $\tau^*\qn \isom \pn $, we pull back the isomorphism $\qn \isom \tau^*\pn$, which we just proved,  along $\tau$ on both sides. 
Since $\tau \circ \tau = \id$, this simplifies to $\tau^*\qn \cong \tau^* \tau^* \pn = \pn$. 
\end{proof}

\begin{proposition}
\label{prop:span_prop}
The $R$-modules $\calP_n$ and $\calQ_n$ are also generated in the following way 
\begin{align*} 
\mathcal{P}_n = \left\langle \begin{bmatrix}x^{n}\\z^{n}\end{bmatrix}, \begin{bmatrix}y^n\\w^n \end{bmatrix} \right\rangle 
~ ~ \text{and} ~ ~ 
\calQ_n  = \left\langle \begin{bmatrix}x^{n}\\y^{n}\end{bmatrix}, \begin{bmatrix}z^n\\w^n \end{bmatrix} \right\rangle. 
\end{align*}
\end{proposition}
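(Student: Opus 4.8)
The plan is to show that the two ``extreme'' generators $\begin{bmatrix}x^n\\z^n\end{bmatrix}$ and $\begin{bmatrix}y^n\\w^n\end{bmatrix}$ already generate all the intermediate generators $\begin{bmatrix}x^{n-i}y^i\\z^{n-i}w^i\end{bmatrix}$ of $\calP_n$ from definition \ref{def: pn and qn def as submodules of R2}; the statement for $\calQ_n$ then follows immediately by applying lemma \ref{lem:suffices_to_prove_for_pn}. The key relation to exploit is that in $R$ we have $x+w=1$ together with $xw=yz$, so $y$ and $z$ behave multiplicatively like ``$\sqrt{xw}$'' in a suitable sense, and in particular the pair $(x,w)$ is a partition of unity: any $R$-module map that becomes an isomorphism after inverting $x$ and after inverting $w$ is an isomorphism. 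So the concrete approach is: let $\calP_n' = \left\langle \begin{bmatrix}x^n\\z^n\end{bmatrix}, \begin{bmatrix}y^n\\w^n\end{bmatrix}\right\rangle \subseteq \calP_n$ and show $\calP_n' = \calP_n$.

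First I would observe the multiplicative identities that let us ``slide'' between generators. Using $xw = yz$ one checks that $x \cdot \begin{bmatrix}x^{n-i}y^i\\z^{n-i}w^i\end{bmatrix}$ and $y\cdot\begin{bmatrix}x^{n-i}y^i\\z^{n-i}w^i\end{bmatrix}$ and similarly multiplication by $w$ or $z$ move one step up or down the list of generators of $\calP_n$: for instance $z\cdot\begin{bmatrix}x^{n-i}y^i\\z^{n-i}w^i\end{bmatrix}$ compared with $x\cdot\begin{bmatrix}x^{n-i-1}y^{i+1}\\z^{n-i-1}w^{i+1}\end{bmatrix}$ — the first components differ, but the second components agree by $xw=yz$, and one repairs the first component using $x+w=1$. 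The cleanest route is probably to localize: after inverting $x$, the module $\calP_n[x^{-1}]$ is free of rank one on $\begin{bmatrix}x^n\\z^n\end{bmatrix}$ (every other generator is $\left(\frac{y}{x}\right)^i$ times it, and $\frac{y}{x}\in R[x^{-1}]$), so $\calP_n'[x^{-1}] = \calP_n[x^{-1}]$; symmetrically, after inverting $w$, the module $\calP_n[w^{-1}]$ is free of rank one on $\begin{bmatrix}y^n\\w^n\end{bmatrix}$ (here $\frac{z}{w}\in R[w^{-1}]$ and $x^{n-i}y^i/(y^nw^{-n})$-type ratios land in $R[w^{-1}]$ using $xw=yz$), so $\calP_n'[w^{-1}] = \calP_n[w^{-1}]$. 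Since $D(x)$ and $D(w)$ cover $\Spec R$ (because $x+w=1$), the inclusion $\calP_n' \hookrightarrow \calP_n$ is an isomorphism on a cover, hence an isomorphism; therefore the two displayed elements generate.

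For $\calQ_n$, I would simply invoke lemma \ref{lem:suffices_to_prove_for_pn}: the automorphism $\tau$ swaps $y$ and $z$ (fixing $x$, $w$) and carries the basis $\begin{bmatrix}x^{n-i}y^i\\z^{n-i}w^i\end{bmatrix}$ of $\calP_n$ to the basis $\begin{bmatrix}x^{n-i}z^i\\y^{n-i}w^i\end{bmatrix}$ of $\calQ_n$, so it carries the two-element generating set $\left\{\begin{bmatrix}x^n\\z^n\end{bmatrix}, \begin{bmatrix}y^n\\w^n\end{bmatrix}\right\}$ of $\calP_n$ to $\left\{\begin{bmatrix}x^n\\y^n\end{bmatrix}, \begin{bmatrix}z^n\\w^n\end{bmatrix}\right\}$, giving the claim for $\calQ_n$.

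The main obstacle is bookkeeping the localization step cleanly — specifically checking that each intermediate generator $\begin{bmatrix}x^{n-i}y^i\\z^{n-i}w^i\end{bmatrix}$ really does lie in $R[x^{-1}]\cdot\begin{bmatrix}x^n\\z^n\end{bmatrix}$ and in $R[w^{-1}]\cdot\begin{bmatrix}y^n\\w^n\end{bmatrix}$, where the nonobvious inclusions require repeatedly substituting $yz=xw$ to rewrite a ratio like $z^{n-i}w^i/z^n = w^i/z^i$ as $(w/z)^i$ and recognizing $w/z = x/y$ after inverting the right element. Once the two localized identifications are in hand, the gluing argument via $x+w=1$ is immediate and formal.
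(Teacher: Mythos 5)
Your proof is correct, but it takes a genuinely different route from the paper's. The paper argues directly: it writes $1=(x+w)^n$, expands by the binomial theorem, and checks that each term $x^{n-d}w^d\begin{bmatrix}x^{n-i}y^i\\z^{n-i}w^i\end{bmatrix}$ is an explicit $R$-multiple of $\begin{bmatrix}x^n\\z^n\end{bmatrix}$ when $i+d\leq n$ and of $\begin{bmatrix}y^n\\w^n\end{bmatrix}$ when $i+d>n$, so the intermediate generators are redundant; the case of $\calQ_n$ is handled by the involution $\tau$, exactly as you do. Your localization argument is sound: the identities $\begin{bmatrix}x^{n-i}y^i\\z^{n-i}w^i\end{bmatrix}=(y/x)^i\begin{bmatrix}x^n\\z^n\end{bmatrix}$ in $\calP_n[x^{-1}]$ and $=(z/w)^{n-i}\begin{bmatrix}y^n\\w^n\end{bmatrix}$ in $\calP_n[w^{-1}]$ both check out using $yz=xw$, and since $x+w=1$ the finitely generated cokernel of $\calP_n'\hookrightarrow\calP_n$ has empty support and hence vanishes. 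What your route buys is conceptual clarity and a preview of the patching description $\calP_n\cong\OP_n$ that the paper only establishes afterward (proposition \ref{prop:local_patching_data}); what the paper's route buys is complete explicitness — it exhibits the actual coefficients of the $R$-linear combination expressing each intermediate generator in terms of the two extreme ones, with no appeal to localization or to the fact that surjectivity can be checked on a Zariski cover.
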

\begin{proof}
We only prove the claim for $\calP_n$, as the proof for $\calQ_n$ is analogous by Lemma \ref{lem:suffices_to_prove_for_pn}. 
Containment in one direction is clear by definition of $\calP_n$.  
Now fix $n$ and pick a number $0\leq i \leq n$. We then have
\begin{equation*}
    \begin{bmatrix}x^{n-i}y^i\\z^{n-i}w^i\end{bmatrix} =
    \left(x+w\right)^n \begin{bmatrix}x^{n-i}y^i\\z^{n-i}w^i\end{bmatrix}
    = \sum_{d=0}^n \binom{n}{ d} x^{n-d}w^d
    \begin{bmatrix}x^{n-i}y^i\\z^{n-i}w^i\end{bmatrix}.
\end{equation*}
For each $d$, one of the following hold:
\begin{equation*}
\begin{array}{ll}
			x^{n-d}w^d \begin{bmatrix}x^{n-i}y^i\\z^{n-i}w^i\end{bmatrix} = x^{n-i-d}y^iw^d\begin{bmatrix}x^{n}\\z^{n}\end{bmatrix} & \mbox{if } i+d \leq n, \\
			& \\
			x^{n-d}w^d \begin{bmatrix}x^{n-i}y^i\\z^{n-i}w^i\end{bmatrix} = x^{n-d}z^{n-i}w^{d+i-n} \begin{bmatrix}y^{n}\\w^{n}\end{bmatrix} & \mbox{if } i+d > n.    
\end{array}
\end{equation*}
This completes the proof. 
\end{proof}

We will now show that the $R$-modules $\calP_n$ and $\calQ_n$ are algebraic line bundles, that is, finitely generated $R$-modules of constant rank 1. We check locally and in the process give a description of $\calP_n$ and $\calQ_n$ with open patching data  \cite[2.5 on page 14]{Kbook}.

\begin{proposition}
\label{prop:local_patching_data}
The $R$-modules $\calP_n$ and $\calQ_n$ are algebraic line bundles over $R$, that is, finitely generated $R$-modules of constant rank 1, and determine invertible sheaves on $\J$. They are described in terms of open patching data by 
\begin{align*}
\calP_n &\cong \left\{\left(f_x, f_w\right) \in R[x^{-1}] \times R[w^{-1}] \st   f_w = \left(\frac{z}{x}\right)^n f_x \right\}, \\
\calQ_n &\cong \left\{\left(f_x, f_w\right) \in R[x^{-1}] \times R[w^{-1}] \st   f_w = \left(\frac{y}{x}\right)^n f_x\right\}.
\end{align*}
\end{proposition}
\begin{proof}
The canonical projections 
$\calP_n[x^{-1}] \to R[x^{-1}]$ and $\calP_n[w^{-1}] \to R[w^{-1}]$ 
are isomorphisms. Since $D(x) \cup D(w) = \calJ$, we conclude $\calP_n$ is locally free of constant rank 1 and is thus an algebraic line bundle \cite[Lemma 2.4]{Kbook}. For $\begin{bmatrix}f \\ g \end{bmatrix} \in \calP_n[x^{-1},w^{-1}]$, one checks that $(z/x)^nf = g$, which determines $\calP_n$ in terms of open patching data. A similar calculation works for $\calQ_n$.
\end{proof}

\begin{remark}\label{rem:pair_of_secs_as_map_to_P1}
Proposition \ref{prop:local_patching_data} shows us how to interpret an element 
\begin{equation*}
s_0 = a_0 \begin{bmatrix} x^n \\ z^n \end{bmatrix} + a_1 \begin{bmatrix} y^n \\ w^n \end{bmatrix}, ~ \text{with} ~ a_0, a_1 \in R
\end{equation*}
that is a global section of the invertible sheaf associated to $\calP_n$. 
Namely, the global section $s_0$ restricted to $D(x)$ is described by $a_0 x^n + a_1 y^n$, while on $D(w)$ the section is $a_0 z^n + a_1 w^n$. On the overlap, the two sections agree when compared using the appropriate transition functions. 

Combining Propositions 
\ref{prop:span_prop} and \ref{prop:local_patching_data} it follows that any tuple $(f_x, f_w) \in R[x^{-1}] \times R[w^{-1}]$ satisfying $f_w = \left(\frac{z}{x}\right)^n f_x$ can be expressed as
\begin{equation*}
\begin{bmatrix}f_x \\ f_w \end{bmatrix} 
= a_0 \begin{bmatrix} x^n \\ z^n \end{bmatrix} + a_1 \begin{bmatrix} y^n \\ w^n \end{bmatrix}, ~ \text{for some} ~ a_0, a_1 \in R. 
\end{equation*} 
\end{remark}

The algebraic line bundles $\calP_n$ and $\calQ_n$ may also be described as the image of an idempotent $(2\times 2)$-matrix of rank 1. 
For $n\geq 1$, let $A=\sum_{i=0}^{n-1} \binom{2n-1}{i}x^{n-1-i}w^i$ and $B=\sum_{i=n}^{2n-1} \binom{2n-1}{i}x^{2n-1-i}w^{i-n}$. Then $x^nA + w^n B= (x+w)^{2n-1}=1$.
Define
\begin{equation}
    \label{eq:Mn}
    M_n = \begin{pmatrix}x^n A  & y^n B \\ z^n A & w^n B\end{pmatrix} \text{ and }  
    M_n' = \begin{pmatrix}x^n A  & z^n B \\ y^n A & w^n B\end{pmatrix}.
\end{equation}

\begin{proposition}
For every $n\geq 1$, the matrices $M_n$ and $M_n'$ are idempotent of rank 1. When viewed as linear transformations from $R^2$ to $R^2$, the image of $M_n$ and $M_n'$ is $\calP_n$ and $\calQ_n$ respectively. 
\end{proposition}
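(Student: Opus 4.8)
The statement has two parts for each of $M_n$ and $M_n'$: that the matrix is idempotent of rank $1$, and that its image is $\calP_n$ (resp. $\calQ_n$). By the symmetry established in lemma \ref{lem:suffices_to_prove_for_pn}, it suffices to handle $M_n$; applying $\tau$ and noting that $\tau(M_n) = M_n'$ (since $\tau$ swaps $y \leftrightarrow z$ and fixes $x, w$, hence fixes $A$ and $B$, which lie in $k[x,w]$) will then give the claim for $M_n'$ and $\calQ_n$.

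\emph{Idempotency.} I would compute $M_n^2$ directly. Writing $M_n = \left(\begin{smallmatrix} x^n A & y^n B \\ z^n A & w^n B\end{smallmatrix}\right)$, the $(1,1)$-entry of $M_n^2$ is $x^n A(x^n A) + y^n B(z^n A) = A(x^{2n}A + y^n z^n B)$. Using $yz = xw$ in $R$ we get $y^n z^n = x^n w^n$, so this is $A(x^{2n}A + x^n w^n B) = A x^n(x^n A + w^n B) = A x^n \cdot 1 = x^n A$, recovering the $(1,1)$-entry of $M_n$. The other three entries are analogous, each time using $y^nz^n = x^nw^n$ and the identity $x^nA + w^nB = 1$. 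So $M_n^2 = M_n$. For the rank: the trace is $x^n A + w^n B = 1$ and the determinant is $x^n A \cdot w^n B - y^n B \cdot z^n A = AB(x^n w^n - y^n z^n) = AB(x^nw^n - x^nw^n) = 0$. An idempotent matrix over a commutative ring with trace $1$ and determinant $0$ has rank $1$ everywhere (localizing at any prime, the characteristic polynomial is $t(t-1)$, so the image is locally free of rank $1$); alternatively, $M_n$ being idempotent, $R^2 = \im(M_n) \oplus \ker(M_n)$, and taking ranks at each prime with trace and determinant as computed forces $\im(M_n)$ to have rank $1$.

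\emph{Image equals $\calP_n$.} The columns of $M_n$ are $\left(\begin{smallmatrix} x^n A \\ z^n A\end{smallmatrix}\right) = A\left(\begin{smallmatrix} x^n \\ z^n\end{smallmatrix}\right)$ and $\left(\begin{smallmatrix} y^n B \\ w^n B\end{smallmatrix}\right) = B\left(\begin{smallmatrix} y^n \\ w^n\end{smallmatrix}\right)$, so $\im(M_n) \subseteq \langle \left(\begin{smallmatrix} x^n \\ z^n\end{smallmatrix}\right), \left(\begin{smallmatrix} y^n \\ w^n\end{smallmatrix}\right)\rangle = \calP_n$ by proposition \ref{prop:span_prop}. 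For the reverse inclusion, since $M_n$ is idempotent, $\im(M_n)$ is exactly the set of fixed vectors of $M_n$, so it suffices to check $M_n \left(\begin{smallmatrix} x^n \\ z^n\end{smallmatrix}\right) = \left(\begin{smallmatrix} x^n \\ z^n\end{smallmatrix}\right)$ and $M_n \left(\begin{smallmatrix} y^n \\ w^n\end{smallmatrix}\right) = \left(\begin{smallmatrix} y^n \\ w^n\end{smallmatrix}\right)$. The first: top entry is $x^n A x^n + y^n B z^n = x^{2n}A + x^n w^n B = x^n(x^nA + w^nB) = x^n$, using $y^nz^n = x^nw^n$ again; bottom entry is $z^nAx^n + w^nBz^n = z^n(x^nA + w^nB) = z^n$. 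The second is symmetric. Hence $\calP_n \subseteq \im(M_n)$, and equality follows.

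\emph{Main obstacle.} There is no real obstacle here — everything reduces to the two ring identities $y^nz^n = x^nw^n$ (from $xw = yz$) and $x^nA + w^nB = (x+w)^{2n-1} = 1$, plus the elementary fact that an idempotent matrix's image is its fixed-point set. The only point requiring a little care is the rank-$1$ claim: one should either cite that an idempotent with trace $1$, determinant $0$ over a commutative ring is locally rank $1$ (localize and diagonalize, or compute the characteristic polynomial $t^2 - t = t(t-1)$ which has distinct idempotent-giving factors), or deduce it from proposition \ref{prop:local_patching_data}, which already establishes that $\calP_n$ is an algebraic line bundle of rank $1$ — and we have just shown $\im(M_n) = \calP_n$.
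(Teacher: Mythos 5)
Your proof is correct and follows essentially the same route as the paper's: both verify idempotency and the inclusion $\calP_n\subseteq\im(M_n)$ by the identical computation $x^n\cdot(\text{col}_1)+z^n\cdot(\text{col}_2)=\left[\begin{smallmatrix}x^n\\z^n\end{smallmatrix}\right]$ (and similarly for the second generator), using $x^nA+w^nB=1$ and $y^nz^n=x^nw^n$. Your additions — the explicit trace/determinant argument for rank $1$ and the use of the involution $\tau$ to transfer the result to $M_n'$ — are correct refinements of points the paper leaves as ``straightforward'' or ``similar.''
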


\begin{proof}
It is straightforward to verify that $M_n$ is idempotent using the relation $1=x^nA + w^n B$ and that $\im(M_n) \subset \calP_n $. Note that 
 \begin{equation*}
    x^n\begin{bmatrix} x^n A\\ z^n A\end{bmatrix} + z^n\begin{bmatrix} y^n B\\ w^nB \end{bmatrix} = (x^nA + w^nB) \begin{bmatrix} x^n \\ z^n \end{bmatrix} =  \begin{bmatrix} x^n \\ z^n \end{bmatrix}
 \end{equation*}
 and similarly, 
 \begin{equation*}
    y^n\begin{bmatrix} x^n A\\ z^n A\end{bmatrix} + w^n\begin{bmatrix} y^n B\\ w^nB \end{bmatrix} = (x^nA + w^nB) \begin{bmatrix} y^n \\ w^n \end{bmatrix} =  \begin{bmatrix} y^n \\ w^n \end{bmatrix}.
 \end{equation*}
So $\calP_n \subset \im(M_n)$, and the image is equal to $\calP_n$. 
The argument for $M_n'$ and $\calQ_n$ is similar. 
\end{proof}

\begin{proposition}\label{prop:tensor_of_P1_is_Pn}
For every $n\ge 1$, the morphisms $\calP_n \otimes \calP_1 \to \calP_{n+1}$ and $\calP_1^{\otimes n} \to \calP_n$ obtained from component-wise multiplication are isomorphisms. 
A similar statement holds for $\calQ_n$. 
\end{proposition}
\begin{proof}
Consider the $R$-module map $m \colon R^2 \otimes R^2 \to R^2$ induced by component-wise multiplication. 
By the description of the generators of the modules $\calP_n$, it is clear that $m$ restricts to a map $m \colon \calP_n \otimes \calP_1 \to \calP_{n+1}$ and this map is surjective. 
As both $\calP_n\otimes \calP_1$ and $\calP_{n+1}$ are algebraic line bundles, the map $m$ is surjective locally at every maximal ideal $\frakm\subseteq R$ and hence an isomorphism. 
Thus $m \colon \calP_n \otimes \calP_1 \to \calP_{n+1}$ is itself an isomorphism by \cite[Proposition 3.9]{AtiyahMacdonald}. 
This proves the first claim. 
The second claim now follows by induction. 
\end{proof}

We now have a complete description of the isomorphism $\bbZ \cong \Pic(\calJ)$.

\begin{proposition}\label{prop:P_n_and_Q_n_generate_Pic(J)}
Under the isomorphism $\Pic(\calJ) \cong \Pic(\PP) \cong \bbZ$ arising from the $\AAA$-homotopy equivalence $\pi \colon  \J \to \PP$, the modules $\calP_n$ and $\calQ_n$ correspond to $n$ and $-n$, respectively, while the trivial invertible sheaf $\calO$ corresponds to $0$. 
\end{proposition}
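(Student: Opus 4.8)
The plan is to combine the algebraic results already established with the homotopy invariance of the Picard group. By Lemma \ref{lem:pic_J_is_Z} we know $\Pic(\calJ) \cong \bbZ$ with generator $\calP_1$, and the isomorphism $\pi^* \colon \Pic(\PP) \to \Pic(\calJ)$ sends $\calO(1)$ to $\calP_1$; so the identification $\Pic(\calJ) \cong \bbZ$ is the one in which $\calP_1 \mapsto 1$. It then suffices to track where $\calP_n$, $\calQ_n$, and $\calO$ land under this identification.

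First I would handle $\calP_n$: by Proposition \ref{prop:tensor_of_P1_is_Pn} there is an isomorphism $\calP_1^{\otimes n} \cong \calP_n$, so in $\Pic(\calJ)$ we have $[\calP_n] = n[\calP_1]$, which corresponds to the integer $n$. Next, for $\calO$: since $\calO$ is the unit of the tensor product it is the identity element of $\Pic(\calJ)$, hence corresponds to $0$; strictly speaking one should note $\calO = \calP_0$ in the notation of Definition \ref{def: pn and qn def as submodules of R2} (the empty-tensor-power case $n=0$), which is consistent. Finally, for $\calQ_n$: Proposition \ref{prop:tensor_of_P1_is_Pn} gives $\calQ_1^{\otimes n} \cong \calQ_n$, so $[\calQ_n] = n[\calQ_1]$, and Proposition \ref{prop:Q_n_is_inverse_to_P_n} gives $\calP_1 \otimes \calQ_1 \cong R = \calO$, i.e. $[\calQ_1] = -[\calP_1]$ in $\Pic(\calJ)$. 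Therefore $[\calQ_n] = -n[\calP_1]$, which corresponds to the integer $-n$.

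Assembling these three computations yields the statement. There is essentially no obstacle here: every ingredient — $\Pic(\calJ) \cong \bbZ$ generated by $\calP_1$ (Lemma \ref{lem:pic_J_is_Z}), $\calP_1 \otimes \calQ_1 \cong R$ (Proposition \ref{prop:Q_n_is_inverse_to_P_n}), and the identifications $\calP_1^{\otimes n} \cong \calP_n$, $\calQ_1^{\otimes n} \cong \calQ_n$ (Proposition \ref{prop:tensor_of_P1_is_Pn}) — has already been proved in the excerpt. The only point requiring a word of care is pinning down which integer corresponds to the generator, i.e. that the normalization is $\calP_1 \leftrightarrow 1$ rather than $\calP_1 \leftrightarrow -1$; this is fixed by declaring the isomorphism $\Pic(\calJ) \cong \bbZ$ to be the composite $\Pic(\PP) \xrightarrow{\deg} \bbZ$ precomposed with $(\pi^*)^{-1}$, under which $\calO_{\PP}(1) \mapsto \calP_1 \mapsto 1$. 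So the proof is a short bookkeeping argument, and I would write it as three one-line verifications of the three claims (for $\calP_n$, for $\calQ_n$, and for $\calO$) referencing the propositions above.
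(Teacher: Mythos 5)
Your proposal is correct and follows exactly the paper's own argument: cite Lemma \ref{lem:pic_J_is_Z} for $\Pic(\calJ)\cong\bbZ$ generated by $\calP_1$, Proposition \ref{prop:Q_n_is_inverse_to_P_n} for $[\calQ_1]=-[\calP_1]$, and Proposition \ref{prop:tensor_of_P1_is_Pn} for $[\calP_n]=n[\calP_1]$ and $[\calQ_n]=n[\calQ_1]$. Your extra remark pinning down the normalization $\calP_1\leftrightarrow 1$ via $\pi^*(\calO(1))=\calP_1$ is a welcome clarification that the paper leaves implicit.
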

\begin{proof}
By Lemma \ref{lem:pic_J_is_Z}, $\Pic (\J) = \bbZ$, and $\calP_1$ generates the Picard group. 
By Proposition \ref{prop:Q_n_is_inverse_to_P_n}, the inverse of $\calP_1$ is$\calQ_1$. 
By Proposition \ref{prop:tensor_of_P1_is_Pn}, the modules $\calP_n$ and $\calQ_n$ correspond to $n$ and $-n$ in the Picard group.
\end{proof}


\subsection{Pointed morphisms \texorpdfstring{$\PP \to \PP$}{P1 to P1} and \texorpdfstring{$\J \to \PP$}{J to P1}} \label{sec:pointed_maps_p1_to_p1_and_j_to_p1}

We will now study morphisms to $\PP$ in more detail. 
By \cite[Theorem II.7.1]{Hartshorne}, for a smooth $k$-scheme $X$, the data needed to give a morphism $f \colon  X \to \PP$ are an invertible sheaf $\calL$ over $X$ and the choice of two global sections $s_0, s_1 \in \Gamma(X, \calL)$ that generate the invertible sheaf $\calL$. 
That is, at every point $p \in X$, the stalks of the sections $(s_0)_p$ and $(s_1)_p$ generate the local ring $\calL_p$. 
We then write $[s_0, s_1]$ for the morphism $X \to \PP$ given by the data above, where we usually omit the invertible sheaf $\calL$ from the notation. 
We note that throughout the paper we use the terms \emph{morphism} and \emph{map} interchangeably. 

The scheme $\PP$ is pointed at $\infty = [1:0]$.  
A pointed morphism $f \colon  \PP \to \PP$ by definition is a morphism satisfying $f(\infty) = \infty$. 
A pointed morphism $f \colon \PP \to \PP$ given by the invertible sheaf $\calO(n)$ on $\PP$ with two generating sections $\sigma_0, \sigma_1 \in k[x_0,x_1]_{(n)}$ has the following special form by work of Cazanave \cite{Caz}.  

\begin{proposition}\label{prop:Caz_thm_pointed_P1_endo} \cite[Proposition 2.3]{Caz}  
A pointed $k$-scheme morphism $f \colon \bbP^1 \to \bbP^1$ corresponds uniquely to the data of a natural number $n$ and a choice of two polynomials, $A = \sum_{i=0}^n a_iX^i$ and $B = \sum_{i=0}^{n-1}b_iX^i$ in $k[X]$ for which $a_n = 1$ and the resultant $\res(A,B)$ is non-zero. 
The integer $n$ is called the degree of $f$ and is denoted $\deg(f)$; the scalar
$\res(f) = \res(A, B) \in  k^\times$ is called the resultant of $f$. 
We recall that $\res(A,B) =\det \left(\Syl (A,B) \right) \in k$, 
where $\Syl(A,B)$ is the Sylvester matrix of the pair of polynomials $(A,B)$ which we recall in Definition \ref{def:sylvesterAndRes}.
\end{proposition}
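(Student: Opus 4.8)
The plan is to unwind the standard description of morphisms to $\PP$ and then read off the classification. First I would invoke \cite[Theorem II.7.1]{Hartshorne}: a morphism $f\colon \PP\to\PP$ is the same datum as an invertible sheaf $\calL$ on $\PP$ together with two global sections $s_0,s_1\in\Gamma(\PP,\calL)$ that generate $\calL$, where two triples $(\calL,s_0,s_1)$ and $(\calL',s_0',s_1')$ define the same morphism exactly when some isomorphism $\calL\xrightarrow{\cong}\calL'$ carries $s_i$ to $s_i'$. Since a generating pair of sections makes $\calL$ globally generated and $\Pic(\PP)\cong\bbZ$ is generated by $\calO(1)$, necessarily $\calL\cong\calO(n)$ for some $n\ge 0$ (for $n<0$ there is no nonzero global section). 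Using $\Gamma(\PP,\calO(n))=k[x_0,x_1]_n$ and $\operatorname{Aut}(\calO(n))=\Gamma(\PP,\calO^\times)=k^\times$ acting by scalars, the datum of $f$ becomes: an integer $n\ge 0$ together with a pair $(\sigma_0,\sigma_1)$ of degree-$n$ forms with no common zero on $\PP$, taken up to simultaneous scaling by $k^\times$.

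Next I would impose pointedness and dehomogenize. In the chart $x_1\ne 0$ with affine coordinate $X=x_0/x_1$ the morphism is $[x_0:x_1]\mapsto[A(X):B(X)]$ with $A(X)=\sigma_0(X,1)$ and $B(X)=\sigma_1(X,1)$ of degree $\le n$, while at $[1:0]$ it sends $\infty$ to $[\sigma_0(1,0):\sigma_1(1,0)]$; note that the coefficient of $X^n$ in $A$ (resp. $B$) is $\sigma_0(1,0)$ (resp. $\sigma_1(1,0)$). Hence $f(\infty)=\infty$ is equivalent to $\sigma_1(1,0)=0$ and $\sigma_0(1,0)\ne 0$, i.e. to $\deg B\le n-1$ and $\deg A=n$. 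The unique scaling of the pair that makes $A$ monic sets $a_n=1$ and at the same time eliminates the $k^\times$-ambiguity, so the data $(n,A,B)$ with $a_n=1$ is genuinely unique. Finally, $\sigma_0,\sigma_1$ generate $\calO(n)$ iff $V(\sigma_0)\cap V(\sigma_1)=\emptyset$ in $\PP$; the point $[1:0]$ is never a common zero since $\sigma_0(1,0)=a_n=1$, and on the chart $x_1\ne 0$ the common zeros are precisely the common roots of $A$ and $B$, so generation is equivalent to $\gcd(A,B)=1$ in $k[X]$. Because $A$ is monic of degree exactly $n$, the Sylvester resultant $\res_{n,n}(A,B)$ — which a priori vanishes either when $A$ and $B$ share a factor or when both formal leading coefficients vanish — vanishes if and only if $\gcd(A,B)\ne 1$, the ``common zero at infinity'' alternative being excluded by $a_n=1\ne 0$. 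Thus pointed morphisms $f$ with associated integer $n$ correspond bijectively to pairs $(A,B)$ with $a_n=1$, $\deg B\le n-1$, and $\res_{n,n}(A,B)\in k^\times$.

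For the converse I would start from such a triple $(n,A,B)$, homogenize to $\sigma_0=x_1^n A(x_0/x_1)$ and $\sigma_1=x_1^n B(x_0/x_1)$ (both degree-$n$ forms), check via the resultant computation together with $\sigma_0(1,0)=1$ that they have no common zero, hence obtain a morphism $f\colon\PP\to\PP$, and observe it is pointed since $\sigma_1(1,0)=b_n=0$ while $\sigma_0(1,0)=1$; this assignment is inverse to the one above, and $n=\deg(f)=\deg f^*\calO(1)$ is intrinsically recovered. The argument is otherwise classical projective geometry; the only steps needing genuine care are fixing the chart and basepoint conventions consistently and the last identification — making sure that $A$ being monic of the exact degree $n$ is exactly what forces $\res_{n,n}(A,B)$ to detect coprimality with no spurious contribution ``at infinity'', so that non-vanishing of $\res_{n,n}$ is equivalent to base-point-freeness of $(\sigma_0,\sigma_1)$. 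That identification is the main (mild) obstacle.
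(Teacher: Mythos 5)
Your argument is correct and is essentially the standard one: the paper itself gives no proof (the statement is quoted from \cite[Proposition 2.3]{Caz}), and the homogenization/dehomogenization dictionary you use is exactly what the paper sketches in the paragraph immediately following the proposition, with the resultant condition encoding base-point-freeness and $a_n=1$ killing the $k^\times$-ambiguity. The one point you rightly flag — that $a_n=1$ rules out the ``common zero at infinity'' degeneration of $\res_{n,n}$ so that nonvanishing is exactly coprimality of $A$ and $B$ over $\bar k$ — is handled correctly, so there is nothing to add.
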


\begin{remark}\label{rem:from_fPP_to_sections}
One easily translates from the morphism $f \colon \PP \to \PP$ given by $n$, $A$, and $B$ in Proposition \ref{prop:Caz_thm_pointed_P1_endo} to the morphism given by the invertible sheaf $\calO(n)$ and the choice of global sections $\sigma_0 = \sum_{i=0}^n a_ix_0^ix_1^{n-i}$ and $\sigma_1 = \sum_{i=0}^n b_ix_0^ix_1^{n-i}$ where we understand $b_n = 0$. 
The resultant condition guarantees that these global sections generate $\calO(n)$. 
The condition $a_n=1$ is a normalizing condition to give a bijective correspondence between morphisms and the data $n$, $A$, and $B$. 
We will find it more convenient to use the data $[\sigma_0, \sigma_1] \colon \PP \to \PP$ and $\calO(n)$ to describe a pointed map in what follows.
\end{remark}

We will now explain in detail how we can use this perspective to describe morphisms via line bundles and generating sections in the special case $\J \to \PP$. 

\begin{proposition}\label{prop:compose_caz_with_pi}
Consider a pointed map $[\sigma_0, \sigma_1] \colon \PP \to \PP$ with invertible sheaf $\calO(n)$, $\sigma_0 = \sum_{i=0}^n a_ix_0^ix_1^{n-i}$ and $\sigma_1 = \sum_{i=0}^n b_ix_0^ix_1^{n-i}$. 
The composition $[\sigma_0, \sigma_1] \circ \pi$ is the map $[s_0, s_1] \colon \calJ \to \PP$ with invertible sheaf $\calP_n$ and global sections 
\begin{align}
    \label{eq:caz_section}     
    s_0 = \sum_{i=0}^n a_i \begin{bmatrix}x^iy^{n-i} \\ z^iw^{n-i} \end{bmatrix} ~ \text{and} ~ 
    s_1 & = \sum_{i=0}^n b_i \begin{bmatrix}x^ny^{n-i} \\ z^iw^{n-i} \end{bmatrix}.
\end{align}
\end{proposition}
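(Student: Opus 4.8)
The plan is to deduce the formula from the functoriality of the correspondence \cite[Theorem II.7.1]{Hartshorne} between morphisms to $\PP$ and pairs (invertible sheaf, two generating global sections). Recall that if a morphism $f \colon X \to \PP$ is written as $[t_0,t_1]$ for an invertible sheaf $\calL$ on $X$ and generating sections $t_0,t_1 \in \Gamma(X,\calL)$, then there is an isomorphism $f^*\calO(1) \cong \calL$ under which $f^*x_0 \mapsto t_0$ and $f^*x_1 \mapsto t_1$, where $x_0,x_1$ are the homogeneous coordinates of $\PP$; and for a morphism $g \colon Y \to X$ the composite $f\circ g$ is $[g^*t_0, g^*t_1]$ with respect to the invertible sheaf $g^*\calL$. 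Applying this with $f = [\sigma_0,\sigma_1]$, sheaf $\calO(n)$, and $g = \pi$, the composite $[\sigma_0,\sigma_1]\circ\pi$ is the morphism attached to $\pi^*\calO(n)$ with the two generating sections $\pi^*\sigma_0$ and $\pi^*\sigma_1$. (Pointedness of the composite is automatic since both $\pi$ and $[\sigma_0,\sigma_1]$ are pointed.)

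First I would pin down the invertible sheaf. By definition $\pi = [s_0,s_1]$ uses the invertible sheaf $\calP_1$ with sections $s_0 = \begin{bmatrix}x\\z\end{bmatrix}$, $s_1 = \begin{bmatrix}y\\w\end{bmatrix}$, so the recalled correspondence gives $\pi^*\calO(1)\cong\calP_1$ with $\pi^*x_0 \mapsto \begin{bmatrix}x\\z\end{bmatrix}$ and $\pi^*x_1 \mapsto \begin{bmatrix}y\\w\end{bmatrix}$. Since $\calO(n) \cong \calO(1)^{\otimes n}$ we obtain $\pi^*\calO(n) \cong \calP_1^{\otimes n}$, and by proposition \ref{prop:tensor_of_P1_is_Pn} the component-wise multiplication map $\mu$ identifies $\calP_1^{\otimes n}$ with $\calP_n$. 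This produces the invertible sheaf $\calP_n$ claimed in the statement.

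Next I would trace the sections through these identifications. Pullback of sections is compatible with the multiplication maps $\Gamma(\calO(1))^{\otimes n} \to \Gamma(\calO(n))$, so from $\sigma_0 = \sum_{i=0}^n a_i x_0^i x_1^{n-i}$ we get $\pi^*\sigma_0 = \sum_{i=0}^n a_i (\pi^*x_0)^i(\pi^*x_1)^{n-i}$ in $\Gamma(\calJ, \calP_1^{\otimes n})$. Under $\mu$ the class $(\pi^*x_0)^i(\pi^*x_1)^{n-i}$ goes to the component-wise product of $i$ copies of $\begin{bmatrix}x\\z\end{bmatrix}$ with $n-i$ copies of $\begin{bmatrix}y\\w\end{bmatrix}$, that is, to $\begin{bmatrix}x^i y^{n-i}\\ z^i w^{n-i}\end{bmatrix}$; summing gives precisely the expression for $s_0$ in \eqref{eq:caz_section}, and the same computation for $\sigma_1$ gives $s_1$. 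That $s_0,s_1$ actually generate $\calP_n$ is automatic, since generation of an invertible sheaf is a local property preserved by pullback and $\sigma_0,\sigma_1$ generate $\calO(n)$ thanks to the resultant condition of proposition \ref{prop:Caz_thm_pointed_P1_endo}.

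I expect no genuine obstacle; the only thing demanding care is the bookkeeping of the two identifications $\pi^*x_j \leftrightarrow s_j$ and $\calP_1^{\otimes n}\cong\calP_n$, i.e. keeping straight which explicit column vector represents each pulled-back monomial. As a sanity check --- and as an alternative route that avoids invoking functoriality abstractly --- one can compute directly in the two coordinate charts of lemma \ref{lem:coords}, using the patching description $\calP_n \cong \OP_n$ of proposition \ref{prop:local_patching_data}: on $D(x)$ the section $s_0$ reads $\sum_i a_i x^i y^{n-i}$ and on $D(w)$ it reads $\sum_i a_i z^i w^{n-i}$, these agree under the transition function $(z/x)^n$, and the local computation re-proves that the pair is generating.
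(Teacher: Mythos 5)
Your proposal is correct and is exactly the "straightforward calculation" the paper's one-line proof alludes to: pull back the sheaf and sections along $\pi$, identify $\pi^*\calO(n)\cong\calP_1^{\otimes n}\cong\calP_n$ via the component-wise multiplication of proposition \ref{prop:tensor_of_P1_is_Pn}, and note that generation is preserved under pullback because the resultant condition makes $\sigma_0,\sigma_1$ generate $\calO(n)$. You have simply written out the details (plus a useful chart-by-chart sanity check) that the paper leaves implicit.
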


\begin{proof}
This is a straightforward calculation. 
The condition on the resultant ensures that the sections $s_0$ and $s_1$ generate $\calP_n$.  
\end{proof}

\begin{remark}
We note that the difference between a general map $[s_0, s_1] \colon \calJ \to \PP$ with invertible sheaf $\calP_n$ and a map $\calJ \to \PP$ which factors as $f \circ \pi$ with $f\colon \PP \to \PP$ is that the coefficients $a_i$ and $b_i$ in the expressions of the sections in Equation \eqref{eq:caz_section}
are in the field $k$ when the map factors, but in general the coefficients are in $R$. 
\end{remark}


We now look at the data needed to describe a general morphism $\J \to \PP$ and also see what condition pointedness imposes. 

\begin{construction}
\label{con:J_to_P_map} 
A morphism  $f \colon \calJ \to \bbP^1$ is determined by the following data: 
an invertible sheaf $\calL$ on $\J$ and the choice of two global sections $s_0, s_1 \in \Gamma(\J, \calL)$ that generate $\calL$ \cite[Theorem II.7.1]{Hartshorne}. 
Since $\Pic(\J) \cong \bbZ$, the invertible sheaf $\calL$ may be chosen to be either $\calP_n$, $\calQ_n$, or $\calO$. 
We call the integer corresponding to the class of $\calL$ in $\Pic(\J)\cong \bbZ$ the degree of $f$.  

We will now make the assignment $(\calL,s_0,s_1) \mapsto f$ explicit. 
We will study only the case of $\calP_n$, as $\calQ_n$ is handled in the same way by Proposition \ref{prop:span_prop}. 
The case of $\calO$ is discussed later in Section \ref{sec:degree_0_maps}. 

For $\calL = \calP_n$, two generating sections $s_0, s_1 \in \Gamma(\J, \calP_n)$ may be chosen to be of the form
\begin{equation*}
s_0 = a_0\begin{bmatrix}x^n \\ z^n \end{bmatrix}+a_1\begin{bmatrix}y^n \\ w^n\end{bmatrix}\qquad 
s_1 = b_0\begin{bmatrix}x^n \\ z^n \end{bmatrix}+b_1\begin{bmatrix}y^n \\ w^n\end{bmatrix}.
\end{equation*}
Define $D(s_i) = \{ p \in \J \st (s_i)_p \not\in \frakm_p(\calP_n)_p \}$. The map $[s_0, s_1]$ is defined on the open set $D(s_i)$ to map into $U_i = \left\{ [x_0, x_1] \st x_i \neq 0 \right\}$. Here $U_0 \cong \Spec{k[y_1]}$ and $U_1 \cong \Spec{k[y_0]}$, where $y_0 = x_0/x_1$ and $y_1 = x_1/x_0$. 
The map $D(s_i) \to U_i$ is given by the corresponding map of rings $k[y_j] \to \calP_n[s_i^{-1}]$ determined by $y_j \mapsto s_j/s_i$. 
This requires some explanation due to the description of the sheaf $\calP_n$. 
Proposition \ref{prop:local_patching_data} shows that the components of each section $s_i$ describe the section on the open sets $D(x)$ and $D(w)$. 
Hence there are four cases to consider to get a description of the map in concrete terms of affine open sets.
\begin{enumerate}
\item $D(x) \cap D(s_0)$: Here $s_0$ is described by $a_0 x^n + a_1 y^n$ in the ring $R[x^{-1}]$ and $s_1$ is given by $b_0 x^n + b_1 y^n$ in the ring $R[x^{-1}]$. Hence on $D(s_0)$ the corresponding ring map $k[y_1] \to R[x^{-1}, (a_0x^n + a_1y^n)^{-1}]$ is given by $y_1 \mapsto \frac{b_0 x^n + b_1 y^n}{a_0 x^n + a_1y^n}$. 
\item $D(x) \cap D(s_1)$: Here $s_0$ is described by $a_0 x^n + a_1 y^n$ in the ring $R[x^{-1}]$ and $s_1$ is given by $b_0 x^n + b_1 y^n$ in the ring $R[x^{-1}]$. Hence on $D(s_1)$ the corresponding ring map $k[y_0] \to R[x^{-1}, (b_0x^n + b_1y^n)^{-1}]$ is given by $y_0 \mapsto \frac{a_0 x^n + a_1y^n}{b_0 x^n + b_1 y^n}$. 
\item $D(w) \cap D(s_0)$:  Here $s_0$ is described by $a_0 z^n + a_1 w^n$ in the ring $R[w^{-1}]$ and $s_1$ is given by $b_0 z^n + b_1 w^n$ in the ring $R[w^{-1}]$. Hence on $D(s_0)$ the corresponding ring map $k[y_1] \to R[w^{-1}, (a_0z^n + a_1w^n)^{-1}]$ is given by $y_1 \mapsto \frac{b_0 z^n + b_1 w^n}{a_0 z^n + a_1w^n}$. 
\item $D(w) \cap D(s_1)$: Here $s_0$ is described by $a_0 z^n + a_1 w^n$ in the ring $R[w^{-1}]$ and $s_1$ is given by $b_0 z^n + b_1 w^n$ in the ring $R[w^{-1}]$. Hence on $D(s_1)$ the corresponding ring map $k[y_0] \to R[w^{-1}, (b_0z^n + b_1w^n)^{-1}]$ is given by $y_0 \mapsto \frac{a_0 z^n + a_1w^n}{b_0 z^n + b_1 w^n}$. 
\end{enumerate}

This information can be consolidated into the two maps $D(x) \to \PP$ and $D(w) \to \PP$ given in terms of the pair of sections $[a_0 x^n + a_1 y^n , b_0 x^n + b_1 y^n]$ and $[a_0 z^n + a_1 w^n , b_0 z^n + b_1 w^n]$ respectively. Written in this form, we see that a map $\J \to \PP$ given by the invertible sheaf $\calP_n$ with two generating sections $s_0, s_1$ should be interpreted as giving a map to $\PP$ on the open sets $D(x)$ and $D(w)$ according to the first component of the sections $s_0, s_1$ on $D(x)$ and according to the second component of the sections $s_0, s_1$ on $D(w)$.
\end{construction}

\begin{remark}
Recall that $\J$ is pointed at $\jj = (x-1,y,z,w)$ and $\PP$ is pointed at $\infty = [1:0]$. A map $f \colon  \J \to \PP$ is pointed if $f(\jj)=\infty$. If $f = [s_0, s_1]$ with line bundle $\calL$ and generating sections $s_0$, $s_1$, pointedness can be verified by checking that the stalk $s_1(\jj)$ satisfies $s_1(\jj)=0$ in the local ring $\calL_\jj$. For us, it suffices to work on $D(x)$ where our line bundles are trivial, and verify that modulo $\jj$ the section $s_1$ vanishes. 
\end{remark}

We give a concrete criterion for checking pointedness of a map $f \colon  \J \to \PP$ with line bundle $\calP_n$. 
The case of $\calQ_n$ is similar.

\begin{proposition}
\label{prop:pointed_map}
A map $[s_0, s_1]:\J \to \PP$ with invertible sheaf $\calP_n$ and generating sections
\begin{equation*}
s_0 = a_0\begin{bmatrix}x^n \\ z^n \end{bmatrix}+a_1\begin{bmatrix}y^n \\ w^n\end{bmatrix},\qquad 
s_1 = b_0\begin{bmatrix}x^n \\ z^n \end{bmatrix}+b_1\begin{bmatrix}y^n \\ w^n\end{bmatrix}
\end{equation*}
is pointed if and only if $b_0 \in \jj$, i.e., $b_0(\jj)=0$.  
\end{proposition}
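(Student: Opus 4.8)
The plan is to reduce the claim to a computation in the local ring of $\J$ at the base point $\jj = (x-1,y,z,w)$, using the explicit description of $\calP_n$ from proposition \ref{prop:local_patching_data} and the observation (recorded in the remark before the statement) that it suffices to work on the open patch $D(x)$, where $\calP_n$ is trivialized by its first component. First I would note that $\jj \in D(x)$, since $x \equiv 1 \pmod{\jj}$, so pointedness of $f = [s_0,s_1]$ is equivalent to the vanishing of the first component of $s_1$, namely $b_0 x^n + b_1 y^n$, in the local ring $R[x^{-1}]_\jj$ modulo the maximal ideal, i.e.\ modulo $\jj$.

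Next I would carry out the evaluation modulo $\jj$. In $R/\jj \cong k$ we have $x \equiv 1$, $y \equiv 0$, $z \equiv 0$, $w \equiv 0$, so the first component of $s_1$ reduces to $b_0 x^n + b_1 y^n \equiv b_0(\jj)\cdot 1 + b_1(\jj)\cdot 0 = b_0(\jj)$. Hence the section $s_1$ vanishes at $\jj$ (in the trivialization of $\calP_n$ over $D(x)$, which is an honest unit on a neighborhood of $\jj$ since $x$ is a unit there) if and only if $b_0(\jj) = 0$, which is exactly the condition $b_0 \in \jj$. This gives the ``if and only if'' directly.

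A small point that should be addressed for completeness: one must check that $\jj \in D(s_0)$, so that $f$ is actually defined at $\jj$ and lands in $U_0$, making $\infty = [1:0]$ the relevant point to compare against. Since $s_0$ and $s_1$ generate $\calP_n$ and $s_1$ vanishes at $\jj$, the section $s_0$ cannot also vanish at $\jj$ (otherwise the stalk at $\jj$ would not be generated), so $\jj \in D(s_0)$ automatically; then by construction \ref{con:J_to_P_map}, case (1), the map near $\jj$ is $y_1 \mapsto (b_0 x^n + b_1 y^n)/(a_0 x^n + a_1 y^n)$, and $f(\jj) = \infty$ precisely when the numerator vanishes at $\jj$. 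I expect no real obstacle here; the only thing requiring a little care is bookkeeping the trivialization of $\calP_n$ on $D(x)$ versus the abstract stalk $(\calP_n)_\jj$, but since $x$ is invertible at $\jj$ the two agree up to a unit and the vanishing condition is unaffected. The argument for $\calQ_n$ is identical after replacing $y$ by $z$ and $\calP_n$ by $\calQ_n$.
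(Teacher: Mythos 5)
Your proposal is correct and follows essentially the same route as the paper's proof: both work on the chart $D(x)\cap D(s_0)$ via construction \ref{con:J_to_P_map}, reduce pointedness to the vanishing of $b_0x^n+b_1y^n$ modulo $\jj$, and observe that $\jj\in D(s_0)$ is forced (by $f(\jj)=\infty\in U_0$ in one direction and by the generating property of $s_0,s_1$ in the other). The evaluation $x\equiv 1$, $y\equiv 0$ modulo $\jj$ giving the condition $b_0(\jj)=0$ matches the paper's argument.
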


\begin{proof}
First, assume the map $[s_0, s_1]$ is pointed. Construction \ref{con:J_to_P_map} gives a description of the map in local coordinates. Note that for $\jj$ to map to $\infty\in U_0$, it is necessary that $\jj \in D(s_0)$. Since $\jj \in D(x)\cap D(s_0)$, the map in local coordinates is obtained by taking $\Spec$ of the ring map $g \colon 
k[y_1] \to R[x^{-1}, (a_0x^n + a_1y^n)^{-1}]$ which is given by $g(y_1) = \frac{b_0 x^n + b_1 y^n}{a_0 x^n + a_1y^n}$.
The condition for pointedness is then that the preimage of $\jj$ under $g$ is the maximal ideal $(y_1)$.  
This is equivalent to the condition that $y_1$ maps into the ideal $(x-1,y,z,w) \subseteq R[x^{-1}, (a_0 x^n + a_1y^n)^{-1}]$. 
By the definition of $g$, the requirement is that $\frac{b_0 x^n + b_1y^n}{a_0 x^n + a_1 y^n} \in (x-1,y,z,w)$, which is equivalent to $b_0 x^n + b_1 y^n \in (x-1,y,z,w)$. 
Since $y \in (x-1,y,z,w)$ and $x$ is invertible, this condition is met when $b_0 \in \jj$. Thus when $[s_0,s_1]$ is pointed, $\jj\in D(s_0)$ and $b_0 \in \jj$. 

Now assume that $b_0 \in \jj$. This implies $\jj \in D(s_0)$, since the sections $s_0, s_1$ generate $(\calP_n)_{\jj}$, and $b_0\in\jj$ implies $s_1(\jj)=0$. 
Here we can use the same construction above, since $\jj \in D(x)\cap D(s_0)$. 
The algebra above shows that when $b_0 \in \jj$ the preimage of $\jj$ under $\se$ is $(y_1)$, i.e., the map $[s_0, s_1]$ is pointed. 
\end{proof}

\begin{proposition}
\label{prop:normalized_map}
Let $f=[s_0, s_1] \colon \J \to \PP$ be a pointed map with invertible sheaf $\calP_n$. 
If $r = s_0(\jj)$, then $\left[\frac{s_0}{r}, \frac{s_1}{r}\right] \colon \J \to \PP$ is a pointed map that is equal to $f$. 
Thus any pointed map with line bundle $\calP_n$ may be represented by a pair of generating global sections $[s_0, s_1]$ where $s_0(\jj)=1$ and $s_1(\jj)=0$.
\end{proposition}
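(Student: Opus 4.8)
The statement to prove is Proposition \ref{prop:normalized_map}: given a pointed map $f = [s_0, s_1] \colon \J \to \PP$ with invertible sheaf $\calP_n$, writing $\alpha = s_0(\jj)$, then $[\tfrac{s_0}{\alpha}, \tfrac{s_1}{\alpha}]$ is well-defined, pointed, and equal to $f$; consequently any pointed map with line bundle $\calP_n$ admits a representative with $s_0(\jj) = 1$ and $s_1(\jj) = 0$.

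Let me sketch how I would prove this.

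The plan is first to observe that $\alpha = s_0(\jj)$ is a unit in $R$, so that division by $\alpha$ makes sense as an operation on global sections. Since $f$ is pointed, Proposition \ref{prop:pointed_map} gives $b_0 \in \jj$, hence $s_1(\jj) = 0$ in the stalk $(\calP_n)_\jj$; and since the sections $s_0, s_1$ generate $\calP_n$ everywhere, in particular at $\jj$, and $s_1$ lies in $\frakm_\jj (\calP_n)_\jj$, the stalk $(s_0)_\jj$ must generate $(\calP_n)_\jj$. Working on the chart $D(x)$ where $\calP_n$ is free of rank one (with basis coming from the description in Remark \ref{rem:pair_of_secs_as_map_to_P1}), the section $s_0$ is represented by $a_0 x^n + a_1 y^n \in R[x^{-1}]$; the condition that $s_0$ generates the stalk at $\jj$ means $a_0 x^n + a_1 y^n \notin \jj \cdot R[x^{-1}]$. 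Reducing modulo $\jj$ and using $x \equiv 1$, $y \equiv 0$, this says $a_0(\jj) \neq 0$, i.e. $\alpha = a_0(\jj) \in k^\times$. In particular $\alpha$ is a unit in $R$.

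Next I would argue that multiplying both generating sections by the unit $\alpha^{-1} \in R$ yields another generating pair defining the \emph{same} morphism. This is the standard fact that a morphism $X \to \PP$ determined by $(\calL, s_0, s_1)$ depends only on the pair up to simultaneous multiplication by a global unit (equivalently, up to isomorphism of the data in the sense of \cite[Theorem II.7.1]{Hartshorne}): the ratio $s_1/s_0$ on $D(s_0)$ and $s_0/s_1$ on $D(s_1)$ are unchanged, so the ring maps $y_j \mapsto s_j/s_i$ appearing in Construction \ref{con:J_to_P_map} are identical. Hence $[\tfrac{s_0}{\alpha}, \tfrac{s_1}{\alpha}] = [s_0, s_1] = f$ as morphisms $\J \to \PP$. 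Pointedness of the rescaled map is inherited either directly from the equality of morphisms, or from Proposition \ref{prop:pointed_map} applied to the new coefficients $(\tfrac{a_0}{\alpha}, \tfrac{a_1}{\alpha}, \tfrac{b_0}{\alpha}, \tfrac{b_1}{\alpha})$, noting $\tfrac{b_0}{\alpha} \in \jj$ since $b_0 \in \jj$ and $\alpha$ is a unit. Finally, with $s_0' = \tfrac{s_0}{\alpha}$ we get $s_0'(\jj) = \alpha^{-1}\alpha = 1$ and $s_1'(\jj) = \alpha^{-1} \cdot 0 = 0$, giving the normalized representative.

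The only mild subtlety — and the step I would be most careful about — is the bookkeeping around the stalk $(\calP_n)_\jj$: one must check that "$s_0$ generates the stalk at $\jj$" really does translate to $a_0(\jj) \neq 0$ rather than to some condition mixing in the second component $a_0 z^n + a_1 w^n$ on $D(w)$. This is handled by the remark following Construction \ref{con:J_to_P_map}, namely that $\jj \in D(x)$ and on $D(x)$ the bundle $\calP_n$ is trivialized with $s_0 \leftrightarrow a_0 x^n + a_1 y^n$, so evaluating at $\jj$ (where $x \mapsto 1$, $y,z,w \mapsto 0$) only sees the first component; everything else is routine.
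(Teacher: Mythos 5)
Your proposal is correct and follows essentially the same route as the paper: it first extracts from the pointedness analysis (proposition \ref{prop:pointed_map}) that $s_1(\jj)=0$ and $\alpha=s_0(\jj)$ is a unit, and then observes via construction \ref{con:J_to_P_map} that the factor $\frac{1}{\alpha}$ cancels in every local coordinate chart, so the rescaled pair defines the same morphism. Your extra care about which component of $s_0$ is seen at the stalk over $\jj$ is a correct elaboration of what the paper leaves implicit.
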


\begin{proof}
Proposition \ref{prop:pointed_map} has established that $s_1(\jj)=0$ and $s_0(\jj) = r$ is a unit. We verify that the maps $[s_0, s_1]$ and $\left[\frac{s_0}{r}, \frac{s_1}{r}\right]$ are equal in local coordinates by Construction \ref{con:J_to_P_map}, where the constants $\frac{1}{r}$ cancel out in every local coordinate chart. 
\end{proof}

\begin{proposition}\label{prop:morphism_ideal_condition}
Let $s_0$ and $s_1$ be the following sections in $\calP_n$
\begin{equation*}
s_0 = a_0\begin{bmatrix}x^n \\ z^n \end{bmatrix}+a_1\begin{bmatrix}y^n \\ w^n\end{bmatrix},\quad 
s_1 = b_0\begin{bmatrix}x^n \\ z^n \end{bmatrix}+b_1\begin{bmatrix}y^n \\ w^n\end{bmatrix}.
\end{equation*}
The sections $s_0,s_1$ generate $\calP_n$ if and only if there exist $U_x,V_x,U_w,V_w \in R$ such that \begin{equation*}
    U_x(x^na_0+y^na_1) + V_x(x^nb_0+y^nb_1) + U_w(z^na_0+w^na_1) + V_w(z^nb_0+w^nb_1) = 1.
\end{equation*}

Employing similar notation, sections $s_0,s_1$ generate $\calQ_n$ if and only if there exist $U_x,V_x,U_w,V_w \in R$ such that \begin{equation*}
    U_x(x^na_0+z^na_1) + V_x(x^nb_0+z^nb_1) + U_w(y^na_0+w^na_1) + V_w(y^nb_0+w^nb_1) = 1.
\end{equation*}
\end{proposition}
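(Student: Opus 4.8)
The plan is to verify generation of $\calP_n$ locally on the open cover $\{D(x),D(w)\}$ of $\calJ$ — this is genuinely a cover since $x+w=1$ in $R$ — and then to repackage the two resulting local conditions as the single global B\'ezout identity in the statement. I will carry this out for $\calP_n$; the case of $\calQ_n$ will follow by pulling back along the automorphism $\tau$ of lemma \ref{lem:suffices_to_prove_for_pn}, which interchanges $y$ and $z$, or equally well by rerunning the same argument with $y$ and $z$ swapped.

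First I would recall from proposition \ref{prop:local_patching_data} and remark \ref{rem:pair_of_secs_as_map_to_P1} that restriction to $D(x)$ trivialises $\calP_n$ via its first component, so that $s_0$ and $s_1$ restrict to $x^n a_0+y^n a_1$ and $x^n b_0+y^n b_1$ in $R[x^{-1}]$, while restriction to $D(w)$ trivialises $\calP_n$ via its second component, with $s_0,s_1$ restricting to $z^n a_0+w^n a_1$ and $z^n b_0+w^n b_1$ in $R[w^{-1}]$. Since $\calP_n$ is invertible and $\{D(x),D(w)\}$ covers $\calJ$, the pair $s_0,s_1$ generates $\calP_n$ if and only if it generates $\calP_n$ over each of $D(x)$ and $D(w)$; through the two trivialisations this says precisely that $(x^n a_0+y^n a_1,\ x^n b_0+y^n b_1)=R[x^{-1}]$ and $(z^n a_0+w^n a_1,\ z^n b_0+w^n b_1)=R[w^{-1}]$.

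It then remains to identify these two localised unit-ideal conditions with the single condition $I=R$, where $I\subseteq R$ is the ideal generated by all four elements $x^n a_0+y^n a_1$, $x^n b_0+y^n b_1$, $z^n a_0+w^n a_1$, $z^n b_0+w^n b_1$; note $I=R$ is exactly the asserted B\'ezout relation. If $I=R$ then trivially $IR[x^{-1}]=R[x^{-1}]$ and $IR[w^{-1}]=R[w^{-1}]$. The one point that requires care — and the main obstacle — is the converse, where one must see that the two generators of $I$ coming from the opposite chart become redundant after localisation. This is where the defining relation $xw=yz$ of $R$ enters: it yields the identities $x^n(z^n a_0+w^n a_1)=z^n(x^n a_0+y^n a_1)$ and $x^n(z^n b_0+w^n b_1)=z^n(x^n b_0+y^n b_1)$, so that in $R[x^{-1}]$ the elements $z^n a_0+w^n a_1$ and $z^n b_0+w^n b_1$ already lie in $(x^n a_0+y^n a_1,\ x^n b_0+y^n b_1)$, and hence $IR[x^{-1}]=(x^n a_0+y^n a_1,\ x^n b_0+y^n b_1)R[x^{-1}]$; symmetrically, the identities $w^n(x^n a_0+y^n a_1)=y^n(z^n a_0+w^n a_1)$ and $w^n(x^n b_0+y^n b_1)=y^n(z^n b_0+w^n b_1)$ give $IR[w^{-1}]=(z^n a_0+w^n a_1,\ z^n b_0+w^n b_1)R[w^{-1}]$. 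Since an ideal of $R$ that becomes the unit ideal over every member of an open cover of $\Spec R$ is itself the unit ideal, combining the last two paragraphs gives the equivalence. Finally, for $\calQ_n$ I would apply $\tau^*$ throughout, replacing $\OP_n$ by $\OQ_n$, which amounts to interchanging $y$ and $z$; the identities needed, such as $x^n(y^n a_0+w^n a_1)=y^n(x^n a_0+z^n a_1)$, again hold because $xw=yz$, so the same argument applies verbatim.
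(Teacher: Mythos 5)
Your proof is correct, but it takes a genuinely different route from the paper's. The paper argues directly inside the module $\calP_n$: for the forward direction it writes the two standard generators $\begin{bmatrix}x^n \\ z^n\end{bmatrix}$ and $\begin{bmatrix}y^n \\ w^n\end{bmatrix}$ as $R$-combinations $Us_0+Vs_1$ and $U's_0+V's_1$, reads off one component of each identity to land in the ideal $I$ generated by the four displayed elements, and then uses $1=(x+w)^{2n}$ to express $1$ as a combination of $x^n$ and $w^n$; for the converse it exhibits explicit coefficients, e.g.\ $(U_xx^n+U_wz^n)s_0+(V_xx^n+V_wz^n)s_1=\begin{bmatrix}x^n \\ z^n\end{bmatrix}$, recovering both generators. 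You instead localize to the cover $\{D(x),D(w)\}$, invoke the trivializations of proposition \ref{prop:local_patching_data} to turn generation into two unit-ideal conditions in $R[x^{-1}]$ and $R[w^{-1}]$, use $xw=yz$ to show the four-generator ideal $I$ collapses to the relevant two generators in each localization, and finish with the local--global principle for the unit ideal. Your key identities (e.g.\ $x^n(z^na_0+w^na_1)=z^n(x^na_0+y^na_1)$, which reduces to $(xw)^n=(yz)^n$) are correct, as is the reduction of the $\calQ_n$ case via $\tau$. The trade-off: your argument is more conceptual and makes transparent why the single global B\'ezout relation encodes the two chart-wise conditions, but it leans on the sheaf-theoretic input of proposition \ref{prop:local_patching_data} and Nakayama-style localization; the paper's computation is more elementary and produces explicit coefficients expressing the standard generators in terms of $s_0,s_1$, which is the form actually reused later (e.g.\ in the determinant computation of theorem \ref{thm:deg0action_factorization}).
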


\begin{proof}
By Lemma \ref{lem:suffices_to_prove_for_pn}, it suffices to prove this for $\calP_n$. 
Assume $s_0,s_1$ generate $\calP_n$. Then there exist $U,V$ such that $Us_0 + Vs_1 = \begin{bmatrix}x^n \\ z^n \end{bmatrix}$. 
The first component of this identity gives
\begin{equation*}
    \left(U(x^na_0+y^na_1) + V(x^nb_0+y^nb_1)\right) = x^n.
\end{equation*} 
Similarly, there exist $U',V'$ such that $U's_0 + V's_1 = \begin{bmatrix}y^n \\ w^n \end{bmatrix}$. 
This gives the relation 
\begin{equation*}
    \left(U'(z^na_0+w^na_1) + V'(z^nb_0+w^nb_1)\right) = w^n.
\end{equation*}
We need to show that the ideal $(x^n,w^n)$ is the unit ideal. 
However, the equation $1=(x+w)^{2n} = \Sigma_{i=0}^{2n} \binom{2n}{i}x^{2n-i}w^i$ demonstrates that $1$ can be expressed as a linear combination of $x^n$ and $w^n$ over $R$, since each summand $\binom{2n}{i}x^{2n-i}w^i$ can be written as $r_ix^n$ with $r_i = \binom{2n}{i}x^{n-i}w^i \in R$ or $r'_iw^n$ with $r'_i = \binom{2n}{n+i}x^{n-i}w^{i} \in R$ for $i=0,1, \ldots,n$.  
Thus, $(x^n,w^n)$ is the unit ideal. 
Now we assume that there exist elements  $U_x,V_x,U_w,V_w \in R$ such that 
\begin{equation*}
U_x(x^na_0+y^na_1) + V_x(x^nb_0+y^nb_1) + U_w(z^na_0+w^na_1) + V_w(z^nb_0+w^nb_1) = 1.
\end{equation*}
A straight forward computation yields $(U_xx^n + U_wz^n)s_0 + (V_xx^n + V_wz^n)s_1 = \begin{bmatrix}x^n \\ z^n \end{bmatrix}$, and $(U_xy^n + U_ww^n)s_0 + (V_xy^n + V_ww^n)s_1 = \begin{bmatrix}y^n \\ w^n \end{bmatrix}.$ These two elements generate $\calP_n$, thus $[s_0,s_1]$ do as well.  
\end{proof}

For brevity, we write maps $\J \to \PP$ of nonzero degree using the following notation. 

\begin{definition}\label{def:notation_maps}
Let $n$ be a positive integer. 
Let $(a_0,a_1:b_0,b_1)_n$ denote the map $\J \to \PP$ with invertible sheaf $\calP_n$ and generating sections
\begin{equation*}
s_0 = a_0\begin{bmatrix}x^n \\ z^n \end{bmatrix}+a_1\begin{bmatrix}y^n \\ w^n\end{bmatrix},\quad 
s_1 = b_0\begin{bmatrix}x^n \\ z^n \end{bmatrix}+b_1\begin{bmatrix}y^n \\ w^n\end{bmatrix} 
~ \text{with} ~ a_0,a_1,b_0,b_1 \in R.
\end{equation*}
Similarly, let $(a_0,a_1:b_0,b_1)_{-n}$ denote the map $\J \to \PP$ with invertible sheaf $\calQ_n$ and generating sections 
\begin{equation*}
s_0 = a_0\begin{bmatrix}x^n \\ y^n \end{bmatrix}+a_1\begin{bmatrix}z^n \\ w^n\end{bmatrix},\quad 
s_1 = b_0\begin{bmatrix}x^n \\ y^n \end{bmatrix}+b_1\begin{bmatrix}z^n \\ w^n\end{bmatrix} 
~ \text{with} ~ a_0,a_1,b_0,b_1 \in R. 
\end{equation*}
\end{definition}


\subsection{Detecting morphisms \texorpdfstring{$\J \to \PP$}{J to P1} via resultants} \label{sec:maps_j_to_p1_via_resultant}

For later purposes, we extend the definition of the resultant to homogeneous polynomials in two variables. 
We collect some further facts about resultants in Appendix \ref{sec:appendix_resultants}. 
The main goal of this subsection is to prove Proposition \ref{prop:resultant_unit_morphism}. 
Motivated by the observation of Remark \ref{rem:from_fPP_to_sections} make the following definition.  

\begin{definition}\label{def:sigma}
Let $R [\alpha, \beta]$ denote the polynomial ring over $R$ in variables  $\alpha$ and $\beta$, 
and let $R [\alpha,\beta]_{(n)}$ denote the $R$-submodule of homogeneous polynomials of degree $n$. 
For every $n\geq 1$, 
the map $\se \colon R [\alpha,\beta]_{(n)} \to \calP_n$, defined by 
$\se(\alpha^{i}\beta^{n-i}) = \begin{bmatrix}x^i y^{n-i} \\ z^i w^{n-i} \end{bmatrix}$ for all $0\leq i \leq n$
is a surjective morphism of $R$-modules. 
\end{definition}

\begin{definition}\label{def:res_for_pairs}
The resultant of a pair of homogeneous polynomials $(F_0, F_1)=\left(\sum_{i=0}^n a_i\alpha^i\beta^{n-i} , \sum_{i=0}^n b_i\alpha^i\beta^{n-i}\right) \in (R [\alpha,\beta]_{(n)})^2 $
is defined to be the resultant of the associated univariate polynomials $(\FF_0,\FF_1) = \left(\sum_{i=0}^n a_i\XX^i , \sum_{i=0}^n b_i\XX^i\right)$ in the indeterminate $\XX$. That is,
\begin{equation*}
\res(F_0,F_1):= \res(\FF_0,\FF_1) = \res \left( \sum_{i=0}^n a_i\XX^i , \sum_{i=0}^n b_i\XX^i \right) = \det \left(\Syl (\FF_0,\FF_1) \right),
\end{equation*}
where $\Syl(\FF_0,\FF_1)$ is the Sylvester matrix of the pair of polynomials $(\FF_0,\FF_1)$ in $R[\XX]$. See Definition \ref{def:sylvesterAndRes} for a definition of the Sylvester matrix.
\end{definition}

\begin{proposition}
\label{prop:resultant_unit_morphism}
Consider a pair $(F_0,F_1)$ of homogeneous polynomials of degree $n\geq 1$ in $R[\alpha, \beta]_{(n)}$. 
If $\res(F_0, F_1)$ is a unit, then the pair of 
sections $(\se(F_0), \se(F_1))$ generates $\calP_n$ and defines a morphism $[\se(F_0), \se(F_1)] \colon  \J \to \PP$.
\end{proposition}
\begin{proof} 
Consider $(F_0, F_1) = \left(\sum_{i=0}^n a_ix^iy^{n-i} , \sum_{i=0}^n b_ix^iy^{n-i}\right)$ with unit resultant. It suffices to show that $(s_0,s_1) = (\se(F_0), \se(F_1))$ generate $\calP_n$ on the open patches $D(x)$ and $D(w)$. 
On $D(x)$, this requires showing that the ideal $\left(\sum_{i=0}^n a_ix^iy^{n-i} , \sum_{i=0}^n b_ix^iy^{n-i}\right)$ is the unit ideal in $R[x^{-1}]$. 
The ideal is the same as the ideal $\left(\sum_{i=0}^n a_i(\frac{y}{x})^{n-i}, \sum_{i=0}^n b_i(\frac{y}{x})^{n-i}\right)$ which corresponds to a pair of polynomials of degree $n$ in the variable $\frac{y}{x}$. 
By Lemma \ref{lem:reverse_order_resultant}, this pair of polynomials has unit resultant. 
Since the resultant is a unit,  there exists $U_x, V_x \in R[x^{-1}]$ by Lemma \ref{lem:resultant_bezout} giving a B\'ezout relation $U_x\sum_{i=0}^n a_i(\frac{y}{x})^{n-i} + V_x\sum_{i=0}^n b_i(\frac{y}{x})^{n-i} = 1$. 
On $D(w)$ we need to prove that the ideal $\left( \sum_{i=0}^na_iz^iw^{n-i}, \sum_{i=0}^n b_iz^iw^{n-i} \right)$ is the unit ideal in $R[w^{-1}]$. 
The ideal is equal to $\left( \sum_{i=0}^n a_i(\frac{z}{w})^i , \sum_{i=0}^nb_i(\frac{z}{w})^i \right)$. 
This pair of polynomials has unit resultant by assumption. 
By Lemma \ref{lem:resultant_bezout}, unit resultant implies existence of a B\'ezout relation $U_w\sum_{i=0}^n a_i(\frac{z}{w})^i + V_w \sum_{i=0}^nb_i(\frac{z}{w})^i = 1$ in $R[w^{-1}]$. 
This proves that $[s_0,s_1]$ defines a morphism $\J \to \PP$.  
\end{proof}

\begin{remark}\label{rem:connecting_sections_sigma_and_sections_s}
Let $[\sigma_0, \sigma_1] \colon \PP \to \PP$ be a pointed map given by invertible sheaf $\calO(n)$, and sections $\sigma_0 = \sum_{i=0}^n a_ix_0^ix_1^{n-i}$ and $\sigma_1 = \sum_{i=0}^n b_ix_0^ix_1^{n-i}$. 
Then the pair of homogeneous polynomials $F_0 = \sum_{i=0}^n a_i \alpha^{n-i}\beta^{i}$ and $F_1 = \sum_{i=0}^n b_i \alpha^{n-i}\beta^{i}$ in $R[\alpha, \beta]_{(n)}$ has unit resultant. 
By Proposition \ref{prop:resultant_unit_morphism}, the pair $(\se(F_0),\se(F_1))$ defines a morphism $[\se(F_0), \se(F_1)] \colon  \J \to \PP$. 
This morphism is equal to the morphism $[s_0,s_1]$, constructed from $[\sigma_0, \sigma_1]$ in Proposition \ref{prop:compose_caz_with_pi}.  
\end{remark}

\begin{remark}
We note that there exist pairs of polynomials $(F_0, F_1), (F'_0, F'_1)$ such that $(\se(F_0), \se(F_1)) = (\se(F'_0),\se(F'_1))$, while  $\res(F_0, F_1) \neq \res(F'_0, F'_1)$. 
An example is given by $(x\alpha + z\beta, \beta)$ and $(\alpha, \beta)$. 
We calculate 
\begin{equation*}
 (\se(x\alpha + z\beta), \se(\beta)) =
\left(x\begin{bmatrix}x \\ z \end{bmatrix} + z\begin{bmatrix}y \\ w \end{bmatrix}, \begin{bmatrix}y \\ w \end{bmatrix}\right)=
\left(\begin{bmatrix}x \\ z \end{bmatrix}, \begin{bmatrix}y \\ w \end{bmatrix}\right) 
= (\se(\alpha),\se(\beta)).
\end{equation*}
The resultants are 
\begin{equation*}
\res(x\alpha + z\beta, \beta) = x \neq 1 = \res(\alpha, \beta).
\end{equation*}
\end{remark}


\subsection{The pointed naive homotopy relation}\label{sec:naive_htpy_relation}

Naive homotopy theory for schemes is a generalization of the homotopy theory of rings in classical algebra, see \cite{Gersten} for a definition. 
Naive homotopy classes of maps between schemes do not generally have the good properties one expects from a homotopy theory. 
In our case, however, the work of Asok, Hoyois, and Wendt in \cite{AHW2} shows that naive homotopy classes behave sufficiently well.  
We denote by $\Sm_k$ the category of smooth finite type $k$-schemes. 
We denote the set of morphisms between objects $X,Y \in \Sm_k$ by $\Sm_k(X,Y)$. 
If $X$ and $Y$ are pointed, we denote by $\Sm_k(X,Y)_*$ the set of \emph{pointed} morphisms in $\Sm_k$.

\begin{definition} \label{def:elem_naive_htpy}
Let $X$ and $Y$ be smooth schemes finite type $k$-schemes.  
For $a \in k$, let $i_a= \id_X \times a$ be the map obtained by taking the Cartesian product of $\id_X$ and the inclusion map $a \colon \Spec k \to \AAA$ given by the ring map $k[t] \to k$ sending $t$ to $a$.  
An elementary homotopy between two morphisms $f \colon X \to Y$ and $g \colon X \to Y$ is given by a 
morphism $H(T) \colon X \times \AAA \to Y$ satisfying $H(0) = f$ and $H(1) = g$, i.e., $H(0) = H(T) \circ i_0$ and $H(1) = H(T) \circ i_1$. 
We say that $f$ and $g$ are elementarily homotopic and write $f \sim g$.

The relation of morphisms being elementarily homotopic is symmetric and reflexive, but not transitive. To obtain an equivalence relation on the set of morphisms $\Sm_k(X,Y)$, we take the transitive closure of $\sim$.  
That is, we define two morphisms $f,g \in \Sm_k(X,Y)$ to be naively homotopic if there is a finite sequence of elementary homotopies $H_i(T) \colon X\times \AAA \to Y$, for $0\leq i \leq n$ with $H_0(0) = f$, $H_n(1) = g$, and for all $0\leq i < n$ $H_i(1) = H_{i-1}(0)$. We write $f \simeq g$ in this case. 
The relation $\simeq$ is now an equivalence relation on $\Sm_k(X,Y)$, so we can study the set of naive homotopy classes of morphisms from $X$ to $Y$. 
\end{definition}

For our constructions, we will work with pointed maps and pointed naive homotopies.  
We define the latter next. 

\begin{definition}
\label{def:pointed_naive_homotopy}
If $X$ and $Y$ are smooth $k$-schemes, pointed at $k$-points $x$ and $y$ respectively, we say that an elementary homotopy $H(T) \colon X\times \AAA \to Y$ is pointed if the generic point of $\{x\} \times \AAA$ maps to $y$. 
Said another way, the points $x$ and $y$ correspond to morphisms $x \colon \Spec{k} \to X$ and $y \colon \Spec{k} \to Y$, 
and we require that $H(T) \circ (x \times \id_{\AAA}) = y\circ p_1$ where $p_1 \colon \Spec{k} \times \AAA \to \Spec{k}$ is the projection onto the first factor. 

As in the unpointed case, the relation on the set of pointed morphisms $\Sm_k(X,Y)_*$ given by pointed elementary homotopies is not an equivalence relation. We say that pointed morphisms $f, g \in \Sm_k(X,Y)_*$ are naively homotopic, and write $ f \simeq g$, if there is a chain of pointed elementary homotopies from $f$ to $g$. The naive homotopy relation is an equivalence relation on pointed morphisms. We write $[X,Y]\naif = \Sm_k(X,Y)_*/\simeq$ for the set of equivalence classes.
\end{definition}

\begin{remark}\label{rem:def_of_jj'}
For us, the most important case is when $X = \J = \Spec{R}$ with basepoint $\jj = (x-1,y,z,w)$. 
This ideal extends to $\jj' = (x-1,y,z,w) \subseteq R[T]$. 
The condition that a homotopy $H(T) \colon \J \times \AAA \to Y$ be pointed is simply that $H(T)(\jj')=y$, where $y$ is the basepoint of $Y$.
\end{remark}

We now formulate a criterion which will help us to construct homotopies of the form $\J \times \AAA \to \PP$. 
Let $p_1 \colon \J \times \AAA \to \J$ denote the projection to the first factor. 
Similar to Definition 
\ref{def:sigma}, we will use the following notation. 
Let $(R[T])[\alpha, \beta]$ be the polynomial ring over $R[T]$ in variables  $\alpha$ and $\beta$, 
and let $(R[T])[\alpha,\beta]_{(n)}$ denote the $R[T]$-submodule of homogeneous polynomials of degree $n$. 
For every $n\geq 1$, 
we consider the map $\se \colon (R[T])[\alpha,\beta]_{(n)} \to p_1^*\calP_n$, defined by 
$\se(\alpha^{i}\beta^{n-i}) = \begin{bmatrix}x^i y^{n-i} \\ z^i w^{n-i} \end{bmatrix}$ for all $0\leq i \leq n$. 

\begin{proposition}
\label{prop:resultant_unit_morphism_homotopy}
Let $F_0,F_1 \in (R[T])[\alpha, \beta]_{(n)}$ be a pair of homogeneous polynomials of degree $n\geq 1$ over the ring $R[T]$. 
If $\res(F_0, F_1)$ is a unit, then the pair of 
sections $(\se(F_0), \se(F_1))$ generates the line bundle $p_1^*\calP_n$ and defines a morphism $[\se(F_0), \se(F_1)] \colon  \J \times \AAA \to \PP$.
\end{proposition}
\begin{proof}
The proof is analogous to the proof of Proposition \ref{prop:resultant_unit_morphism} after replacing the ring $R$ with $R[T]$. 
\end{proof}


\subsection{Morphisms \texorpdfstring{$\calJ \to \ato$}{from J to A2-0}}
\label{sec:degree_0_maps}

We write $\deg  \colon  [\calJ, \PP]\naif \to \Pic(\calJ)\cong \bbZ$ for the map that sends a map $f$ to $f^*\calO(1)$. Our choices thus far set $\deg(\pi) = 1$ and $\deg(\tilde{\pi}) = -1$. 
Write $[\calJ, \PP]\naif_n$ for the set of naive homotopy classes of maps $\calJ \to \PP$ with degree $n$. 
Our goal for this section is to describe the maps $\J \to \PP$ of degree $0$. 
We consider the scheme $\ato = \Spec \left( k[t_0,t_1]
\right) \setminus \{(t_0,t_1)\}$ to be pointed at $(t_0-1, t_1)$ and write $\eta  \colon  \ato \to \PP$  for the Hopf map given by the trivial algebraic line bundle $\calO_{\ato}$ with the choice of sections $\eta_0=t_0$, $\eta_1=t_1$.  
A scheme morphism $\calJ \to \ato$ is given by a morphism $\calJ \to \bbA^2$ that does not have $\{0\}$ in the image. 
Thus, a morphism $\J \to \ato$ is given by a pair $(s_0,s_1) \in R^2$ such that the ideal $(s_0,s_1)$ generates $R$, i.e., there are $U,V \in R$ for which $s_0 U + s_1 V = 1$. 
In other words, a morphism $\calJ \to \ato$ is given by unimodular row $(s_0,s_1)$ in $R^2$.

\begin{proposition}
\label{prop:a2_lift}
Consider a map $f  \colon  \calJ \to \PP$. Then we have $\deg(f) = 0$ if and only if $f$ factors through the Hopf map $\eta  \colon  \ato \to \PP$. 
\end{proposition}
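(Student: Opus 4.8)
The plan is to argue in both directions using the interpretation of morphisms to $\PP$ as line bundles with two generating sections, combined with the classification of $\Pic(\J)$ from proposition \ref{prop:P_n_and_Q_n_generate_Pic(J)}. The ``if'' direction is the easy one: if $f$ factors as $\J \xrightarrow{g} \ato \xrightarrow{\eta} \PP$, then $f^*\calO(1) = g^*\eta^*\calO(1)$, and $\eta$ is given by the trivial line bundle $\calO_{\ato}$ with sections $\eta_0 = s$, $\eta_1 = t$, so $\eta^*\calO(1) \cong \calO_{\ato}$; pulling back along $g$ still gives a trivial bundle on $\J$, hence $\deg(f) = 0$.

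For the ``only if'' direction, suppose $\deg(f) = 0$, so $f = [s_0, s_1]$ is given by the trivial line bundle $\calO_{\J} = \calO$ together with two generating global sections $s_0, s_1 \in \Gamma(\J, \calO) = R$. Generating the unit ideal means there exist $U, V \in R$ with $U s_0 + V s_1 = 1$; that is, $(s_0, s_1)$ is a unimodular row in $R^2$. A unimodular row is exactly the data of a morphism $g \colon \J \to \ato$ (the pair $(s_0, s_1)$ never simultaneously vanishes), and by construction $\eta \circ g = [s_0, s_1] = f$ since $\eta$ post-composed with $g$ reads off the two coordinate functions. So I need only check that this $g$ can be taken to be pointed: since $f$ is pointed, after the normalization of proposition \ref{prop:normalized_map} (applied in the degree-$0$ case, i.e. dividing both sections by the unit $s_0(\jj)$) I may assume $s_0(\jj) = 1$ and $s_1(\jj) = 0$, which is precisely the condition that $(s_0, s_1)$ sends $\jj$ to the basepoint $(s-1, t)$ of $\ato$. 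This makes $g$ pointed and $\eta \circ g = f$ as pointed morphisms.

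The only mild subtlety — and what I would flag as the place to be careful rather than a genuine obstacle — is making the correspondence ``degree-$0$ map $\leftrightarrow$ unimodular row'' precise: one must confirm that the construction of a map $\J \to \PP$ from $\calO$ with two sections (the degree-$0$ analogue of construction \ref{con:J_to_P_map}, deferred in the text) genuinely factors through $\ato$, i.e. that the open sets $D(s_0)$ and $D(s_1)$ cover $\J$ exactly because $(s_0,s_1)$ is unimodular, and that on each chart the map to $\PP$ agrees with $\eta$ composed with the corresponding chart of $\ato = D(s) \cup D(t)$. This is a direct unwinding of definitions with no real content. I would also remark that the same argument shows any pointed $g \colon \J \to \ato$ yields $\deg(\eta \circ g) = 0$, so the factorization is through pointed maps to $\ato$, which is what is needed for the later identification $[\J, \ato]\naif \hookrightarrow [\J, \PP]\naif$ as the degree-$0$ subgroup.
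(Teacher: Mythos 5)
Your proposal is correct and follows essentially the same route as the paper: degree $0$ means the trivial line bundle, two generating global sections form a unimodular row, and a unimodular row is exactly a morphism $\J \to \ato$, while the converse follows from the triviality of line bundles on $\ato$ (the paper cites $\Pic(\ato)=0$ where you compute $\eta^*\calO(1)$ directly, which is the same observation). Your added discussion of pointedness is harmless but belongs to the subsequent corollary \ref{cor:a2_lift_through_hopf} rather than to this proposition, which is stated for unpointed maps.
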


\begin{proof}
First, since $\Pic(\ato)=0$, it follows that any map that factors as $\calJ \to \ato \xrightarrow{\eta} \PP$ has degree 0. 
Second, assume that the morphism $f\colon  \calJ \to \PP$ satisfies $\deg(f)=0$, i.e.,  $f^*\calO(1) = \calO_R$. 
We recall from Construction \ref{con:J_to_P_map} that 
then $f$ is given by global sections $s_0, s_1 \in R= \Gamma(\J,\calO_R)$ that generate $\calO_R$, i.e., the ideal $(s_0, s_1)$ generates $R$. 
As we explained above, this shows that $(s_0,s_1)$ determines a morphism $\calJ \to \ato$. 
Since $\eta$ is given by the trivial bundle, the composition $\J \xrightarrow{(s_0,s_1)} \ato \xrightarrow{\eta} \PP$ corresponds to the trivial algebraic line bundle $\calO_R$ on $\J$ with global sections $s_0,s_1$. 
Thus, the composition $(s_0,s_1)\circ \eta$ equals $f$ which finishes the proof. 
\end{proof}

\begin{corollary}
\label{cor:a2_lift_through_hopf}
Let $f \colon \calJ \to \PP$ be a pointed map of degree 0. 
Then there exists a unique pointed map $f' \colon \J \to \ato$ such that $f = f' \circ \eta$.
\end{corollary}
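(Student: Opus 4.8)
The plan is to upgrade Proposition \ref{prop:a2_lift} from an unpointed to a pointed statement, and then to establish uniqueness. First I would invoke Proposition \ref{prop:a2_lift}: since $f$ has degree $0$, its associated invertible sheaf $f^*\calO(1)$ is trivial, so $f = [s_0,s_1]$ for global sections $s_0,s_1 \in R = \Gamma(\J,\calO)$ generating the unit ideal, and the pair $(s_0,s_1)$ defines a morphism $g \colon \J \to \ato$ with $g^*\eta_0 = s_0$, $g^*\eta_1 = s_1$, so that $f = \eta \circ g$. The content to add is that $g$ can be taken pointed, i.e.\ $g(\jj) = (s-1,t)$, the basepoint of $\ato$. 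Since $f$ is pointed, $f(\jj) = \infty = [1:0]$, and by the normalization available for degree-$0$ maps (the degree-$0$ analogue of Proposition \ref{prop:normalized_map}, applied with line bundle $\calO$) we may rescale so that $s_0(\jj) = 1$ and $s_1(\jj) = 0$; concretely, $s_0(\jj) = \alpha \in R^\times$ because $s_0$ is a generator whose vanishing locus misses $\jj$, and replacing $(s_0,s_1)$ by $(s_0/\alpha, s_1/\alpha)$ does not change $f$ but arranges $s_0 \equiv 1$, $s_1 \equiv 0 \pmod{\jj}$. Then the ring map $k[s^{\pm},t] \to R$ (or rather the map defining $g$ on the appropriate chart of $\ato$) sends $s \mapsto s_0$, $t \mapsto t$-coordinate $\mapsto s_1$, and pulls the maximal ideal $(s-1,t)$ back into $\jj$; hence $g$ is pointed. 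Composing with $\eta$, which is pointed, recovers $f$.

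For uniqueness, suppose $f = \eta \circ g = \eta \circ g'$ for pointed $g, g' \colon \J \to \ato$. Write $g = (s_0, s_1)$ and $g' = (s_0', s_1')$ as pairs of elements of $R$ generating the unit ideal. The map $\eta \colon \ato \to \PP$ on the standard charts is $U_0 = D(s) \mapsto [s:t] = [1 : t/s]$ and $U_1 = D(t) \mapsto [s/t : 1]$, and over $\PP$ these two charts glue along $\bbG_m = D(s)\cap D(t)$. Equality $\eta \circ g = \eta \circ g'$ means that on $D(s_0) = D(s_0')$ we have $s_1/s_0 = s_1'/s_0'$ in $R[s_0^{-1}]$, and likewise $s_0/s_1 = s_0'/s_1'$ on $D(s_1) = D(s_1')$; so there is a unit $\lambda \in R^\times$ with $(s_0', s_1') = \lambda \cdot (s_0, s_1)$. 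Pointedness of both $g$ and $g'$ forces $s_0(\jj) = s_0'(\jj)$ after the normalization above (both equal $1$), hence $\lambda \equiv 1 \pmod{\jj}$; but more is true: since $\ato$, unlike $\PP$, has no nonconstant units globally identified away, the two sections literally determine the morphism to $\ato$ and not merely up to scaling — a morphism $\J \to \ato$ is the \emph{same data} as a unimodular pair $(s_0,s_1)$ in $R^2$, with no scaling ambiguity, because $\ato \hookrightarrow \bbA^2$ and a map to $\bbA^2$ is a pair of functions. Thus from $\eta \circ g = \eta \circ g'$ I must argue $\lambda = 1$: the scaling $\lambda$ is constrained by $\lambda = s_0'/s_0 \in R^\times$, and since $\Pic(\J) \cong \bbZ$ is torsion-free the only way two unimodular pairs give the same map to $\PP$ and both are pointed-normalized is $\lambda \equiv 1$ on the open $D(s_0)$; combined with $D(s_1)$ one gets $\lambda = 1$ globally, hence $g = g'$.

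The main obstacle is the uniqueness clause, specifically pinning down that the scaling unit $\lambda$ relating two lifts is exactly $1$ rather than merely a unit congruent to $1$ modulo $\jj$. The cleanest route is to observe that two pointed morphisms $g, g' \colon \J \to \ato$ with $\eta g = \eta g'$ must agree: the equalizer of $g$ and $g'$ is a closed subscheme of $\J$ containing the basepoint, and $\eta g = \eta g'$ together with the fact that $\eta$ is a $\bbG_m$-torsor over $\PP$ (so $\eta$ is a monomorphism onto... no—$\eta$ is not a monomorphism). So instead I would argue directly with the unimodular rows: $\eta g = \eta g'$ gives $(s_0',s_1') = \lambda(s_0,s_1)$ with $\lambda \in R^\times$; then $s_0 U + s_1 V = 1$ for some $U,V$, and applying the pointed normalization to \emph{both} pairs (which is legitimate and changes neither $g$ nor $g'$ as maps to $\ato$ only if we do not rescale—so here care is needed) forces the claim. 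Honestly, the slickest argument: $\eta$ realizes $\ato$ as the complement of the zero section in the total space of $\calO(-1) \to \PP$, hence $\eta$ is a locally closed immersion composed with... it is in fact an affine bundle minus a section; the key point is that $\eta$ is radicial/a monomorphism after all on the relevant locus is \emph{false}. I will therefore present uniqueness by the unimodular-row bookkeeping above, tracking $\lambda$ on the two charts $D(s_0)$ and $D(s_1)$ and using that a global unit of $R$ restricting to $1$ on a Zariski cover is $1$, which finishes the proof.
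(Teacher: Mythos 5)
Your existence argument is correct and is essentially the paper's: factor $f$ through $\eta$ via proposition \ref{prop:a2_lift}, observe that $\alpha = s_0(\jj)$ is a nonzero element of $k$, and rescale both sections by $\alpha^{-1}$; this changes nothing in $\PP$ but makes the lift pointed.

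The uniqueness half has a genuine gap. You correctly reduce to the statement that two pointed lifts satisfy $(s_0',s_1') = c\,(s_0,s_1)$ for a single global unit $c \in R^\times$ with $c(\jj)=1$ (the paper obtains $c$ by the same gluing of $s_0'/s_0$ on $D(s_0)$ with $s_1'/s_1$ on $D(s_1)$ that you sketch). But you never supply the one fact that finishes the argument, namely that $R^\times = k^\times$: the global units of the Jouanolou device are the constants, so a unit that equals $1$ at the basepoint is identically $1$. In its place you offer several justifications that do not work: the torsion-freeness of $\Pic(\J)$ is irrelevant to pinning down $c$; the observation that a map to $\ato$ \emph{is} its unimodular row is true but circular (the question is precisely whether the two rows coincide); and your closing claim that ``a global unit of $R$ restricting to $1$ on a Zariski cover is $1$'' has a hypothesis you have not established --- you only know $c(\jj)=1$, i.e.\ that $c$ equals $1$ at one closed point, not on any open cover. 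Without $R^\times = k^\times$ (which follows, e.g., from $\J \to \PP$ being an affine bundle torsor and $\calO^\times(\PP)=k^\times$, or by direct computation in $R$), the conclusion $c=1$ does not follow from $c \equiv 1 \pmod{\jj}$, and that is exactly the step you yourself flag as the main obstacle.
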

\begin{proof}
Let $(s_0, s_1)\colon \J \to \ato$ be a factorization of $f$ through the Hopf map. 
Note that $r=s_0(\jj)$ need not be $1$, although $r$ is a unit of $k$. 
Instead, the map $f' = \left(\frac{1}{r} s_0, \frac{1}{r}s_1\right)$ is pointed and satisfies $f = f' \circ \eta$.  

To show uniqueness, let $(s_0', s_1')\colon \J \to \ato$ be another pointed map that factors $f$ through $\eta$. 
That is, we assume $[s_0, s_1] = [s_0',s_1']$. Note that in this case, $D(s_0)=D(s_0')=D(f^*x_0)$ and $D(s_1)=D(s_1')=D(f^*x_1)$.
Working locally in $D(s_1)=D(s_1')$, we have $s_0/s_1 = s_0'/s_1'$ in $R[s_1^{-1}]$ by construction.
We may write $s_0'= c_0s_0$ for $c_0=s_1'/s_1 \in R[s_1^{-1}]$. 
Similarly, in $D(s_0)=D(s_0')$, we obtain $s_1' = c_1s_1$ for $c_1 =s_0'/s_0$. 
In the intersection $D(s_0)\cap D(s_1)$ we have $s_0'/s_1'= s_0/s_1$, which implies $s_0'/s_0= s_1'/s_1$. 
This is exactly the equation $c_0=c_1$. 
The elements $c_1 \in R[s_0^{-1}]$ and $c_0 \in R[s_1^{-1}]$ therefore glue together to an element $c\in R$. 
Hence $c$ satisfies $s_0' = cs_0$ and $s_1' = cs_1$.   
Observe that $c(s_0u' + s_1 v') = 1$, that is, $c \in R^\times=k^\times$. 
The pointedness assumption forces $c(\jj)=1$, hence, $c=1$ with which we conclude $(s_0,s_1)=(s_0',s_1')$.
\end{proof}

\begin{remark}
The previous proposition says, in other words, that a map $\calJ \to \ato$ is equivalent to a unimodular row $(A,B)$ of length two in $R$. 
Furthermore, a pointed map $\calJ \to \ato$ is equivalent to a unimodular row $(A,B)$ of length two in $R$ that also satisfies $A(\jj)=1$ and $B(\jj)=0$. 
\end{remark}

Pointed elementary homotopies between maps of degree $0$ can also be lifted to a pointed elementary homotopy of maps $\J\to\ato$.

\begin{proposition}
\label{prop:lift_homotopies_ato}
Let  $H(T)=[s_0(T), s_1(T)]\colon \calJ\times \AAA \to \PP $ be a pointed elementary homotopy between maps $H(0)$ and $H(1)$ which have degree 0. There is a pointed elementary homotopy $H'(T) \colon \J \times \AAA \to \ato$ between the lifts $H'(0)$ and $H'(1)$. 
\end{proposition}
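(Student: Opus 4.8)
The plan is to mimic the proof of Corollary \ref{cor:a2_lift_through_hopf}, but working over the base ring $R[T]$ instead of $R$. First I would observe that a pointed elementary homotopy $H(T) = [s_0(T), s_1(T)] \colon \calJ \times \AAA \to \PP$ is itself a morphism from the smooth affine scheme $\calJ \times \AAA = \Spec(R[T])$ to $\PP$. Since $H(0)$ and $H(1)$ have degree $0$, the invertible sheaf $H(T)^*\calO(1)$ on $\calJ \times \AAA$ restricts to the trivial sheaf at both $T=0$ and $T=1$. Because $\Pic(\calJ \times \AAA) \cong \Pic(\calJ) \cong \bbZ$ (homotopy invariance of the Picard group, as already used in Section \ref{sec:invertible_sheaves}), the degree of the homotopy is a well-defined integer that agrees with the degree of its restriction to any fiber; hence $H(T)^*\calO(1) \cong \calO_{R[T]}$ is itself trivial.

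Given that $H(T)^*\calO(1)$ is trivial, the homotopy $H(T)$ is given by a pair of sections $s_0(T), s_1(T) \in R[T]$ that generate the unit ideal, so there are $U(T), V(T) \in R[T]$ with $s_0(T)U(T) + s_1(T)V(T) = 1$. Exactly as in Proposition \ref{prop:a2_lift}, this data defines a morphism $(s_0(T), s_1(T)) \colon \calJ \times \AAA \to \ato$ lifting $H(T)$ through $\eta$. To make it pointed, I would rescale: let $\alpha(T) = s_0(T)(\jj')$, where $\jj' = (x-1,y,z,w) \subseteq R[T]$ is the extended basepoint ideal. Since $H(T)$ is a pointed homotopy, its restriction to $\{\jj\} \times \AAA$ lands in the basepoint $\infty = [1:0]$ of $\PP$, which forces $s_1(T)(\jj') = 0$ and $s_0(T)(\jj') = \alpha(T)$ to be a unit in $R[T]/\jj' \cong k[T]$; combined with the Bézout relation, $\alpha(T)$ is in fact a unit of $k[T]$, but more to the point $s_0(T)$ is invertible modulo $\jj'$, so $\alpha(T) \in (R[T])^\times$ after possibly... actually the cleaner statement is that $\alpha(T)$ divides a suitable power so that $1/\alpha(T) \in k[T]$ need not hold — here I would instead argue that the unimodular row $(s_0(T), s_1(T))$ can be replaced by $(s_0(T)/\alpha(T), s_1(T)/\alpha(T))$ only if $\alpha(T)$ is a unit in $R[T]$. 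Since $s_0(T)U(T) + s_1(T)V(T) = 1$ reduces modulo $\jj'$ to $\alpha(T)\bar U(T) = 1$ in $k[T]$, we get $\alpha(T) \in k^\times$ is a nonzero \emph{constant}, hence a unit of $R[T]$. Then $H'(T) = (s_0(T)/\alpha(T), s_1(T)/\alpha(T)) \colon \calJ \times \AAA \to \ato$ is a pointed morphism with $H'(T) \circ \eta = H(T)$, and its restrictions $H'(0), H'(1)$ are precisely the pointed lifts of $H(0), H(1)$ supplied by Corollary \ref{cor:a2_lift_through_hopf} — this uses the uniqueness statement of that corollary to identify $H'(i)$ with ``the'' lift $H(i)'$.

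The main obstacle I anticipate is the very first step: verifying that $H(T)^*\calO(1)$ is genuinely trivial on $\calJ \times \AAA$, not merely fiberwise trivial. One must invoke homotopy invariance of the Picard group for the smooth affine scheme $\calJ \times \AAA$ (equivalently, that $\Pic(R) \to \Pic(R[T])$ is an isomorphism, which holds since $R$ is a regular domain by the Bass–Quillen type result for $\Pic$, or directly because $\calJ \times \AAA$ is $\AAA$-homotopy equivalent to $\calJ$), and then argue that a line bundle on $R[T]$ restricting to $\calO$ at $T=0$ must be $\calO$. The rest — producing the Bézout relation, rescaling to make things pointed, and matching up endpoints with Corollary \ref{cor:a2_lift_through_hopf} — is routine bookkeeping analogous to the already-established $T$-free case.
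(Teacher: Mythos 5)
Your proposal is correct and follows essentially the same route as the paper's proof: observe that the line bundle $H(T)^*\calO(1)$ on $\calJ\times\AAA=\Spec(R[T])$ is trivial (the paper asserts this directly from the degree-$0$ hypothesis, while you justify it via homotopy invariance of $\Pic$), extract a unimodular row $(s_0(T),s_1(T))$ in $R[T]$ with a B\'ezout relation, and rescale by $\alpha=s_0(T)(\jj')$, which the reduction of the B\'ezout relation modulo $\jj'$ shows is a unit of $R[T]/\jj'\cong k[T]$ and hence a constant in $k^\times$. The only cosmetic difference is your explicit appeal to the uniqueness clause of corollary \ref{cor:a2_lift_through_hopf} to identify the endpoints of $H'(T)$ with the canonical pointed lifts, which the paper leaves implicit.
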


\begin{proof}
Since $H(0)$ and $H(1)$ have degree 0, the homotopy $H(T)$ is degree 0 too, that is, the line bundle it determines is the trivial one $\calO_{\J\times\AAA}$. 
We can use the two generating global sections $s_0(T), s_1(T) \in R[T]$ to build a map $(s_0(T), s_1(T)) \colon \J \times \AAA \to \ato$. Note that since $s_0(T)$ and $s_1(T)$ generate $R[T]$, there are $u(T), v(T) \in R[T]$ for which $s_0(T)u(T) + s_1(T)v(T)=1$. 
Since $H(T)$ is pointed, $s_1(T)(\jj')= 0$ in $R[T]/\jj'$. 
This implies that $s_0(T)(\jj')u(T)(\jj') = 1$ in $R[T]/\jj'$. The ring $R[T]/\jj'$ is easily seen to be isomorphic to $k[T]$. 
Hence $r=s_0(T)(\jj')$ is a unit of $k[T]$, and the units of $k[T]$ are exactly the units of $k$. 
Thi shows that the map $\left(\frac{1}{r}s_0(T), \frac{1}{r}s_1(T)\right) \colon \J \times \AAA \to \ato$ is a pointed homotopy between $H'(0)$ and $H'(1)$. 
\end{proof}

Let $\SL_2$ denote the affine scheme $\Spec \left( k[a,b,c,d]/(ad-bc-1)\right)$ pointed at the ideal $(a-1,b,c,d-1)$. Intuitively, this is the scheme of $(2\times 2)$-matrices with determinant 1, pointed at the identity matrix. 
Let $(A,B)$ be a unimodular row in $R$. 
That is, there exist $U, V \in R$ for which $AU + BV =1$. 
Thus the data of a map $\calJ \to \ato$ can be used to produce a matrix $\begin{pmatrix}A & -V \\ B & U \end{pmatrix} \in \SL_2(R)$, in other words, a map $\J\to \SL_2$.  

\begin{lemma} \label{lem:liftsofpointedmapstoSL2}
A pointed map $(A,B)\colon \calJ \to \ato$ can be lifted to a pointed map $\begin{pmatrix}A & -V \\ B & U \end{pmatrix}\colon \J\to \SL_2$.
\end{lemma}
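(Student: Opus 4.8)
The plan is to produce the lift by a one-parameter adjustment of the B\'ezout cofactors of $(A,B)$, using the syzygy $(B,-A)$ to normalize the resulting matrix at the basepoint.

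First I would unwind the data. By the remark preceding the lemma, a pointed morphism $(A,B)\colon \calJ \to \ato$ is a unimodular row $(A,B)$ in $R$ with $A(\jj)=1$ and $B(\jj)=0$. Choose any $U_0,V_0\in R$ with $AU_0+BV_0=1$, so that $\begin{pmatrix} A & -V_0 \\ B & U_0\end{pmatrix}\in\SL_2(R)$ and hence defines a morphism $\calJ\to\SL_2$ projecting to $(A,B)$ under the first-column map $\SL_2\to\ato$. Reducing the relation $AU_0+BV_0=1$ modulo $\jj$ and using $A(\jj)=1$, $B(\jj)=0$ shows that automatically $U_0(\jj)=1$; thus the only obstruction to this lift being pointed is that the scalar $V_0(\jj)\in k$ may be nonzero.

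Next I would remove this obstruction. For any $t\in R$, replacing $(U_0,V_0)$ by $U:=U_0+tB$ and $V:=V_0-tA$ still satisfies $AU+BV=AU_0+BV_0=1$, so the matrix $M:=\begin{pmatrix} A & -V \\ B & U\end{pmatrix}$ still lies in $\SL_2(R)$ and still projects to $(A,B)$. Taking $t$ to be the constant $V_0(\jj)\in k\subseteq R$, one computes $V(\jj)=V_0(\jj)-t\,A(\jj)=0$ and $U(\jj)=U_0(\jj)+t\,B(\jj)=1$, so $M(\jj)=\begin{pmatrix}1&0\\0&1\end{pmatrix}$ is the identity matrix, the basepoint of $\SL_2$. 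Hence $M$ is the required pointed lift of $(A,B)$.

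There is no serious obstacle: the single point to check is that the syzygy adjustment has enough freedom to kill the value $V_0(\jj)$, which works precisely because $A(\jj)=1$ makes the correction term $-tA$ shift $V(\jj)$ by exactly $-t$. Everything else — that $M$ defines a morphism to $\SL_2$ (its determinant is $AU+BV=1$) and that it is compatible with the basepoints of $\ato$ and $\SL_2$ under the first-column projection — is immediate from the construction.
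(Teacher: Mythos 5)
Your proof is correct and follows essentially the same route as the paper: take an arbitrary B\'ezout completion, note that $U_0(\jj)=1$ is automatic, and then adjust by the syzygy $(U,V)=(U_0+tB,\,V_0-tA)$ with $t=V_0(\jj)$ to kill the off-diagonal value at the basepoint. The only (welcome) difference is that you explicitly justify $U_0(\jj)=1$ by reducing the B\'ezout relation modulo $\jj$, which the paper merely asserts.
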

\begin{proof}
Let $\begin{pmatrix}A & -V_1 \\ B & U_1 \end{pmatrix}$ be an arbitrary lift of $(A,B)$. 
Note that $A(\jj)=1$, $B(\jj)=0$, and $U_1(\jj)=1$, but $V_1(\jj)=v$ for some $v\in k$.  
For any $d\in R$, we can construct a different lift by setting $U_2=U_1+Bd$ and $V_2=V_1-Ad$.
Set $d=v$. Then $U_2(\jj)=1$, and $V_2(\jj)=0$, so $\begin{pmatrix}A & -V_2 \\ B & U_2 \end{pmatrix}$ is pointed.
\end{proof}

\begin{remark}\label{rem:liftsofmapstoSL2}
The pointed lift of Lemma \ref{lem:liftsofpointedmapstoSL2} is not unique in general. For example, the unimodular row $(1,0)$ lifts to the pointed maps
\begin{equation*}
\begin{pmatrix}
        1 & 0 \\ 0 & 1 
    \end{pmatrix} \quad \text{or} \quad \begin{pmatrix}
        1 & y \\ 0 & 1 
    \end{pmatrix}.
\end{equation*}
\end{remark}

However, we now show that any two pointed lifts of a 
pointed unimodular row $(A,B)$ are naively homotopic.

\begin{lemma}\label{lemma:liftsofmapstoSL2_unique}
Let $(A,B)\colon \calJ \to \ato$ be a pointed map. 
Any two lifts of $(A,B)$ to a pointed map $\J\to \SL_2$ are naively homotopic. 
\end{lemma}
\begin{proof}
Let 
\begin{equation*}
\tilde{f}_i = \begin{pmatrix}
A & -V_i \\
B & U_i
\end{pmatrix}
\end{equation*}
for $i\in\{1,2\}$ be two pointed lifts of $(A,B)$. 
A pointed elementary homotopy between $\tilde{f}_1$ and $\tilde{f}_2$ is given by
\begin{equation*}
\tilde{f}_t = \begin{pmatrix} A & -(TV_1 + (1-T)V_2)\\ B & TU_1 + (1-T)U_2\end{pmatrix}
\end{equation*}
which proves the claim. 
\end{proof}

\begin{proposition}
\label{prop:lift_homotopies}
Every pointed elementary homotopy 
\begin{equation*}
H(T) = (s_0(T),s_1(T))\colon \calJ\times \AAA \to \ato
\end{equation*}
can be lifted to a pointed elementary homotopy 
\begin{equation*}
\begin{pmatrix}
s_0(T) & -V(T) \\ s_1(T) & U(T) \end{pmatrix}\colon \J\times \AAA\to \SL_2.
\end{equation*}
\end{proposition}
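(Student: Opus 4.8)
The plan is to run the argument of Lemma \ref{lem:liftsofpointedmapstoSL2} with the base ring $R$ replaced by the polynomial ring $R[T]$. First I would note that, since $H(T)=(s_0(T),s_1(T))$ is a morphism $\J\times\AAA\to\ato$, the pair $\bigl(s_0(T),s_1(T)\bigr)$ is a unimodular row over $R[T]$, so there exist $U_1(T),V_1(T)\in R[T]$ with $s_0(T)U_1(T)+s_1(T)V_1(T)=1$. The matrix $\begin{pmatrix}s_0(T)&-V_1(T)\\ s_1(T)&U_1(T)\end{pmatrix}$ then has determinant $1$ and hence determines a morphism $\J\times\AAA\to\SL_2$ whose composite with the first-column projection $\SL_2\to\ato$ equals $H(T)$; in particular its restrictions at $T=0$ and $T=1$ are lifts of $H(0)$ and $H(1)$ in the sense of Construction \ref{con:liftsofmapstoSL2}.

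Next I would make this lift pointed. Recall from Definition \ref{def:pointed_naive_homotopy} that $H(T)$ being a pointed homotopy means that modulo $\jj'=(x-1,y,z,w)\subseteq R[T]$ one has $s_0(T)\equiv 1$ and $s_1(T)\equiv 0$ in $R[T]/\jj'\cong k[T]$. Reducing the B\'ezout identity then forces $U_1(T)\equiv 1\pmod{\jj'}$, whereas $V_1(T)\equiv v(T)\pmod{\jj'}$ for some polynomial $v(T)\in k[T]$ which need not vanish. Exactly as in Lemma \ref{lem:liftsofpointedmapstoSL2}, I would correct this by setting $U(T)=U_1(T)+s_1(T)v(T)$ and $V(T)=V_1(T)-s_0(T)v(T)$; a direct computation gives $s_0(T)U(T)+s_1(T)V(T)=1$ still, and modulo $\jj'$ one has $U(T)\equiv 1$ and $V(T)\equiv v(T)-v(T)=0$. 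Hence $\begin{pmatrix}s_0(T)&-V(T)\\ s_1(T)&U(T)\end{pmatrix}$ reduces to the identity matrix modulo $\jj'$, which is precisely the condition that the associated morphism $\J\times\AAA\to\SL_2$ be a pointed elementary homotopy lifting $H(T)$.

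I do not expect a genuine obstacle: the statement is a $T$-parametrized copy of Lemma \ref{lem:liftsofpointedmapstoSL2}. The only points needing a little care are that the correction term $v(T)$ must be chosen in $R[T]$ — which is fine since $k[T]\subseteq R[T]$ — and that one should invoke the identification $R[T]/\jj'\cong k[T]$ recorded in Subsection \ref{sec:naive_htpy_relation} in order to make sense of evaluating the sections $s_i(T)$ at $\jj'$.
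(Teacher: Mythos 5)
Your proposal is correct and follows essentially the same route as the paper's proof: complete the unimodular row $(s_0(T),s_1(T))$ over $R[T]$ via a B\'ezout identity, then fix pointedness by adding a multiple of the first column to the second (the paper takes the correction term to be the B\'ezout coefficient $v(T)\in R[T]$ itself rather than its image in $k[T]$, but this makes no difference). No gaps.
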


\begin{proof}
Recall $\jj' = (x-1,y,z,w)$ must map to the basepoint for the homotopy $H(T)$ to be pointed. The sections $s_0(T)$ and $s_1(T)$ generate the unit ideal, hence there exist $u(T)$ and $v(T)$ in $R[T]$ for which $s_0(T)u(T) + s_1(T)v(T) = 1$. 
The pointedness assumption gives the relation among ideals $(s_0(T)-1, s_1(T)) \subseteq \jj' \subseteq R[T]$.  

With these data, we construct the matrix 
\begin{equation*}
\begin{pmatrix}
s_0(T) & - v(T) \\ s_1(T) & u(T)
\end{pmatrix}\in
\SL_2(R[T]).
\end{equation*}
This matrix determines a map $\J\times\AAA \to \SL_2$ that lifts the unimodular row $(s_0(T), s_1(T))$ we started with. 
This homotopy need not be a pointed homotopy. However, for any choice of $d(T) \in R[T]$, the matrix 
\begin{equation*}
\begin{pmatrix}
s_0(T) & -v(T)+s_0(T)d(T) \\ 
s_1(T) & u(T)+s_1(T)d(T)
\end{pmatrix}
\end{equation*}
is also a lift of $(s_0(T),s_1(T))$. 
We will now show that, for $d(T) =v(T)$, the map this matrix determines is a pointed homotopy. 
Write $u_2(T)=u(T)+s_1(T)v(T)$ and $v_2(T)=v(T)-s_0(T)v(T)$.
Our assumption that $(s_0(T), s_1(T))$ is pointed gives us $(s_0(T)-1, s_1(T)) \subseteq \jj' \subseteq R[T]$. We must show that $(v_2(T), u_2(T)-1) \subseteq \jj'$ too. Since $(s_0(T)-1)\in \jj'$, we have $v_2(T) = -v(T)(s_0(T)-1)\in \jj'$. Observe that $u_2(T)-1 \in \jj'$ if $u(T)-1 \in \jj'$ since $s_1(T)\in \jj'$. Since $s_0(T)u(T) + s_1(T)v(T) = 1$, it follows that $s_0(T)u(T)-1 \in \jj'$. This can be rewritten as $s_0(T)u(T)-1 = (s_0(T)-1)u(T) + u(T) -1$. Since $s_0(T)-1 \in \jj'$ it follows that $u(T)-1 \in \jj'$ too. 
\end{proof}

\begin{definition}\label{def:def_of_phi}
Let $\phi \colon \SL_2 \to \ato$ be the morphism determined by the ring map $f \colon k[t_0,t_1] \to k[a,b,c,d]/(ad-bc-1)$ given by $f(t_0) = a$ and $f(t_1) = c$. Intuitively, this is the morphism that extracts the first column from a matrix in $\SL_2$. As given, this map has codomain $\mathbb{A}^2$, but it is clear from the relation $ad-bc = 1$ that $\phi$ maps into $\ato$.
\end{definition}

\begin{proposition}\label{prop:sequence_of_bijections}
The maps $\phi \colon \SL_2\to \ato$ and $\eta \colon \ato \to \PP$ induce bijections of naive homotopy classes of pointed maps 
\begin{equation*}
[\J, \SL_2]\naif \xrightarrow{\phi_*} [\J, \ato]\naif \xrightarrow{\eta_*} [\J, \PP]\naif_0.
\end{equation*}
\end{proposition}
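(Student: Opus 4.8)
The plan is to show each of the two maps is a bijection on its own. In both cases surjectivity is read off directly from one of the preceding lifting statements, while injectivity is obtained by lifting a chain of pointed elementary homotopies one elementary homotopy at a time and then repairing the endpoints of the lifted chain.

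I would first treat $\eta_* \colon [\J,\ato]\naif \to [\J,\PP]\naif_0$. Surjectivity is immediate from proposition \ref{prop:a2_lift} together with corollary \ref{cor:a2_lift_through_hopf}: every pointed morphism $\J \to \PP$ of degree $0$ is of the form $\eta \circ f'$ for a pointed morphism $f' \colon \J \to \ato$, and conversely any such composite has degree $0$. For injectivity, let $f', g' \colon \J \to \ato$ be pointed morphisms with $\eta \circ f' \simeq \eta \circ g'$, witnessed by a finite chain of pointed elementary homotopies $H_i(T) \colon \J \times \AAA \to \PP$. Since the degree is constant along an elementary homotopy, every map occurring in the chain has degree $0$, so proposition \ref{prop:lift_homotopies_ato} lifts each $H_i(T)$ to a pointed elementary homotopy $H_i'(T) \colon \J \times \AAA \to \ato$ between the pointed lifts of its endpoints. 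By the uniqueness clause of corollary \ref{cor:a2_lift_through_hopf} the pointed lift of a degree-$0$ pointed map is unique, so the lifted endpoints agree at the junctions of the chain and at the two ends are forced to be $f'$ and $g'$; hence $f' \simeq g'$, and $\eta_*$ is injective.

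Next I would treat $\phi_* \colon [\J,\SL_2]\naif \to [\J,\ato]\naif$. Surjectivity is lemma \ref{lem:liftsofpointedmapstoSL2}: every pointed unimodular row over $R$ completes to a pointed map $\J \to \SL_2$. For injectivity, take pointed $\tilde f, \tilde g \colon \J \to \SL_2$ with $\phi \circ \tilde f \simeq \phi \circ \tilde g$ via a chain of pointed elementary homotopies $K_i(T) \colon \J \times \AAA \to \ato$, and lift each $K_i(T)$ to a pointed elementary homotopy $\tilde K_i(T) \colon \J \times \AAA \to \SL_2$ by proposition \ref{prop:lift_homotopies}. Now the non-uniqueness of lifts to $\SL_2$ means the lifted chain only connects some pointed lift of $\phi\circ\tilde f$ to some pointed lift of $\phi\circ\tilde g$, and moreover consecutive $\tilde K_i$ may land on different pointed lifts of the common map at a junction. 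This is exactly what construction \ref{con:liftsofmapstoSL2} mends: inserting at every junction of the lifted chain — and at its two ends, to connect $\tilde f$ and $\tilde g$ to the chain — the explicit pointed elementary homotopy \eqref{eq:f_thomotopy} between two pointed lifts of a fixed pointed unimodular row, we obtain a chain of pointed elementary homotopies from $\tilde f$ to $\tilde g$. Thus $\tilde f \simeq \tilde g$ and $\phi_*$ is injective.

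The surjectivity halves and the concatenation bookkeeping are routine. The one genuine subtlety is the endpoint repair in the two injectivity arguments: for $\eta_*$ one leans on uniqueness of the $\ato$-lift, whereas for $\phi_*$ there is no uniqueness and one must instead invoke construction \ref{con:liftsofmapstoSL2} at every junction of the lifted homotopy chain — not merely at its two ends — to mend it into a chain with the correct endpoints.
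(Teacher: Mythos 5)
Your proof is correct and follows essentially the same route as the paper's: surjectivity of $\eta_*$ and $\phi_*$ from corollary \ref{cor:a2_lift_through_hopf} and lemma \ref{lem:liftsofpointedmapstoSL2}, and injectivity by lifting elementary homotopies via propositions \ref{prop:lift_homotopies_ato} and \ref{prop:lift_homotopies}, with construction \ref{con:liftsofmapstoSL2} repairing the non-unique $\SL_2$-lifts at the junctions. You merely spell out the chain-of-homotopies bookkeeping that the paper's terser proof leaves implicit.
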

\begin{proof}
The map $\phi_*$ is surjective by Lemma \ref{lem:liftsofpointedmapstoSL2}, 
and $\phi_*$ is injective by Lemma \ref{lemma:liftsofmapstoSL2_unique}. 
Corollary \ref{cor:a2_lift_through_hopf} shows that $\eta_*$ is bijective on the level of pointed morphisms. 
This shows that $\eta_*$ is surjective. To show that $\eta_*$ is injective, it suffices to show that a pointed elementary homotopy $H(T) \colon \J \times \AAA \to \PP$ between degree 0 maps lifts to a pointed elementary homotopy $H'(T) \colon \J \times \AAA \to \ato$, which is done in Proposition \ref{prop:lift_homotopies_ato}.
\end{proof}


\section{Operations on naive homotopy classes of morphisms} 
\label{sec:operations}

\subsection{Group structure on maps of degree \texorpdfstring{$0$}{0}}\label{subsec:operations_deg_zero}

We may now define a binary operation on naive homotopy classes of morphisms $\J\fd\ato $. 
This is analogous to Cazanave's naive sum of pointed rational functions. 
The group structure is obtained by lifting maps $f,g  \colon  \calJ \to \ato$ to $\tilde{f}, \tilde{g}  \colon  \calJ \to \SL_2$, multiplying the two resulting maps using the group structure on $\SL_2$, then mapping back down to $\ato$ via the map $\phi  \colon \SL_2 \to \ato$.

A morphism $f \colon \calJ \to \SL_2$ is equivalent to the data of a matrix 
$M \in \SL_2(R)$. 
A matrix $M \in \SL_2(R)$ corresponds to a pointed morphism if upon evaluation at $\jj$, the resulting matrix is the identity matrix. The set of pointed maps corresponds to a subgroup of $\SL_2(R)$. 
The operation of matrix multiplication respects the naive homotopy relation for pointed maps and therefore defines a group operation on $[\calJ, \SL_2]\naif$, the set of pointed naive homotopy classes of morphisms. It suffices to prove the following proposition, given that the naive homotopy relation is the transitive closure of pointed elementary homotopies.

\begin{proposition}\label{prop:product_of_htpy_is_htpic}
Let $M(T) \in \SL_2(R[T])$ be a pointed elementary homotopy between the matrices $M_0 = M(0)\in \SL_2(R)$ and $M_1 = M(1) \in \SL_2(R)$ corresponding to pointed morphisms. 
Let $M'(T)\in \SL_2(R[T])$ be another elementary homotopy where similar notation is employed. The pointed morphisms corresponding to $M_0\cdot M'_0$ and $M_1 \cdot M'_1$ are elementarily homotopic.
\end{proposition}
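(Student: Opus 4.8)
The plan is to exhibit an explicit pointed elementary homotopy between the morphisms corresponding to $M_0 \cdot N_0$ and $M_1 \cdot N_1$, namely the product homotopy $M(T)\cdot N(T) \in \SL_2(R[T])$. First I would observe that $M(T)$ and $N(T)$ are morphisms $\J \times \AAA \to \SL_2$, and since $\SL_2$ is a group scheme over $k$, matrix multiplication gives a morphism $\SL_2 \times \SL_2 \to \SL_2$; composing the product map $(M(T), N(T)) \colon \J \times \AAA \to \SL_2 \times \SL_2$ with multiplication produces a morphism $P(T) = M(T) \cdot N(T) \colon \J \times \AAA \to \SL_2$. Equivalently, on the level of rings, $P(T)$ is literally the matrix product computed entrywise in $R[T]$, and the entries of a product of two matrices in $\SL_2(R[T])$ again satisfy the determinant-one relation because $\det$ is multiplicative, so $P(T) \in \SL_2(R[T])$ as required.

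Next I would check the two boundary conditions. Precomposing with $i_0 = \id_{\J} \times 0$ sends $M(T) \mapsto M_0$ and $N(T) \mapsto N_0$ since $M(0) = M_0$ and $N(0) = N_0$ by hypothesis; because composition with $i_0$ is compatible with the multiplication morphism, $P(T) \circ i_0 = M_0 \cdot N_0$. The same argument with $i_1$ gives $P(T) \circ i_1 = M_1 \cdot N_1$. So $P(T)$ is an elementary homotopy between the morphisms associated to $M_0 \cdot N_0$ and $M_1 \cdot N_1$.

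It then remains to verify that $P(T)$ is a \emph{pointed} homotopy, i.e.\ that evaluation of $P(T)$ at the extended basepoint ideal $\jj' = (x-1,y,z,w) \subseteq R[T]$ yields the identity matrix, equivalently that the generic point of $\{\jj\} \times \AAA$ maps to the identity of $\SL_2$. Since $M(T)$ and $N(T)$ are pointed homotopies, their evaluations at $\jj'$ are both the identity matrix in $\SL_2(R[T]/\jj') \cong \SL_2(k[T])$. Evaluation at $\jj'$ is a ring homomorphism $R[T] \to R[T]/\jj'$ and hence commutes with forming the entrywise matrix product, so the evaluation of $P(T) = M(T)\cdot N(T)$ at $\jj'$ is the product of the two identity matrices, which is the identity. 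Therefore $P(T)$ is a pointed elementary homotopy, and $M_0 \cdot N_0$ and $M_1 \cdot N_1$ represent the same class, as claimed.

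The argument is essentially formal; there is no real obstacle, the only thing to be a little careful about is bookkeeping — namely that ``corresponding morphism'' and ``matrix'' are being used interchangeably and that all of the operations in sight (entrywise multiplication, restriction along $i_0, i_1$, reduction modulo $\jj'$) are induced by ring maps and therefore commute with one another. If one prefers a more structural phrasing, the whole statement is the observation that $\Hom(-, \SL_2)$ takes values in groups, that $(-)\times\AAA$ and the two endpoint maps are functorial, and that the subgroup of pointed morphisms (those restricting to the identity at $\jj$, resp.\ $\jj'$) is closed under the group operation; the displayed homotopy $M(T)\cdot N(T)$ then lands in this subgroup by construction.
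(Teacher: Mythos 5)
Your proposal is correct and follows exactly the same route as the paper: the homotopy is the entrywise product $M(T)\cdot N(T)\in \SL_2(R[T])$, and the only point requiring verification is pointedness, which follows because evaluation at $\jj'$ is a ring homomorphism and sends both factors to the identity matrix. The extra bookkeeping you supply (boundary conditions, multiplicativity of $\det$) is fine but the paper treats it as immediate.
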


\begin{proof}
All that needs to be verified is that the morphism corresponding to the matrix product $M(T)\cdot M'(T)$ is 
pointed. Since both $M(T)$ and $M'(T)$ are pointed, evaluation at $\jj'$ gives the identity matrix in $\SL_2(k[T])$. It's clear then that the product $M(T)\cdot M'(T)$ will evaluate to the identity matrix at $\jj'$ too. 
\end{proof}

\begin{definition}
\label{defn:M-sum}
Consider two pointed naive homotopy classes $[(A_i, B_i)  \colon  \calJ \to \ato]$ for $i = 1, 2$ represented by the unimodular rows $(A_i, B_i) \in R^2$. Pick completions of the unimodular rows to matrices corresponding to pointed maps, as guaranteed by Lemma \ref{lem:liftsofpointedmapstoSL2}:
\begin{equation*}
\begin{pmatrix}A_1 & -V_1\\ B_1 & U_1\end{pmatrix},\begin{pmatrix}A_2 & -V_2\\ B_2 & U_2\end{pmatrix} \in \SL_2(R).
\end{equation*}
We define $[(A_1,B_1)] \oplus [(A_2,B_2)]$ to be the naive homotopy class $[(A_3,B_3)]$ where $(A _3, B_3)$ is the unimodular row obtained from the matrix product
\begin{equation*}
\begin{pmatrix}A_3 & -V_3\\ B_3 & U_3\end{pmatrix}  =
\begin{pmatrix}A_1 & -V_1\\ B_1 & U_1\end{pmatrix} \cdot 
\begin{pmatrix}A_2 & -V_2\\ B_2 & U_2\end{pmatrix}.
\end{equation*}
\end{definition}

\begin{proposition}\label{prop:oplus_well_defined_on_SL_2}
The operation $\oplus$ of Definition \ref{defn:M-sum} is well-defined and gives the set $[\calJ, \ato]\naif$ the structure of a group. 
\end{proposition}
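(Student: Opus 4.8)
The plan is to verify the group axioms for $\oplus$ on $[\calJ, \ato]\naif$ by transporting the known group structure on $[\calJ, \SL_2]\naif$ along the maps $\phi \colon \SL_2 \to \ato$ and the completion construction of lemma \ref{lem:liftsofpointedmapstoSL2}. The key observation is that the composite of the completion map and $\phi_*$ realizes $[\calJ,\ato]\naif$ as the image of $[\calJ,\SL_2]\naif$ under $\phi_*$, and proposition \ref{prop:sequence_of_bijections} tells us $\phi_* \colon [\calJ,\SL_2]\naif \to [\calJ,\ato]\naif$ is a bijection. So the first step is to record that $\oplus$ is, by construction, nothing but the transport of the matrix-multiplication operation on $[\calJ,\SL_2]\naif$ through this bijection: given classes $[(A_i,B_i)]$, choosing pointed completions $\tilde f_i \in \SL_2(R)$ and setting $[(A_1,B_1)] \oplus [(A_2,B_2)] = \phi_*([\tilde f_1]\cdot[\tilde f_2])$ agrees with definition \ref{defn:M-sum} because extracting the first column of a product $\tilde f_1 \cdot \tilde f_2$ is exactly forming $(A_3,B_3)$.

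The next step is well-definedness, which has two parts. First, independence of the choice of pointed completions: by construction \ref{con:liftsofmapstoSL2}, any two pointed completions of a given pointed unimodular row are naively homotopic in $\SL_2$, via the explicit homotopy \eqref{eq:f_thomotopy}; since matrix multiplication on $\SL_2(R[T])$ sends pointed homotopies to pointed homotopies (as in the preceding proposition), the product class $[\tilde f_1 \cdot \tilde f_2] \in [\calJ,\SL_2]\naif$ is independent of the choices, and then so is its image under $\phi_*$. Second, independence of the representative unimodular rows: if $(A_1,B_1)$ and $(A_1',B_1')$ are pointed-homotopic via a pointed homotopy $\calJ \times \AAA \to \ato$, then by proposition \ref{prop:lift_homotopies} this lifts to a pointed homotopy $\calJ \times \AAA \to \SL_2$ between pointed completions, so the corresponding classes in $[\calJ,\SL_2]\naif$ agree; multiplying by a fixed class and applying $\phi_*$ preserves this, so $\oplus$ descends to homotopy classes. (Here one uses that pointed completions of the endpoints of the lifted homotopy can be taken to be the homotopy's own endpoint values, which are already pointed.)

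With well-definedness in hand, the group axioms transfer for free. Associativity of $\oplus$ follows from associativity of matrix multiplication in $\SL_2(R[T])$ together with the fact that $\phi_*$ carries the operation on $[\calJ,\SL_2]\naif$ to $\oplus$. The identity element is the class of the pointed unimodular row $(1,0)$, completed by the identity matrix, and the identity laws follow from $I\cdot M = M \cdot I = M$. The inverse of $[(A,B)]$ is the class $\phi_*([\tilde f^{-1}])$ where $\tilde f$ is a pointed completion of $(A,B)$; since $\tilde f^{-1} \in \SL_2(R)$ is again pointed (its evaluation at $\jj$ is $I^{-1} = I$) and $\tilde f \cdot \tilde f^{-1} = I$, the inverse laws hold. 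Thus $([\calJ,\ato]\naif, \oplus)$ is a group. The only place requiring genuine care — the main obstacle — is the well-definedness argument: one must be careful that the homotopy-lifting results (propositions \ref{prop:lift_homotopies} and \ref{prop:lift_homotopies_ato}) and the pointedness bookkeeping from construction \ref{con:liftsofmapstoSL2} combine to show that \emph{all} choices of completion and \emph{all} choices of homotopy representatives yield the same class in $[\calJ,\SL_2]\naif$ before pushing down; once that is pinned down, everything else is formal.
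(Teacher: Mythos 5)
Your proposal is correct and follows essentially the same route as the paper's proof: both establish well-definedness by showing independence of the choice of pointed completion (via construction \ref{con:liftsofmapstoSL2}) and of the homotopy representative (via proposition \ref{prop:lift_homotopies}), and then read off the identity $(1,0)$, associativity, and the inverse $[(U,-B)]$ from matrix algebra in $\SL_2(R)$. Your framing of $\oplus$ as the transport of matrix multiplication along the bijection $\phi_*$ is a mild repackaging, but the underlying argument is the same.
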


\begin{proof}
We first show that the operation does not depend on the particular completion to a matrix in $\SL_2(R)$. 
Let $M_1$ and $M_1'$ be two pointed completions of $(A_1, B_1)$, and similarly let $M_2$ and $M_2'$ be two pointed completions of $(A_2, B_2)$. 
There are two representatives for the product $[(A_1,B_1)]\oplus[(A_2,B_2)]$ from these choices. 
They are $(A_3, B_3)$, taken from the first column of $M_1\cdot M_2$ and $(A_3', B_3')$, the first column of $M_1'\cdot M_2'$. 
Any two completions of a unimodular row to a matrix in $\SL_2(R)$ are homotopic by Lemma \ref{lemma:liftsofmapstoSL2_unique}, 
hence $[M_1]=[M_1']$ and $[M_2]=[M_2']$ in $[\calJ, \SL_2]\naif$. 
By Proposition \ref{prop:product_of_htpy_is_htpic}, the products $M_1\cdot M_2$ and $M_1'\cdot M_2'$ are  homotopic as maps $\J \to \SL_2$. 
Extracting the first column of this homotopy gives a homotopy between the resulting unimodular rows defining the resulting map. 

We now show that the operation does not depend on the representative of the naive homotopy class chosen. 
Let $(A_1(T), B_1(T))$ and $(A_2(T), B_2(T))$ be pointed elementary homotopies.
These can be completed to matrices $M_1(T) \in \SL_2(R[T])$ and $M_2(T)\in\SL_2(R[T])$ by Proposition \ref{prop:lift_homotopies}. 
The first column of the product $M_1(T)\cdot M_2(T)$ provides the homotopy between the two possible representations of the product. 
We conclude that the operation is well-defined on the set $[\calJ, \ato]\naif$ of pointed naive homotopy classes. 

The identity for the operation is given by the unimodular row $(1,0)  \colon  \calJ \to \ato$. 
Associativity of $\oplus$ follows from the associativity of matrix multiplication.  
Finally, let $(A,B)  \colon  \calJ \to \ato$ be given by the unimodular row $(A, B) \in R^2$ and complete it to a matrix $\begin{pmatrix} A & -V \\ B & U\end{pmatrix} \in \SL_2(R)$ giving a pointed map. 
The inverse of this matrix in $\SL_2(R)$ is the matrix $\begin{pmatrix}U  & V \\ -B & A\end{pmatrix}$, and the first column of this matrix represents the inverse of $(A,B)$ in $[\calJ, \ato]\naif$ for $\oplus$. 
That is, $-[(A,B)] = [(U, -B)]$. 
\end{proof}

\begin{lemma}
\label{lem:sl2_ato_naive_iso}
The map $\phi \colon \SL_2 \to \ato$ induces an isomorphism of groups  
\[
\phi_* \colon [\J, \SL_2]\naif \xrightarrow{\cong} [\J, \ato]\naif.
\]
\end{lemma}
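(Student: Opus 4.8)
The plan is to leverage everything that has already been set up, so that almost nothing remains. By Proposition \ref{prop:sequence_of_bijections} the map $\phi_*$ is already known to be a bijection of sets; hence the only point to establish is that it is a homomorphism for the two group structures in play, namely matrix multiplication on $[\J,\SL_2]\naif$ and the operation $\oplus$ of Definition \ref{defn:M-sum} on $[\J,\ato]\naif$.

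First I would unwind the definitions. A class in $[\J,\SL_2]\naif$ is represented by a pointed matrix $M=\begin{pmatrix}A & -V\\ B & U\end{pmatrix}\in\SL_2(R)$, and since $\phi$ extracts the first column of a matrix, we have $\phi_*[M]=[(A,B)]$, the class of the pointed unimodular row given by the first column of $M$. Now take two classes represented by pointed matrices $M_1$ and $M_2$ with first columns $(A_1,B_1)$ and $(A_2,B_2)$. On one hand, $\phi_*([M_1]\cdot[M_2])=\phi_*[M_1M_2]$ is the class of the first column of the product $M_1M_2$, which in the notation of Definition \ref{defn:M-sum} is exactly $[(A_3,B_3)]$. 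On the other hand, $\phi_*[M_1]\oplus\phi_*[M_2]=[(A_1,B_1)]\oplus[(A_2,B_2)]$ is computed, by definition, by choosing pointed completions of the rows $(A_i,B_i)$ to matrices in $\SL_2(R)$, multiplying them, and reading off the first column. Since $M_1$ and $M_2$ are themselves pointed completions of $(A_1,B_1)$ and $(A_2,B_2)$, the independence of the choice of completion — established in Proposition \ref{prop:oplus_well_defined_on_SL_2}, via Construction \ref{con:liftsofmapstoSL2} — lets us use $M_1$ and $M_2$ here, giving again $[(A_3,B_3)]$. Therefore $\phi_*([M_1]\cdot[M_2])=\phi_*[M_1]\oplus\phi_*[M_2]$, so $\phi_*$ is a group homomorphism, and being bijective it is an isomorphism.

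The only step that demands any care is the appeal to well-definedness, i.e.\ making sure the operation $\oplus$ on $[\J,\ato]\naif$ may be evaluated using the \emph{specific} lifts $M_1,M_2$ that come with the chosen representatives of the $\SL_2$-classes, rather than some other completions; this is precisely what Proposition \ref{prop:oplus_well_defined_on_SL_2} guarantees. Beyond that and the bijectivity statement of Proposition \ref{prop:sequence_of_bijections}, no new input is required, so I do not expect a genuine obstacle here.
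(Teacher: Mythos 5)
Your proof is correct and takes essentially the same route as the paper: bijectivity is quoted from Proposition \ref{prop:sequence_of_bijections}, and the homomorphism property is immediate because $\oplus$ on $[\J,\ato]\naif$ is defined via matrix multiplication of lifts. The paper states this in one line; your careful appeal to Proposition \ref{prop:oplus_well_defined_on_SL_2} to justify using the given representatives as the lifts is exactly the point being implicitly used there.
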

\begin{proof}
The map $\phi_*$ is a group homomorphism by our definition of $\oplus$ on $[\J, \ato]\naif$ in terms of matrix multiplication.  We have shown in Proposition \ref{prop:sequence_of_bijections} that $\phi_*$ is bijective, hence the result. 
\end{proof}

For the next result, we recall that the cogroup structure of $\bbP^1 \cong S^1 \wedge \bbG_m$ in the pointed $\AAA$-homotopy category endows $[\bbP^1,X]^{\AAA}$ with a group operation for any motivic space $X$.  
It is a simple exercise to produce an $\AAA$-weak equivalence $\PP \simeq S^1 \wedge \bbG_m$ using the standard covering of $\PP$ by two affine lines with intersection $\bbG_m$. 
The simplicial circle $S^1$ (or some suitable homotopy equivalent model of it, like $\partial \Delta^2$) admits the structure of an h-cogroup, or just a cogroup in the homotopy category. 
Explicitly, the pointed simplicial set $\partial \Delta^2 \simeq S^1$ admits two maps: 
a pinch map $p \colon S^1 \to S^1 \vee S^1$ and an inverse map $S^1 \to S^1$. 
These operations fit into homotopy commutative diagrams that give the expected algebraic properties, like associativity and the definition of the inverse \cite[Chapter 2]{Switzer}. 
These two observations together allow us to define a group operation on $[S^1 \wedge \bbG_m, \PP]^{\AAA}$ as follows. 
Given two maps $f, g \colon S^1 \wedge \bbG_m \to \PP$ in the $\AAA$-homotopy category, the composition below represents the sum $f \oplus^{\AAA} g$ of the maps $f$ and $g$. 
\begin{equation}
\label{eq:group_operation}
\xymatrix{
S^1 \wedge \bbG_m \ar[r]^-{p \wedge 1} & (S^1 \vee S^1 ) \wedge \bbG_m \ar[r]^-{\cong} & (S^1 \wedge \bbG_m) \vee (S^1 \wedge \bbG_m) \ar[r]^-{f \vee g} & \bbP^1
}
\end{equation}
Note that the morphism $f \vee g$ exists by the universal property of wedge sums. 
One must take the time to verify that the operation defined above does indeed make $[\bbP^1, \bbP^1]^{\AAA}$ into a group, but the pleasant properties of the $\AAA$-homotopy category make this doable. 
We refer to the operation $\oplus^{\AAA}$ as the \emph{conventional group structure}.

\begin{definition}
Let $\cc_0 \colon [\J, \ato]\naif \to [\bbP^1, \ato]^{\AAA}$ denote the composition of the natural map  
$\nu_0 \colon [\J, \ato]\naif \to [\J, \ato]^{\AAA}$ and the bijection $(\pi_{\AAA}^*)^{-1} \colon  [\J, \ato]^{\AAA} \to [\bbP^1, \ato]^{\AAA}$ that is given by the inverse of the bijection $\pi_{\AAA}^*$. 
We note that $\nu_0$ is a bijection by Proposition \ref{prop:appendix_main_result} since $\ato$ is $\AAA$-naive. 
\end{definition}

\begin{theorem}
\label{thm:degree_zero_iso} 
The map $\cc_0$ is an isomorphism of groups between the group $[\J, \ato]\naif$ with operation $\oplus$ and the group $[\bbP^1, \ato]^{\AAA}$ with the conventional group operation. 
\end{theorem}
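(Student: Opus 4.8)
The plan is to factor $\cc_0$ through the group scheme $\SL_2$ and to recognize the operation $\oplus$ on $[\J,\ato]\naif$ as the one inherited from the group-object structure that $\SL_2$ carries in the pointed $\AAA$-homotopy category; once this is done, the coincidence with the conventional operation $\oplus^{\AAA}$ will be exactly the Eckmann--Hilton argument applied to the cogroup $\PP\simeq S^1\wedge\bbG_m$ and the group object $\SL_2$. Throughout I will use the chain of bijections $[\J,\SL_2]\naif\xrightarrow{\phi_*}[\J,\ato]\naif$ of Proposition \ref{prop:sequence_of_bijections} together with Lemma \ref{lem:sl2_ato_naive_iso} (which identifies matrix multiplication upstairs with $\oplus$ downstairs), and the fact that $\phi\colon\SL_2\to\ato$ is itself an $\AAA$-weak equivalence, being Zariski-locally a trivial $\AAA$-bundle: over the basic opens $D(a),D(c)\subseteq\ato$ one solves $ad-bc=1$ for $b$, respectively $d$.

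First I would set up the formal bookkeeping. The affine group scheme $\SL_2$, pointed at the identity matrix, is a group object in $\Hop$, since its scheme-level multiplication, unit and inversion morphisms descend and the scheme product of smooth schemes computes the categorical product in $\Hop$. Hence every set $[Z,\SL_2]^{\AAA}$, and likewise every set $[Z,\SL_2]\naif$, carries the group operation induced by matrix multiplication; since all of these are induced by the single morphism $\SL_2\times\SL_2\to\SL_2$, the comparison map $\nu\colon[Z,\SL_2]\naif\to[Z,\SL_2]^{\AAA}$ is a group homomorphism for this operation, and precomposition by any scheme morphism $Z'\to Z$ is as well. In particular $\pi_{\AAA}^*\colon[\PP,\SL_2]^{\AAA}\to[\J,\SL_2]^{\AAA}$ is a group isomorphism for the matrix-multiplication operations (it is bijective because $\pi$ is an $\AAA$-weak equivalence), and postcomposition with $\phi$ gives a group isomorphism $[\PP,\SL_2]^{\AAA}\to[\PP,\ato]^{\AAA}$ for the respective \emph{conventional} operations, because postcomposition with a scheme morphism commutes with the cogroup comultiplication of $\PP$ used in \eqref{eq:group_operation}. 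Naturality of $\nu$ and of $\pi^*$ then makes the square relating $\cc_0$ on $[\J,\ato]\naif$ to its analogue $\widetilde{\cc_0}:=(\pi_{\AAA}^*)^{-1}\circ\nu$ on $[\J,\SL_2]\naif$ commute, and all the comparison maps $\phi_*$ in that square are bijections (the one on $[\J,-]\naif$ by Proposition \ref{prop:sequence_of_bijections}, the ones on $\AAA$-homotopy classes because $\phi$ is an $\AAA$-equivalence), as are the horizontal $\nu$'s.

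It then remains to show that $\widetilde{\cc_0}$ carries matrix multiplication on $[\J,\SL_2]\naif$ to $\oplus^{\AAA}$ on $[\PP,\SL_2]^{\AAA}$; combining this with the two $\phi_*$-isomorphisms and Lemma \ref{lem:sl2_ato_naive_iso} writes $\cc_0=\phi_*\circ\widetilde{\cc_0}\circ(\phi_*)^{-1}$ as a composite of group isomorphisms and finishes the proof. By the preceding paragraph $\widetilde{\cc_0}$ is already a homomorphism from matrix multiplication upstairs to matrix multiplication on $[\PP,\SL_2]^{\AAA}$, so the only remaining point is that on $[\PP,\SL_2]^{\AAA}$ the operation coming from the group object $\SL_2$ agrees with the conventional operation $\oplus^{\AAA}$ coming from the cogroup $\PP\simeq S^1\wedge\bbG_m$. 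This is the Eckmann--Hilton argument: the two operations share the constant map to the basepoint as a common two-sided unit and satisfy the interchange law, hence coincide and are both commutative. This last step is the one I expect to be the main obstacle --- not because the algebra of Eckmann--Hilton is hard, but because one must verify carefully that the cogroup comultiplication of $\PP$ and the group multiplication of $\SL_2$ really do induce operations satisfying the interchange identity in the pointed motivic homotopy category; for this I would transport the classical treatment, for instance \cite[Chapter 2]{Switzer}, which uses only the abstract cogroup/group-object formalism.
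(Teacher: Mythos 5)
Your proposal is correct and follows essentially the same route as the paper's own proof: factor $\cc_0$ through $[\J,\SL_2]\naif$ via the isomorphisms $\phi_*$, observe that the naive and $\AAA$-homotopy operations on maps into $\SL_2$ both come from the group-object structure so the comparison maps are homomorphisms, and then invoke the Eckmann--Hilton argument (as in \cite[Proposition 2.25]{Switzer}) to identify the group-object operation on $[\PP,\SL_2]^{\AAA}$ with the conventional one. The paper's proof is organized around the same commutative square and the same final appeal to Eckmann--Hilton, so there is nothing substantive to add.
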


\begin{proof}
Let $\cc'_0 \colon [\J, \SL_2]\naif \to [\bbP^1, \SL_2]^{\AAA}$ denote the composition of the canonical map $[\J, \SL_2]\naif \to [\J, \SL_2]^{\AAA}$ and the bijection $[\J, \SL_2]^{\AAA} \to [\bbP^1, \SL_2]^{\AAA}$ which is given by the inverse of the bijection $\pi_{\AAA}^*$.
Since both groups $[\bbP^1,\SL_2]^{\AAA}$ and $[\bbP^1,\ato]^{\AAA}$ inherit their operation from the cogroup structure of $\bbP^1$, 
the $\bbA^1$-weak equivalence $\SL_2 \xrightarrow{\phi} \ato$ induces an isomorphism of groups $\phi_* \colon [\PP, \SL_2]^{\AAA} \to [\PP, \ato]^{\AAA}$.
By Lemma \ref{lem:sl2_ato_naive_iso} the map $\phi_* \colon [\J, \SL_2]\naif \to [\J, \ato]\naif$ is a group isomorphism. 
We then have the following commutative diagram. 
\begin{align*}
\xymatrix{
[\J, \ato]\naif \ar[r]^-{\cc_0} & [\PP, \ato]^{\AAA} \\
[\J, \SL_2]\naif \ar[r]_-{\cc'_0} \ar[u]_-{\cong}^-{\phi_*} & [\bbP^1,\SL_2]^{\AAA} \ar[u]^-{\cong}_-{\phi_*} 
}
\end{align*}

Hence, in order to establish that $\cc_0$ is a group isomorphism, it suffices to show that $\cc'_0$ is a group isomorphism. 
Since $\J$ is affine and $\SL_2$ is $\AAA$-naive by \cite[Theorem 4.2.1]{AHW2}, we know that $\cc'_0$ is a bijection by Proposition \ref{prop:appendix_main_result}. 
Hence it suffices to show that $\cc'_0$ is a group homomorphism.

Again, because $\SL_2$ is $\AAA$-naive, the canonical map $[\J, \SL_2]\naif \to [\J, \SL_2]^{\AAA}$ is a bijection by Proposition \ref{prop:appendix_main_result}. 
This bijection is a group isomorphism because the operation on both sets is defined using the same construction, that is, the sum of two maps is given by
\begin{equation*}
\J \xrightarrow{\Delta} \J \times \J \xrightarrow{f \times g} \SL_2 \times \SL_2 \xrightarrow{m} \SL_2, 
\end{equation*}
where $m \colon \SL_2 \times \SL_2 \to \SL_2$ is the multiplication on $\SL_2$. 
In other words, the group structure is induced by the group object structure on $\SL_2$.

Similarly, the set $[\PP, \SL_2]^{\AAA}$ also obtains the structure of a group using that $\SL_2$ is a group object in the pointed $\AAA$-homotopy category.
The Eckmann--Hilton argument given in \cite[Proposition 2.25]{Switzer} can be applied in this scenario to show that this group structure coincides with the conventional group structure, see also \cite[Proposition 2.2.12]{arkowitz2011introduction}. 
Hence we may assume that the group operation on $[\PP, \SL_2]^{\AAA}$ is induced by the group object structure on $\SL_2$. 
Combining these observations shows that the composition 
\[
[\J, \SL_2]\naif \to [\J, \SL_2]^{\AAA} \to [\PP,\SL_2]^{\AAA}
\]
is a group homomorphism. 
This is the map $\cc'_0$ which proves the assertion.  
\end{proof}

\begin{corollary}\label{cor:J_to_ato_naive_is_abelian}
The group $[\J, \ato]\naif$ is abelian.
\end{corollary}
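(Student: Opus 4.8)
The plan is to deduce commutativity by transporting the problem to $[\PP,\SL_2]^{\AAA}$, where two a priori distinct group structures are available, exactly as they were compared in the proof of theorem \ref{thm:degree_zero_iso}. First I would invoke theorem \ref{thm:degree_zero_iso}: since $\cc_0 \colon [\J,\ato]\naif \to [\PP,\ato]^{\AAA}$ is an isomorphism of groups carrying $\oplus$ to the conventional operation, it suffices to prove that $[\PP,\ato]^{\AAA}$ with its conventional operation is abelian. The $\AAA$-weak equivalence $\phi \colon \SL_2 \to \ato$ induces an isomorphism $\phi_* \colon [\PP,\SL_2]^{\AAA} \xrightarrow{\cong} [\PP,\ato]^{\AAA}$ that is compatible with the conventional operations, both being inherited from the cogroup structure of $\PP \simeq S^1 \wedge \bbG_m$. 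Hence it is enough to show that $[\PP,\SL_2]^{\AAA}$ is abelian.

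Next I would recall that $[\PP,\SL_2]^{\AAA}$ supports two group operations: the conventional one, coming from the co-$H$ (indeed cogroup) structure of $\PP \simeq S^1 \wedge \bbG_m$; and the one induced by the group-object structure of $\SL_2$, pulled back along the diagonal $\PP \to \PP \times \PP$. These two operations satisfy the interchange law, because the comultiplication of $\PP$ and the multiplication of $\SL_2$ commute past one another in the homotopy category. The Eckmann--Hilton argument \cite[Proposition 2.25]{Switzer} (cf.\ \cite[Proposition 2.2.12]{arkowitz2011introduction}), which was already used in the proof of theorem \ref{thm:degree_zero_iso} to identify these two operations, then forces them to agree and, crucially, to be commutative. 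Transporting this back along $\phi_*$ and $\cc_0$ yields that $[\J,\ato]\naif$ is abelian.

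I do not expect a genuine obstacle: the corollary is essentially a bookkeeping consequence of theorem \ref{thm:degree_zero_iso} together with the Eckmann--Hilton formalism recalled in its proof. The only point demanding a little care is to be sure that $\cc_0$ really carries the operation $\oplus$ on the left to the \emph{conventional} operation on the right — but that is precisely the statement of theorem \ref{thm:degree_zero_iso}, so nothing beyond citing it is required.
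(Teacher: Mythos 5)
Your argument is correct and follows the paper's own route: the paper likewise reduces to $[\PP,\SL_2]^{\AAA}$ via the isomorphisms established in theorem \ref{thm:degree_zero_iso} and concludes by the Eckmann--Hilton argument. You have merely spelled out in more detail the interchange-law step that the paper leaves implicit.
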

\begin{proof}
Since $[\J, \ato]\naif $ is isomorphic to $[\PP, \SL_2]^{\AAA}$, the Eckmann--Hilton argument shows that this group is abelian.
\end{proof}

\begin{remark}\label{rem:Morel_shows_K1MW_gives_degree_0_maps}
Morel shows in \cite[\S 7.3]{Morel12} that the group $[\PP,\ato]^{\AAA}$ is isomorphic to $K_1^{\mathrm{MW}}(k)$, the first Milnor--Witt $K$-theory group of the field $k$ (see Definition \ref{def:Milnor_Witt_K} below). 
In short, the computation \cite[Theorem 7.13]{Morel12} and the $\AAA$-weak equivalence between $\SL_2$ and $\ato$ gives $\pi_1^{\AAA}(\ato) \cong \KMW_2$. The contraction of this sheaf evaluated at $\Spec{k}$ then computes $[\PP, \ato]^{\AAA}$:
\begin{equation*}
[\PP, \ato]^{\AAA} \cong \pi_1^{\AAA}(\ato)_{-1}(\Spec{k}) \cong (\KMW_2)_{-1}(\Spec{k}) \cong K^{\mathrm{MW}}_1(k).
\end{equation*}
Hence our results show that there is an isomorphism $[\J, \SL_2]\naif \cong K_1^{\mathrm{MW}}(k)$. 
We make this isomorphism explicit in Section \ref{sec:Milnor_Witt_K_theory_and_degree_0}. 
\end{remark}

\begin{remark} \label{q:explicit_matrix_commutation}
For any two pointed matrices $M,M' \in \SL_2(R)$, which represent pointed morphisms $\J \to \SL_2$, there is a chain of elementary homotopies connecting $M\cdot M'$ and $M' \cdot M$. 
We do not know of a general algorithm to construct this chain of homotopies explicitly. 
\end{remark}

The following explicit naive homotopies will be used in the later sections.

\begin{lemma}
\label{lem:ABeqUV}
Consider a matrix $M = 
\begin{pmatrix}
A & -V \\ 
B & U 
\end{pmatrix} 
\in \SL_2(R)$.
Then $M$ and $(M^{-1})^T$ are naively homotopic. 
Thus, the unimodular rows $(A,B)$ and $(U,V)$ are naively homotopic. 
\end{lemma}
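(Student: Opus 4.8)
The plan is to realize $(M^{-1})^{T}$ as a conjugate of $M$ by a fixed matrix of determinant one, and then to deform that fixed matrix to the identity along an explicit polynomial path so that the resulting homotopy is pointed. First one records the purely algebraic identity
\[
w N w^{-1} = \left(N^{-1}\right)^{T} \quad \text{for all } N \in \SL_2(R), \text{ where } w = \begin{pmatrix} 0 & 1 \\ -1 & 0 \end{pmatrix},
\]
which is a one-line $(2\times 2)$ computation (and holds over any commutative ring once $\det N = 1$). Applied to $M = \begin{pmatrix} A & -V \\ B & U \end{pmatrix}$ it gives $w M w^{-1} = (M^{-1})^{T} = \begin{pmatrix} U & -B \\ V & A \end{pmatrix}$, whose first column is the unimodular row $(U,V)$. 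Since $M$ is pointed, $M(\jj)$ is the identity matrix, hence $(M^{-1})^{T}(\jj) = (I^{-1})^{T} = I$, so $(M^{-1})^{T}$ corresponds to a pointed morphism $\J \to \SL_2$ as well.

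Next I would connect $w$ to the identity through $\SL_2(k[T])$. Factoring $w$ into elementary matrices, $w = \begin{pmatrix} 1 & 1 \\ 0 & 1 \end{pmatrix}\begin{pmatrix} 1 & 0 \\ -1 & 1 \end{pmatrix}\begin{pmatrix} 1 & 1 \\ 0 & 1 \end{pmatrix}$, and inserting the parameter $T$ into the off-diagonal entries yields
\[
\gamma(T) = \begin{pmatrix} 1 & T \\ 0 & 1 \end{pmatrix}\begin{pmatrix} 1 & 0 \\ -T & 1 \end{pmatrix}\begin{pmatrix} 1 & T \\ 0 & 1 \end{pmatrix} = \begin{pmatrix} 1-T^{2} & 2T - T^{3} \\ -T & 1 - T^{2} \end{pmatrix} \in \SL_2(k[T]),
\]
with $\gamma(0) = I$ and $\gamma(1) = w$. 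Setting $H(T) = \gamma(T)\, M\, \gamma(T)^{-1} \in \SL_2(R[T])$ we obtain $H(0) = M$ and $H(1) = w M w^{-1} = (M^{-1})^{T}$. This $H$ is pointed: since the entries of $\gamma(T)$ lie in $k[T]$ and $M$ reduces to $I$ modulo $\jj' = (x-1,y,z,w) \subseteq R[T]$, we get $H(T)(\jj') = \gamma(T)\, I\, \gamma(T)^{-1} = I$. Hence $H$ is a pointed elementary homotopy, and $M \simeq (M^{-1})^{T}$ in $[\J,\SL_2]\naif$.

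Finally, postcomposing $H$ with the first-column projection $\phi \colon \SL_2 \to \ato$ produces a pointed elementary homotopy in $\ato$ between the first column $(A,B)$ of $M$ and the first column $(U,V)$ of $(M^{-1})^{T}$; equivalently, since $\phi_*$ is defined on naive homotopy classes (as in Proposition~\ref{prop:sequence_of_bijections}), the equality $[M] = [(M^{-1})^{T}]$ in $[\J,\SL_2]\naif$ descends to $[(A,B)] = [(U,V)]$ in $[\J,\ato]\naif$. The one step that requires genuine care is the pointedness of $H$, which is precisely why one must conjugate by the path $\gamma(T)$ through $I$ rather than by the constant matrix $w$ (which is not a pointed morphism); beyond that the argument is a routine sequence of $(2\times 2)$ verifications, so I do not anticipate a real obstacle.
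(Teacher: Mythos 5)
Your proof is correct and follows essentially the same route as the paper: both conjugate $M$ by an explicit polynomial path in $\SL_2(k[T])$ from the identity to a Weyl element $w$ (the paper uses $H(T)=\begin{pmatrix}1-T^2 & -T\\ T(2-T^2) & 1-T^2\end{pmatrix}$, ending at $-w$, which conjugates identically), so that $H(T)MH(T)^{-1}$ is a pointed homotopy from $M$ to $(M^{-1})^T$. Your write-up just makes explicit the identity $wNw^{-1}=(N^{-1})^T$, the elementary-matrix origin of the path, and the pointedness check that the paper leaves as "straightforward to verify."
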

\begin{proof}
Consider the matrix $H = 
\begin{pmatrix}
1-T^2 & -T \\
T(2-T^2) & 1-T^2 
\end{pmatrix}\in\SL_2(R[T])$. 
The matrix $H$ defines an unpointed homotopy from the identity matrix to 
$\begin{pmatrix}
    0 & -1 
    \\1 & 0 
\end{pmatrix}$. 
It is straightforward to verify that the product $H M H^{-1}$ is a pointed homotopy between $M$ and $(M^{-1})^T$ as claimed.  
\end{proof}

\begin{lemma}
\label{lem:scaling_lemma}
Consider a matrix 
$\begin{pmatrix}
A & -V \\ 
B & U 
\end{pmatrix} \in SL_2(R)$ and let $u \in k^\times$. Then there is an elementary homotopy
\begin{equation}\label{eq:elementary_htpy_unimodulr_row_and_usqaure}
\begin{pmatrix}
A & -V \\ 
B & U 
\end{pmatrix} \simeq \begin{pmatrix}
A & -\frac{1}{u^2}V \\ 
u^2B & U 
\end{pmatrix}.
\end{equation}
Thus, the unimodular row $(A,B)$ is naively homotopic to the unimodular row $(A,u^2B)$.
\end{lemma}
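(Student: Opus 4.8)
The plan is to realise the rescaling in \eqref{eq:elementary_htpy_unimodulr_row_and_usqaure} as conjugation by a diagonal matrix, and then to upgrade that conjugation to an honest elementary homotopy. Write $M = \begin{pmatrix} A & -V \\ B & U \end{pmatrix}$ and set $D = \begin{pmatrix} u^{-1} & 0 \\ 0 & u \end{pmatrix} \in \SL_2(k)$. A one-line matrix computation gives
\begin{equation*}
D\, M\, D^{-1} = \begin{pmatrix} A & -\tfrac{1}{u^{2}}V \\ u^{2}B & U \end{pmatrix},
\end{equation*}
so the right-hand matrix in \eqref{eq:elementary_htpy_unimodulr_row_and_usqaure} is exactly $D M D^{-1}$, which again lies in $\SL_2(R)$, and its first column is the unimodular row $(A, u^{2}B)$.

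The key step is to connect the identity to $D$ by a polynomial path inside $\SL_2$. Since $\SL_2(k) = E_2(k)$, the diagonal matrix $D$ is a product of elementary matrices; concretely, I would take
\begin{equation*}
D(T) := \begin{pmatrix} 1 & 0 \\ (u-1)T & 1 \end{pmatrix}\begin{pmatrix} 1 & T \\ 0 & 1 \end{pmatrix}\begin{pmatrix} 1 & 0 \\ (u^{-1}-1)T & 1 \end{pmatrix}\begin{pmatrix} 1 & -uT \\ 0 & 1 \end{pmatrix} \in \SL_2(k[T]),
\end{equation*}
which one checks directly satisfies $D(0) = I_2$ and $D(1) = D$. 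Then $H(T) := D(T)\, M\, D(T)^{-1} \in \SL_2(R[T])$ is an elementary homotopy with $H(0) = M$ and $H(1) = D M D^{-1}$, proving \eqref{eq:elementary_htpy_unimodulr_row_and_usqaure}. The pointed version is free: the entries of $D(T)$ lie in $k[T]$, so reducing modulo $\jj'$ gives $H(T)(\jj') = D(T)\, M(\jj)\, D(T)^{-1}$, which equals $I_2$ whenever $M$ represents a pointed morphism, i.e.\ $M(\jj) = I_2$. Composing $H$ with $\phi\colon \SL_2 \to \ato$ (extraction of first columns) then yields the asserted naive homotopy of morphisms $\calJ \to \ato$ from $(A,B)$ to $(A, u^{2}B)$, pointed if $M$ is.

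I do not expect a genuine obstacle here; the only thing that needs to be noticed is why one must detour through $\SL_2$ at all. The naive guess of a direct homotopy of unimodular rows, such as $\bigl(A,\, (1+(u^{2}-1)T)B\bigr)$, fails because $1+(u^{2}-1)T$ is not a unit in $R[T]$, so that row is not unimodular. Conjugation keeps us inside $\SL_2$, where unimodularity of the first column is automatic, and this is precisely what fixes the issue. Verifying the factorisation of $D$ and the conjugation identity are both routine calculations. One may also view this lemma as the elementary geometric shadow of the relation $\langle u^{2}\rangle = \langle 1\rangle$ in $\GW(k)$.
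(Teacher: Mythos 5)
Your proof is correct and follows essentially the same route as the paper's: conjugation by $\mathrm{diag}(u^{-1},u)$, with the homotopy to the identity obtained by decomposing the diagonal matrix into elementary matrices. You simply make explicit what the paper leaves implicit — the factorization $D(T)$ (which does check out, with $D(0)=I_2$ and $D(1)=\mathrm{diag}(u^{-1},u)$) and the pointedness of the resulting homotopy.
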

\begin{proof}
The matrix on the right-hand side of Equation \eqref{eq:elementary_htpy_unimodulr_row_and_usqaure} can be written as the following product    
\begin{equation*}
\begin{pmatrix}
A & -\frac{1}{u^2}V \\ u^2B & U 
\end{pmatrix}
= \begin{pmatrix}
\frac{1}{u} & 0 \\0 & u
\end{pmatrix}\begin{pmatrix}
A & -V \\ B & U 
\end{pmatrix}\begin{pmatrix}
 u & 0 \\0 & \frac{1}{u}
\end{pmatrix}.
\end{equation*}
The diagonal matrices can be decomposed to a product of elementary matrices, which are all homotopic to the identity.
\end{proof}


\subsection{Action of degree \texorpdfstring{$0$}{0} maps on degree \texorpdfstring{$n$}{n} maps}\label{sec:action_of_0_maps_on_n_maps}

Recall that we write $[\calJ, \PP]\naif_n$ for the set of naive homotopy classes of maps $\calJ \to \PP$ with degree $n$. 
We define a group action of $[\calJ, \ato]\naif\cong [\calJ, \SL_2]\naif\cong [\calJ, \PP]\naif_0$ on $[\calJ, \PP]\naif_n$ for all $n \neq 0$. We start by first defining an operation on actual morphisms, and then show that the operation respects the naive homotopy equivalence relation.

\begin{definition} \label{def:group_action_SL2}
Let $M \colon \calJ \to \SL_2$ be a morphism with corresponding matrix 
\begin{equation*}
\begin{pmatrix}A & -V 
    \\B & U 
\end{pmatrix}
\end{equation*}
and consider a map $[s_0, s_1]  \colon  \calJ \to \PP$ determined by $n \in \bbN$, the algebraic line bundle $\calP_n$ or $\calQ_n$, and generating global sections $s_0, s_1$. 

We define $M \oplus [s_0, s_1] \colon \calJ \to \PP$ to be the morphism determined by the same algebraic line bundle with the generating global sections $M\oplus [s_0,s_1] = [As_0-Vs_1, Bs_0+Us_1]$ which are obtained from the following matrix multiplication
\begin{equation*}
\begin{pmatrix}A & -V 
    \\B & U 
\end{pmatrix}
\begin{pmatrix}
    s_0 \\
    s_1
    \end{pmatrix}
      = 
      \begin{pmatrix}
      As_0 - Vs_1 \\ Bs_0 + U s_1
      \end{pmatrix}. 
\end{equation*}
\end{definition}

\begin{proposition}\label{prop:oplus_is_a_morphism}
Given a map $[s_0, s_1] \colon \calJ \to \PP$ with algebraic line bundle $\calL$ (either $\calP_n$ or $\calQ_n$) and a map $M \colon \calJ \to \SL_2$, the construction $M \oplus [s_0, s_1]$ is a morphism from $\calJ $ to $\bbP^1$. 
If both maps are pointed, the result is also pointed. Furthermore, the operation of Definition \ref{def:group_action_SL2} defines a left group action of $[\J,\SL_2]\naif$ on the set $\Sm_k(\J,\PP)$. 
\end{proposition}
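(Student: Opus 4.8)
The plan is to verify the three assertions of the proposition in order: first that $M \oplus [s_0, s_1]$ is a well-defined morphism $\calJ \to \bbP^1$, then that it is pointed when both inputs are, and finally that the assignment $(M, [s_0, s_1]) \mapsto M \oplus [s_0, s_1]$ is a left group action. For the first claim I would observe that $\calL$ is unchanged, so it suffices to check that the new sections $As_0 - Vs_1$ and $Bs_0 + Us_1$ still generate $\calL$. This is immediate from the fact that the matrix $\begin{pmatrix} A & -V \\ B & U\end{pmatrix}$ lies in $\SL_2(R)$, hence is invertible over $R$: from a B\'ezout relation for $(s_0,s_1)$ we obtain one for the transformed pair by applying the inverse matrix, which has entries in $R$. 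Concretely, if $U_0 s_0 + U_1 s_1 = \ell$ for a local generator $\ell$, then left-multiplying $\begin{pmatrix}s_0\\s_1\end{pmatrix}$ by an $\SL_2(R)$ matrix does not change the ideal generated by the entries, so $(As_0 - Vs_1, Bs_0 + Us_1)$ generate $\calL$ at every point. By \cite[Theorem II.7.1]{Hartshorne} these data determine a morphism $\calJ \to \bbP^1$, and its degree is unchanged since the line bundle is unchanged.

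For pointedness, I would use proposition \ref{prop:pointed_map}: the map $[s_0,s_1]$ with line bundle $\calP_n$ (the $\calQ_n$ case is identical by lemma \ref{lem:suffices_to_prove_for_pn}) is pointed precisely when the coefficient $b_0$ of $\begin{bmatrix}x^n\\z^n\end{bmatrix}$ in $s_1$ lies in $\jj$. Writing $s_0 = a_0\begin{bmatrix}x^n\\z^n\end{bmatrix} + a_1\begin{bmatrix}y^n\\w^n\end{bmatrix}$ and $s_1 = b_0\begin{bmatrix}x^n\\z^n\end{bmatrix} + b_1\begin{bmatrix}y^n\\w^n\end{bmatrix}$, the new sections have leading coefficients $A a_0 - V b_0$ and $B a_0 + U b_0$. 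Evaluating at $\jj$: since $M$ is pointed we have $A(\jj) = 1$, $B(\jj) = 0$, $U(\jj) = 1$, $V(\jj) = 0$, and since $[s_0,s_1]$ is pointed we have $b_0(\jj) = 0$ (and $a_0(\jj)$ a unit, which we may normalize to $1$ by proposition \ref{prop:normalized_map}). Hence the new $b_0$-coefficient, $B a_0 + U b_0$, evaluates to $0 \cdot 1 + 1 \cdot 0 = 0$ at $\jj$, so it lies in $\jj$, and proposition \ref{prop:pointed_map} gives that $M \oplus [s_0,s_1]$ is pointed.

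For the group action property, I would check the two axioms. The identity matrix $I$ acts by $I \oplus [s_0,s_1] = [s_0, s_1]$ directly from the definition. For compatibility with composition, let $M, N \in \SL_2(R)$; then $M \oplus (N \oplus [s_0,s_1])$ is obtained by applying the matrix $N$ to the section column vector $\begin{pmatrix}s_0\\s_1\end{pmatrix}$ and then applying $M$, which by associativity of matrix multiplication equals applying $MN$ to $\begin{pmatrix}s_0\\s_1\end{pmatrix}$, i.e. $(MN) \oplus [s_0,s_1]$. Here I should be slightly careful about the sign/transpose conventions built into definition \ref{def:group_action_SL2} — the matrix written there is $\begin{pmatrix}A & -V\\ B & U\end{pmatrix}$ and it acts on the left on the column $\begin{pmatrix}s_0\\s_1\end{pmatrix}$, so the correspondence $M \leftrightarrow$ "left multiplication by $M$" is a homomorphism and everything is consistent with the product in $\SL_2(R)$ used in definition \ref{defn:M-sum}. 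Since all three verifications are honest algebra on the representing sections rather than on homotopy classes, the bulk of the work is the generation claim in the first paragraph; that is the only place where anything beyond bookkeeping happens, and even there the point is just that $\SL_2(R)$-matrices preserve the ideal generated by the entries of a column vector. The main obstacle, such as it is, is simply keeping the sign and transpose conventions of definition \ref{def:group_action_SL2} aligned so that "act, then act again" matches "multiply the matrices, then act" on the nose rather than up to an anti-homomorphism; once that is pinned down the argument is routine.
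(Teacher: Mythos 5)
Your proposal is correct and follows essentially the same route as the paper: the generation claim is handled by inverting the $\SL_2(R)$-matrix (the paper just writes out the explicit relations $U(As_0-Vs_1)+V(Bs_0+Us_1)=s_0$ and $-B(As_0-Vs_1)+A(Bs_0+Us_1)=s_1$), pointedness is the same evaluation at $\jj$ using $B(\jj)=0$ and $s_1(\jj)=0$, and the action axioms reduce to associativity of matrix multiplication exactly as in the paper.
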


\begin{proof}
The morphism $M \colon \calJ \to \SL_2$ is described by a matrix 
\begin{equation*}
\begin{pmatrix} A & -V \\ B & U \end{pmatrix} \in \SL_2(R).
\end{equation*}
We observe that $U(As_0 -Vs_1) +V(Bs_0 + Us_1) = s_0$ and $-B(As_0 -Vs_1) +A(Bs_0 + Us_1) = s_1$. 
By assumption, the sections $s_0$, $s_1$ generate the algebraic line bundle $\calL$. 
Hence the pair of sections $As_0 -Vs_1$, $Bs_0 + Us_1$ generate $\calL$ as well. This proves the first assertion.

That the map $[s_0, s_1]$ is pointed means that $s_1 \in \jj \subseteq R$, or equivalently, $s_1(\jj)=0$ in $R/\jj$.
That $M$ is pointed means $M(\jj)$ is the identity matrix. To verify $M\oplus [s_0,s_1]$ is pointed, we must check that $B(\jj)s_0(\jj)+U(\jj)s_1(\jj)=0$, but this is clear as $B(\jj)=0$ and $s_1(\jj)=0$ from our assumptions. 

The fact that the operation is a left group action follows from the associativity of matrix multiplication and the definition of the group structure on maps $\calJ \to \SL_2$.
\end{proof}

The next theorem employs the notation of Definition \ref{def:notation_maps} for morphisms $\J \to \PP$. 

\begin{theorem}
\label{thm:deg0action_factorization}
Let $f \colon \calJ \to \PP$ be a map of degree $n$. Then there exists a matrix $M\in\SL_2(R)$ such that $f = M \oplus (1,0:0,1)_n $.
\end{theorem}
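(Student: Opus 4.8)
The idea is to reduce any degree $n$ map to the "standard" degree $n$ map $(1,0;0,1)_n$ by finding an explicit matrix in $\SL_2(R)$ that transports one pair of generating sections to the other. Write $f = [s_0, s_1]$ with invertible sheaf $\calP_n$ (the case $\calQ_n$ is symmetric via lemma \ref{lem:suffices_to_prove_for_pn}), and by proposition \ref{prop:normalized_map} normalize so that $s_0(\jj) = 1$ and $s_1(\jj) = 0$. Using the description of sections in $\calP_n$ from remark \ref{rem:pair_of_secs_as_map_to_P1}, write
\begin{equation*}
s_0 = a_0 \begin{bmatrix} x^n \\ z^n \end{bmatrix} + a_1 \begin{bmatrix} y^n \\ w^n \end{bmatrix}, \qquad
s_1 = b_0 \begin{bmatrix} x^n \\ z^n \end{bmatrix} + b_1 \begin{bmatrix} y^n \\ w^n \end{bmatrix},
\end{equation*}
with $a_0, a_1, b_0, b_1 \in R$. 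The target map $(1,0;0,1)_n$ has generating sections $e_0 = \begin{bmatrix} x^n \\ z^n \end{bmatrix}$ and $e_1 = \begin{bmatrix} y^n \\ w^n \end{bmatrix}$. So I want a matrix $M = \begin{pmatrix} A & -V \\ B & U \end{pmatrix} \in \SL_2(R)$ with $M \cdot (e_0, e_1)^T = (s_0, s_1)^T$, i.e. $A = a_0$, $-V = a_1$, $B = b_0$, $U = b_1$. The constraint is precisely that $\det \begin{pmatrix} a_0 & a_1 \\ b_0 & b_1 \end{pmatrix} = a_0 b_1 - a_1 b_0 = 1$ in $R$.

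The crux, then, is to show that after suitably adjusting the representation of $s_0, s_1$ one may assume $a_0 b_1 - a_1 b_0 = 1$. Here is where the work lies. The pair $(s_0, s_1)$ generates $\calP_n$, which by proposition \ref{prop:morphism_ideal_condition} is an explicit unimodularity condition on $(a_0, a_1, b_0, b_1)$ involving $x^n, y^n, z^n, w^n$. The determinant $\delta := a_0 b_1 - a_1 b_0$ is a function on $\J$ whose value at $\jj$ is $1 \cdot s_0(\jj)$-type data; combining the normalization $s_0(\jj) = 1$, $s_1(\jj) = 0$ with pointedness (proposition \ref{prop:pointed_map}: $b_0(\jj) = 0$, and one checks $a_0(\jj) = 1$, $b_1(\jj) = 1$) one gets $\delta(\jj) = 1$. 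I expect that $\delta$ is in fact a global unit of $R$, hence in $k^\times$, and equals $1$: indeed, the wedge $s_0 \wedge s_1$ is a global section of $\calP_n \otimes \calP_n \cong \calP_{2n}$ (by proposition \ref{prop:tensor_of_P1_is_Pn}), but expressed in the basis $e_0 \wedge e_1$ of the line $\bigwedge^2$ of the rank-two ambient, the coefficient is $\delta$; since $s_0, s_1$ generate, $s_0 \wedge s_1$ is a nowhere-vanishing section of a trivial-on-the-relevant-locus bundle, forcing $\delta \in R^\times = k^\times$. Then $\delta(\jj) = 1$ gives $\delta = 1$. One must be slightly careful: the two-dimensional ambient module is $R^2$ only after the patching identifications, so this argument should be run on the charts $D(x)$ and $D(w)$ separately and then glued, exactly as in construction \ref{con:J_to_P_map}.

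Once $\delta = 1$ is established, set $A = a_0$, $B = b_0$, $V = -a_1$, $U = b_1$; then $M = \begin{pmatrix} A & -V \\ B & U \end{pmatrix} \in \SL_2(R)$, and by definition \ref{def:group_action_SL2} we have $M \oplus (1,0;0,1)_n = [A e_0 - V e_1, B e_0 + U e_1] = [s_0, s_1] = f$. Moreover $M$ is pointed since $A(\jj) = 1, B(\jj) = 0, V(\jj) = 0, U(\jj) = 1$, so $M$ genuinely defines a pointed morphism $\J \to \SL_2$ and the action is the pointed one. The main obstacle is the determinant computation: showing $a_0 b_1 - a_1 b_0 = 1$ rather than merely a unit or merely unimodular-adjacent, and handling the fact that sections of $\calP_n$ live in a module glued from two charts rather than literally in $R^2$; I would carry this out by the chart-wise unit argument above, using proposition \ref{prop:morphism_ideal_condition} to control the local Bézout data.
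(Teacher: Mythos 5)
Your overall strategy is the paper's: reduce to finding $M\in\SL_2(R)$ with $M\cdot(e_0,e_1)^T=(s_0,s_1)^T$, which comes down to arranging $a_0b_1-a_1b_0=1$. You identify the right crux, but your argument for it does not work. The wedge construction fails at the root: $\calP_n$ has rank $1$, so $\bigwedge^2\calP_n=0$, and there is no rank-two ambient bundle of which $(s_0,s_1)$ is a frame. Concretely, computing in the ambient $R^2$ one finds $s_0\wedge s_1=(a_0b_1-a_1b_0)(x^nw^n-y^nz^n)=0$ because $xw=yz$, so the "coefficient against $e_0\wedge e_1$" carries no information. Worse, $\delta:=a_0b_1-a_1b_0$ is not even a function of $f$: the generators $e_0,e_1$ of $\calP_n$ satisfy the relations $y^ne_0=x^ne_1$ and $w^ne_0=z^ne_1$, so the coefficients $(a_0,a_1,b_0,b_1)$ are non-unique and $\delta$ changes with the choice. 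For example the single section $e_0+xe_1=(1+y)e_0$ yields $\delta$-contributions $1$ or $1+y$ (not a unit in $R$) depending on how it is written; and the pointed, normalized map $(1,0:0,2)_1$ has $b_1(\jj)=2$, so your claim that pointedness forces $\delta(\jj)=1$ is false. Thus $\delta$ need be neither $1$ nor a unit, and no chart-by-chart gluing will rescue the wedge argument.

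The correct mechanism --- and the one the paper uses --- is to exploit exactly this non-uniqueness. Replacing $(a_0,a_1)$ by $(a_0+y^nc+w^nc',\,a_1-x^nc-z^nc')$ and $(b_0,b_1)$ by $(b_0-y^nd-w^nd',\,b_1+x^nd+z^nd')$ leaves the sections, hence $f$, unchanged, while it shifts $\det M=\delta$ by $c(x^nb_0+y^nb_1)+c'(z^nb_0+w^nb_1)+d(x^na_0+y^na_1)+d'(z^na_0+w^na_1)$, i.e.\ by an arbitrary element of the ideal $I=(x^na_0+y^na_1,\,z^na_0+w^na_1,\,x^nb_0+y^nb_1,\,z^nb_0+w^nb_1)$. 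By proposition \ref{prop:morphism_ideal_condition} the generating hypothesis says precisely that $I$ is the unit ideal, so $1-\delta\in I$ can be realized by a choice of $c,c',d,d'$, giving $\det M=1$. Two smaller points: the theorem is unpointed, and the resulting $M$ is in general not pointed (see remark \ref{rem:M_may_not_be_pointed}); the pointed refinement is proposition \ref{prop:deg0action_pointed_factorization_new}, so your appeal to proposition \ref{prop:normalized_map} and the closing claim that $M$ is pointed do not belong in this proof.
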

\begin{proof}
We prove the assertion for $n>0$. 
The proof for $n <0$ is similar. 
Let $f = (a_0,a_1:b_0,b_1)_n$ with the notation introduced in Definition \ref{def:notation_maps}.  
For $c,c',d,d'\in R$, consider the matrix 
\begin{equation}\label{eq:matrix_to_get_f=M_plus_pi}
M = \begin{pmatrix}
a_0 + y^n c +w^n c' & a_1 -x^nc - z^nc' \\ b_0 - y^nd - w^nd' & b_1 +x^n d +z^n d'
\end{pmatrix}.
\end{equation}
The matrix $M$ can be written as the sum
\begin{equation*}
    M = \begin{pmatrix}
        a_0 & a_1 \\
        b_0 & b_1
    \end{pmatrix} + \begin{pmatrix}
        y^n c +w^n c' & -x^nc - z^nc' \\
        - y^nd - w^nd' & x^n d +z^n d'
    \end{pmatrix}.
\end{equation*}
By definition of $\oplus$ and the notation in Definition \ref{def:notation_maps} we compute 
\begin{align*}
& \begin{pmatrix}
        y^n c +w^n c' & -x^nc - z^nc' \\
        - y^nd - w^nd' & x^n d +z^n d'
\end{pmatrix} \oplus (1,0:0,1)_n \\
= & 
\begin{pmatrix}
        y^n c +w^n c' & -x^nc - z^nc' \\
        - y^nd - w^nd' & x^n d +z^n d'
\end{pmatrix}
\begin{pmatrix}
    \begin{bmatrix}
    x^n \\ z ^n
    \end{bmatrix} \\ 
    \begin{bmatrix}
    y^n \\ w ^n
    \end{bmatrix}
    \end{pmatrix}\\
= & 
\begin{bmatrix}
(y^n c + w^n c') \begin{bmatrix}
    x^n \\ z ^n
    \end{bmatrix}  + (-x^nc - z^nc') \begin{bmatrix}
    y^n \\ w ^n
    \end{bmatrix} \\
    (- y^nd - w^nd') \begin{bmatrix}
    x^n \\ z ^n
    \end{bmatrix} + 
    (x^n d +z^n d')\begin{bmatrix}
    y^n \\ w ^n
    \end{bmatrix}
    \end{bmatrix}
    = \begin{bmatrix} 0 \\ 0 \end{bmatrix}
\end{align*}

The last equality follows from the relations 
$y^n \begin{bmatrix}
    x^n \\ z ^n
    \end{bmatrix} = x^n \begin{bmatrix}
    y^n \\ w ^n
    \end{bmatrix}$ and  $w^n \begin{bmatrix}
    x^n \\ z ^n
    \end{bmatrix} = z^n \begin{bmatrix}
    y^n \\ w ^n
\end{bmatrix}.$ 
%
Hence, for any choice of $c,c',d,d'\in R$, we have 
\begin{align*}
M\oplus (1,0:0,1)_n & = M\oplus\left[
\begin{bmatrix}
    x^n \\ z ^n
    \end{bmatrix}, 
    \begin{bmatrix}
    y^n\\ w^n
    \end{bmatrix}
    \right] 
    =  
    \left[
a_0 \begin{bmatrix}
    x^n \\ z ^n
    \end{bmatrix} + a_1  
    \begin{bmatrix}
    y^n\\ w^n
    \end{bmatrix}, 
    b_0 \begin{bmatrix}
    x^n \\ z ^n
    \end{bmatrix} + b_1  
    \begin{bmatrix}
    y^n\\ w^n
    \end{bmatrix}
    \right] \\
    & = (a_0,a_1:b_0,b_1)_n.
    \end{align*}
We now show that there always exist $c,c',d,d'$ such that $M \in SL_2(R)$. 
The determinant of $M$ is given by the formula  
\begin{equation*}
\det(M) = a_0b_1-a_1b_0 + c(x^nb_0 + y^n b_1) + c'(z^nb_0 + w^nb_1) + d(x^na_0 + y^na_1) + d'(z^na_0 + w^na_1).  
\end{equation*} 
Since $(a_0,a_1:b_0,b_1)_n$ determines a morphism of schemes, 
it follows from Proposition \ref{prop:morphism_ideal_condition} that the ideal $I:=(x^na_0+y^na_1,z^na_0+w^na_1,x^nb_0+y^nb_1,z^nb_0+w^nb_1)$ is the unit ideal. 
Thus $1-a_0b_1-a_1b_0$ is in $I$, and there exist elements $c,c',d,d'$ such that $\det(M)=1$. 
This shows there exists $M \in SL_2(R)$ satisfying $f = M \oplus (1,0:0,1)_n $ as desired.
\end{proof}

Theorem \ref{thm:deg0action_factorization} implies that the action $\oplus$ is transitive. More concretely, the theorem has the following consequence: 

\begin{corollary}
Let $f,g \colon \calJ \to \PP$ be two morphisms of degree $n$. 
Then there exists a matrix $M\in\SL_2(R)$ such that $M \oplus f= g$. 
\end{corollary}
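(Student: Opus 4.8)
The plan is to deduce this immediately from Theorem \ref{thm:deg0action_factorization} together with the fact, recorded in Proposition \ref{prop:oplus_is_a_morphism}, that $\oplus$ is a left group action of $\SL_2(R)$ on the set of morphisms $\calJ \to \PP$ of a fixed degree. Since $f$ and $g$ both have degree $n$, they are classified by the same invertible sheaf ($\calP_n$, with the case $\calQ_n$ handled symmetrically via lemma \ref{lem:suffices_to_prove_for_pn}), so the standard morphism $e := (1,0:0,1)_n$ of definition \ref{def:notation_maps} serves as a common reference point for both. In effect, what we want to show is that the action of $\SL_2(R)$ on degree-$n$ morphisms is transitive, and Theorem \ref{thm:deg0action_factorization} already says the orbit of $e$ is everything.

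Concretely, I would first apply Theorem \ref{thm:deg0action_factorization} to $f$ to obtain a matrix $M_f \in \SL_2(R)$ with $f = M_f \oplus e$, and likewise a matrix $M_g \in \SL_2(R)$ with $g = M_g \oplus e$. Using that $\oplus$ is a group action, the first identity rearranges to $e = M_f^{-1} \oplus f$, where $M_f^{-1}$ is the inverse of $M_f$ in $\SL_2(R)$, viewed again as a morphism $\calJ \to \SL_2$. Substituting this into the second identity yields
\[
g = M_g \oplus e = M_g \oplus \left( M_f^{-1} \oplus f \right) = \left( M_g M_f^{-1} \right) \oplus f,
\]
so that $M := M_g M_f^{-1} \in \SL_2(R)$ has the required property.

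I do not expect a genuine obstacle here; the statement is a formal corollary. The only points to be careful about are that the degree-$n$ convention is applied uniformly so that $f$, $g$, and $e$ all carry the same line bundle, and that one invokes the group-action axioms from Proposition \ref{prop:oplus_is_a_morphism} rather than re-deriving the associativity relation $M \oplus (M' \oplus h) = (MM') \oplus h$ by hand. If one prefers to stay at the level of honest morphisms, that relation is in any case immediate from associativity of matrix multiplication applied to the column vector of generating sections $\begin{pmatrix} s_0 \\ s_1 \end{pmatrix}$ appearing in definition \ref{def:group_action_SL2}.
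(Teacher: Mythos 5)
Your proposal is correct and is essentially the argument the paper gives: apply theorem \ref{thm:deg0action_factorization} to both $f$ and $g$ to write each as $M_f \oplus (1,0:0,1)_n$ and $M_g \oplus (1,0:0,1)_n$, then take $M = M_g M_f^{-1}$ using the group-action property. The extra care you take about the line bundles and the associativity relation is sound but not needed beyond what the paper already records.
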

\begin{proof}
By Theorem \ref{thm:deg0action_factorization}, there exist $M'$ and $M''$ such that $M'\oplus (1,0:0,1)_n = f$ and  $M'' \oplus (1,0:0,1)_n  = g$. 
The desired matrix is given by $M = M''\cdot (M')^{-1}$. 
\end{proof}

\begin{remark}\label{rem:M_may_not_be_pointed}
The $\SL_2(R)$-matrix $M$ constructed in the proof of Theorem \ref{thm:deg0action_factorization} is not always pointed, even if the map $f$ we started with is pointed. 
For example, following the construction for the map $f=(1,1:0,1)_1$ yields the matrix $M=\begin{pmatrix}
1 & 1 \\ 0 & 1
\end{pmatrix}$ which is not pointed, since $M(\jj)$ is not the identity matrix. 
\end{remark}

Remark \ref{rem:M_may_not_be_pointed} shows that we have to improve our argument in order to get an action on pointed homotopy classes. 
We will now prove the necessary adjustments. 

\begin{proposition}
\label{prop:deg0action_pointed_factorization_new}
Let $f \colon  \calJ \to \PP$ be a pointed map of degree $n\neq 0$.
Then there is a pointed naive homotopy between $f$ and a map of the form $M \oplus (1,0:0,1)_n $ for some pointed matrix $M \in \SL_2(R)$. 
\end{proposition}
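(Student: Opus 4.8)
The plan is to take the non-pointed factorization $f = M \oplus (1,0:0,1)_n$ provided by Theorem \ref{thm:deg0action_factorization} and repair the matrix $M$ by multiplying it on the left with a constant unipotent matrix. First I would normalize $f$: by Proposition \ref{prop:normalized_map} and Proposition \ref{prop:pointed_map} we may assume $f = (a_0,a_1:b_0,b_1)_n$ with $a_0(\jj) = 1$ and $b_0(\jj) = 0$. Theorem \ref{thm:deg0action_factorization} then supplies a matrix $M \in \SL_2(R)$ with $f = M \oplus (1,0:0,1)_n$, namely the explicit matrix in \eqref{eq:matrix_to_get_f=M_plus_pi}. Evaluating that matrix at $\jj$, where $y, z, w \mapsto 0$ and $x \mapsto 1$, the lower-left entry becomes $b_0(\jj) = 0$ and the upper-left entry becomes $a_0(\jj) = 1$; since $\det M = 1$, the lower-right entry evaluates to $1$, so $M(\jj) = \begin{pmatrix} 1 & \mu \\ 0 & 1 \end{pmatrix}$ for some $\mu \in k$. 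It is important here that $\mu$ lies in $k$ rather than merely in $R$, which is precisely what the normalization guarantees.

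Next I would set $E = \begin{pmatrix} 1 & -\mu \\ 0 & 1 \end{pmatrix} \in \SL_2(R)$ (a constant matrix) and $M_f := E \cdot M$. Then $M_f(\jj) = E \cdot M(\jj)$ is the identity matrix, so $M_f$ is a pointed matrix, and because $\oplus$ is a left group action by Proposition \ref{prop:oplus_is_a_morphism} we have $M_f \oplus (1,0:0,1)_n = E \oplus \bigl(M \oplus (1,0:0,1)_n\bigr) = E \oplus f$. So it suffices to produce a pointed naive homotopy $f \simeq E \oplus f$.

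For this I would use the straight-line homotopy $E_T := \begin{pmatrix} 1 & -T\mu \\ 0 & 1 \end{pmatrix} \in \SL_2(R[T])$ through unipotent matrices: applying the recipe of Definition \ref{def:group_action_SL2} to $E_T$ and the sections of $f$, now regarded as sections of $\calP_n$ over $R[T]$, produces a morphism $H \colon \J \times \AAA \to \PP$ with $H(0) = I \oplus f = f$ and $H(1) = E \oplus f$; it is indeed a morphism by the argument of Proposition \ref{prop:oplus_is_a_morphism}. Writing $s_0, s_1$ for the generating sections of $f$, the generating sections of $H$ are $s_0 - T\mu s_1$ and $s_1$. Since left multiplication by $E_T$ leaves the second section unchanged, the coefficient of $\begin{bmatrix} x^n \\ z^n \end{bmatrix}$ in it is still $b_0$, which lies in $\jj$ and hence in $\jj' = (x-1,y,z,w)R[T]$; by the pointedness criterion of Proposition \ref{prop:pointed_map} applied over $R[T]$, $H$ is a pointed elementary homotopy. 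This settles the case $n > 0$, and $n < 0$ follows identically after conjugating by the symmetry $\tau$ of Lemma \ref{lem:suffices_to_prove_for_pn}, which interchanges $\calP_n$ and $\calQ_n$.

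The argument is mostly bookkeeping, and the one step that needs genuine care is the identification of $M(\jj)$ as a constant unipotent matrix, as this is what makes a single constant left-multiplication enough to achieve pointedness. This hinges on combining the normalization $a_0(\jj) = 1$, $b_0(\jj) = 0$ with the relation $\det M = 1$. Without first normalizing, $M(\jj)$ would instead be a product of a diagonal matrix and a unipotent one, and one would have to additionally homotope away the diagonal part; normalizing first keeps this clean.
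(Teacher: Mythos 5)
Your proposal is correct and follows essentially the same route as the paper: normalize so that $a_0(\jj)=1$, invoke theorem \ref{thm:deg0action_factorization}, observe that the resulting matrix evaluates at $\jj$ to a unipotent matrix $\begin{pmatrix}1 & \mu\\ 0 & 1\end{pmatrix}$ with $\mu\in k$, correct it by the constant matrix $\begin{pmatrix}1 & -\mu\\ 0 & 1\end{pmatrix}$, and connect $f$ to $M_f\oplus(1,0:0,1)_n$ by the straight-line homotopy $(a_0-T\mu b_0,\,a_1-T\mu b_1:b_0,b_1)_n$, which is pointed because the second section is unchanged. The paper's proof is the same argument written directly in terms of the coefficients $c,c',d,d'$ of the explicit matrix \eqref{eq:matrix_to_get_f=M_plus_pi}.
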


\begin{proof}
Let $f = (a_0,a_1:b_0,b_1)_n$, where we may assume $a_0(\jj)=1$ by Proposition \ref{prop:normalized_map}. 
By Theorem \ref{thm:deg0action_factorization} we can find a matrix $M' \in \SL_2(R)$ such that $M' \oplus (1,0:0,1)_n = f$. 
However, $M'$ may not be pointed.  
We can replace $M'$ with a pointed map $M$ as follows. 
Assuming $M'$ is of the form given in Equation \eqref{eq:matrix_to_get_f=M_plus_pi} we get $b_1(\jj)+d(\jj)=1$. Moreover, this implies that there is an element $e\in k$ such that  $M'(\jj) = \begin{pmatrix}
1 & e \\ 0 & 1
\end{pmatrix}$ and $e = a_1(\jj) - c(\jj)$.  
Define $M$ to be 
$M = \begin{pmatrix}
1 & -e \\ 0 & 1
\end{pmatrix}M'$. 
We compute $M \oplus (1,0:0,1)_n  = (a_0 - eb_0,a_1-eb_1:b_0,b_1)_n$. 
The assertion now follows from the fact that the morphism $(a_0 - Teb_0,a_1-Teb_1:b_0,b_1)_n$ is a pointed homotopy between $M \oplus (1,0:0,1)_n$ and $f$, where $T$ denotes the parameter for the homotopy. 
\end{proof}

\begin{corollary}
\label{cor:decomposition}
Let $f,g \colon \calJ \to \PP$ be two pointed morphisms of degree $n$. 
There exists a pointed map $M \colon \J \to \SL_2$ such that $M \oplus f$ is pointed naively homotopic to $g$. 
\end{corollary}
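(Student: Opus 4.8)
The plan is to reduce the statement to the normal form supplied by Proposition \ref{prop:deg0action_pointed_factorization_new}. Applying that proposition to $f$ and to $g$ (both of degree $n\neq 0$) produces pointed matrices $M_f, M_g \in \SL_2(R)$ together with pointed naive homotopies $f \simeq M_f \oplus (1,0:0,1)_n$ and $g \simeq M_g \oplus (1,0:0,1)_n$. The natural candidate is then
\[
M \defeq M_g \cdot M_f^{-1} \colon \J \to \SL_2 .
\]
Since $M_f(\jj)$ and $M_g(\jj)$ are both the identity matrix, so is $M(\jj)$, hence $M$ is pointed, which is the extra feature we need over the unpointed Theorem \ref{thm:deg0action_factorization}.

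The one point I would check carefully is that the action $\oplus$ of Definition \ref{def:group_action_SL2} descends to \emph{pointed} naive homotopy classes in degree $n$. Concretely: given a fixed pointed matrix $N = \begin{pmatrix} A & -V \\ B & U \end{pmatrix} \in \SL_2(R)$ and a pointed elementary homotopy $H(T) = [s_0(T), s_1(T)] \colon \J \times \AAA \to \PP$ between maps of degree $n$, I claim $N \oplus H(T) \defeq [A s_0(T) - V s_1(T),\, B s_0(T) + U s_1(T)]$, with $N$ regarded over $R[T]$, is again a pointed elementary homotopy, now between $N \oplus H(0)$ and $N \oplus H(1)$. That it is a morphism $\J \times \AAA \to \PP$ follows verbatim from the identities $U(As_0 - Vs_1) + V(Bs_0 + Us_1) = s_0$ and $-B(As_0 - Vs_1) + A(Bs_0 + Us_1) = s_1$ used in the proof of Proposition \ref{prop:oplus_is_a_morphism}, now over the ring $R[T]$; it restricts to $N \oplus H(i)$ at $T = i$; and it is pointed because $B(\jj') = 0$ together with $s_1(T)(\jj') = 0$ forces $B(\jj') s_0(T)(\jj') + U(\jj') s_1(T)(\jj') = 0$ in $R[T]/\jj'$. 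Taking transitive closures, $N \oplus (-)$ then carries pointed naive homotopies to pointed naive homotopies, so $f \simeq f'$ implies $N \oplus f \simeq N \oplus f'$ whenever $N$ is pointed.

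Granting this, the corollary is immediate from the fact that $\oplus$ is a left action (Proposition \ref{prop:oplus_is_a_morphism}):
\begin{align*}
M \oplus f \;&\simeq\; M \oplus \bigl( M_f \oplus (1,0:0,1)_n \bigr) \;=\; (M \cdot M_f) \oplus (1,0:0,1)_n \\
&=\; (M_g \cdot M_f^{-1} \cdot M_f) \oplus (1,0:0,1)_n \;=\; M_g \oplus (1,0:0,1)_n \;\simeq\; g,
\end{align*}
where the first relation is the pointed homotopy-invariance of the previous paragraph applied to the pointed matrix $M$. I expect the only real obstacle to be this bookkeeping about pointedness: one must know that $M_f$, $M_g$ can be chosen pointed — which is precisely what Proposition \ref{prop:deg0action_pointed_factorization_new} guarantees and what the unpointed Theorem \ref{thm:deg0action_factorization} alone would not (cf.\ Remark \ref{rem:M_may_not_be_pointed}) — and that acting by a fixed pointed matrix preserves the \emph{pointed} naive homotopy relation rather than merely the unpointed one.
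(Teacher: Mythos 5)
Your proof is correct and is precisely the argument the paper intends: the corollary is stated without proof as an immediate consequence of proposition \ref{prop:deg0action_pointed_factorization_new} applied to both $f$ and $g$, with $M = M_g \cdot M_f^{-1}$, exactly as you write. Your careful check that acting by a fixed pointed matrix preserves pointed naive homotopies is the right point to worry about, and it is the same verification the paper carries out separately in the proof of theorem \ref{thm:operation_descends_to_htpy_classes}.
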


Since the line bundle corresponding to the morphisms $M\oplus f$ and $f$ are the same by definition of $\oplus$, it is clear that $\oplus$ preserves degrees of morphisms. 
Hence we make the following definition. 

\begin{definition}\label{def:group_action_homotopy}
Let $[(A,B)]\in [\J,\ato]\naif\cong [\J, \PP]\naif_0$ be a pointed naive homotopy class represented by the map with unimodular row $(A,B)$ in $R$. 
Let $[f] \in [\J, \PP]\naif_n$ be a pointed naive homotopy class of degree $n$ with $n\neq 0$ represented by a pointed morphism $f \colon \calJ \to \PP$. 
We define $[(A,B)]\oplus [f] := [M\oplus f]$ where $M$ is a completion of $(A,B)$ to a matrix in $\SL_2(R)$ corresponding to a pointed map.   
\end{definition}

It remains to show that $\oplus$ respects naive homotopy classes. 

\begin{theorem}\label{thm:operation_descends_to_htpy_classes}
The operation of Definition \ref{def:group_action_homotopy} is well-defined and for each $n\in \Z$ provides the set $[\J, \PP]\naif_n$ with a left-action by the group $[\J,\ato]\naif$. 
\end{theorem}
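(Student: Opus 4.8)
The plan is to reduce the assertion to the elementary behaviour of the operation $M\oplus(-)$ of Definition \ref{def:group_action_SL2} on honest morphisms, feeding in the two lifting results already available, namely Construction \ref{con:liftsofmapstoSL2} and Proposition \ref{prop:lift_homotopies}. Recall that Definition \ref{def:group_action_homotopy} sets $[(A,B)]\oplus[f]:=[M\oplus f]$ for a pointed completion $M\in\SL_2(R)$ of the unimodular row $(A,B)$, so there are three independences to check and two action axioms. A preliminary observation is that the recipe of Definition \ref{def:group_action_SL2} makes sense verbatim for morphisms $\J\times\AAA\to\PP$: such a morphism is given by a line bundle on $\J\times\AAA$, which by homotopy invariance of the Picard group is isomorphic to $\calP_n\otimes_R R[T]$ or $\calQ_n\otimes_R R[T]$ for a unique $n$, together with two generating sections, and a matrix $M\in\SL_2(R[T])$ acts on this data by matrix multiplication. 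The B\'ezout identity $\det M=1$ shows the new sections still generate, exactly as in Proposition \ref{prop:oplus_is_a_morphism}; this \emph{homotopy version} of $M\oplus(-)$ preserves pointedness when $M$ is pointed, commutes with restriction along $T=a$ for $a\in k$, and satisfies $(M_1M_2)\oplus\Phi=M_1\oplus(M_2\oplus\Phi)$ on the nose by associativity of matrix multiplication. I would record these facts once at the outset.

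Granting this, well-definedness splits into three steps of increasing depth. First, independence of the chosen pointed completion: if $M,M'\in\SL_2(R)$ are two pointed completions of $(A,B)$, then Construction \ref{con:liftsofmapstoSL2} supplies an explicit pointed $M(T)\in\SL_2(R[T])$ joining them, and $M(T)\oplus f$ is a pointed elementary homotopy between $M\oplus f$ and $M'\oplus f$. Second, independence of the representative of $[f]$: by transitivity of $\simeq$ it suffices to treat one pointed elementary homotopy $H(T)\colon\J\times\AAA\to\PP$ between degree-$n$ maps $f$ and $g$ (so that $H$ itself has degree $n$), whereupon $M\oplus H(T)$ is a pointed elementary homotopy between $M\oplus f$ and $M\oplus g$. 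Third, independence of the representative of $[(A,B)]$: reduce to one pointed elementary homotopy $(A(T),B(T))\colon\J\times\AAA\to\ato$, lift it via Proposition \ref{prop:lift_homotopies} to a pointed $M(T)\in\SL_2(R[T])$ whose restrictions to $T=0,1$ are pointed completions of $(A,B)$ and $(A',B')$, and observe that $M(T)\oplus f$, together with the first step, identifies $[(A,B)]\oplus[f]$ with $[(A',B')]\oplus[f]$.

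The two action axioms are then short. Neutrality holds because $(1,0)$ admits the identity matrix as a pointed completion and $I\oplus f=f$. For associativity of the action, observe that for pointed completions $M_1,M_2\in\SL_2(R)$ the product $M_1M_2$ is again a pointed completion of the unimodular row representing $[(A_1,B_1)]\oplus[(A_2,B_2)]$ in the sense of Definition \ref{defn:M-sum}, and that $M_2\oplus f$ is a pointed morphism representing $[(A_2,B_2)]\oplus[f]$ by Proposition \ref{prop:oplus_is_a_morphism}; the on-the-nose identity $(M_1M_2)\oplus f=M_1\oplus(M_2\oplus f)$, combined with well-definedness, then yields $\bigl([(A_1,B_1)]\oplus[(A_2,B_2)]\bigr)\oplus[f]=[(A_1,B_1)]\oplus\bigl([(A_2,B_2)]\oplus[f]\bigr)$. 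The degenerate case $n=0$ is dispatched separately: there $[\J,\PP]\naif_0\cong[\J,\ato]\naif$ and the action is the group multiplication of Definition \ref{defn:M-sum}, which is a left action because $[\J,\ato]\naif$ is a group by Proposition \ref{prop:oplus_well_defined_on_SL_2}.

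Since the only genuinely non-formal ingredient — lifting homotopies of unimodular rows to $\SL_2$ — is already provided by Proposition \ref{prop:lift_homotopies}, I expect no real obstacle. The one point requiring care is the bookkeeping behind the homotopy version of $M\oplus(-)$: one must verify that a degree-$n$ morphism out of $\J\times\AAA$ is presented by generating sections of $\calP_n\otimes_R R[T]$ (respectively $\calQ_n\otimes_R R[T]$) compatibly with its restrictions to $T=0$ and $T=1$, which rests on the isomorphism $\Pic(\J\times\AAA)\cong\Pic(\J)\cong\bbZ$ and on the fact that two such presentations of a single morphism differ by a global unit.
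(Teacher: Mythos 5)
Your proposal is correct and follows essentially the same route as the paper: independence of the pointed completion via the explicit interpolating homotopy, independence of the unimodular-row representative via the lift of Proposition \ref{prop:lift_homotopies}, independence of the representative of $[f]$ by acting on the homotopy $M\oplus f(T)$ and checking generation and pointedness, and the action axioms from associativity of matrix multiplication as in Proposition \ref{prop:oplus_is_a_morphism}. Your explicit treatment of the homotopy version of the operation and of the $n=0$ case is slightly more careful bookkeeping than the paper records, but the substance is identical.
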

\begin{proof}
First, consider a pointed map $f=[s_0,s_1]$. We show that $[(A,B)]\oplus [f]$ is independent of the choice of completion of $(A,B)$
 to a matrix in $\SL_2(R)$. 
So let 
\begin{align*}
M=\begin{pmatrix}A & -V\\ B & U\end{pmatrix}, ~ 
M'=\begin{pmatrix}A & -V'\\ B & U'\end{pmatrix}
\end{align*}
be two completions to matrices in $\SL_2(R)$ which correspond to pointed maps. 
Then the naive homotopy $H(T)=\begin{pmatrix}A & -(TV+(1-T)V' \\ B & TU+(1-T)U'\end{pmatrix}$ is pointed independently of $T$, 
and $H(T)\oplus f$ is a pointed homotopy between $M\oplus f$ and $M'\oplus f$. 

Now we show that $[(A,B)]\oplus [f]$ is independent of the choice of the representing unimodular row $(A,B)$. 
Suppose we have a pointed elementary homotopy $(A(T), B(T))$ between two unimodular rows. 
Proposition \ref{prop:lift_homotopies} shows that we can lift it to a pointed elementary homotopy $M(T) = \begin{pmatrix}A(T) & -V(T) \\ B(T) & U(T) \end{pmatrix}\in \SL_2(R[T])$. 
Then $M(T)\oplus f$ is a pointed homotopy between $(A,B)\oplus f$ and $(A',B')\oplus f$. 
Now we consider a unimodular row $(A,B)$ and let $M = \begin{pmatrix}A & -V\\ B & U\end{pmatrix}$ be a lift to a matrix in $\SL_2(R)$. 
Let $f_0, f_1 \colon \J \to \PP$ be two pointed morphisms which are homotopic via a pointed naive homotopy. 
Let $f(T) \colon \J \times \AAA \to \PP$ be a pointed naive homotopy. 
We let $\calL'$ denote the line bundle $\J \times \AAA$ which corresponds to the morphism $f(T)$. 
We define the map $H(T):= M \oplus f(T) \colon \J \times \AAA \to \PP$ with the same algebraic line bundle $\calL'$ on $\J \times \AAA$ and global sections 
\[
\begin{pmatrix}A & -V\\ B & U\end{pmatrix} \cdot \begin{pmatrix} s_0(T) \\ s_1(T)\end{pmatrix} 
= \begin{pmatrix}A s_0(T) -V s_1(T) \\ 
B s_0(T) + Us_1(T) \end{pmatrix}.
\]
We note that $H(T)$ thus defined is in fact a morphism $\J \times \AAA \to \PP$, since we have $U(As_0(T) -Vs_1(T)) + V(Bs_0(T) + Us_1(T)) = s_0(T)$, and $-B(As_0(T) -Vs_1(T)) +A(Bs_0(T) + Us_1(T)) = s_1(T)$. 
By assumption, the sections $s_0(T)$, $s_1(T)$ generate the line bundle $\calL'$.  
Hence $(As_0(T) -Vs_1(T), Bs_0(T) + Us_1(T))$ generate $\calL'$ as well. 
This shows that $H(T)$ defines a morphism.  
We now verify that $H(T)$ is pointed by showing $Bs_0(T) + Us_1(T) \in \jj'$ . 
Pointedness of $[s_0(T), s_1(T)]$ means that $s_1(T)(\jj') = 0$ in $R[T]/\jj'$. 
Pointedness of $(A,B)$ means $M(\jj)$ is the identity matrix. We calculate 
\begin{equation*}
B(\jj')s_0(T)(\jj') + U(\jj')s_1(T)(\jj') =0\cdot s_0(T)(\jj') + 1 \cdot 0 = 0
\end{equation*}
which completes the verification.
This shows that $\oplus$ is independent of the choice of representatives in both naive homotopy classes and completes the proof of the first assertion. 
The second assertion then follows from Proposition  \ref{prop:oplus_is_a_morphism}. 
\end{proof}

\begin{remark}\label{rem:variation_of_operation}
There are several variations to the operation given in Definition \ref{def:group_action_homotopy} that produce valid group actions. 
For $M \in \SL_2(R)$, the operation in Definition \ref{def:group_action_SL2} is given by the matrix multiplication $M\cdot (s_0 \; s_1)^T$. 
We could have taken equally well 
either $M^T \cdot (s_0\; s_1)^T$ or $M^{-1} \cdot (s_0\; s_1)^T$, although this would give a right-action rather than a left-action on maps. 
Up to homotopy, the latter two choices in fact agree, since $M^T$ is homotopic to  $M^{-1}$ by Lemma \ref{lem:ABeqUV}.  
Thus there are two natural choices for this action, one of which applies the inverse operation to the morphism in $\SL_2(R)$ before acting. 
In Appendix \ref{sec:evidence} we will use real realization to check which of these operations can represent the group operation on $[\J, \bbP^1]^{\AAA}$ induced from Morel's group structure on $[\bbP^1, \bbP^1]^{\AAA}$ via $\pi_{\AAA}^*$. 
In fact, in Examples \ref{ex:realization_F_acts_on_pi_tilde} and \ref{ex:group_action_justification} we show that only the choice of Definitions \ref{def:group_action_SL2} and \ref{def:group_action_homotopy} can be compatible.  
\end{remark}


\section{The group structure on \texorpdfstring{$[\J,\PP]\naif$}{[J,P1]}}
\label{sec:proposed_group_structure}

\subsection{The definition of the group structure}

In this section we define an explicit group structure on $[\J,\PP]\naif$. 
We will then discuss some alternative approaches and open questions. 

\begin{definition}\label{def:minus_pi_and_n_pi}
Let $-[\id]$ denote the additive inverse of $[\id \colon \PP \to \PP]$ under the conventional group structure on $[\PP, \PP]^{\AAA}$. 
Define $-\pi \colon  \J \to \PP$ to be a morphism which represents the $\AAA$-homotopy class $-[\id \colon  \PP \to \PP] \in [\PP, \PP]^{\AAA}$ under the bijection $\cc \colon  [\J, \PP]\naif \to [\PP, \PP]^{\AAA}$ of Equation \eqref{eq:jpn_ppa}. 
More generally, for any integer $n$, let $n\pi$ denote a morphism $n\pi \colon  \J \to \PP$ which represents the $\AAA$-homotopy class $n[\id \colon  \PP \to \PP]$ under the bijection $\cc \colon  [\J, \PP]\naif \to [\PP, \PP]^{\AAA}$.
\end{definition}

We are now ready to define a group operation on $[\J, \PP]\naif$.  

\begin{definition}
\label{def:full_group}
Let $f \colon \J \to \PP$ and $g \colon \J \to \PP$ be morphisms of degrees $n$ and $m$ respectively. 
By Corollary \ref{cor:decomposition} there are degree $0$ maps $f_0 \colon \J \to  \PP$ and $g_0 \colon \J \to \PP$ for which $f \simeq f_0 \oplus n\pi$ and $g \simeq g_0 \oplus m\pi$. 
We define the sum of $[f]$ and $[g]$ to be 
\begin{align*}
[f] \oplus [g] & = ([f_0] \oplus [n\pi] ) \oplus ([g_0] \oplus [m \pi]) \\
& = ([f_0] \oplus [g_0]) \oplus [(n+m) \pi].
\end{align*}
The term $[f_0] \oplus [g_0]$ is calculated by matrix multiplication via Definition \ref{defn:M-sum}. 
The group action of Definition \ref{def:group_action_homotopy} is used to compute $(f_0\oplus g_0)\oplus (n+m)\pi$.
\end{definition}

\begin{remark}
It follows from Theorem  \ref{thm:operation_descends_to_htpy_classes} that the operation $\oplus$ of Definition \ref{def:full_group} is well-defined. 
We also note that, for $n>0$, the proofs of Theorem \ref{thm:deg0action_factorization} and Proposition \ref{prop:deg0action_pointed_factorization_new} may be used to write down a concrete algorithm to find a map $f_0$ such that $f \simeq f_0 \oplus n\pi$ for any degree $n$ map $f$. 
\end{remark}

\begin{remark}\label{rem:construction_of_npi}
For $n > 0$, we may construct morphisms $n\pi$ by using Cazanave's group operation on morphisms $[\PP,\PP]\naif$ and lift it to an element in $[\J,\PP]\naif$. A recursive description of the maps $n\pi$ for $n>0$ can be given as follows: 
Set $G_0=1$ and $G_1=\begin{bmatrix}
        x \\ z
    \end{bmatrix}$. 
For $n>0$, we define $G_{n+1}$ recursively by setting 
\begin{align*}
        G_{n+1}= \begin{bmatrix}
        x \\ z
    \end{bmatrix}\cdot G_n - \begin{bmatrix}
        y^2 \\ w^2
    \end{bmatrix}\cdot G_{n-1}
    \end{align*}
where we recall that multiplication of sections is induced by component-wise multiplication in $R$. 
For $n>0$, the morphism $n\pi$ is given by sections \begin{equation*}
    \left[ G_n, \begin{bmatrix}
        y \\ w
\end{bmatrix}\cdot G_{n-1}\right].
\end{equation*}
We note that  $[(1,0:0,1)_n]$ is in general not equal to $[n\pi]$ for $n>1$. 
We will explain this observation in Remark \ref{rem:KW2_computed_Xn/1} using Morel's motivic Brouwer degree and the work of Kass and Wickelgren.  
\end{remark}

We are now ready to prove the following important result. 

\begin{theorem}\label{thm:group_structure}
The operation $\oplus$ turns the set $[\J, \PP]\naif$ into an abelian group. 
Moreover, there is an isomorphism of groups  $\ff \colon \left([\J, \PP]\naif,\oplus\right) \xrightarrow{\cong} \left([\PP, \PP]^{\AAA},\oplus^{\AAA}\right)$. 
\end{theorem}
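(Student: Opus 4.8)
The strategy I would follow is to present $([\J,\PP]\naif,\oplus)$ as a split abelian extension of $\bbZ$ by its degree-$0$ subgroup, and then to compare it with the analogous presentation of $[\PP,\PP]^{\AAA}$. The ingredients at hand are: $[\J,\ato]\naif$ is an abelian group (corollary \ref{cor:J_to_ato_naive_is_abelian}); the map $\cc_0$ is a group isomorphism onto $[\PP,\ato]^{\AAA}$ (theorem \ref{thm:degree_zero_iso}); for each $n\neq 0$ the group $[\J,\ato]\naif$ acts on $[\J,\PP]\naif_n$ (theorem \ref{thm:operation_descends_to_htpy_classes}), transitively by corollary \ref{cor:decomposition}; the degree map $\deg\colon[\J,\PP]\naif\to\Pic(\J)\cong\bbZ$ is well defined on homotopy classes; and $\deg([n\pi])=n$, since $n\pi$ represents $n[\id]$ and $\cc$ is compatible with degrees as in diagram \eqref{eq:diagram_group_morphism_intro}.

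The first, and crucial, step is to upgrade transitivity of the action to \emph{simple} transitivity: the stabiliser of $[n\pi]$ in $[\J,\ato]\naif$ is trivial. As $[\J,\ato]\naif$ is abelian all stabilisers coincide, so this is equivalent to freeness of each action. Concretely one must show that if $M\in\SL_2(R)$ is pointed and $M\oplus(1,0:0,1)_n\simeq(1,0:0,1)_n$, then $[M]=0$ in $[\J,\ato]\naif$: starting from a degree-$n$ pointed naive homotopy realising this equivalence, one would peel off the ``$(1,0:0,1)_n$ factor'', that is, lift the homotopy along the map sending $N\in\SL_2(R)$ to $N\oplus(1,0:0,1)_n$, using the constructions behind proposition \ref{prop:deg0action_pointed_factorization_new} together with the homotopy-lifting argument of proposition \ref{prop:lift_homotopies}, and thereby produce a pointed homotopy in $[\J,\ato]\naif$ from the unimodular row $(A,B)$ (the first column of $M$) to $(1,0)$. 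Granting simple transitivity, the class $[f_0]$ in definition \ref{def:full_group} satisfying $f\simeq f_0\oplus n\pi$ is uniquely determined by $[f]$, so $\oplus$ is unambiguously defined, and $[f]\mapsto([f_0],\deg f)$ is a bijection $[\J,\PP]\naif\xrightarrow{\ \cong\ }[\J,\ato]\naif\times\bbZ$ with inverse $([a],n)\mapsto[a]\oplus[n\pi]$.

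What remains is formal. Since $[(1,0)]$ acts as the identity one gets $[n\pi]\oplus[m\pi]=[(n+m)\pi]$, and for $[f]=[f_0]\oplus[n\pi]$ and $[g]=[g_0]\oplus[m\pi]$ definition \ref{def:full_group} reads $[f]\oplus[g]=([f_0]\oplus[g_0])\oplus[(n+m)\pi]$; thus, under the bijection just described, $\oplus$ is exactly the direct product operation on $[\J,\ato]\naif\times\bbZ$, so $([\J,\PP]\naif,\oplus)$ is an abelian group. On the other side, the bottom row of diagram \eqref{eq:diagram_group_morphism_intro} is, by Morel's work and $\Pic(\PP)\cong\bbZ$, a short exact sequence of abelian groups $0\to[\PP,\ato]^{\AAA}\to[\PP,\PP]^{\AAA}\xrightarrow{\deg}\bbZ\to0$, which splits via $n\mapsto n[\id]$ because $\bbZ$ is free; hence $[\PP,\PP]^{\AAA}\cong[\PP,\ato]^{\AAA}\times\bbZ$. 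I then define $\phi$ to be the composite
\begin{equation*}
[\J,\PP]\naif\ \xrightarrow{\ \cong\ }\ [\J,\ato]\naif\times\bbZ\ \xrightarrow{\ \cc_0\times\id\ }\ [\PP,\ato]^{\AAA}\times\bbZ\ \xrightarrow{\ \cong\ }\ [\PP,\PP]^{\AAA},
\end{equation*}
which is a group isomorphism by construction, sends $[n\pi]$ to $n[\id]$, and restricts to $\cc_0$ on the degree-$0$ subgroup $[\J,\ato]\naif$.

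The main obstacle is simple transitivity of the action. It cannot be imported from the bijection $\cc$: equivariance of $\cc$ for these actions (via $\cc_0$) for all $n$ would, after chasing definition \ref{def:full_group}, force $\cc$ to be a group homomorphism, which is precisely the open conjecture \ref{conj:main_conjecture_intro}. So freeness must be established by a hands-on analysis of pointed naive homotopies between degree-$n$ morphisms $\J\to\PP$; the $\SL_2$-lifting machinery of section \ref{sec:degree_0_maps} and, if needed, the identification $[\J,\ato]\naif\cong K_1^{MW}(k)$ from remark \ref{rem:Morel_shows_K1MW_gives_degree_0_maps} are the tools I would reach for. Everything downstream of that is bookkeeping with split abelian extensions of $\bbZ$.
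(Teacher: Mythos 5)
Your proposal follows the same architecture as the paper's proof: decompose $[\J,\PP]\naif$ as $[\J,\ato]\naif\times\bbZ$ via $[f]\mapsto([f_0],\deg f)$, get commutativity from corollary \ref{cor:J_to_ato_naive_is_abelian}, and compare with the extension $1\to[\PP,\ato]^{\AAA}\to[\PP,\PP]^{\AAA}\to\bbZ\to1$ using theorem \ref{thm:degree_zero_iso}; the paper packages the last step with the five lemma rather than an explicit splitting, which is immaterial. The genuine divergence is your first step. The paper's proof contains no freeness argument: it asserts that $\left([\J,\PP]\naif,\oplus\right)$ is ``by definition'' the direct product of $\{[n\pi]\st n\in\bbZ\}$ and $[\J,\ato]\naif$, and the remark following definition \ref{def:full_group} attributes well-definedness to theorem \ref{thm:operation_descends_to_htpy_classes} --- which only gives independence of the chosen representatives within fixed homotopy classes, not uniqueness of the class $[f_0]$. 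You are right that the literal reading of definition \ref{def:full_group} needs trivial stabilizers (take $\deg g=-n$: the degree-zero component of $[f]\oplus[g]$ is computed in $[\J,\ato]\naif$, where the action on itself is free, so a nontrivial stabilizer of $[n\pi]$ would make the answer ambiguous), and you are also right that equivariance of $\cc$ cannot be invoked here without proving conjecture \ref{conj:main_conjecture}.

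That said, as a self-contained proof your proposal is open at exactly the step you call crucial. The ``peel off the $(1,0:0,1)_n$ factor'' idea is only a gesture, and it meets a concrete obstruction: a section of $\calP_n$ of the form $As_0-Vs_1$ does not determine the pair $(A,V)$, because the presentation of sections of $\calP_n$ as $R$-combinations of the two generators is not unique (for instance $x\left[\begin{smallmatrix}x\\z\end{smallmatrix}\right]+z\left[\begin{smallmatrix}y\\w\end{smallmatrix}\right]=\left[\begin{smallmatrix}x\\z\end{smallmatrix}\right]$, a point the paper itself flags when discussing $\sigma$ and resultants), so a pointed homotopy of degree-$n$ morphisms cannot simply be lifted backwards to a homotopy of matrices in $\SL_2(R[T])$. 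So either you supply a genuinely new argument for freeness --- which would be a real addition, since it is what makes $[f]\mapsto([f_0],\deg f)$ a bijection rather than merely a surjection onto each degree --- or you adopt the paper's tacit stance that the direct-product structure is built into the definition of $\oplus$, in which case your proof collapses to the paper's. Everything downstream (the identification of $\oplus$ with the product operation, the splitting of Morel's sequence, the definition of $\phi$ and that it restricts to $\cc_0$ and sends $[n\pi]$ to $n[\id]$) matches the paper.
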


\begin{proof}
We observe that the set $\{[n\pi] : n \in \Z\}$ inherits the structure of an abelian group from $\Z$. 
In Definition \ref{def:full_group} we construct the group $\left([\J, \PP]\naif,\oplus\right)$ as the direct product of the two groups $\{[n\pi] : n \in \Z\}$ and $[\J,\ato]\naif$. 
Both are abelian by Corollary \ref{cor:J_to_ato_naive_is_abelian}. 
This implies the first assertion. 

By definition of the operation $\oplus$, the group $\left([\J, \PP]\naif, \oplus\right)$ 
fits into the short exact sequence displayed in the top row of Diagram \eqref{eq:diagram_group_morphism} below. 
By the work of Morel in \cite[\S7.3]{Morel12}, the group $\left([\PP, \PP]^{\AAA}, \oplus^{\AAA}\right)$ 
fits into  the short exact sequence displayed in the bottom row. 
\begin{align}\label{eq:diagram_group_morphism}
\xymatrix{
1 \ar[r] & [\calJ, \ato]\naif  \ar[r] \ar[d]^-{\cong}_-{\xi_0} & [\J,\PP]\naif \ar@{.>}[d]_-{\ff} \ar[r]^-{\deg} &  \bbZ \ar[d]^-{\cong}  \ar[r] & 1  \\
1 \ar[r] & [\PP, \ato]^{\AAA} \ar[r] & [\PP, \PP]^{\AAA} \ar[r]^-{\deg} &  \bbZ \ar[r] & 1 
}
\end{align}
By Theorem \ref{thm:degree_zero_iso}, the vertical map $\cc_0$ on the left-hand side is an isomorphism. 
The vertical map $q$ on the right-hand side is an isomorphism as well. 
We define $\ff$ to be the unique group homomorphism satisfying $\ff([\pi]) = [\id]$ and $\ff([f_0]) = \cc_0([f_0])$ for all $[f_0] \in [\J,\ato]\naif$.  
The diagram commutes by our definition of $\ff$. 
Since $\cc_0$ and the right-hand vertical map in Diagram \eqref{eq:diagram_group_morphism} are isomorphisms, $\ff$ is an isomorphism of groups as well by the five-lemma. 
\end{proof}


\subsection{Open questions and potential alternative approaches}

Recall the map $\cc \colon [\J,\PP]\naif \to [\PP,\PP]^{\AAA}$ which is the composite of the canonical map 
$\nu \colon [\J,\PP]\naif \to [\J,\PP]^{\AAA}$  and the inverse of the induced map $\pi_{\AAA}^* \colon [\PP,\PP]^{\AAA} \to [\J,\PP]^{\AAA}$. 
Unfortunately, Theorem \ref{thm:group_structure} does not imply that the bijection $\cc$ is a group isomorphism. 
However, since $\cc$ restricts to an isomorphism on the subgroups $[\J,\ato]\naif$ and $\{[n\pi] \st n \in \bbZ \}$, we do believe that the bijection $\cc$ is a group isomorphism, which we state as a conjecture below. 

\begin{conjecture}\label{conj:main_conjecture}
The bijection $\cc \colon [\J,\PP]\naif \to [\PP,\PP]^{\AAA}$ is a group isomorphism and equals $\ff$. 
\end{conjecture}


One obstacle to prove Conjecture \ref{conj:main_conjecture} is that, for $n<0$, we do not know which morphism $\J \to \PP$ is sent to $n[\id]$ under $\cc$. 
In particular, we do not know which morphism $\J \to \PP$ is mapped to the motivic homotopy class $-[\id : \PP \to \PP]$. 
A potential candidate for $-\pi$ may be the map $\tilde{\pi}=(1,0:0,-1)_{-1}$ determined by the line bundle $\calQ_1$ and generating sections 
\begin{equation*}
s_0 = \begin{pmatrix}
x \\ y
\end{pmatrix}
\text{ and } ~ 
s_1 = -\begin{pmatrix}
z \\ w
\end{pmatrix}. 
\end{equation*}

\begin{question}\label{question:tilde_pi} 
Is $\tilde{\pi}$ the inverse of $\pi$ for $\oplus$, i.e., is $\tilde{\pi}$ naively homotopic to $-\pi$? 
\end{question}

In Appendix \ref{sec:evidence} we present further evidence for Conjecture \ref{conj:main_conjecture}. 
We use the real realization functor for fields $k \subset \R$ and Morel's theorem which states that the signature of the motivic Brouwer degree equals the topological Brouwer degree under real realization. 
This provides a potential  
obstruction to the compatibility of $\oplus^{\AAA}$ and the action of $[\J,\ato]\naif$ on $[\J,\PP]\naif$. 
We then compute  concrete examples and show that other choices for the action of $[\J,\ato]\naif$ on $[\J,\PP]\naif$ are not compatible with $\oplus^{\AAA}$, while our choice of operation in Definition \ref{def:group_action_homotopy} is compatible with $\oplus^{\AAA}$ after real realization in the chosen examples.

In Proposition \ref{prop:image_of_100u} we show that the naive homotopy class of $\pi$ is mapped to the class $\left(\langle 1\rangle,1\right)$ in $\GW(k) \times_{k^\times/k^{\times 2}} k^\times$ as expected if $\cc$ is a group homomorphism.  
Based on the computations in Appendix  
\ref{sec:evidence} 
we prove in Theorem \ref{thm:class_of_tilde_pi} that the image of $[\tilde{\pi}]$ under the motivic Brouwer degree is the class $-\langle 1\rangle$ in $\GW(k)$. 
This brings us very close to a positive answer to Question \ref{question:tilde_pi}. 
We are, however, not able to compute the resultant, i.e., the image of $\tilde{\pi}$ in $k^{\times}$.


We end this section with comments on potential alternative approaches: 

\begin{remark}\label{rem:J_to_J_maps}
Since $\pi$ is an $\AAA$-weak equivalence, it induces a bijection $\pi_* \colon [\J,\J]\naif \to [\J,\PP]\naif$. 
Hence there is a bijection between $[\PP,\PP]^{\AAA}$ and the set of pointed naive homotopy classes $[\J,\J]\naif$.  
In \cite[page 31]{cazanave2009theorie} Cazanave speculates whether $[\J,\J]\naif$ can be used to study the group structure on $[\PP,\PP]^{\AAA}$.  
A morphism $\J \to \J$ corresponds to a ring homomorphism $R \to R$, or equivalently, the data of a $(2\times 2)$-matrix with entries in $R$ and with trace $1$ and determinant $0$. 
For every map $f \colon \J \to \PP$ we can find a map $F \colon \J \to \J$ such that $f=\pi \circ F$. 
We will refer to such a map $F$ as a lift of $f$. 
There is a particularly nice way to construct a lift in the case $f \colon \J \to \PP$ has degree $0$. 
Assume that $f$ is given by a unimodular row $(A,B)$. 
Let $U, V \in R$ be such that $\begin{pmatrix}
        A & -V \\ B & U
    \end{pmatrix}$ has determinant $1$.  
Then $F$ is given by the matrix  $\begin{pmatrix}
        AU & BU \\ AV & BV
    \end{pmatrix}$
which has trace $1$ and determinant $0$. 
Composing the map with $\pi$ yields the $\J \to \PP$ map given by either $[AU : BU]$ or $[AV: BV]$, whenever they are defined, which coincides with the map corresponding to the unimodular row $(A,B)$. 
If $f$ has non-zero degree, there is also a concrete procedure to find a lift of $f$, which we leave to the reader. 

Since morphisms $\J \to \J$ can be represented by matrices, it may seem plausible that one can find a suitable operation on $[\J,\J]\naif$ which may help to describe the group $([\PP,\PP]^{\AAA},\oplus^{\AAA})$. 
However, neither addition nor multiplication of matrices equip the set $[\J,\J]\naif$ with an operation which is compatible with the conventional group structure on $[\PP,\PP]^{\AAA}$. 
We have verified in examples that composition of maps in $[\J,\J]\naif$ descends to the operation $\circ$ on $[\PP,\PP]\naif$ of \cite[Definition 4.5]{Caz}. 
As pointed out in \cite[Remark 4.7]{Caz} the latter does not distribute over the conventional group structure on $[\PP,\PP]^{\AAA}$. 
We were not able to make a reasonable guess which other operation on $[\J,\J]\naif$ might work. 
We have therefore not pursued this path further. 
\end{remark}

\begin{remark}\label{rem:J_wedge_J}
An alternative approach to construct a group structure on $[\J,\J]\naif$ may be the following.   
One can hope to construct a cogroup structure on $\J$. 
However, this is not so easy, even though Asok and Fasel have done much of the work to make this possible. 
In \cite{AsokFasel}, Asok and Fasel give an explicit construction of a smooth scheme $\widetilde{\J \vee \J}$ that is $\AAA$-weak equivalent to the wedge sum $\J \vee \J$. 
We have constructed an explicit map
$\J \to \widetilde{\J \vee \J}$ that conjecturally represents the pinch map $\PP \to \PP \vee \PP$. 
We also have a candidate for a map representing the inverse map $\J \to \PP$, 
but unfortunately both of these claims have proven too difficult to verify. 
We therefore decided not to include this construction in this paper. 
\end{remark}


\section{Compatibility with Cazanave's monoid structure}\label{sec:compatibility_with_Cazanaves_monoid}

The goal of this section is to show that the map 
\[
\pi_{\mathrm{N}}^* \colon \left([\PP,\PP]\naif,\oplus\naif\right) \to \left([\J,\PP]\naif,\oplus \right)
\]
is a morphism of monoids, 
where $\oplus\naif$ denotes the monoid operation defined by Cazanave in \cite{Caz}. 
We will achieve this goal in Theorem \ref{thm:pi*_is_a_monoid_morphism}. 


\subsection{Compatibility with certain degree \texorpdfstring{$0$}{0} maps} \label{sec:computations_for_degree_0_maps}

We first study an important family of degree $0$ morphisms and their compatibility with $\oplus$, $\oplus\naif$, and $\pi_{\mathrm{N}}^*$. 

\begin{definition}\label{def:g_uv}
For $u, v \in k^\times$, we write $g_{u,v}$ for the pointed morphism $\J\to \ato$  
given by the unimodular row $\left(x+ \frac{v}{u}w, (u-v)y\right)$ in $R$. 
This unimodular row can be completed to the $\SL_2(R)$-matrix 
\begin{equation*}
m_{u,v}=\begin{pmatrix}
x+\frac{v}{u}w & \frac{u-v}{uv}z \\ (u-v)y & x + \frac{u}{v}w
\end{pmatrix}.
\end{equation*}
\end{definition}

We now prove some basic properties of the maps $g_{u,v}$ which will be necessary to show that $\pi_{\mathrm{N}}^*$ is a monoid morphism. 

\begin{lemma}\label{lemma:guv_plus_gvu}
For all $u, v, s \in k^\times$, we have the identity  $g_{u,v} \oplus g_{v,s} = g_{u,s}$. 
In particular, we have $g_{u,v} \oplus g_{v,u} = (1,0)$ and $g_{u,v}\oplus g_{v,1}=g_{u,1}$. 
\end{lemma}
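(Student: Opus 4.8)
The plan is to prove the identity $g_{u,v} \oplus g_{v,s} = g_{u,s}$ by direct computation using the explicit completions $m_{u,v}$ given in definition \ref{def:g_uv}, and then to extract the two special cases as immediate consequences.

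First I would recall that, by definition \ref{defn:M-sum}, the class $[g_{u,v}] \oplus [g_{v,s}]$ is represented by the unimodular row obtained as the first column of the matrix product $m_{u,v} \cdot m_{v,s}$. So the main step is to carry out this $(2\times 2)$-matrix multiplication over $R$ and simplify the entries using the defining relations $x + w = 1$ and $xw = yz$. Concretely, I would compute
\begin{equation*}
m_{u,v} \cdot m_{v,s} = \begin{pmatrix} x + \frac{v}{u}w & \frac{u-v}{uv}z \\ (u-v)y & x + \frac{u}{v}w \end{pmatrix} \begin{pmatrix} x + \frac{s}{v}w & \frac{v-s}{vs}z \\ (v-s)y & x + \frac{v}{s}w \end{pmatrix}
\end{equation*}
and check that the first column is $\bigl(x + \frac{s}{u}w,\ (u-s)y\bigr)^T$, i.e.\ exactly the unimodular row defining $g_{u,s}$. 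For the top-left entry one gets $(x+\frac{v}{u}w)(x+\frac{s}{v}w) + \frac{u-v}{uv}z(v-s)y$; expanding and using $yz = xw$ together with $x^2 + xw(\frac{v}{u}+\frac{s}{v}) + \frac{sw^2}{u}$ plus the $xw$-term from the $zy$ product, one collects coefficients of $xw$ and checks they sum to $\frac{s}{u}$, while $x^2 + \frac{s}{u}xw$ and the remaining piece assemble (via $x+w=1$, so $x^2 + xw = x$ and $w^2 + xw = w$) into $x + \frac{s}{u}w$. The bottom-left entry is $(u-v)y(x+\frac{s}{v}w) + (v-s)y(x+\frac{u}{v}w) = y\bigl[(u-v)x + (v-s)x + \frac{s(u-v)}{v}w + \frac{u(v-s)}{v}w\bigr] = y\bigl[(u-s)x + (u-s)w\bigr] = (u-s)y$, using $x+w=1$. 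Since the product $m_{u,v}\cdot m_{v,s}$ already lies in $\SL_2(R)$ and is a valid (pointed) completion, this shows $[g_{u,v}]\oplus[g_{v,s}] = [g_{u,s}]$; in fact the computation gives an identity of unimodular rows, hence the stronger statement at the level of morphisms claimed in the lemma.

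For the special cases: taking $s = u$ gives $g_{u,v}\oplus g_{v,u} = g_{u,u}$, and $g_{u,u}$ is the morphism with unimodular row $\bigl(x + w,\ 0\bigr) = (1,0)$ since $x+w=1$; this is precisely the identity element by proposition \ref{prop:oplus_well_defined_on_SL_2}. Taking $s = 1$ gives $g_{u,v}\oplus g_{v,1} = g_{u,1}$ directly. The only mild subtlety — and the one place I would be careful — is bookkeeping the denominators $u,v,s \in k^\times$ and confirming the off-diagonal entries of $m_{u,v}$ are consistent with the stated formula, but since these are units in $k$ nothing obstructs the simplification. I do not anticipate a genuine obstacle here: the result is an elementary algebraic identity, and the structure of $m_{u,v}$ (an "$\mathrm{SL}_2$-cocycle" in the parameter) is exactly what makes the composition law additive in the ratio $v/u$.
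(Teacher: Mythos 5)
Your proposal is correct and follows essentially the same route as the paper: a direct computation of the matrix product $m_{u,v}\cdot m_{v,s}$, simplified via $yz=xw$ and $x+w=1$ to recover $m_{u,s}$ (hence an identity of actual unimodular rows, not just homotopy classes), with the two special cases obtained by setting $s=u$ and $s=1$. The entry-by-entry simplifications you sketch match the paper's computation.
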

\begin{proof}
A direct computation, using $xw=yz$, shows
\begin{equation*}
m_{u,v}\cdot m_{v,s}
=
\begin{pmatrix}
x^2+ \frac{uv+vs}{uv}xw+ \frac{s}{u}w^2&
\frac{uv-sv}{usv}xz+ \frac{-vs+uv}{usv}zw \\
(u-s)xy+(u-s)wy &
x^2+ \frac{uv+sv}{sv}xw+ \frac{u}{s}w^2
\end{pmatrix},
\end{equation*}
and since $x+w=1$,
this simplifies to the matrix $m_{u,s}$.
Then $g_{u,v} \oplus g_{v,u} = g_{u,u}=(1,0)$ and $g_{u,v}\oplus g_{v,1}=g_{u,1}$ are special cases for respectively $s=u$ and $s=1$. 
\end{proof}

\begin{lemma}
\label{lem:square_scale}
Let $u,v,c \in k^\times$. 
Then $[g_{u,v}] = [g_{c^2u,c^2v}]$.
\end{lemma}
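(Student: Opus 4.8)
The plan is to deduce this directly from the scaling homotopy of Lemma \ref{lem:scaling_lemma}. By Definition \ref{def:g_uv}, the morphism $g_{u,v}$ is given by the unimodular row $\left(x+\tfrac{v}{u}w,\ (u-v)y\right)$, with the explicit $\SL_2(R)$-completion $m_{u,v}$. I would apply Lemma \ref{lem:scaling_lemma} to the matrix $m_{u,v}$ with the scalar $c\in k^\times$ (note the name collision with the lemma's ``$u$''; in the write-up I would rename the lemma's scalar to $c$ to avoid confusion). That lemma then gives an elementary homotopy exhibiting
\begin{equation*}
\left(x+\tfrac{v}{u}w,\ (u-v)y\right)\ \simeq\ \left(x+\tfrac{v}{u}w,\ c^2(u-v)y\right).
\end{equation*}

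Next I would simply observe that the right-hand unimodular row is exactly the one defining $g_{c^2u,\,c^2v}$: indeed $c^2(u-v)=c^2u-c^2v$ and $\tfrac{v}{u}=\tfrac{c^2v}{c^2u}$, so $\left(x+\tfrac{v}{u}w,\ c^2(u-v)y\right)=\left(x+\tfrac{c^2v}{c^2u}w,\ (c^2u-c^2v)y\right)$, which is precisely the unimodular row of $g_{c^2u,c^2v}$ from Definition \ref{def:g_uv}. Hence $[g_{u,v}]=[g_{c^2u,c^2v}]$ in $[\J,\ato]\naif$.

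There is no real obstacle here — the statement is an immediate corollary of Lemma \ref{lem:scaling_lemma} once one matches up the two unimodular rows. The only minor point of care is the clash of variable names ($u$ appears both as an argument of $g$ and as the scaling parameter in Lemma \ref{lem:scaling_lemma}), which I would handle by restating Lemma \ref{lem:scaling_lemma}'s scalar as $c$ when invoking it.
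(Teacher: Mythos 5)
Your proposal is correct and is exactly the paper's proof: both apply Lemma \ref{lem:scaling_lemma} to the unimodular row $\left(x+\tfrac{v}{u}w,\ (u-v)y\right)$ and identify the scaled row with that of $g_{c^2u,c^2v}$. Your added remark about the variable-name clash is a sensible clarification but does not change the argument.
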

\begin{proof}
By Lemma \ref{lem:scaling_lemma}, we have 
\begin{align*}
g_{u,v} = \left( x+\frac{v}{u}w, (u-v)y \right) 
\simeq \left( x+\frac{v}{u}w, c^2(u-v)y \right) = g_{c^2u,c^2v}.
\end{align*}
\end{proof}

\begin{lemma}
\label{lem:gu1_summing}
Let $u,v \in k^{\times}$ and let $v$ be a square. 
Then we have $[g_{u,1}] \oplus [g_{v,1}] =  [g_{uv,1}]$. 
\end{lemma}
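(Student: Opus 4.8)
The plan is to deduce the identity directly from Lemma \ref{lemma:guv_plus_gvu} together with Lemma \ref{lem:square_scale}, with essentially no new computation. Since $v$ is assumed to be a square, I would first write $v = c^2$ for some $c \in k^\times$. Applying Lemma \ref{lem:square_scale} with the roles $u \mapsto u$, $v \mapsto 1$ and scaling factor $c$ then gives the identity of homotopy classes
\begin{equation*}
[g_{u,1}] = [g_{c^2 u,\, c^2}] = [g_{uv,\, v}].
\end{equation*}

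Next I would substitute this into the sum, working in the group $\left([\J,\ato]\naif,\oplus\right)$ of proposition \ref{prop:oplus_well_defined_on_SL_2}, to get $[g_{u,1}] \oplus [g_{v,1}] = [g_{uv,v}] \oplus [g_{v,1}]$. Now Lemma \ref{lemma:guv_plus_gvu}, in the form $g_{a,b} \oplus g_{b,s} = g_{a,s}$ applied with $a \mapsto uv$, $b \mapsto v$, $s \mapsto 1$, yields the identity of morphisms $g_{uv,v} \oplus g_{v,1} = g_{uv,1}$, hence $[g_{uv,v}] \oplus [g_{v,1}] = [g_{uv,1}]$. Chaining the two equalities gives $[g_{u,1}] \oplus [g_{v,1}] = [g_{uv,1}]$, as claimed.

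There is no real obstacle here: all the content sits in the two preceding lemmas, and the only thing to be careful about is bookkeeping of which index plays the role of which variable when invoking Lemma \ref{lem:square_scale} and Lemma \ref{lemma:guv_plus_gvu}. (One could equally well apply Lemma \ref{lem:square_scale} to $[g_{v,1}]$ instead, but converting $[g_{u,1}]$ into $[g_{uv,v}]$ is cleanest, since then Lemma \ref{lemma:guv_plus_gvu} applies verbatim.)
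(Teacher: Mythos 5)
Your proof is correct and takes essentially the same approach as the paper, namely combining lemma \ref{lem:square_scale} with lemma \ref{lemma:guv_plus_gvu}. The only difference is cosmetic: the paper rescales the second summand to $[g_{1,1/v}]$ and then rescales once more at the end, whereas you rescale the first summand to $[g_{uv,v}]$ and apply the composition law once, which is marginally more economical but not a different argument.
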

\begin{proof}
Lemma \ref{lem:square_scale} and Lemma \ref{lemma:guv_plus_gvu} imply 
\begin{equation*}
    g_{u,1}\oplus g_{v,1} \simeq g_{u,1}\oplus g_{1,1/v}=g_{u,1/v} \simeq g_{uv,1},
\end{equation*} 
and hence the claim. 
\end{proof}

Now we study the relationship of the maps $g_{u,v}$ with $\oplus$ and $\pi_{\mathrm{N}}^*$. 
We adopt the following notation from Cazanave \cite{Caz}:  
For $u\in k^\times$, we identify a rational function $X/u$ in the indeterminate $X$ with the morphism $\PP \to \PP$ defined by $[x_0:x_1] \mapsto [x_0:ux_1]$. 
We then have 
\begin{align*}
\pi = \pi_{\mathrm{N}}^*\left(\frac{X}{1}\right) 
= \left[
\begin{bmatrix}
        x\\z
    \end{bmatrix}, 
    \begin{bmatrix}
        y\\w
    \end{bmatrix}
\right] \quad 
\text{and} \quad  
\pi_{\mathrm{N}}^*\left(\frac{X}{u}\right) 
= \left[
\begin{bmatrix}
        x\\z
    \end{bmatrix}, 
    u \begin{bmatrix}
        y\\w
    \end{bmatrix}
\right] 
\end{align*}
as maps $\J \to \PP$. 
Our next goal is to prove Proposition \ref{prop:gu1_oplus_acts_on_caz}. 
To do so, we need some preparation. 

\begin{lemma}\label{lem:g_u1_oplus_pi}
For every $u \in k^\times$, we have $g_{u,1} \oplus \pi = \pi_{\mathrm{N}}^*\left(\frac{X}{u}\right)$. 
\end{lemma}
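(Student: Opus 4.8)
The plan is to compute both sides of the claimed identity as explicit pointed morphisms $\J \to \PP$ given by a line bundle together with a pair of generating sections, and check they literally agree (not just up to homotopy). First I would recall from Definition~\ref{def:g_uv} that $g_{u,1}$ is the unimodular row $\left(x+\frac{1}{u}w,\,(u-1)y\right)$ completed to the matrix
\[
m_{u,1}=\begin{pmatrix} x+\frac{1}{u}w & \frac{u-1}{u}z \\ (u-1)y & x+uw \end{pmatrix}\in \SL_2(R),
\]
and that $\pi=(1,0:0,1)_1$ is the degree $1$ map with line bundle $\calP_1$ and generating sections $s_0=\begin{bmatrix}x\\z\end{bmatrix}$, $s_1=\begin{bmatrix}y\\w\end{bmatrix}$. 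By Definition~\ref{def:group_action_SL2}, $g_{u,1}\oplus\pi$ is the degree $1$ map with line bundle $\calP_1$ and sections given by
\[
m_{u,1}\begin{pmatrix} s_0 \\ s_1\end{pmatrix}
=\begin{pmatrix}\left(x+\frac1u w\right)s_0-\frac{u-1}{u}z\,s_1 \\[2pt] (u-1)y\,s_0+(x+uw)s_1\end{pmatrix}.
\]

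The key step is then to simplify these two section vectors in $\calP_1\subseteq R^2$ using the relations $x+w=1$ and $xw=yz$ of $R$, and to compare with $\pi_{\mathrm N}^*(X/u)$. Recall $X/u\colon\PP\to\PP$ is $[x_0:x_1]\mapsto[x_0:ux_1]$, i.e.\ the map $[\sigma_0,\sigma_1]$ with $\calO(1)$ and $\sigma_0=x_0$, $\sigma_1=ux_1$; by Proposition~\ref{prop:compose_caz_with_pi}, $\pi_{\mathrm N}^*(X/u)=[s_0,s_1]$ with line bundle $\calP_1$ and sections $s_0=\begin{bmatrix}x\\z\end{bmatrix}$, $s_1=u\begin{bmatrix}y\\w\end{bmatrix}$. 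So the target is the pair $\left(\begin{bmatrix}x\\z\end{bmatrix},\,u\begin{bmatrix}y\\w\end{bmatrix}\right)$. I would carry out the component-wise computation of the first component (on $D(x)$), using $\begin{bmatrix}x\\z\end{bmatrix}\leftrightarrow x$, $\begin{bmatrix}y\\w\end{bmatrix}\leftrightarrow y$ under Proposition~\ref{prop:local_patching_data}; the first component of the first section is $\left(x+\frac1u w\right)x-\frac{u-1}{u}zy=x^2+\frac1u xw-\frac{u-1}{u}yz$, and substituting $yz=xw$ gives $x^2+\frac1u xw-\frac{u-1}{u}xw=x^2+xw-xw\cdot\frac{u}{u}\cdots$— more carefully, $\frac1u-\frac{u-1}{u}=\frac{2-u}{u}$, so this equals $x^2+\frac{2-u}{u}xw$; this does not immediately look like $x$, so the honest computation must also use that we are working with \emph{sections of $\calP_1$}, i.e.\ elements of the submodule, where $x\begin{bmatrix}x\\z\end{bmatrix}+z\begin{bmatrix}y\\w\end{bmatrix}=\begin{bmatrix}x\\z\end{bmatrix}$ and $y\begin{bmatrix}x\\z\end{bmatrix}+w\begin{bmatrix}y\\w\end{bmatrix}=\begin{bmatrix}y\\w\end{bmatrix}$ (from $x+w=1$ together with $xw=yz$). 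Using these module relations I would reduce $\left(x+\frac1u w\right)\begin{bmatrix}x\\z\end{bmatrix}-\frac{u-1}{u}z\begin{bmatrix}y\\w\end{bmatrix}$ to $\begin{bmatrix}x\\z\end{bmatrix}$ and $(u-1)y\begin{bmatrix}x\\z\end{bmatrix}+(x+uw)\begin{bmatrix}y\\w\end{bmatrix}$ to $u\begin{bmatrix}y\\w\end{bmatrix}$, exactly matching $\pi_{\mathrm N}^*(X/u)$.

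The main obstacle I anticipate is purely bookkeeping: getting the substitutions $x+w=1$, $xw=yz$ right while working inside the submodule $\calP_1$ rather than in $R^2$ freely, since the naive component-wise simplification (treating the two entries independently in $R[x^{-1}]$ and $R[w^{-1}]$) is what actually makes the identity hold. Once the first section is shown to equal $\begin{bmatrix}x\\z\end{bmatrix}$ and the second $u\begin{bmatrix}y\\w\end{bmatrix}$ on both patches $D(x)$ and $D(w)$ consistently, the two morphisms agree by Construction~\ref{con:J_to_P_map}, and since this is an equality of morphisms it is a fortiori an equality of naive homotopy classes, giving $g_{u,1}\oplus\pi=\pi_{\mathrm N}^*(X/u)$. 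I would remark that this is the $v=1$ case of the more general identity \eqref{eq:guv_plus_Xv_is_Xu_intro}, whose proof is the same computation with $m_{u,v}$ in place of $m_{u,1}$.
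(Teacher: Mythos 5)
Your strategy --- expand $m_{u,1}\cdot(s_0,s_1)^T$ and compare with the sections of $\pi_{\mathrm{N}}^*(X/u)$, which you correctly identify as $\left(\begin{bmatrix}x\\z\end{bmatrix},\,u\begin{bmatrix}y\\w\end{bmatrix}\right)$ --- is exactly the paper's, but your execution contains a sign error that derails the computation. The top-right entry of $m_{u,1}$ is $+\frac{u-1}{u}z$; in the notation $\begin{pmatrix}A&-V\\B&U\end{pmatrix}$ of definition \ref{def:group_action_SL2} this means $-V=\frac{u-1}{u}z$, so the first section of $g_{u,1}\oplus\pi$ is $As_0-Vs_1=\left(x+\frac1u w\right)s_0+\frac{u-1}{u}z\,s_1$, with a plus sign, not the minus sign you wrote. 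With the correct sign the computation is immediate and needs no further input: on $D(x)$ the first component is $x^2+\frac1u xw+\frac{u-1}{u}yz=x^2+xw=x$ (using $yz=xw$ and $x+w=1$), and similarly on $D(w)$ one gets $z$, so the first section is $\begin{bmatrix}x\\z\end{bmatrix}$; the second section reduces to $u\begin{bmatrix}y\\w\end{bmatrix}$ just as you computed.

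With your sign, however, the expression $\left(x+\frac1u w\right)\begin{bmatrix}x\\z\end{bmatrix}-\frac{u-1}{u}z\begin{bmatrix}y\\w\end{bmatrix}$ equals $\left(x+\frac{2-u}{u}w\right)\begin{bmatrix}x\\z\end{bmatrix}$ after applying $z\begin{bmatrix}y\\w\end{bmatrix}=w\begin{bmatrix}x\\z\end{bmatrix}$, and this is not $\begin{bmatrix}x\\z\end{bmatrix}$ for general $u$; the module identities you invoke (which are themselves correct) cannot reduce it further, so the final reduction you claim is false as stated. The obstacle you noticed (``does not immediately look like $x$'') is a symptom of the sign error, not of any subtlety about working inside $\calP_1$. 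Once the sign is corrected, your argument coincides with the paper's proof, which carries out precisely this matrix multiplication using the relations $x\begin{bmatrix}y\\w\end{bmatrix}=y\begin{bmatrix}x\\z\end{bmatrix}$ and $z\begin{bmatrix}y\\w\end{bmatrix}=w\begin{bmatrix}x\\z\end{bmatrix}$, and yields an equality of actual morphisms as required.
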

\begin{proof}
A direct computation using the facts that $x\begin{bmatrix}
        y\\w
    \end{bmatrix}=y\begin{bmatrix}
        x\\z
    \end{bmatrix}$ and $z\begin{bmatrix}
        y\\w
    \end{bmatrix}=w\begin{bmatrix}
        x\\z
    \end{bmatrix}$ shows
\begin{align*}
g_{u,1} \oplus \pi & 
= \begin{pmatrix}
x+\frac{1}{u}w & \frac{u-1}{u}z \\ 
(u-1)y & x + uw
\end{pmatrix}
\oplus
\left[
\begin{bmatrix}
        x\\z
    \end{bmatrix}, 
    \begin{bmatrix}
        y\\w
    \end{bmatrix}
\right] \\
& = 
\left[
\left(x+\frac{1}{u}w\right) \begin{bmatrix}
    x \\ z 
    \end{bmatrix}  
    + \left(\frac{u-1}{u}z\right) \begin{bmatrix}
    y \\ w 
    \end{bmatrix} ,
    (u-1)y \begin{bmatrix}
    x \\ z 
    \end{bmatrix} + 
    \left(x + uw\right)\begin{bmatrix}
    y \\ w 
    \end{bmatrix}
\right] \\
& = \left[
\begin{bmatrix}
        x\\z
    \end{bmatrix}, 
    u \begin{bmatrix}
        y\\w
    \end{bmatrix}
\right]
\end{align*}
and hence the result by definition of the maps involved. 
\end{proof}

\begin{lemma}\label{lem:g_uv_oplus_pi_v}
For all $u,v \in k^\times$, we have the identity 
\begin{align*}
g_{u,v} \oplus \pi_{\mathrm{N}}^*\left(\frac{X}{v}\right) = \pi_{\mathrm{N}}^*\left(\frac{X}{u}\right).
\end{align*}
\end{lemma}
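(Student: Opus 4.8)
The statement to prove is the identity $g_{u,v} \oplus \pi_{\mathrm{N}}^*\!\left(X/v\right) = \pi_{\mathrm{N}}^*\!\left(X/u\right)$ for all $u, v \in k^\times$. The natural approach is to decompose $g_{u,v}$ via the composition law already established in Lemma~\ref{lemma:guv_plus_gvu}, and then reduce the claim to the single-variable case handled in Lemma~\ref{lem:g_u1_oplus_pi}.

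\medskip

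First, recall that Lemma~\ref{lemma:guv_plus_gvu} gives $g_{u,v} = g_{u,1} \oplus g_{1,v}$ (taking $s = v$, or alternatively directly by the matrix computation $m_{u,1}\cdot m_{1,v} = m_{u,v}$). Therefore, using associativity of the action $\oplus$ of the group $[\J,\ato]\naif$ on the homotopy classes (Theorem~\ref{thm:operation_descends_to_htpy_classes}),
\begin{equation*}
g_{u,v} \oplus \pi_{\mathrm{N}}^*\!\left(X/v\right)
= g_{u,1} \oplus \left( g_{1,v} \oplus \pi_{\mathrm{N}}^*\!\left(X/v\right) \right).
\end{equation*}
So it suffices to show $g_{1,v} \oplus \pi_{\mathrm{N}}^*\!\left(X/v\right) = \pi$, since then the right-hand side becomes $g_{u,1}\oplus \pi = \pi_{\mathrm{N}}^*\!\left(X/u\right)$ by Lemma~\ref{lem:g_u1_oplus_pi}. (Note $\pi = \pi_{\mathrm{N}}^*(X/1)$.)

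\medskip

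The remaining identity $g_{1,v} \oplus \pi_{\mathrm{N}}^*\!\left(X/v\right) = \pi$ is itself an instance of the pattern in Lemma~\ref{lem:g_u1_oplus_pi} with the roles adjusted: by Lemma~\ref{lemma:guv_plus_gvu} we have $g_{1,v} \oplus g_{v,1} = g_{1,1} = (1,0)$, so $g_{1,v}$ is the $\oplus$-inverse of $g_{v,1}$ in $[\J,\ato]\naif$; since $g_{v,1} \oplus \pi = \pi_{\mathrm{N}}^*(X/v)$ by Lemma~\ref{lem:g_u1_oplus_pi}, acting on both sides by $g_{1,v}$ gives $g_{1,v}\oplus \pi_{\mathrm{N}}^*(X/v) = g_{1,v}\oplus(g_{v,1}\oplus\pi) = (g_{1,v}\oplus g_{v,1})\oplus \pi = (1,0)\oplus \pi = \pi$, using associativity and the fact that $(1,0)$ acts as the identity. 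Chaining these together yields the claimed identity. Alternatively—and this is perhaps cleaner to present—one can give the whole argument in one line: $g_{u,v} \oplus \pi_{\mathrm{N}}^*(X/v) = g_{u,v} \oplus (g_{v,1} \oplus \pi) = (g_{u,v}\oplus g_{v,1}) \oplus \pi = g_{u,1} \oplus \pi = \pi_{\mathrm{N}}^*(X/u)$, invoking Lemma~\ref{lem:g_u1_oplus_pi} twice and Lemma~\ref{lemma:guv_plus_gvu} once (in the form $g_{u,v}\oplus g_{v,1} = g_{u,1}$).

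\medskip

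There is no real obstacle here: the statement is a formal consequence of the monoid/action identities already in place, and the only mild subtlety is that the identity is claimed as an identity of \emph{morphisms}, not just homotopy classes. If one wants the strict version, one should note that the relevant lemmas (Lemma~\ref{lem:g_u1_oplus_pi} and the matrix identity $m_{u,v}m_{v,1} = m_{u,1}$ from Lemma~\ref{lemma:guv_plus_gvu}) are actually proved by explicit matrix computations that hold on the nose, so the composite $m_{u,v}\oplus(m_{v,1}\oplus\pi) = (m_{u,v}m_{v,1})\oplus\pi = m_{u,1}\oplus\pi$ is a genuine equality of sections and hence of morphisms; only the final step invoking Lemma~\ref{lem:g_u1_oplus_pi} needs to be checked to be an on-the-nose identity, which it is as stated there. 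I would write the proof in the short chained form above and add one sentence remarking that each step is an equality of morphisms.
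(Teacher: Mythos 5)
Your chained one-line argument, $g_{u,v} \oplus \pi_{\mathrm{N}}^*(X/v) = (g_{u,v}\oplus g_{v,1})\oplus\pi = g_{u,1}\oplus\pi = \pi_{\mathrm{N}}^*(X/u)$, is exactly the proof given in the paper, which likewise cites lemma \ref{lemma:guv_plus_gvu}, lemma \ref{lem:g_u1_oplus_pi}, and the definition of $\oplus$. Your proposal is correct and takes essentially the same approach; the remark about each step holding on the nose as morphisms is a reasonable addition.
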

\begin{proof}
Using Lemmas \ref{lemma:guv_plus_gvu}, \ref{lem:g_u1_oplus_pi}, and Definition \ref{def:full_group} we get 
\begin{equation*}
g_{u,v} \oplus \pi_{\mathrm{N}}^*\left(\frac{X}{v}\right)  = (g_{u,v} \oplus g_{v,1}) \oplus \pi = g_{u,1} \oplus \pi = \pi_{\mathrm{N}}^*\left(\frac{X}{u}\right)
\end{equation*}
and hence the result. 
\end{proof}


We are now ready to prove a key result for the compatibility of $\pi_{\mathrm{N}}^*$ with the monoid operations. 

\begin{proposition}
\label{prop:gu1_oplus_acts_on_caz}
Let $u \in k^{\times}$ and $f \colon \PP \to \PP$ be a pointed morphism. 
Then there is a pointed naive homotopy
\begin{align*}
\pi_{\mathrm{N}}^*\left(\frac{X}{u} \oplus\naif f \right) \simeq 
g_{u,1}  \oplus \left( \pi_{\mathrm{N}}^*\left(\frac{X}{1} \oplus\naif f \right) \right). 
\end{align*}
\end{proposition}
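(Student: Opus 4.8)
The plan is to compute both sides as explicit morphisms $\J \to \PP$ with line bundle $\calP_{n+1}$, where $n = \deg f$, and then to join them by a concrete elementary homotopy whose well-definedness is checked by a resultant computation. First I would fix Cazanave data for $f$: write it as the degree-$n$ morphism with invertible sheaf $\calO(n)$ and homogeneous sections coming from $A = \sum_{i=0}^n a_i X^i$ (with $a_n = 1$) and $B = \sum_{i=0}^{n-1} b_i X^i$. Applying Cazanave's explicit description of $\oplus\naif$ from \cite{Caz} to the degree-$1$ functions $X/u$ and $X/1$, I would write out the degree-$(n+1)$ morphisms $X/u \oplus\naif f$ and $X/1 \oplus\naif f$ and homogenise as in definition \ref{def:sigma} to obtain pairs $(S_0^{(u)}, S_1^{(u)})$ and $(S_0^{(1)}, S_1^{(1)})$ in $R[\alpha,\beta]_{n+1}^2$; the structural point to extract is that these two pairs differ only through the appearance of the scalar $u$, precisely in the way that the action of $m_{u,1}$ will reproduce. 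By proposition \ref{prop:compose_caz_with_pi} together with the surjection $\sigma$, the left-hand side $\pi_{\mathrm{N}}^*(X/u \oplus\naif f)$ is then the morphism $[\sigma(S_0^{(u)}), \sigma(S_1^{(u)})]$ with line bundle $\calP_{n+1}$, and $\pi_{\mathrm{N}}^*(X/1 \oplus\naif f) = [\sigma(S_0^{(1)}), \sigma(S_1^{(1)})]$ likewise.

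Next I would unwind the right-hand side. By definitions \ref{def:group_action_SL2} and \ref{def:group_action_homotopy}, $g_{u,1} \oplus \pi_{\mathrm{N}}^*(X/1 \oplus\naif f)$ is the morphism whose generating sections are the entries of $m_{u,1} \cdot (\sigma(S_0^{(1)}), \sigma(S_1^{(1)}))^T$, and since $\sigma$ is $R$-linear these equal $\sigma$ of $m_{u,1} \cdot (S_0^{(1)}, S_1^{(1)})^T$. Using the relations $xw = yz$ and $x + w = 1$ and their consequences for the generators of $\calP_{n+1}$ in proposition \ref{prop:span_prop} --- exactly the kind of simplification already carried out in the proof of lemma \ref{lem:g_u1_oplus_pi} --- I would rewrite this pair in the canonical form of remark \ref{rem:pair_of_secs_as_map_to_P1}, i.e.\ as $R$-linear combinations of $\begin{bmatrix} x^{n+1} \\ z^{n+1} \end{bmatrix}$ and $\begin{bmatrix} y^{n+1} \\ w^{n+1} \end{bmatrix}$. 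The comparison now becomes a comparison of two explicit pairs of sections of $\calP_{n+1}$.

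I would then produce the homotopy as the straight-line path $H(T) = \bigl[\, (1-T)\,\sigma(S_0^{(u)}) + T\,\sigma(S_0'),\ (1-T)\,\sigma(S_1^{(u)}) + T\,\sigma(S_1')\, \bigr]$, where $(\sigma(S_0'), \sigma(S_1'))$ is the right-hand side pair computed above (equivalently, one may take a homotopy obtained by letting the $u$-dependent coefficients in Cazanave's formula vary linearly). The substantive step is to verify that $H(T)$ is a morphism $\J \times \AAA \to \PP$ for every $T$, not merely at $T = 0, 1$. By lemma \ref{lem:resultant_unit_morphism} this reduces to showing that the resultant of the interpolating pair, computed over $R[T]$ in canonical form, is a unit; I would evaluate it using the resultant identities of appendix \ref{sec:appendix_resultants} --- multiplicativity, the reversal lemma \ref{lem:reverse_order_resultant}, the B\'ezout criterion lemma \ref{lem:resultant_bezout}, and the behaviour of the resultant under multiplication by $m_{u,1}$ --- and expect to find that it equals the constant $u \cdot \res(f) \in k^\times$, independent of $T$, so that $H(T)$ is a morphism throughout. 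Pointedness of $H(T)$ is then routine, since $m_{u,1}$ is a pointed matrix, $f$ is pointed, and the pointedness criterion of proposition \ref{prop:pointed_map} is preserved along the path, as in proposition \ref{prop:oplus_is_a_morphism}.

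The main obstacle is this resultant computation. As the remark after lemma \ref{lem:resultant_unit_morphism} shows, the resultant (in canonical form) is not preserved under the $\SL_2(R)$-action in the naive way, so pinning its value along $H(T)$ down to the constant $u \cdot \res(f)$ genuinely needs the appendix's resultant toolkit rather than a one-line invariance argument. A secondary, purely bookkeeping difficulty is making Cazanave's normalisation of $\oplus\naif$ explicit enough to write $(S_0^{(u)}, S_1^{(u)})$ and $(S_0^{(1)}, S_1^{(1)})$ and to see that they are related exactly by the data defining $m_{u,1}$.
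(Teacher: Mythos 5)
Your overall strategy coincides with the paper's: both sides are written as explicit section pairs of $\calP_{n+1}$ via the Cazanave data of $f$, the connecting homotopy is the affine path in $T$ between them (your straight-line path is literally the paper's $h(T)$, since the paper's homotopy is affine in $T$ with the two sides as endpoints), and the verification reduces via lemma \ref{lem:resultant_unit_morphism} to showing a polynomial lift of the interpolating pair has unit resultant. Pointedness is indeed routine since $y\in\jj$.

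The gap is in the decisive step, which you leave as an ``expectation'': showing the resultant is a unit for all $T$. The route you sketch --- computing the Sylvester determinant of the canonical-form interpolation over $R[T]$ using ``multiplicativity'' and ``the behaviour of the resultant under multiplication by $m_{u,1}$'' --- does not go through: the appendix contains no multiplicativity statement, and, as you yourself observe via the remark after lemma \ref{lem:resultant_unit_morphism}, the resultant is \emph{not} controlled under the $\SL_2(R)$-action, so tracking it through $m_{u,1}$ is exactly what one cannot do. Moreover the canonical-form lift $a_0\alpha^{n+1}+a_1\beta^{n+1}$ discards the intermediate coefficients and its resultant is not related to $\res(f)$ in any usable way. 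The missing idea is the commutation identity
\begin{equation*}
m_{u,1}\cdot
\begin{pmatrix}\begin{bmatrix}x\\z\end{bmatrix} & -\begin{bmatrix}y\\w\end{bmatrix}\\ \begin{bmatrix}y\\w\end{bmatrix} & 0\end{pmatrix}
=
\begin{pmatrix}\begin{bmatrix}x\\z\end{bmatrix} & -\tfrac{1}{u}\begin{bmatrix}y\\w\end{bmatrix}\\ u\begin{bmatrix}y\\w\end{bmatrix} & 0\end{pmatrix}
\cdot
\begin{pmatrix}1 & \tfrac{u-1}{u}y\\ 0 & 1\end{pmatrix},
\end{equation*}
which moves $m_{u,1}$ past the Cazanave matrix and converts it into a unipotent elementary matrix acting on $(f_1,f_2)$ \emph{before} the Cazanave matrix for $X/u$ is applied. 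With $T$ inserted in the off-diagonal entry, the lift of $h(T)$ becomes the Cazanave matrix of $X/u$ applied to $(F_1+\tfrac{u-1}{u}yT\,F_2,\,F_2)$; lemma \ref{lem:resultant_conservation_elementary_action} then says this elementary modification preserves the (unit) resultant of $(F_1,F_2)$, and lemma \ref{lem:resultantconservation} says the outer Cazanave matrix scales it by $-u$. That yields unit resultant for every $T$ in one stroke (the value is $-u\cdot\res(f)$, not $u\cdot\res(f)$, though the sign is immaterial). Without this factorization your resultant computation has no traction.
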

\begin{proof} 
We begin by clearly describing the maps under consideration before defining our naive homotopy. 
A pointed morphism $f \colon \PP \to \PP$ of degree $n$ can be expressed as a rational function $\frac{A}{B} = \frac{X^n + a_{n-1}X^{n-1}+\ldots + a_0}{b_{n-1}X^{n-1} + \ldots + b_0}$ where $\res(A,B)$ is a unit by Proposition \ref{prop:Caz_thm_pointed_P1_endo}. If we define
\begin{equation*}
(F_0,F_1)=(\alpha^n + a_{n-1}\alpha^{n-1}\beta + \ldots +a_0\beta^n, b_{n-1}\alpha^{n-1}\beta + \ldots + b_0\beta^n) \in 
\left(R [\alpha,\beta]_{(n)}\right)^2,
\end{equation*}
then $\pi_N^*(f) = [\se(F_0) ,\se(F_1)]$ with respect to the invertible sheaf $\calP_n$ by Proposition \ref{prop:resultant_unit_morphism} and Remark \ref{rem:connecting_sections_sigma_and_sections_s}. Note $\res(F_0, F_1) = \res(A,B)$ is a unit. Write $f_0 = s(F_0)$ and $f_1 = s(F_1)$, so that $\pi_N^*(f) = [f_0, f_1]$ to simplify the notation. 

The map $\pi_{\mathrm{N}}^*\left(\frac{X}{u} \oplus\naif f \right)$ is described as the following matrix product by the calculation of Cazanave \cite[Example 3.3]{Caz} when interpreted in the notation of Remark  \ref{rem:connecting_sections_sigma_and_sections_s}:
\begin{equation*}
   \pi_{\mathrm{N}}^*\left(\frac{X}{u} \oplus\naif f \right) =\begin{pmatrix}\begin{bmatrix}  x \\ z \end{bmatrix} & -\frac{1}{u} \begin{bmatrix}  y \\ w \end{bmatrix} \\ u \begin{bmatrix}  y \\ w \end{bmatrix} & 0 \end{pmatrix}
  \cdot \begin{pmatrix}
        f_0 \\ f_1
    \end{pmatrix}.
\end{equation*}
Note that here we use the isomorphism $\calP_n \otimes \calP_1 \xrightarrow{\cong} \calP_{n+1}$ of  Proposition \ref{prop:tensor_of_P1_is_Pn} 
to identify the product of a pair of column vectors with its image in $\calP_{n+1}$, where we recall that multiplication of sections is induced by component-wise multiplication of elements in $R$. 
The pair of generating sections that determine the morphism $g_{u,1} \oplus \left( \pi^*\left(\frac{X}{1} \oplus\naif f \right) \right)$ is given by the matrix product 
\begin{equation*}
   g_{u,1}  \oplus \left( \pi_{\mathrm{N}}^*\left(\frac{X}{1} \oplus\naif f \right) \right) =\begin{pmatrix}
       x + \frac{1}{u}w & \frac{u-1}{u}z \\ (u-1)y & x+uw
   \end{pmatrix} \cdot 
   \begin{pmatrix}\begin{bmatrix}  x \\ z \end{bmatrix} & - \begin{bmatrix}  y \\ w \end{bmatrix} \\  \begin{bmatrix}  y \\ w \end{bmatrix} & 0 \end{pmatrix} \cdot 
   \begin{pmatrix}
        f_0 \\ f_1
    \end{pmatrix}.
\end{equation*}
Note that we have the following equality of matrix products 
\begin{align*}
    \begin{pmatrix}
       x + \frac{1}{u}w & \frac{u-1}{u}z \\ (u-1)y & x+uw
   \end{pmatrix} \cdot 
   \begin{pmatrix}\begin{bmatrix}  x \\ z \end{bmatrix} & -\begin{bmatrix}  y \\ w \end{bmatrix} \\  \begin{bmatrix}  y \\ w \end{bmatrix} & 0 \end{pmatrix} & = \begin{pmatrix}\begin{bmatrix}  x \\ z \end{bmatrix} & -(x+\frac{1}{u}) \begin{bmatrix}  y \\ w \end{bmatrix} \\ u \begin{bmatrix}  y \\ w \end{bmatrix} & (1-u)y \begin{bmatrix}  y \\ w \end{bmatrix}\end{pmatrix} \\
   & = \begin{pmatrix}\begin{bmatrix}  x \\ z \end{bmatrix} & -\frac{1}{u} \begin{bmatrix}  y \\ w \end{bmatrix} \\ u \begin{bmatrix}  y \\ w \end{bmatrix} & 0 \end{pmatrix} 
   \cdot 
   \begin{pmatrix}
       1 & \frac{u-1}{u}y \\ 0 & 1
   \end{pmatrix}.
\end{align*}
This motivates the definition of the matrix $h(T)$ defined by the following product. 
\begin{align*}
    h(T) & = \begin{pmatrix}\begin{bmatrix}  x \\ z \end{bmatrix} & -\frac{1}{u} \begin{bmatrix}  y \\ w \end{bmatrix} \\ u \begin{bmatrix}  y \\ w \end{bmatrix} & 0 \end{pmatrix}
    \cdot 
    \begin{pmatrix}
       1 & \frac{u-1}{u}yT \\ 0 & 1
   \end{pmatrix} 
   \cdot 
   \begin{pmatrix}
        f_0 \\ f_1
    \end{pmatrix} \\
    & =\begin{pmatrix}
    f_0\begin{bmatrix}  x \\ z \end{bmatrix}  +\frac{u-1}{u}yT f_1 \begin{bmatrix}  x \\ z \end{bmatrix} - \frac{1}{u}f_1 \begin{bmatrix}  y \\ w \end{bmatrix} \\
    u f_0  \begin{bmatrix}  y \\ w \end{bmatrix} + (u-1)yT f_1  \begin{bmatrix}  y \\ w \end{bmatrix} 
    \end{pmatrix}
\end{align*}

We claim that the rows of the matrix $h(T)$ provide generating sections of the line bundle $p_1^*\calP_{n+1}$ on $\J \times \AAA$, where $p_1 \colon \J \times \AAA \to \J$ is the projection to the first factor. 
This implies that $h$ defines a morphism $h \colon \J \times \AAA \to \PP$, and that $h$ is a pointed homotopy from $h(0)=\pi_{\mathrm{N}}^*\left(\frac{X}{u} \oplus\naif f \right) $ to $h(1)=
g_{u,1}  \oplus \left( \pi_{\mathrm{N}}^*\left(\frac{X}{1} \oplus\naif f \right) \right)$. 

Define $H(T)$ by
\begin{align*}
    H(T) & = \begin{pmatrix}\alpha & -\frac{1}{u} \beta \\ u \beta & 0 \end{pmatrix}
    \cdot 
    \begin{pmatrix}
       1 & \frac{u-1}{u}yT \\ 0 & 1
   \end{pmatrix} 
   \cdot 
   \begin{pmatrix}
        F_0 \\ F_1
    \end{pmatrix} \\
    & =\begin{pmatrix}
    F_0\alpha +\frac{u-1}{u}yT F_1 \alpha - \frac{1}{u}F_1 \alpha \\
    u F_0  \beta + (u-1)yT F_1 \beta
    \end{pmatrix}.
\end{align*}
Since $\res(F_0,F_1)$ is a unit, Lemma \ref{lem:resultant_conservation_elementary_action} implies that the pair of polynomials $\left(F_0 + \frac{u-1}{u}yTF_1,F_1\right)$ over the ring $R[T]$ has unit resultant.  
Lemma \ref{lem:resultantconservation} then implies that the resultant 
\begin{align*}
\res\left(F_0\alpha +\frac{u-1}{u}yT F_1 \alpha - \frac{1}{u}F_1 \alpha, 
u F_0  \beta + (u-1)yT F_1 \beta \right)
\end{align*}
is a unit as well. 
Finally, Proposition \ref{prop:resultant_unit_morphism_homotopy} shows that $h$ is a morphism and thus the desired pointed naive homotopy. 
\end{proof}

To give a concrete example of the homotopy constructed in the proof of Proposition \ref{prop:gu1_oplus_acts_on_caz}, we look at the  special case $f=X/1$:

\begin{example}\label{ex:pi_Xu_plus_pi_X1}
For every $u\in k^{\times}$, the morphism $H$ defined by 
\begin{equation*}
H =  \left[ \begin{bmatrix}
        x^2 \\ z^2
    \end{bmatrix} + T\frac{u-1}{u}y \begin{bmatrix}
        xy \\ zw
    \end{bmatrix} - \left(x+\frac{1}{u}w\right)
    \begin{bmatrix}
        y^2 \\ w^2 
    \end{bmatrix},  u \begin{bmatrix}
        xy \\ zw
    \end{bmatrix} + (T(u-1)-(u-1)y)\begin{bmatrix}
        y^2 \\ w^2 
    \end{bmatrix}\right]
\end{equation*}
is a homotopy between 
$H(0)=g_{u,1}\oplus\pi_{\mathrm{N}}^*\left(\frac{X}{1} \oplus\naif \frac{X}{1} \right)$ 
and 
$H(1)=\pi_{\mathrm{N}}^*\left(\frac{X}{u} \oplus\naif \frac{X}{1} \right)$. 
\end{example}


\subsection{The map \texorpdfstring{$\pi_{\mathrm{N}}^*$}{0} is a monoid morphism}\label{sec:compatibility_with_oplusN}

We will now prove that the map $\pi_{\mathrm{N}}^* \colon \left([\PP,\PP]\naif,\oplus\naif\right) \to \left([\J,\PP]\naif,\oplus \right)$ induced by $\pi$ is a morphism of monoids. 

\begin{lemma}\label{lemma:pi_n_times_X1}
We have 
\begin{align*}
\pi_{\mathrm{N}}^*\left(\frac{X}{1} \oplus\naif \cdots \oplus\naif \frac{X}{1} \right) \simeq 
\pi \oplus \cdots \oplus \pi, 
\end{align*}
where there are $n$ summands on both sides. 
\end{lemma}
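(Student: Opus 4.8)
The plan is to compare the two sides by transporting them to $[\PP,\PP]^{\AAA}$ along the bijection $\cc$ of \eqref{eq:jpn_ppa}, using that Cazanave's canonical map $\nu_{\PP}\colon [\PP,\PP]\naif \to [\PP,\PP]^{\AAA}$ is a morphism of monoids (indeed a group completion). The whole argument is formal once the right dictionary is set up.

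First I would record the compatibility $\cc \circ \pi_{\mathrm{N}}^* = \nu_{\PP}$. This is immediate from the description of $\cc$ given right after \eqref{eq:jpn_ppa}: the square relating $\pi_{\mathrm{N}}^*$, $\nu$, $\nu_{\PP}$ and $\pi_{\AAA}^*$ commutes, since both composites send the class of a pointed morphism $h\colon \PP \to \PP$ to the $\AAA$-homotopy class of $h\circ\pi$, and $\cc = (\pi_{\AAA}^*)^{-1}\circ\nu$. Applying $\cc$ to the left-hand side and using that $\nu_{\PP}$ is additive for $\oplus\naif$ together with $\nu_{\PP}(X/1)=[\id]$, I obtain that $\cc$ sends $\pi_{\mathrm{N}}^*(X/1 \oplus\naif \cdots \oplus\naif X/1)$ (with $n$ summands) to $n[\id]$ in $[\PP,\PP]^{\AAA}$.

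Next I would identify the right-hand side. Since $1\pi=\pi$ by definition \ref{def:minus_pi_and_n_pi}, and the unimodular row $(1,0)$ is the identity of the group $[\J,\ato]\naif$, which acts trivially on each $[\J,\PP]\naif_n$ by theorem \ref{thm:operation_descends_to_htpy_classes}, definition \ref{def:full_group} gives $[\pi]\oplus[\pi] = [2\pi]$, and an easy induction (using associativity from theorem \ref{thm:group_structure}) yields $\pi \oplus \cdots \oplus \pi = [n\pi]$ with $n$ summands. By definition \ref{def:minus_pi_and_n_pi}, $\cc$ sends $[n\pi]$ to $n[\id]$ as well. Since $\cc$ is a bijection, comparing the two computations gives $\pi_{\mathrm{N}}^*(X/1 \oplus\naif \cdots \oplus\naif X/1) \simeq \pi \oplus \cdots \oplus \pi$, which is the claim.

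There is no genuine obstacle here beyond bookkeeping: the proof reduces to the three facts (i) $\cc\circ\pi_{\mathrm{N}}^* = \nu_{\PP}$, (ii) $\nu_{\PP}$ respects $\oplus\naif$ (Cazanave), and (iii) the $n$-fold $\oplus$-sum of $[\pi]$ collapses to $[n\pi]$ — all immediate from the definitions and from results already established. The only mild care needed is that $n\pi$ is defined only up to naive homotopy, but both sides of the asserted identity are determined up to $\simeq$, so this is harmless. (One could instead give a direct calculational proof using the recursive formula for $n\pi$ in remark \ref{rem:construction_of_npi} and comparing with the Chebyshev-type recursion governing iterated $\oplus\naif$-sums of $X/1$ via proposition \ref{prop:compose_caz_with_pi}, but the argument above is cleaner and avoids that computation.)
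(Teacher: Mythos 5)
Your proposal is correct and follows essentially the same route as the paper: the paper's proof likewise uses that $\nu_{\PP}$ is a monoid morphism with $\nu_{\PP}(X/1)=[\id]$ to identify the $\cc$-image of the left-hand side with $n[\id]$, concludes $\pi_{\mathrm{N}}^*(X/1 \oplus\naif \cdots \oplus\naif X/1)\simeq n\pi$ from the bijectivity of $\cc$ and the defining property of $n\pi$, and notes $[\pi]\oplus\cdots\oplus[\pi]=[n\pi]$ by definition of $\oplus$. The only difference is that you spell out the compatibility $\cc\circ\pi_{\mathrm{N}}^*=\nu_{\PP}$ explicitly (the paper records it as a separate lemma in the following section but uses it implicitly here), which is harmless.
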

\begin{proof}
Since $\nu_{\PP}\left(\frac{X}{1}\right) = \id_{\PP}$ and $\nu_{\PP}$ is a morphism of monoids by \cite[Proposition 3.23]{Caz}, we have the equality $n[\id] = \left[\frac{X}{1} \oplus\naif \cdots \oplus\naif \frac{X}{1}\right]$ in $[\PP, \PP]^{\AAA}$. 
Thus, $\pi_{\mathrm{N}}^*\left(\frac{X}{1} \oplus\naif \cdots \oplus\naif \frac{X}{1} \right)$ is naively homotopic to $n\pi$. 
By definition, $[\pi] \oplus \cdots \oplus [\pi] = [n\pi]$, hence the result follows. 
\end{proof}

\begin{proposition}\label{prop:pi*_Xu1_to_X_un_and_oplus}
For $u_1,\ldots,u_n \in k^{\times}$ we have 
\begin{align*}
\pi_{\mathrm{N}}^*\left(\frac{X}{u_1} \oplus\naif \frac{X}{u_2} \oplus\naif \cdots \oplus\naif \frac{X}{u_n} \right) 
\simeq \pi_{\mathrm{N}}^*\left(\frac{X}{u_1}\right) \oplus \pi_{\mathrm{N}}^*\left(\frac{X}{u_2}\right) \oplus \cdots \oplus \pi_{\mathrm{N}}^*\left(\frac{X}{u_n} \right). 
\end{align*}
\end{proposition}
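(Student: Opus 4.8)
The plan is to prove the proposition by an iterated application of Proposition \ref{prop:gu1_oplus_acts_on_caz}, which lets us replace each scalar $u_i$ in the $\oplus\naif$-sum by $1$ at the cost of accumulating the degree $0$ morphisms $g_{u_i,1}$, and then to identify the outcome with the right-hand side using Lemma \ref{lemma:pi_n_times_X1}, Lemma \ref{lem:g_u1_oplus_pi} and the definition of $\oplus$ on $[\J,\PP]\naif$ (Definition \ref{def:full_group}). The only inputs needed are: Proposition \ref{prop:gu1_oplus_acts_on_caz}; Lemmas \ref{lem:g_u1_oplus_pi} and \ref{lemma:pi_n_times_X1}; the fact from Theorem \ref{thm:operation_descends_to_htpy_classes} that $\oplus$ defines a well-defined left action of $[\J,\ato]\naif$ on each $[\J,\PP]\naif_n$; that $[\J,\PP]\naif$ is an abelian group (Theorem \ref{thm:group_structure}); and the commutativity of Cazanave's monoid $\left([\PP,\PP]\naif,\oplus\naif\right)$ from \cite{Caz}. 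For $n=1$ everything below is trivial, so we may assume $n\geq 2$.

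First I would establish the auxiliary identity
\[
\pi_{\mathrm{N}}^*\left(\frac{X}{u_1} \oplus\naif \cdots \oplus\naif \frac{X}{u_n}\right) \simeq \left(g_{u_1,1} \oplus \cdots \oplus g_{u_n,1}\right) \oplus \pi_{\mathrm{N}}^*\left(\underbrace{\frac{X}{1} \oplus\naif \cdots \oplus\naif \frac{X}{1}}_{n}\right),
\]
where on the right the classes $g_{u_i,1}$ are multiplied in the group $[\J,\ato]\naif$ and the resulting class acts, via Definition \ref{def:group_action_homotopy}, on the degree $n$ class on the far right. To prove this, apply Proposition \ref{prop:gu1_oplus_acts_on_caz} with $u=u_1$ and $f=\frac{X}{u_2}\oplus\naif\cdots\oplus\naif\frac{X}{u_n}$ to obtain $\pi_{\mathrm{N}}^*\!\left(\frac{X}{u_1}\oplus\naif\cdots\right)\simeq g_{u_1,1}\oplus\pi_{\mathrm{N}}^*\!\left(\frac{X}{1}\oplus\naif\frac{X}{u_2}\oplus\naif\cdots\right)$; then use commutativity and associativity of $\oplus\naif$ to rewrite $\frac{X}{1}\oplus\naif\frac{X}{u_2}\oplus\naif\cdots$ as $\frac{X}{u_2}\oplus\naif\left(\frac{X}{1}\oplus\naif\frac{X}{u_3}\oplus\naif\cdots\right)$ and apply Proposition \ref{prop:gu1_oplus_acts_on_caz} again with $u=u_2$, and so on, until every $u_i$ has been replaced by $1$. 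At each step the newly produced factor $g_{u_i,1}$ is absorbed into the accumulated product using that $\oplus$ is a left group action, i.e.\ $M\oplus(N\oplus\alpha)=(M\oplus N)\oplus\alpha$ with the inner $\oplus$ the group law of $[\J,\ato]\naif$; and each substitution inside $g_{u_1,1}\oplus(-)$ is legitimate because that action is well-defined on naive homotopy classes (Theorem \ref{thm:operation_descends_to_htpy_classes}).

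Next, Lemma \ref{lemma:pi_n_times_X1} identifies the last factor with the $n$-fold $\oplus$-sum $\pi\oplus\cdots\oplus\pi$, which equals $[n\pi]$ by the definition of $\oplus$. Hence the left-hand side of the proposition is naively homotopic to $\left(g_{u_1,1}\oplus\cdots\oplus g_{u_n,1}\right)\oplus[n\pi]$. On the other hand, Lemma \ref{lem:g_u1_oplus_pi} gives $\pi_{\mathrm{N}}^*\!\left(\frac{X}{u_i}\right)=g_{u_i,1}\oplus\pi$, so each $\pi_{\mathrm{N}}^*\!\left(\frac{X}{u_i}\right)$ has degree $1$ with degree $0$ part $g_{u_i,1}$ in the sense required by Definition \ref{def:full_group}; applying that definition repeatedly (using associativity of $\oplus$ on $[\J,\PP]\naif$ from Theorem \ref{thm:group_structure}) shows $\pi_{\mathrm{N}}^*\!\left(\frac{X}{u_1}\right)\oplus\cdots\oplus\pi_{\mathrm{N}}^*\!\left(\frac{X}{u_n}\right)=\left(g_{u_1,1}\oplus\cdots\oplus g_{u_n,1}\right)\oplus[n\pi]$, which is the same class. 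This proves the proposition.

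The main obstacle here is not conceptual: the genuinely hard computational content, namely the explicit homotopy built from resultant computations, is already contained in Proposition \ref{prop:gu1_oplus_acts_on_caz}. The real care required is bookkeeping, namely distinguishing the three operations all written $\oplus$ (the group law on $[\J,\ato]\naif$, the action on $[\J,\PP]\naif_n$, and the full group law on $[\J,\PP]\naif$) and checking that the ``degree $0$ part'' decomposition used when iterating Definition \ref{def:full_group} is consistently the one supplied by Lemma \ref{lem:g_u1_oplus_pi}. One should also make explicit that after the first substitution Proposition \ref{prop:gu1_oplus_acts_on_caz} is applied to a sum of the form $\frac{X}{u_2}\oplus\naif\left(\frac{X}{1}\oplus\naif\cdots\right)$, which is precisely where the commutativity of $\oplus\naif$ enters.
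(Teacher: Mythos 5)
Your proposal is correct and follows essentially the same route as the paper's proof: iterate proposition \ref{prop:gu1_oplus_acts_on_caz} using commutativity of $\oplus\naif$ to accumulate the factors $g_{u_i,1}$, identify the remaining term via lemma \ref{lemma:pi_n_times_X1}, and match the right-hand side using lemma \ref{lem:g_u1_oplus_pi} and definition \ref{def:full_group}. The extra bookkeeping you spell out (distinguishing the various uses of $\oplus$ and justifying substitution inside the action) is left implicit in the paper but is consistent with its argument.
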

\begin{proof}
Since both $\oplus\naif$ and $\oplus$ are commutative and  $\oplus$ is associative, we may apply Proposition  \ref{prop:gu1_oplus_acts_on_caz} and Lemma \ref{lemma:pi_n_times_X1}  to compute
\begin{align*}
\pi_{\mathrm{N}}^*\left(\frac{X}{u_1} \oplus\naif \cdots \oplus\naif \frac{X}{u_n} \right) & \simeq 
g_{u_1,1} \oplus \pi_{\mathrm{N}}^*\left(\frac{X}{1} \oplus\naif \frac{X}{u_2} \oplus\naif \cdots \oplus\naif \frac{X}{u_n}\right) \\
& \simeq  
g_{u_1,1} \oplus g_{u_2,1} \oplus \pi_{\mathrm{N}}^*\left(\frac{X}{1} \oplus\naif \frac{X}{1} \oplus\naif \frac{X}{u_3} \oplus\naif \cdots \oplus\naif \frac{X}{u_n}\right) \\
& \simeq 
g_{u_1,1} \oplus \cdots \oplus g_{u_n,1} \oplus n\pi \\ 
& \simeq 
(g_{u_1,1} \oplus \pi) \oplus  (g_{u_2,1} \oplus \pi) \oplus \cdots \oplus (g_{u_n,1} \oplus \pi) \\ 
& \simeq \pi_{\mathrm{N}}^*\left(\frac{X}{u_1}\right) \oplus \cdots \oplus \pi_{\mathrm{N}}^*\left(\frac{X}{u_n}\right). 
\end{align*}    
The final step follows from Lemma \ref{lem:g_u1_oplus_pi}. 
\end{proof}

\begin{theorem}\label{thm:pi*_is_a_monoid_morphism}
The map $\pi_{\mathrm{N}}^* \colon \left([\PP,\PP]\naif,\oplus\naif\right) \to \left([\J,\PP]\naif,\oplus \right)$ induced by $\pi$ is a morphism of monoids. 
\end{theorem}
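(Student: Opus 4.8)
The plan is to deduce the theorem from Proposition \ref{prop:pi*_Xu1_to_X_un_and_oplus} together with the structural fact, due to Cazanave \cite[\S 3]{Caz} and recalled in the introduction, that the commutative monoid $\left([\PP,\PP]\naif,\oplus\naif\right)$ is generated by the classes $[X/u]$ with $u\in k^\times$: every class of degree $n$ can be written as $[X/u_1]\oplus\naif\cdots\oplus\naif[X/u_n]$ for suitable $u_1,\dots,u_n\in k^\times$, the case $n=0$ being the empty $\oplus\naif$-sum, i.e.\ the unit.

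First I would record that $\pi_{\mathrm{N}}^*$ is well defined on naive homotopy classes: precomposition with the fixed morphism $\pi$ sends a pointed elementary homotopy $\PP\times\AAA\to\PP$ to a pointed elementary homotopy $\J\times\AAA\to\PP$, so $\pi_{\mathrm{N}}^*$ descends to a map of sets $[\PP,\PP]\naif\to[\J,\PP]\naif$. Next I would check that $\pi_{\mathrm{N}}^*$ preserves units. The unit of $\left([\PP,\PP]\naif,\oplus\naif\right)$ is the class of the unique pointed morphism $\PP\to\PP$ of degree $0$, namely the constant morphism at $\infty$ (Proposition \ref{prop:Caz_thm_pointed_P1_endo} with $n=0$); precomposing with $\pi$ gives the constant morphism $\J\to\PP$ at $\infty=[1:0]$, which is $\eta\circ(1,0)$. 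By Proposition \ref{prop:oplus_well_defined_on_SL_2} the unimodular row $(1,0)$ represents the unit of $[\J,\ato]\naif$, and hence, by Definition \ref{def:full_group} together with Theorem \ref{thm:group_structure}, $\eta\circ(1,0)$ represents the unit of $\left([\J,\PP]\naif,\oplus\right)$. Thus $\pi_{\mathrm{N}}^*$ is unital.

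For compatibility with the operations, let $[f],[g]\in[\PP,\PP]\naif$ have degrees $n$ and $m$, and write $[f]=[X/u_1]\oplus\naif\cdots\oplus\naif[X/u_n]$ and $[g]=[X/v_1]\oplus\naif\cdots\oplus\naif[X/v_m]$ via the generation statement. Then $[f]\oplus\naif[g]=[X/u_1]\oplus\naif\cdots\oplus\naif[X/u_n]\oplus\naif[X/v_1]\oplus\naif\cdots\oplus\naif[X/v_m]$, so applying Proposition \ref{prop:pi*_Xu1_to_X_un_and_oplus} to this $\oplus\naif$-sum of length $n+m$, and then to the sums of lengths $n$ and $m$ separately, yields
\begin{align*}
\pi_{\mathrm{N}}^*\left([f]\oplus\naif[g]\right)
&\simeq \Bigl(\pi_{\mathrm{N}}^*(X/u_1)\oplus\cdots\oplus\pi_{\mathrm{N}}^*(X/u_n)\Bigr) \oplus \Bigl(\pi_{\mathrm{N}}^*(X/v_1)\oplus\cdots\oplus\pi_{\mathrm{N}}^*(X/v_m)\Bigr) \\
&\simeq \pi_{\mathrm{N}}^*([f])\oplus\pi_{\mathrm{N}}^*([g]),
\end{align*}
where the reassociation uses that $\oplus$ on $[\J,\PP]\naif$ is associative and commutative (Theorem \ref{thm:group_structure}). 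When $n=0$ or $m=0$ the identity reduces to the unit computation of the previous paragraph. Combining the two steps shows that $\pi_{\mathrm{N}}^*$ is a morphism of monoids.

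I do not expect a genuine obstacle at this stage: the substantive input has already been supplied by Proposition \ref{prop:gu1_oplus_acts_on_caz} and its iteration, Proposition \ref{prop:pi*_Xu1_to_X_un_and_oplus}, so what remains is essentially bookkeeping. The one point deserving a word of care is that the decomposition of a class into degree-one factors is far from canonical; but since the final identity is expressed purely in terms of $\pi_{\mathrm{N}}^*\left([f]\oplus\naif[g]\right)$, $\pi_{\mathrm{N}}^*([f])$ and $\pi_{\mathrm{N}}^*([g])$ — none of which references a chosen decomposition — no independence-of-choices issue actually intervenes. (Alternatively, unit preservation could be obtained a posteriori from the fact that $\pi_{\mathrm{N}}^*$ of the unit is then idempotent in the group $[\J,\PP]\naif$, hence trivial.)
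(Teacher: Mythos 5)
Your proposal is correct and follows essentially the same route as the paper: decompose $[f]$ and $[g]$ into degree-one classes $[X/u_i]$ via Cazanave's generation result and then apply proposition \ref{prop:pi*_Xu1_to_X_un_and_oplus} to the concatenated sum. The extra paragraphs on well-definedness and unit preservation are fine but are left implicit in the paper's proof.
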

\begin{proof}
Let $f,g \colon  \PP \to \PP$ be two pointed morphisms. 
By \cite[Lemma 3.13]{Caz}, $[\PP,\PP]\naif$ is generated by elements in degree $1$. 
Hence we can assume $f \simeq \frac{X}{u_1} \oplus\naif \frac{X}{u_2} \oplus\naif \cdots \oplus\naif \frac{X}{u_n}$ and $g \simeq \frac{X}{v_1} \oplus\naif \frac{X}{v_2} \oplus\naif \cdots \oplus\naif \frac{X}{v_m}$ for some $u_1,\ldots,u_n, v_1,\ldots,v_m \in k^{\times}$. 
Then Proposition \ref{prop:pi*_Xu1_to_X_un_and_oplus} implies the identity 
\begin{equation*}
\pi_{\mathrm{N}}^*\left([f] \oplus\naif [g]\right) = \left[\pi_{\mathrm{N}}^*(f)\right] \oplus \left[\pi_{\mathrm{N}}^*(g)\right] 
\end{equation*}
and hence the result.  
\end{proof}


\section{Group completion}\label{sec:group_completion}

The morphism $\pi \colon \J \to \PP$ induces the following commutative diagram of solid arrows.

\begin{align}\label{eq:diagram_group_completion_setup}
\xymatrix{
[\J,\PP]\naif  \ar[r]^-{\nu_{\J}}  & [\J,\PP]^{\AAA} \ar@/^/[d]^-{(\pi_{\AAA}^*)^{-1}}   \\
[\PP,\PP]\naif \ar[u]^-{\pi_{\mathrm{N}}^*} \ar[r]_-{\nu_{\PP}} & 
\ar@{.>}[ul]^-{\psi}
[\PP,\PP]^{\AAA} \ar[u]^-{\pi_{\AAA}^*} 
}
\end{align}

Recall that $\cc= (\pi^*_{\AAA})^{-1} \circ \nu_{\J}$ denotes the bijection $\cc \colon  [\J, \PP]\naif \to [\PP, \PP]^{\AAA}$ of Equation \eqref{eq:jpn_ppa}. 

\begin{lemma}\label{lem:cc_piN*_is_a_monoid_morphism} 
We have the identity of morphisms $\cc \circ \pi_{\mathrm{N}}^* = \nu_{\PP}$. 
\end{lemma}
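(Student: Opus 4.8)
The plan is to unwind the three maps involved and perform a short diagram chase inside the commutative square \eqref{eq:diagram_group_completion_setup}; the only input needed beyond the definitions is that the localization functor from $\Sm_k$ to the pointed $\AAA$-homotopy category takes composites of scheme morphisms to composites. First I recall that $\pi_{\mathrm{N}}^*$ is induced by precomposition with the scheme morphism $\pi \colon \J \to \PP$, so for a pointed morphism $f \colon \PP \to \PP$ one has $\pi_{\mathrm{N}}^*([f]) = [f \circ \pi] \in [\J,\PP]\naif$ (this is a pointed morphism since $\pi$ is pointed). Next, $\cc$ is by definition \eqref{eq:jpn_ppa} the composite $(\pi_{\AAA}^*)^{-1} \circ \nu_{\J}$, where $\nu_{\J} \colon [\J,\PP]\naif \to [\J,\PP]^{\AAA}$ is the canonical map and $\pi_{\AAA}^* \colon [\PP,\PP]^{\AAA} \to [\J,\PP]^{\AAA}$ is precomposition with $\pi$, which is a bijection by \eqref{eq:ppa_jpa} so that $(\pi_{\AAA}^*)^{-1}$ makes sense.

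The key observation is that the canonical maps $\nu$ are compatible with precomposition by $\pi$: for every pointed $f \colon \PP \to \PP$ we have $\nu_{\J}([f\circ\pi]) = \pi_{\AAA}^*(\nu_{\PP}([f]))$ in $[\J,\PP]^{\AAA}$, because the localization functor sends the scheme morphism $f\circ\pi$ to the composite of the images of $f$ and $\pi$. Applying $(\pi_{\AAA}^*)^{-1}$ to both sides yields
\[
\cc(\pi_{\mathrm{N}}^*([f])) = (\pi_{\AAA}^*)^{-1}\big(\nu_{\J}([f\circ\pi])\big) = (\pi_{\AAA}^*)^{-1}\big(\pi_{\AAA}^*(\nu_{\PP}([f]))\big) = \nu_{\PP}([f]),
\]
which is exactly the asserted identity of maps $\cc \circ \pi_{\mathrm{N}}^* = \nu_{\PP}$. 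Equivalently, using the zig-zag description of $\cc$ recorded after \eqref{eq:jpn_ppa}, the class $\cc([f\circ\pi])$ is that of $\PP \xleftarrow{\pi} \J \xrightarrow{f\circ\pi} \PP$, i.e. $[f]\circ[\pi]\circ[\pi]^{-1} = [f]$ in $[\PP,\PP]^{\AAA}$, which is $\nu_{\PP}([f])$.

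There is no genuine obstacle here: the statement is a formal consequence of the functoriality of the localization functor together with the definitions of $\cc$, $\pi_{\mathrm{N}}^*$, and $\nu_{\PP}$. The only points needing a line of bookkeeping are that all classes and maps in sight are pointed and that $\pi_{\AAA}^*$ is an isomorphism, both already established in section \ref{sec:Jouanolou_device_and_maps}. The lemma is stated separately precisely so that this compatibility can be invoked cleanly when proving that the outer triangle of \eqref{eq:diagram_group_completion_setup} commutes and that $\pi_{\mathrm{N}}^*$ is a group completion.
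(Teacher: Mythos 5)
Your proposal is correct and follows essentially the same route as the paper: the paper simply invokes commutativity of the outer square in \eqref{eq:diagram_group_completion_setup} (i.e.\ $\nu_{\J}\circ\pi_{\mathrm{N}}^* = \pi_{\AAA}^*\circ\nu_{\PP}$) and composes with $(\pi_{\AAA}^*)^{-1}$, which is exactly the identity you verify by hand via functoriality of the localization functor. Your version just makes explicit the one-line justification that the paper leaves implicit.
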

\begin{proof}
Since the outer square in Diagram \eqref{eq:diagram_group_completion_setup} commutes, we have 
$(\pi^*_{\AAA})^{-1} \circ \nu_{\J} \circ \pi_{\mathrm{N}}^* = \nu_{\PP}$.  
Since $\cc = (\pi^*_{\AAA})^{-1} \circ \nu_{\J}$ by definition, this shows 
\begin{align*}
\cc \circ \pi_{\mathrm{N}}^* = \nu_{\PP}
\end{align*}
as desired. 
\end{proof}

In \cite[Theorem 3.22]{Caz} Cazanave proves that the canonical map $\nu_{\PP} \colon \left([\PP, \PP]\naif, \oplus\naif\right) \to \left([\PP, \PP]^{\AAA}, \oplus^{\AAA}\right)$ is a group completion. 
Hence there exists a unique group homomorphism 
\begin{align*}
\psi \colon \left([\PP, \PP]^{\AAA},\oplus^{\AAA}\right) \to \left([\J, \PP]\naif, \oplus\right)
\end{align*} 
making the lower triangle in Diagram \eqref{eq:diagram_group_completion_setup} commute.

We will show in this section that $\pi_{\mathrm{N}}^*$ has image in a certain subgroup and induces a group completion. 
Together with Cazanave's result this implies that we have a canonical isomorphism between the two group completions induced by $\pi_{\mathrm{N}}^*$ and $\nu_{\PP}$, respectively. 
The main result is proven in Theorem \ref{thm:canonical_isomorphism_from_GG}. 

\subsection{Motivic Brouwer degree}
\label{sec:motivic_Brouwer_degree}

In \cite{Morel_ICM} Morel describes the analog of the topological Brouwer degree map in $\AAA$-homotopy theory. 
For pointed endomorphisms of $\PP$ it defines a homomorphism 
\[
\deg^{\AAA} \colon [\bbP^1, \bbP^1]^{\AAA} \to \GW(k).
\]
We recall that by the work of Cazanave \cite[Corollary 3.10]{Caz} and Morel \cite[Theorem 7.36]{Morel12} the map given by 
\begin{align*}
f \mapsto \left(\deg^{\AAA}(f), \res(f)\right),
\end{align*}
where $\res(f)$ denotes the resultant of $f$ as in  \cite{Caz}, which we recalled in Proposition \ref{prop:Caz_thm_pointed_P1_endo}, 
induces an isomorphism of groups 
\begin{equation}\label{eq:Mor_PtoP_GW}
\rho \colon [\bbP^1, \bbP^1]^{\AAA}  \xrightarrow{\cong} \GW(k) \times_{k^\times / k^{\times2}} k^\times.
\end{equation}
Since our definition of $\deg$ is compatible with the notion of degree of a rational function used by Cazanave in \cite{Caz}, 
the work of Cazanave and Morel implies that, for every pointed morphism $f \colon \PP \to \PP$ we have 
\begin{align*}
\deg([f]) = \mathrm{rank}\left(\deg^{\AAA}([f])\right),
\end{align*}
where $\mathrm{rank}$ denotes the homomorphism $\GW(k) \to \Z$ induced by the rank of a quadratic form. 
For a pointed morphism $g \colon \J \to \PP$ with $\cc([g])=[f]$, we have 
\begin{align*}
\deg([g]) = \mathrm{rank}\left(\deg^{\AAA}(\cc[g])\right).   
\end{align*}
Hence we have the commutative diagram 
\begin{align}\label{eq:diagram_rank_and_deg_and_GW}
\xymatrix{
1 \ar[r] & [\calJ, \ato]\naif \ar[d]_-{\cc_0}^-{\cong} \ar[r] & [\J,\PP]\naif \ar[d]^-{\mathrm{bijection}}_-{\cc} \ar[r]^-{\deg} & \Pic(\calJ) \ar[d]^-{\cong} \ar[r] & 1 \\
1 \ar[r] & [\PP, \ato]^{\AAA} \ar[d]^-{\cong}  \ar[r] & [\PP, \PP]^{\AAA}   \ar[r]^-{\deg} \ar[d]_-{\rho}^-{\cong} & \Pic(\PP) \ar[d]^-{\cong} \ar[r] & 1 \\
1 \ar[r] & \GW(k)_0 \times_{k^\times/k^{\times 2}} k^\times \ar[r] & \GW(k) \times_{k^\times / k^{\times2}} k^\times \ar[r]^-{\mathrm{rank}} & \Z \ar[r] & 1
}
\end{align} 
where $\GW(k)_0 \times_{k^\times/k^{\times 2}} k^\times$ denotes the kernel of the rank homomorphism.


\begin{proposition}
\label{prop:pi_N_injective}
The map $\pi_{\mathrm{N}}^*$ is injective. 
\end{proposition}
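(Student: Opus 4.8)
The plan is to use the factorization of $\pi_{\mathrm{N}}^*$ through the bijection $\cc$ together with Cazanave's theorem. Recall from Lemma~\ref{lem:cc_piN*_is_a_monoid_morphism} that $\cc \circ \pi_{\mathrm{N}}^* = \nu_{\PP}$, where $\cc$ is a bijection. Since a composition $\cc \circ \pi_{\mathrm{N}}^*$ is injective precisely when $\pi_{\mathrm{N}}^*$ is injective (as $\cc$ is a bijection, hence injective), it suffices to show that $\nu_{\PP} \colon [\PP,\PP]\naif \to [\PP,\PP]^{\AAA}$ is injective.

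The injectivity of $\nu_{\PP}$ follows from the structure results recalled in the excerpt. First, I would invoke Cazanave's description of $[\PP,\PP]\naif$ via the degree and resultant: by Proposition~\ref{prop:Caz_thm_pointed_P1_endo}, a pointed morphism $f \colon \PP \to \PP$ is determined by its degree $n$ together with polynomials $A, B$ with $\res(f) \in k^{\times}$, and the naive homotopy class is an invariant of $(n, \res(f)) \in \bbN \times k^{\times}$ — more precisely, by the work of Cazanave and Morel summarized around equation~\eqref{eq:Mor_PtoP_GW}, the pair $(\deg^{\AAA}, \res)$ refines this. The composite $\rho \circ \nu_{\PP} \colon [\PP,\PP]\naif \to \GW(k) \times_{k^\times/k^{\times 2}} k^\times$ sends $[f]$ to $(\deg^{\AAA}([f]), \res(f))$, and by \cite[Corollary 3.10]{Caz} together with \cite[Theorem 7.36]{Morel12} this assignment is already injective on naive homotopy classes: the degree and the resultant form a complete invariant of $[f] \in [\PP,\PP]\naif$. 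Since $\rho$ is an isomorphism, $\nu_{\PP}$ must be injective.

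Putting these together: $\rho \circ \cc \circ \pi_{\mathrm{N}}^* = \rho \circ \nu_{\PP}$ is injective, and since both $\rho$ and $\cc$ are bijections, $\pi_{\mathrm{N}}^*$ is injective. Concretely, if $\pi_{\mathrm{N}}^*([f]) = \pi_{\mathrm{N}}^*([g])$ for pointed morphisms $f, g \colon \PP \to \PP$, then applying $\cc$ gives $\nu_{\PP}([f]) = \nu_{\PP}([g])$ in $[\PP,\PP]^{\AAA}$, hence $(\deg^{\AAA}([f]), \res(f)) = (\deg^{\AAA}([g]), \res(g))$ in $\GW(k) \times_{k^\times/k^{\times 2}} k^\times$, and by the completeness of this invariant for naive classes we conclude $[f] = [g]$.

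I do not anticipate a serious obstacle here; the result is essentially a formal consequence of Lemma~\ref{lem:cc_piN*_is_a_monoid_morphism}, the bijectivity of $\cc$, and the already-cited fact that the degree-resultant pair is a complete invariant of naive homotopy classes of pointed endomorphisms of $\PP$. The only point requiring a little care is to make sure one is citing injectivity of $\nu_{\PP}$ itself rather than merely the fact that $\rho$ is an isomorphism — but this injectivity is exactly \cite[Corollary 3.10]{Caz}, which states that $\nu_{\PP}$ identifies $[\PP,\PP]\naif$ with the sub-monoid of $\GW(k) \times_{k^\times/k^{\times 2}} k^\times$ of elements of non-negative rank, so no gap arises.
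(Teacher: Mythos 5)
Your reduction is identical to the paper's: via lemma \ref{lem:cc_piN*_is_a_monoid_morphism} and the bijectivity of $\cc$, everything comes down to the injectivity of $\nu_{\PP}$. The problem is how you discharge that last step. You assert that the injectivity of $\nu_{\PP}$ ``is exactly'' \cite[Corollary 3.10]{Caz}, describing that corollary as identifying $[\PP,\PP]\naif$ with a submonoid of $\GW(k)\times_{k^\times/k^{\times 2}}k^\times$. That is not what the corollary provides (and not how this paper uses it): it identifies $[\PP,\PP]\naif$ with $\MW^s(k)\times_{k^\times/k^{\times 2}}k^\times$, where $\MW^s(k)$ is the stable monoid of symmetric bilinear forms of \cite[Definition 3.8]{Caz}, while the target $\GW(k)\times_{k^\times/k^{\times 2}}k^\times$ of $\rho\circ\nu_{\PP}$ is the group completion of that monoid. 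Whether a monoid injects into its group completion is exactly the point at issue, so your citation silently absorbs the content to be proved. The ingredient your argument is missing --- and the only real mathematics in the paper's proof --- is that the group completion $w\colon \MW^s(k)\to\GW(k)$ is injective because $\MW^s(k)$ is cancellative, which is Witt's cancellation theorem. With that supplied, your argument closes; without it, the claim that the $\GW(k)$-valued degree together with the resultant is a complete invariant of the naive class is an assertion of the conclusion, not a proof.

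Two subsidiary inaccuracies are worth flagging. First, ``the degree and the resultant form a complete invariant'' is false if ``degree'' means the integer $\deg(f)$: over $\bbR$ the B\'ezout forms $\langle 1,1\rangle$ and $\langle -1,-1\rangle$ have the same rank and the same determinant but are not isomorphic, so they yield distinct naive homotopy classes with the same integer degree and resultant; the statement is only true for the $\GW(k)$-valued degree, and only after the cancellation step. Second, the image of $\rho\circ\nu_{\PP}$ is not ``the elements of non-negative rank'': over $\bbR$ the class $2\langle 1\rangle-\langle -1\rangle$ has rank $1$ but signature $3$, hence is not the class of any rank-one form and does not lie in the image. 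Neither error is fatal to the strategy, but as written the proof has a gap at its central step that is repaired precisely by the Witt-cancellation argument the paper gives.
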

\begin{proof}
By Lemma \ref{lem:cc_piN*_is_a_monoid_morphism} we know $\cc \circ \pi_{\mathrm{N}}^* = \nu_{\PP}$. 
Since $\cc$ is a bijection, it suffices to show that $\nu_{\PP}$ is injective. 
The isomorphism $\rho$ fits into the commutative diagram 
\begin{align*}
\xymatrix{
[\PP,\PP]\naif \ar[d]^-{\cong} \ar[r]^-{\nu_{\PP}} & [\PP,\PP]^{\AAA} \ar[d]_-{\cong}^-{\rho} \\
\MW^s(k) \times_{k^\times / k^{\times2}} k^\times \ar[r]_-{\hat{\wc}} & \GW(k) \times_{k^\times / k^{\times2}} k^\times
}
\end{align*}
where $\MW^s(k)$ denotes the stable monoid of symmetric bilinear forms as in \cite[Definition 3.8]{Caz} and $\hat{\wc}$ is the group completion induced by the group completion $\wc \colon \MW^s(k) \to \GW(k)$. 
Since the vertical maps are isomorphisms by \cite[Corollary 3.10]{Caz} and \cite[\S7.3]{Morel12}, $\nu_{\PP}$ is injective if and only if $\hat{\wc}$ is injective. 
To show that $\hat{\wc}$ is injective, it suffices to show that the group completion $\wc \colon \MW^s(k) \to \GW(k)$ is injective. 
Since $\MW^s(k)$ satisfies the cancellation property by the definition of $\MW^s(k)$ in \cite[Definition 3.8]{Caz}, respectively by Witt's cancellation theorem, the map $\wc$ is indeed injective.  
This proves the assertion. 
\end{proof}


\begin{proposition}
\label{prop:gu1_distinct}
Let $u,v \in k^\times$. 
If $u \neq v$, then $[g_{u,1}] \neq  [g_{v,1}]$.
\end{proposition}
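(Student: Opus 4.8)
The plan is to distinguish the classes $[g_{u,1}]$ and $[g_{v,1}]$ in $[\J,\ato]\naif$ by pushing them through the chain of isomorphisms that identifies this group with $K_1^{MW}(k)$, and then reading off a concrete invariant that separates $u$ from $v$. Concretely, by Theorem \ref{thm:degree_zero_iso} and Remark \ref{rem:Morel_shows_K1MW_gives_degree_0_maps} we have group isomorphisms
\begin{equation*}
[\J,\ato]\naif \xrightarrow{\ \cc_0\ } [\PP,\ato]^{\AAA} \cong K_1^{MW}(k).
\end{equation*}
So it suffices to compute the image of $[g_{u,1}]$ under this composite and verify that the assignment $u \mapsto \text{(image of }[g_{u,1}])$ is injective on $k^\times$. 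First I would recall Morel's presentation of $K_1^{MW}(k)$: it is generated by symbols $[a]$ for $a \in k^\times$ together with the element $\eta$, subject to the Steinberg-type relations. The natural map $k^\times \to K_1^{MW}(k)$, $a \mapsto [a]$, is injective (this follows, for instance, from the fact that $K_1^{MW}(k)$ surjects onto $K_1^M(k) = k^\times$ via the quotient killing $\eta$, so $[u] = [v]$ in $K_1^{MW}(k)$ already forces $u = v$ in $k^\times$). Hence the whole proposition reduces to the identity $\cc_0([g_{u,1}]) = [u]$ (or some fixed invertible reparametrization thereof, e.g. $-[u]$ or $[u^{-1}]$ — the precise normalization does not matter for injectivity).

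The key computational step is therefore to identify $\cc_0([g_{u,1}])$ with the symbol $[u]$. There are two natural routes. The first is to use the partial additivity relation already established: Lemma \ref{lem:gu1_summing} shows that for squares $v$ we have $[g_{u,1}] \oplus [g_{v,1}] = [g_{uv,1}]$, and more generally Lemma \ref{lemma:guv_plus_gvu} gives $[g_{u,v}] \oplus [g_{v,1}] = [g_{u,1}]$. Combined with the known structure of $K_1^{MW}(k)$ — in particular that the "unit part" of a class is its image in $k^\times$ under the map killing $\eta$, and this is additive on symbols in the sense $[u] + [v] = [uv] + \eta[u][v]$ so that modulo $\eta$ one gets $[u]+[v] \equiv [uv]$ — one sees that $u \mapsto \cc_0([g_{u,1}])$ must land on $[u]$ up to the relations. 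The cleaner second route, which I would actually pursue, is a direct identification: realize $g_{u,1}$ as (up to homotopy) the standard generator of $\pi_1^{\AAA}(\ato)$ twisted by $u$. Under the $\AAA$-equivalence $\ato \simeq \SL_2$ and Lemma \ref{lem:liftsofpointedmapstoSL2}, $g_{u,1}$ lifts to the matrix $m_{u,1} \in \SL_2(R)$, and modulo elementary matrices (which are $\AAA$-null-homotopic) one can reduce $m_{u,1}$ to the diagonal matrix $\mathrm{diag}(u^{-1}, u)$ pulled back along $\pi^{-1}$; the class of such a diagonal matrix in $[\PP, \SL_2]^{\AAA} \cong K_1^{MW}(k)$ is precisely the symbol $\pm[u]$ by Morel's computation of $\pi_1^{\AAA}$ of $\SL_n$.

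The main obstacle will be making the last identification genuinely rigorous rather than heuristic: matrices over $R$ are not matrices over $k$, so "reducing modulo elementary matrices" must be done through a homotopy over $R[T]$, and one needs to be careful that the reduction is compatible with the chosen basepoint and with the isomorphism $\cc_0$. An alternative that sidesteps this is to invoke the already-proven injectivity of $\pi_{\mathrm{N}}^*$ (Proposition \ref{prop:pi_N_injective}) together with Lemma \ref{lem:g_u1_oplus_pi}: if $[g_{u,1}] = [g_{v,1}]$ then adding $[\pi]$ to both sides and using Lemma \ref{lem:g_u1_oplus_pi} gives $\pi_{\mathrm{N}}^*(X/u) = \pi_{\mathrm{N}}^*(X/v)$, whence by injectivity of $\pi_{\mathrm{N}}^*$ we get $[X/u] = [X/v]$ in $[\PP,\PP]\naif$; but Cazanave's classification (Proposition \ref{prop:Caz_thm_pointed_P1_endo}) shows the degree-$1$ classes $[X/u]$ are pairwise distinct for distinct $u$, since already their resultants $\res(X/u) = u$ are distinct. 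This last argument is short and self-contained, so I would present it as the proof:
\begin{equation*}
[g_{u,1}] = [g_{v,1}] \ \Longrightarrow\ \pi_{\mathrm{N}}^*(X/u) = [g_{u,1}]\oplus[\pi] = [g_{v,1}]\oplus[\pi] = \pi_{\mathrm{N}}^*(X/v),
\end{equation*}
and then injectivity of $\pi_{\mathrm{N}}^*$ plus $\res(X/u) = u \neq v = \res(X/v)$ forces $u = v$, a contradiction. The only thing to double-check is that $\res(X/u)$ is indeed $u$ (immediate from Proposition \ref{prop:Caz_thm_pointed_P1_endo} with $A = X$, $B = u$, or the appropriate normalization $A = X/u$, $B = 1$), so that distinct $u$ give distinct classes in $[\PP,\PP]\naif$.
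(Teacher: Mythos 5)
Your final argument is exactly the paper's proof (stated in contrapositive form): both rest on the injectivity of $\pi_{\mathrm{N}}^*$ from Proposition \ref{prop:pi_N_injective}, the identity $[g_{u,1}]\oplus[\pi]=\pi_{\mathrm{N}}^*([X/u])$, and cancellation in the group $[\J,\PP]\naif$, with the fact that $[X/u]\neq[X/v]$ for $u\neq v$ supplied by Cazanave's resultant (which the paper leaves implicit but you spell out). The preliminary $K_1^{MW}$ discussion is unnecessary, but the proof you actually present is correct and matches the paper's.
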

\begin{proof} 
Assume that $u\neq v \in k^{\times}$. 
By Cazanave's work \cite[Corollary 3.10]{Caz}, this implies $[X/u] \neq [X/v]$. 
By Proposition \ref{prop:pi_N_injective} this implies $\pi_{\mathrm{N}}^*([X/u]) \neq \pi_{\mathrm{N}}^*([X/v])$.  
By Proposition \ref{prop:gu1_oplus_acts_on_caz} we have $\pi_{\mathrm{N}}^*([X/u]) = [g_{u,1}] \oplus \pi$ and $\pi_{\mathrm{N}}^*([X/v]) = [g_{v,1}] \oplus \pi$. 
Since $[\J,\PP]\naif$ is a group, this implies $[g_{u,1}] \neq  [g_{v,1}]$.  
\end{proof}

\begin{proposition}\label{prop:image_of_100u}
For every $u\in k^{\times}$ we have 
\[
\left(\deg^{\AAA}(\cc[\pi_{\mathrm{N}}^*(X/u)]),\res (\cc[\pi_{\mathrm{N}}^*(X/u)])\right) = \left(\langle u \rangle,u\right) ~ \text{in} ~ \GW(k) \times_{k^\times/k^{\times 2}} k^\times.
\]
In particular, for $\pi_{\mathrm{N}}^*(X/1)=\pi$, we get
\[
\left(\deg^{\AAA}(\cc[\pi]),\res (\cc[\pi])\right) = \left(\langle 1 \rangle,1\right) ~ \text{in} ~ \GW(k) \times_{k^\times/k^{\times 2}} k^\times.
\]
\end{proposition}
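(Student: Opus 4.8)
The plan is to push the computation through the isomorphism $\rho$ of \eqref{eq:Mor_PtoP_GW} and to read off the two coordinates of the image of $[X/u]$ separately. By Lemma~\ref{lem:cc_piN*_is_a_monoid_morphism} we have $\cc\circ\pi_{\mathrm{N}}^{*} = \nu_{\PP}$, so $\cc[\pi_{\mathrm{N}}^{*}(X/u)]$ is nothing but the $\AAA$-homotopy class $\nu_{\PP}([X/u])$ of the rational function $X/u\colon\PP\to\PP$. Hence, by the very definition of $\rho$, the pair in the statement equals $\rho\bigl(\nu_{\PP}[X/u]\bigr) = \bigl(\deg^{\AAA}(\nu_{\PP}[X/u]),\,\res(\nu_{\PP}[X/u])\bigr)$. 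For the second coordinate, in the notation of Proposition~\ref{prop:Caz_thm_pointed_P1_endo} the morphism $X/u$ is the degree-one pair $(A,B)=(X,u)$; its Sylvester matrix is $\left(\begin{smallmatrix}1&0\\0&u\end{smallmatrix}\right)$, whose determinant is $u$, so $\res(X/u)=u$.

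For the first coordinate I will avoid computing any local degree by hand. Since $\deg(X/u)=1$, the compatibility of $\deg$ with the rank homomorphism recorded in diagram~\eqref{eq:diagram_rank_and_deg_and_GW} forces $\deg^{\AAA}(\nu_{\PP}[X/u])$ to be a class of rank $1$ in $\GW(k)$, i.e.\ of the form $\langle a\rangle$ for some $a\in k^{\times}$. Because $\rho$ takes values in the fibre product $\GW(k)\times_{k^{\times}/k^{\times2}}k^{\times}$, the discriminant of $\langle a\rangle$, which is $a$, agrees modulo squares with the resultant $u$ computed above; therefore $a\equiv u\pmod{(k^{\times})^{2}}$ and $\langle a\rangle = \langle u\rangle$ in $\GW(k)$. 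Combining the two coordinates gives
\[
\bigl(\deg^{\AAA}(\cc[\pi_{\mathrm{N}}^{*}(X/u)]),\,\res(\cc[\pi_{\mathrm{N}}^{*}(X/u)])\bigr) = (\langle u\rangle,u).
\]

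For the special case, note that $X/1$ is the identity morphism $\PP\to\PP$, so $\pi_{\mathrm{N}}^{*}(X/1)=\id_{\PP}\circ\pi=\pi$, and setting $u=1$ in the formula just proved yields $(\langle 1\rangle,1)$. I expect no serious obstacle here: the only inputs are Lemma~\ref{lem:cc_piN*_is_a_monoid_morphism}, the resultant formula of Proposition~\ref{prop:Caz_thm_pointed_P1_endo}, and the structural properties of $\rho$ already collected in this section. The one point requiring care is the bookkeeping of conventions---that the fibre product in \eqref{eq:Mor_PtoP_GW} is formed along the discriminant map and that $\operatorname{disc}\langle a\rangle = a$---since this is exactly what singles out $\langle u\rangle$ rather than, say, $\langle -u\rangle$.
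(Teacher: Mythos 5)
Your proof is correct, and its first (and main) step coincides with the paper's: both reduce via Lemma \ref{lem:cc_piN*_is_a_monoid_morphism} to identifying the image of $\nu_{\PP}([X/u])$ under $\rho$. Where you diverge is in how that image is pinned down. The paper simply cites Cazanave's computation \cite[3.4]{Caz}, which assigns to $X/u$ its rank-one B\'ezout form $[u]$, giving $(\langle u\rangle,u)$ in one stroke. You instead compute only the resultant directly from the Sylvester matrix of $(X,u)$ and then recover the $\GW(k)$-coordinate indirectly: rank $1$ from the compatibility $\deg = \mathrm{rank}\circ\deg^{\AAA}$ recorded around diagram \eqref{eq:diagram_rank_and_deg_and_GW}, and the square class of the discriminant from the fact that $\rho$ lands in the fibre product \eqref{eq:Mor_PtoP_GW}. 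This buys you independence from Cazanave's explicit B\'ezout-form computation, at the cost of leaning on the structural statement that $\rho$ takes values in $\GW(k)\times_{k^\times/k^{\times 2}}k^\times$ (itself a theorem of Cazanave and Morel, so nothing is gained in logical depth) and on the convention that the fibre product is formed along the discriminant, with $\operatorname{disc}\langle a\rangle=a$ --- a point you correctly flag as the one needing care. Both arguments are sound; the paper's is shorter because it outsources the whole identification to a single citation.
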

\begin{proof}
By Lemma \ref{lem:cc_piN*_is_a_monoid_morphism} we know $\cc[\pi_{\mathrm{N}}^*(X/u)] = \nu_{\PP}([X/u])$. 
In \cite[3.4]{Caz} Cazanave shows that the image of $ \nu_{\PP}([X/u])$ in $\GW(k)\times_{k^\times / k^{\times 2}}k^\times$ is $\left(\langle u \rangle,u\right)$ by assigning it to the rank $1$ symmetric matrix $[u]$,  
which has determinant $u$ and corresponds to the quadratic form $\langle u \rangle$. 
\end{proof}

\begin{remark}\label{rem:KW2_computed_Xn/1}
We note that, since $[(1,0:0,u)_n] = X^n/u$, the work of Kass and Wickelgren in \cite[Lemma 5]{KassWickelgren2} implies that we have  $\deg^{\AAA}(\cc([(1,0:0,u)_n)])) = \langle u \rangle + \frac{n-1}{2}\langle 1,-1 \rangle$ for $n$ odd, and 
$\deg^{\AAA}(\cc([(1,0:0,u)_n)])) = \frac{n}{2}\langle 1,-1 \rangle$ for $n$ even.  
On the other hand, 
by the choice of the morphism $n\pi$ in Definition \ref{def:minus_pi_and_n_pi}
we have $\deg^{\AAA}(\cc([n\pi])) = 
\deg^{\AAA}(n[\id]) = n \langle 1 \rangle$. 
In particular, this implies that 
$[(1,0:0,1)_n]$ and $[n\pi]$ are in general not equal for $n>1$ which explains our comment at the end of Remark \ref{rem:construction_of_npi}. 
\end{remark}


In light of Question \ref{question:tilde_pi} 
we would like to show that $\rho(\cc[\tilde{\pi}])$ is the class $(-\langle 1 \rangle, 1)$ in $\GW(k) \times_{k^\times/k^{\times 2}} k^\times$.   
We are not able to confirm this yet, since we do not know how to compute the resultant of $\cc[\tilde{\pi}]$. 
We can, however, show the following fact based on the computations of topological degrees in Appendix \ref{sec:evidence}. 
We thank Kirsten Wickelgren for mentioning to us the idea to use the arguments of \cite{BHeta} and \cite{BWEuler} to reduce the computation to the Grothendieck--Witt group of the integers.

\begin{theorem}\label{thm:class_of_tilde_pi} 
For every field $k$, 
we have 
\begin{align*}
\deg^{\AAA}\left(\cc[\tilde{\pi}]\right) = - \langle 1 \rangle  ~ \text{in} ~ \GW(k).
\end{align*}
\end{theorem}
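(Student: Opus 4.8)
The plan is to reduce the computation of $\deg^{\AAA}(\cc[\tilde\pi]) \in \GW(k)$ to a computation over a base where $\GW$ is well understood, exploiting that $\deg^{\AAA}$ is compatible with base change and that $\GW(k)$ is detected by rank and signature-type invariants whenever possible. First I would observe that the morphism $\tilde\pi \colon \J \to \PP$ is defined over the prime field (indeed over $\bbZ$): the ring $R$, the module $\calQ_1$, and the generating sections $s_0 = (x,y)^T$, $s_1 = -(z,w)^T$ all make sense integrally. By the arguments of \cite{BHeta} and \cite{BWEuler} — which is exactly the input the authors credit to Wickelgren — the local degree / Brouwer degree of a morphism defined over $\bbZ$ is the image of a well-defined class in $\GW(\bbZ)$ under the base-change map $\GW(\bbZ) \to \GW(k)$. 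Since $\GW(\bbZ) \to \GW(k)$ is injective for every field $k$ and $\GW(\bbZ) \cong \bbZ \oplus \bbZ$ is detected by $(\mathrm{rank}, \mathrm{signature})$, it suffices to pin down $\deg^{\AAA}(\cc[\tilde\pi])$ after applying these two invariants, i.e.\ to compute its rank and its signature (the latter via real realization, which is the content of the computations in appendix \ref{sec:evidence}).

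The rank is immediate: by the commutative diagram \eqref{eq:diagram_rank_and_deg_and_GW} we have $\mathrm{rank}(\deg^{\AAA}(\cc[\tilde\pi])) = \deg([\tilde\pi])$, and $\tilde\pi$ has invertible sheaf $\calQ_1$, which corresponds to $-1 \in \Pic(\J) \cong \bbZ$ by proposition \ref{prop:P_n_and_Q_n_generate_Pic(J)}; hence the rank is $-1$. For the signature, I would use Morel's theorem that the signature of the motivic Brouwer degree equals the topological Brouwer degree under real realization (for $k \subseteq \bbR$), and then compute the topological degree of the real realization of the composite $\PP(\bbR) \xleftarrow{\pi} \J(\bbR) \xrightarrow{\tilde\pi} \PP(\bbR)$ — equivalently the degree of $\tilde\pi \circ \pi^{-1}$ as a self-map of $S^1 \simeq \PP(\bbR)$. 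The explicit affine coordinates of lemma \ref{lem:coords}, together with the formula for $\tilde\pi$ in those charts, reduce this to a concrete computation with real-valued functions on $\bbR^2$, which is carried out in appendix \ref{sec:evidence}; the outcome is topological degree $-1$. Since a rank-$(-1)$ form over $\bbZ$ with signature $-1$ is forced to be $-\langle 1\rangle$ (the other rank-$(-1)$ class, $-\langle -1\rangle$ over $\bbR$, has signature $+1$), and base change from $\bbZ$ then gives $-\langle 1\rangle$ in $\GW(k)$ for all $k$, the theorem follows.

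The main obstacle I anticipate is the transfer of the computation to $\GW(\bbZ)$: one must check that $\cc[\tilde\pi]$ — a priori only a naive homotopy class over a variable field $k$ — really is the image of a fixed integral class under base change, so that it is legitimate to determine it by rank and real signature alone. Here the delicate point is that $\cc$ itself (being built from $\pi_{\AAA}^{*}$ and the comparison $\nu$) is not manifestly defined over $\bbZ$; however, $\deg^{\AAA}$ of the \emph{zig-zag} $\PP \xleftarrow{\pi}\J \xrightarrow{\tilde\pi}\PP$ is, since both legs are integral morphisms and $\pi$ is an $\AAA$-weak equivalence integrally, so the Brouwer degree of the corresponding element of $[\PP,\PP]^{\AAA}$ is pulled back from $\GW(\bbZ)$ by the Bachmann--Wickelgren / Bachmann--Hopkins rigidity arguments. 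Once that is granted, everything else is rank and real-realization bookkeeping, both of which are routine given the explicit coordinates already set up in section \ref{sec:Jouanolou_device_and_maps} and the examples of appendix \ref{sec:evidence}.
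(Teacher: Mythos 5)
Your overall strategy for fields of characteristic $\neq 2$ is essentially the one the paper uses: exploit that $\pi$ and $\tilde{\pi}$ are defined over $\bbZ$, invoke the Bachmann--Hopkins spectrum $KO'_{\bbZ}$ to identify $\deg^{\AAA}$ with a map factoring through $\GW(\bbZ)$, determine the integral class by its rank (which is $-1$ by the degree computation) and its signature (which is $-1$ by the real realization computation of example \ref{ex:realization_pi_tilde}), conclude via formula \eqref{eq:compute_q_via_dC_and_dR} that the class is $-\langle 1\rangle$ in $\GW(\bbZ)$, and push forward. Two points, however. First, your auxiliary claim that $\GW(\bbZ)\to\GW(k)$ is injective for every field $k$ is false: for $k=\bbC$ or any field of characteristic $2$ one has $\langle -1\rangle = \langle 1\rangle$, so $\langle 1\rangle - \langle -1\rangle$ dies. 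Fortunately this claim is not load-bearing — what you actually need is only that the class is pinned down \emph{in} $\GW(\bbZ)$ by rank and signature and then pushed forward, which is how the paper argues — but as written the sentence is an error.

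The genuine gap is characteristic $2$. The identification $KO'_k \simeq KO_k$ from \cite[Lemma 3.38 (3)]{BHeta}, which underlies the reduction of $\deg^{\AAA}_k$ to a class over $\bbZ$, is only available when $\mathrm{char}(k)\neq 2$, so your blanket appeal to the Bachmann--Hopkins/Bachmann--Wickelgren rigidity does not cover fields of characteristic $2$, and the theorem as stated is for \emph{every} field. The paper closes this case by an independent elementary argument: over $\bbF_2$ the monoid $[\PP,\PP]\naif$ is generated by the single class $[X/1]$ (since $\bbF_2^\times$ is trivial), so the group completion forces $\deg^{\AAA}_{\bbF_2}\colon [\PP,\PP]^{\AAA}\to\GW(\bbF_2)=\bbZ$ to be an isomorphism detected by rank alone, whence $\deg^{\AAA}_{\bbF_2}(\cc[\tilde{\pi}])=-\langle 1\rangle$ follows from $\deg([\tilde{\pi}])=-1$; general characteristic-$2$ fields then follow by base change from $\bbF_2$. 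You would need to supply some such argument (or verify that the rigidity input extends to characteristic $2$) for your proof to cover all fields.
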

\begin{proof} 
First we assume $k = \bbF_2$. 
By \cite[Lemma 3.13]{Caz}, $[\PP,\PP]\naif$ is generated by elements in degree $1$, i.e., the class of $X/1$ generates $[\PP,\PP]\naif$. 
Since $\nu_{\PP} \colon [\PP,\PP]\naif \to [\PP,\PP]^{\AAA}$ is a group completion, 
this implies that 
$\deg_{\bbF_2}^{\AAA} \colon [\PP,\PP]^{\AAA} \xrightarrow{\cong} \GW(\bbF_2) = \bbZ$ is an isomorphism, where we refer to \cite{MilnorHusemoller} for the Grothendieck group $\GW(\bbF_2)$ of symmetric bilinear forms over $\bbF_2$ and the isomorphism $\GW(\bbF_2) = \bbZ$ (see \cite[Lemma B.5]{BWEuler}, \cite[III Remark (3.4)]{MilnorHusemoller}).   
Since the right-hand side of Diagram \eqref{eq:diagram_rank_and_deg_and_GW} commutes, 
the fact that we have $\deg\left([\tilde{\pi}]\right) = -1$ implies $\deg^{\AAA}\left(\cc[\tilde{\pi}]\right) = - \langle 1 \rangle$ in $\GW(k)$.

Next we let $k$ be a field of characteristic $2$. 
Then the canonical morphism $\Spec{k} \to \Spec{\bbF_2}$ induces a commutative diagram of group homomorphisms 
\begin{align*}
\xymatrix{
[\PP,\PP]_{\bbF_2}^{\AAA} \ar[d]_-{\deg_{\bbF_2}^{\AAA}} \ar[r] & [\PP,\PP]_{k}^{\AAA} \ar[d]^-{\deg_{k}^{\AAA}} \\
\GW(\bbF_2) \ar[r] & \GW(k).
}
\end{align*}
Since $\tilde{\pi}$ is defined over $\bbF_2$ 
and $\deg_{\bbF_2}^{\AAA}(\cc[\tilde{\pi}]) = -\langle 1 \rangle$ by the first case, 
this implies $\deg_{k}^{\AAA}(\cc[\tilde{\pi}]) = -\langle 1 \rangle$ in $\GW(k)$. 

Now we assume that $k$ is a field of characteristic $\ne 2$. 
The proof for this case is also based on the fact that $\tilde{\pi}$ is already defined over $\bbZ$ and not just $k$. 
To make the argument work, however, requires a bit more effort. 
For a ring $S$, let $\mathcal{SH}(S)$ denote the stable motivic homotopy category over $\Spec{S}$.  
Let $KO_k\in \mathcal{SH}(k)$ denote the motivic spectrum over $\Spec{k}$ which represents Hermitian K-theory. 
It is equipped with a unit morphism $\unitm_k \colon \unit_k \to KO_k$ in $\mathcal{SH}(k)$. 
Let $\stable \colon [\PP,\PP]^{\AAA} \to \unit_k^{0,0}(\Spec{k})$ denote the homomorphism defined by stabilization and note that there is a canonical isomorphism $KO_k^{0,0}(\Spec{k}) \cong \GW(k)$. 
We then define the homomorphism $\delta$ as the following composition:  
\begin{align*}
\delta \colon [\PP,\PP]^{\AAA} \xrightarrow{\stable} \unit_k^{0,0}(\Spec{k}) 
\xrightarrow{\unitm_k} KO_k^{0,0}(\Spec{k}) 
\cong \GW(k). 
\end{align*}
We claim that the homomorphism $\delta$ can be identified with the motivic Brouwer degree $\deg_k^{\AAA}$ over $k$. 
To prove the claim we follow the argument of Levine and Raksit in \cite[proof of Theorem 8.6, page 1845]{LevineRaksit}. 
By Morel's computation \cite[Theorem 6.40]{Morel12}, the isomorphism 
$\GW(k) \cong \unit_k^{0,0}(\Spec{k})$ sends $\langle u \rangle \in \GW(k)$, for $u \in k^\times$, to $\stable(\nu_{\PP}[X/u])$, the image of the class of $X/u \colon \PP_k \to \PP_k$, $[x_0:x_1] \mapsto [x_0:ux_1]$, in $\unit_k^{0,0}(\Spec{k})$.  
Hence the classes $\stable(\nu_{\PP}[X/u])$ for all $u\in k^\times$ generate $\unit_k^{0,0}(\Spec{k})$. 
Thus, in order to prove the claim it suffices to show that $\delta(\nu_{\PP}[X/u]) = \langle u \rangle$ in $\GW(k)$, since $\deg_k^{\AAA}([X/u]) = \langle u \rangle \in \GW(k)$. 
That is, we need to show $\unitm_k(\stable(\nu_{\PP}[X/u])) = \langle u \rangle$. 
This follows from \cite[Corollary 6.2]{Ananyevskiy} which proves the claim.

In \cite[\S 3.8.3]{BHeta} Bachmann and Hopkins construct a motivic spectrum $KO'_{\bbZ} \in \mathcal{SH}(\bbZ)$ with a unit morphism $\unitm'_{\bbZ} \colon \unit_{\bbZ} \to KO'_{\bbZ}$, 
and write $KO'_k$ and $\unitm'_k$ for the pullback of $KO'_{\bbZ}$ and $\unitm'_{\bbZ}$ to $\mathcal{SH}(k)$ along the canonical morphism $\Spec{k} \to \Spec{\bbZ}$. 
Since the characteristic of $k$ is not $2$, there is an equivalence of ring spectra $KO'_k \simeq KO_k$ by \cite[Lemma 3.38 (3)]{BHeta}, which induces an isomorphism $(KO'_k)^{0,0}(\Spec{k}) \cong KO^{0,0}_k(\Spec{k})$. 
Thus, there is an isomorphism $(KO'_k)^{0,0}(\Spec{k}) \cong KO_k^{0,0}(\Spec{k})$ which 
fits into the following commutative diagram. 
\begin{align*}
\xymatrix{
\unit_k^{0,0}(\Spec{k}) \ar[dr]_-{\unitm_k} \ar[r]^-{\unitm'_k} & 
(KO'_k)^{0,0}(\Spec{k}) \ar[d]^-{\cong} \\
& KO_k^{0,0}(\Spec{k})
}
\end{align*}
By the above, we may therefore identify $\deg_k^{\AAA}$ over $k$ with the composed homomorphism 
\begin{align*}
[\PP,\PP]^{\AAA} \xrightarrow{\stable} \unit_k^{0,0}(\Spec{k}) \xrightarrow{\unitm'_k} (KO'_k)^{0,0}(\Spec{k}) \cong KO_k^{0,0}(\Spec{k}) \cong \GW(k). 
\end{align*}
Furthermore, by \cite[Lemma 3.38 (2)]{BHeta}, there is an isomorphism $\pi_{0,0}(KO'_{\bbZ}) \cong \GW(\bbZ)$, where $\GW(\bbZ)$ denotes the Grothendieck--Witt group over $\bbZ$ defined in \cite[Chapter II]{MilnorHusemoller}. 
Let $[\PP,\PP]^{\AAA}_{\bbZ}$ denote the set of endomorphisms of $\PP$ in the pointed unstable $\AAA$-homotopy category over $\Spec{\bbZ}$. 
We now define the homomorphism 
$\deg^{\AAA}_{\bbZ} \colon [\PP,\PP]^{\AAA}_{\bbZ} \to \GW(\bbZ)$ 
as the composition 
\begin{align*}
\deg^{\AAA}_{\bbZ} \colon 
[\PP,\PP]^{\AAA}_{\bbZ} \xrightarrow{\stable_{\bbZ}} \unit_{\bbZ}^{0,0}(\Spec{\bbZ}) \xrightarrow{\unitm'_{\bbZ}} (KO'_{\bbZ})^{0,0}(\Spec{\bbZ}) \cong \GW(\bbZ). 
\end{align*} 
The canonical homomorphism $\bbZ \to k$ then induces the following commutative square   
\begin{align*}
\xymatrix{
[\PP,\PP]^{\AAA}_{\bbZ} \ar[d] \ar[rr]^-{\deg^{\AAA}_{\bbZ}} & & \GW(\bbZ) \ar[d]^-{\mathfrak{b}_k} \\
[\PP,\PP]^{\AAA}_{k} \ar[rr]_-{\deg^{\AAA}_k} & & \GW(k)
}
\end{align*}
where $\mathfrak{b}_k \colon \GW(\bbZ) \to \GW(k)$ denotes the change of coefficients homomorphism. 
As a consequence we see that if $[f] \in [\PP,\PP]^{\AAA}_k$ is in the image of the homomorphism 
$[\PP,\PP]^{\AAA}_{\bbZ} \to [\PP,\PP]^{\AAA}_k$,  
then 
\begin{align}\label{eq:deg_over_Z_is_in_image}
\deg^{\AAA}_k([f]) = 
\mathfrak{b}_k(\deg^{\AAA}_{\bbZ}([f])). 
\end{align}
By \cite[Lemma 5.6]{BWEuler} (see also \cite[Theorem II.4.3]{MilnorHusemoller}),  $\GW(\bbZ)$ is generated over $\Z$ by the classes $\langle 1 \rangle$ and $\langle -1 \rangle$. 
For a class $\qq \in \GW(\bbZ)$, let $\qq_{\bbC}$ and $\qq_{\bbR}$ denote the images of $\qq$ in $\GW(\bbC)$ and $\GW(\bbR)$, respectively. 
Then 
$\qq \in \GW(\bbZ)$ is uniquely determined by the integers $\mathrm{rank}(\qq_{\bbC})$ and $\mathrm{sgn}(\qq_{\bbR})$, given by the rank and signature of $\qq_{\bbC}$ and $\qq_{\bbR}$, respectively, 
via the formula 
\begin{align}\label{eq:compute_q_via_dC_and_dR}
\qq = \frac{\mathrm{rank}(\qq_{\bbC}) + \mathrm{sgn}(\qq_{\bbR})}{2} \langle 1 \rangle +  \frac{\mathrm{rank}(\qq_{\bbC}) - \mathrm{sgn}(\qq_{\bbR})}{2} \langle -1 \rangle
\in \GW(\Z). 
\end{align}

Since $\J$ and both morphisms $\pi$ and $\tilde{\pi}$ are defined over $\Spec{\bbZ}$, 
we can now apply the above observations to prove the assertion of the proposition.  
Since $\pi$ is an $\AAA$-weak equivalence over $\Spec{\bbZ}$ as well, 
we can form the pointed $\AAA$-homotopy class $\cc_{\bbZ}\left([\tilde{\pi}]\right) := [\tilde{\pi} \circ \pi^{-1}]^{\AAA} \in [\PP,\PP]^{\AAA}_{\bbZ}$ defined by the zig-zag $\PP_{\bbZ} \xleftarrow{\pi} \calJ_{\bbZ} \xrightarrow{\tilde{\pi}}\PP_{\bbZ}$.  
The class  $\cc_{\bbZ}\left([\tilde{\pi}]\right)$ is sent  to $\cc\left([\tilde{\pi}]\right)$ under base change. 
Thus, by the above arguments, to determine $\deg^{\AAA}_k\left(\cc[\tilde{\pi}]\right)$ in $\GW(k)$ it suffices to compute the rank and signature of $\deg^{\AAA}_{\bbZ}\left(\cc_{\bbZ}[\tilde{\pi}]\right)$ after base change to $\bbC$ and $\bbR$, respectively. 
Since the right-hand side of Diagram \eqref{eq:diagram_rank_and_deg_and_GW} commutes, the fact that we have $\deg\left([\tilde{\pi}]\right) = -1$ implies $\mathrm{rank}(\deg_{\bbC}^{\AAA}\left(\cc[\tilde{\pi}]\right)) = -1$. 
In Appendix \ref{sec:evidence} and Example \ref{ex:realization_pi_tilde} we show that the signature of 
$\deg^{\AAA}_{\bbR}\left(\cc[\tilde{\pi}]\right)$ over $\bbR$ is $-1$. 
Thus, by Formula \eqref{eq:compute_q_via_dC_and_dR}, we get $\deg^{\AAA}_{\bbZ}\left(\cc_{\bbZ}[\tilde{\pi}]\right) = - \langle 1 \rangle$ in $\GW(\bbZ)$. 
By Equation \eqref{eq:deg_over_Z_is_in_image} we can therefore conclude that  
$\deg_k^{\AAA}\left(\cc[\tilde{\pi}]\right) = - \langle 1 \rangle$ in $\GW(k)$. 
\end{proof}


\subsection{Group completion of naive homotopy classes}
\label{sec:group_completion_of_naive_classes}

We will now describe the homomorphism $\psi \colon \left([\PP, \PP]^{\AAA},\oplus^{\AAA}\right) \to \left([\J, \PP]\naif, \oplus\right)$ induced by the universal property of the group completion $\nu_{\PP}$ in more detail.   
By \cite[Lemma 3.13]{Caz}, $[\PP,\PP]\naif$ is generated by elements in degree $1$, i.e., it is generated by the set of classes $[X/u]$ for all $u\in k^{\times}$. 
Hence, since $\nu_{\PP}$ is a group completion, every element in $[\PP,\PP]^{\AAA}$ of degree $0$ can be written as a sum of the differences $\gamma_{u,v}:=\nu_{\PP}([X/u]) - \nu_{\PP}([X/v])$ for suitable $u,v \in k^{\times}$. 
Thus the set of classes $\gamma_{u,v}$ for all $u,v \in k^{\times}$ generates the subgroup $[\PP,\PP]_0^{\AAA}$ of degree $0$ elements. 
Because of this we would like to understand the image of the $\gamma_{u,v}$ under $\psi$. 
Since $\psi$ is a group homomorphism, we know  $\psi(\gamma_{u,v}) \oplus \psi(\nu_{\PP}([X/v])) = \psi(\nu_{\PP}([X/u]))$. 
Since $\psi \circ \nu_{\PP} = \pi_{\mathrm{N}}^*$, this implies $\psi(\gamma_{u,v}) \oplus \pi_{\mathrm{N}}^*([X/v]) = \pi_{\mathrm{N}}^*([X/u])$. 
By Lemma \ref{lem:g_uv_oplus_pi_v}, the map $g_{u,v}$ satisfies $[g_{u,v}] \oplus \pi_{\mathrm{N}}^*([X/v])) = \pi_{\mathrm{N}}^*([X/u])$. 
Hence, since $[\J,\PP]\naif$ is a group, we get 
\[
\psi(\gamma_{u,v}) = [g_{u,v}] ~ \text{in} ~ [\J,\PP]\naif.
\]

This motivates the following definition of the subgroup $\GG \subseteq [\J,\PP]\naif$. 

\begin{definition}\label{def:subgroup_GG}
Let $\GG_0:=\langle [g_{u,v}] \st u,v\in k^{\times} \rangle \subseteq [\J,\ato]\naif$ denote the subgroup generated by the homotopy classes of the maps $g_{u,v}$. 
Let $\GG \subseteq [\J,\PP]\naif$ be the subgroup generated by $\GG_0$ and $[\pi]$. 
\end{definition}

We note that by Lemma \ref{lemma:guv_plus_gvu}, we have $-[g_{u,v}] = [g_{v,u}]$, while $[g_{u,u}]$ is the neutral element, and we therefore have $-([g_{u,v}] \oplus \pm n[\pi]) = [g_{v,u}] \oplus \mp n[\pi]$ in $\GG$. 

\begin{remark}\label{rem:MM0_completes_to_GG0}
By Lemma \ref{lemma:guv_plus_gvu}, we have $[g_{u,v}] \oplus [g_{v,1}] = [g_{u,1}]$ for all $u,v \in k^{\times}$. 
Thus, every element in $\GG_0$ is a sum of differences of elements in 
the submonoid of $[\J,\ato]\naif$ generated by the set of homotopy classes $[g_{u,1}]$ for all $u \in k^{\times}$. 
This implies that the inclusion of the submonoid of $[\J,\ato]\naif$ generated by the set of homotopy classes $[g_{u,1}]$ for all $u \in k^{\times}$ 
into $\GG_0$ is a group completion.     
\end{remark}

\begin{lemma}
The morphism of monoids $\pi_{\mathrm{N}}^* \colon [\PP,\PP]\naif \to [\J,\PP]\naif$ has image in $\GG$. 
\end{lemma}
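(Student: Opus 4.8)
The plan is to use the fact established in Cazanave's work, recalled in the proof of theorem \ref{thm:pi*_is_a_monoid_morphism}, that $[\PP,\PP]\naif$ is generated by elements in degree $1$, i.e., by the classes $[X/u]$ for $u \in k^\times$. Since theorem \ref{thm:pi*_is_a_monoid_morphism} tells us that $\pi_{\mathrm{N}}^*$ is a morphism of monoids, its image is the submonoid generated by the classes $\pi_{\mathrm{N}}^*([X/u])$. So it suffices to show that each $\pi_{\mathrm{N}}^*([X/u])$ lies in $\GG$.

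For this I would invoke lemma \ref{lem:g_u1_oplus_pi}, which states $g_{u,1} \oplus \pi = \pi_{\mathrm{N}}^*(X/u)$ as morphisms (hence also as homotopy classes). By definition \ref{def:subgroup_GG}, $\GG$ is the subgroup of $[\J,\PP]\naif$ generated by $\GG_0 = \langle [g_{u,v}] : u,v \in k^\times\rangle$ together with $[\pi]$. In particular $[g_{u,1}] \in \GG_0 \subseteq \GG$ and $[\pi] \in \GG$, so $[g_{u,1}] \oplus [\pi] \in \GG$. Therefore $\pi_{\mathrm{N}}^*([X/u]) = [g_{u,1} \oplus \pi] \in \GG$ for every $u \in k^\times$.

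Finally I would conclude: since $\GG$ is a subgroup (in particular closed under $\oplus$), and $\pi_{\mathrm{N}}^*$ is a monoid morphism whose generators $\pi_{\mathrm{N}}^*([X/u])$ all lie in $\GG$, the image of $\pi_{\mathrm{N}}^*$ is contained in $\GG$. There is no real obstacle here; the content is entirely in the already-established lemma \ref{lem:g_u1_oplus_pi} and theorem \ref{thm:pi*_is_a_monoid_morphism}, together with the generation statement \cite[Lemma 3.13]{Caz}. One minor point to be careful about is that the definition \ref{def:full_group} of $\oplus$ on all of $[\J,\PP]\naif$ must be the one used so that $g_{u,1}\oplus\pi$ really is computed via the action of the degree $0$ class $[g_{u,1}]$ on $[\pi]$, which is exactly the setup of lemma \ref{lem:g_u1_oplus_pi}.
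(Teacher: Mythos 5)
Your proof is correct and follows essentially the same route as the paper: reduce to the degree-one generators $[X/u]$ via \cite[Lemma 3.13]{Caz}, then conclude from lemma \ref{lem:g_u1_oplus_pi} that $\pi_{\mathrm{N}}^*([X/u]) = [g_{u,1}] \oplus [\pi]$ lies in $\GG$. You merely make explicit the role of $\pi_{\mathrm{N}}^*$ being a monoid morphism and of $\GG$ being closed under $\oplus$, which the paper leaves implicit.
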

\begin{proof}
By \cite[Lemma 3.13]{Caz}, $[\PP,\PP]\naif$ is generated by elements in degree $1$, i.e., it is generated by the set of classes $[X/u]$ for all $u\in k^{\times}$. 
Hence it suffices to show that $\pi_{\mathrm{N}}^*([X/u])$ is contained in $\GG$. 
This follows from Lemma \ref{lem:g_u1_oplus_pi}. 
\end{proof}

\begin{proposition}\label{prop:GG_is_group_completion}
The morphism of monoids $\pi_{\mathrm{N}}^* \colon [\PP,\PP]\naif \to \GG$ is a group completion. 
\end{proposition}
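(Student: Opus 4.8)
The plan is to show that $\pi_{\mathrm{N}}^* \colon [\PP,\PP]\naif \to \GG$ satisfies the universal property of the group completion: it is a morphism of commutative monoids into a group, and every morphism of monoids from $[\PP,\PP]\naif$ to a group factors uniquely through it. First I would recall that $\pi_{\mathrm{N}}^*$ is a monoid morphism by theorem \ref{thm:pi*_is_a_monoid_morphism} and that its image lies in $\GG$ by the preceding lemma, so the statement makes sense; also $\GG$ is a group by definition \ref{def:subgroup_GG}. Since $\nu_{\PP} \colon [\PP,\PP]\naif \to [\PP,\PP]^{\AAA}$ is a group completion by \cite[Theorem 3.22]{Caz}, there is a unique homomorphism $\psi \colon [\PP,\PP]^{\AAA} \to \GG$ with $\psi \circ \nu_{\PP} = \pi_{\mathrm{N}}^*$; the existence of a factorization through any group therefore comes for free via $\psi$, and only uniqueness and the fact that $\psi$ restricted appropriately is what needs attention. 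Concretely, it suffices to prove two things: (i) $\pi_{\mathrm{N}}^*$ is injective (so no unwanted identifications are forced), and (ii) $\GG$ is generated as a group by the image of $\pi_{\mathrm{N}}^*$, i.e.\ every element of $\GG$ is a difference $\pi_{\mathrm{N}}^*(a) \oplus (-\pi_{\mathrm{N}}^*(b))$.

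For (i), I would invoke proposition \ref{prop:pi_N_injective}, which already establishes that $\pi_{\mathrm{N}}^*$ is injective (via the cancellation property of $\MW^s(k)$ and $\cc \circ \pi_{\mathrm{N}}^* = \nu_{\PP}$). For (ii), recall that $\GG$ is generated by $[\pi]$ together with the classes $[g_{u,v}]$ for $u,v \in k^\times$. By lemma \ref{lemma:guv_plus_gvu} we have $[g_{u,v}] \oplus [g_{v,1}] = [g_{u,1}]$ and $-[g_{u,v}] = [g_{v,u}]$, so $\GG_0$ is generated by the $[g_{u,1}]$ and their inverses; moreover remark \ref{rem:MM0_completes_to_GG0} says the submonoid $\MM_0$ generated by the $[g_{u,1}]$ group-completes to $\GG_0$. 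By lemma \ref{lem:g_u1_oplus_pi}, $[g_{u,1}] = \pi_{\mathrm{N}}^*(X/u) \oplus (-[\pi])$, so each generator $[g_{u,1}]$ of $\MM_0$ is a difference of two elements of $\mathrm{im}(\pi_{\mathrm{N}}^*)$ (namely $\pi_{\mathrm{N}}^*(X/u)$ and $\pi_{\mathrm{N}}^*(X/1) = \pi$). Combined with $[\pi] = \pi_{\mathrm{N}}^*(X/1) \in \mathrm{im}(\pi_{\mathrm{N}}^*)$, every generator of $\GG$ — hence every element of $\GG$ — is a sum of elements of $\mathrm{im}(\pi_{\mathrm{N}}^*)$ and their inverses. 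Since $[\PP,\PP]\naif$ is generated in degree $1$ by the classes $[X/u]$ (\cite[Lemma 3.13]{Caz}) and $\pi_{\mathrm{N}}^*$ is a monoid morphism, this shows $\GG$ is generated as a group by $\mathrm{im}(\pi_{\mathrm{N}}^*)$.

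Finally I would assemble these into the universal property. Given a group $A$ and a monoid morphism $h \colon [\PP,\PP]\naif \to A$, the composite with the group completion $\nu_{\PP}$ of Cazanave gives a unique group homomorphism $\bar h \colon [\PP,\PP]^{\AAA} \to A$ with $\bar h \circ \nu_{\PP} = h$; then $\bar h \circ \psi^{-1}$ — or rather, one should argue directly — produces a homomorphism $\GG \to A$ whose composite with $\pi_{\mathrm{N}}^*$ is $h$. Uniqueness follows because $\GG$ is generated by $\mathrm{im}(\pi_{\mathrm{N}}^*)$: any two homomorphisms out of $\GG$ agreeing on $\mathrm{im}(\pi_{\mathrm{N}}^*)$ agree everywhere. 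I expect the main (and only mildly delicate) obstacle to be bookkeeping the compatibility between the two candidate group completions: one must check that $\psi$ really does restrict to a map with image in $\GG$ (already noted above) and that the factorization obtained from Cazanave's universal property lands in $\GG$ rather than merely in $[\PP,\PP]^{\AAA}$; but since $\GG$ is defined precisely so that $\mathrm{im}(\pi_{\mathrm{N}}^*)$ generates it, this is immediate once (i) and (ii) are in hand. All the substantive work has already been done in lemmas \ref{lemma:guv_plus_gvu}, \ref{lem:g_u1_oplus_pi}, \ref{lem:g_uv_oplus_pi_v}, proposition \ref{prop:pi_N_injective}, and remark \ref{rem:MM0_completes_to_GG0}, so the proof is a short formal assembly.
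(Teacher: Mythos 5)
Your proposal is correct and rests on the same two pillars as the paper's proof --- the injectivity of $\pi_{\mathrm{N}}^*$ (proposition \ref{prop:pi_N_injective}) and the fact that $\GG$ is generated by $\mathrm{im}(\pi_{\mathrm{N}}^*)$ via $[g_{u,v}] = \pi_{\mathrm{N}}^*([X/u]) \oplus (-\pi_{\mathrm{N}}^*([X/v]))$ and $[\pi]=\pi_{\mathrm{N}}^*([X/1])$ --- but it packages them differently. The paper verifies the universal property by hand: it defines $\widetilde{\mu}$ on the generators $[\pi]$ and $[g_{u,v}]$ (the values being forced by lemma \ref{lem:g_uv_oplus_pi_v}), and the bulk of the work is checking well-definedness, i.e.\ that a relation $\sum_i [g_{u_i,v_i}]=0$ in $\GG$ forces $\sum_i \mu([X/u_i]) = \sum_i \mu([X/v_i])$, which is exactly where injectivity of $\pi_{\mathrm{N}}^*$ enters. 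You instead invoke the general criterion that an injective morphism of commutative monoids into an abelian group whose image generates that group is a group completion. That criterion is true, and it buys a cleaner statement, but you assert it without justification; the justification (injectivity of $\pi_{\mathrm{N}}^*$ forces $[\PP,\PP]\naif$ to be cancellative, so its group completion is the quotient of $[\PP,\PP]\naif \times [\PP,\PP]\naif$ by the usual relation, and the induced map to $\GG$ is injective by (i) and surjective by (ii)) is precisely what the paper's well-definedness check unwinds, so it should be spelled out rather than taken for granted. One genuine caution: your suggested factorization $\bar h \circ \psi^{-1}$ is circular, since the invertibility of $\psi$ is equivalent to the statement being proved (it is the content of theorem \ref{thm:canonical_isomorphism_from_GG}); you correctly flag this and fall back on the direct argument, and the proof should avoid $\psi^{-1}$ entirely.
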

\begin{proof} 
Let $H$ be an abelian group and $\mu \colon [\PP,\PP]\naif \to H$ be a morphism of monoids. 
We will show that there is a unique homomorphism of groups $\widetilde{\mu} \colon \GG \to H$ such that $\widetilde{\mu} \circ \pi_{\mathrm{N}}^* = \mu$. 

We set $\widetilde{\mu}([\pi]):=\mu([X/1])$.  
By Lemma \ref{lem:g_uv_oplus_pi_v}, we have $[g_{u,v}] \oplus \pi_{\mathrm{N}}^*([X/v]) = \pi_{\mathrm{N}}^*([X/u])$ in $\GG \subseteq [\J,\PP]\naif$. 
Hence compatibility with $\mu$ forces the definition 
\begin{align*}
\widetilde{\mu}([g_{u,v}]):=\mu([X/u]) - \mu([X/v]).
\end{align*}
By definition of $\GG$ this induces a unique group homomorphism $\widetilde{\mu}$, once we have shown that it is well-defined.

Now we show that $\widetilde{\mu}$ is well-defined. 
Because $\GG \cong \GG_0 \oplus \Z$, all relations in $\GG$ amongst the generators arise from relations of the classes $[g_{u,v}]$. 
Consider a relation of the form
\begin{equation}\label{eq:relation_guv_group_completion_proof}
[g_{u_1,v_1}] \oplus \ldots \oplus [g_{u_s,v_s}] = 0.
\end{equation} 
We must then show that $\sum_i\widetilde{\mu}\left([g_{u_i,v_i}]\right)=0$ in $H$. 
Since $\GG$ is a group and by Lemma \ref{lem:g_uv_oplus_pi_v}, we have 
\begin{equation*}
\sum_i g_{u_i,v_i} \oplus \pi_{\mathrm{N}}^*([X/v_i]) 
= \sum_i \pi_{\mathrm{N}}^*([X/u_i]).
\end{equation*} 
Since $\GG$ is abelian, this implies 
\begin{equation*}
\sum_i [g_{u_i,v_i}] = \sum_i \pi_{\mathrm{N}}^*([X/u_i]) - \pi_{\mathrm{N}}^*([X/v_i]) 
= \sum_i \pi_{\mathrm{N}}^*([X/u_i]) - \sum_i \pi_{\mathrm{N}}^*([X/v_i])=0.
\end{equation*} 
Hence 
\begin{equation*}
\sum_i \pi_{\mathrm{N}}^*([X/u_i]) = \sum_i \pi_{\mathrm{N}}^*([X/v_i])
\end{equation*} 
in $[\PP,\PP]\naif$. 
Since $\pi_{\mathrm{N}}^*$ is an injective monoid morphism, in $[\PP,\PP]\naif$ we have the equation 
\begin{equation*}
\sum_i [X/u_i] = \sum_i [X/v_i].
\end{equation*} 
It thus follows that
\begin{equation*}
\mu\left(\sum_i [X/u_i]\right) = \mu\left(\sum_i [X/v_i]\right) ~ \text{in} ~ H. 
\end{equation*}
We calculate 
\begin{equation*}
\widetilde{\mu}\left(\sum_i [g_{u_i,v_i}]\right) = \sum_i \mu([X/u_i]) - \mu([X/v_i]) = 0,
\end{equation*}
as desired.
This shows that $\widetilde{\mu}$ is well-defined. 

It remains to show $\widetilde{\mu} \circ \pi_{\mathrm{N}}^* = \mu$. 
By \cite[Lemma 3.13]{Caz}, $[\PP,\PP]\naif$ is generated by the set of classes $[X/u]$ for all $u\in k^{\times}$.  
Hence $\mu$ is completely determined by the images of $[X/u]$ for all $u\in k^{\times}$. 
Thus, in order to show $\widetilde{\mu} \circ \pi_{\mathrm{N}}^* = \mu$, it suffices to show 
$\widetilde{\mu}\left(\pi_{\mathrm{N}}^* ([X/u])\right) = \mu([X/u])$ for every $u \in k^{\times}$. 
This is now immediate from the definition of $\widetilde{\mu}$ and Lemma \ref{lem:g_uv_oplus_pi_v}: 
\begin{align*}
\widetilde{\mu}\left(\pi_{\mathrm{N}}^* ([X/u])\right)  
& = \widetilde{\mu}\left([g_{u,1}]\oplus \pi_{\mathrm{N}}^* ([X/1])\right) \\
& = \widetilde{\mu}\left([g_{u,1}]\right)+ \widetilde{\mu}\left([\pi]\right)  \\
& = \mu\left([X/u]\right) - \mu\left([X/1]\right) + \mu\left([X/1]\right) \\
& = \mu\left([X/u]\right).
\end{align*}
This shows that $\pi_{\mathrm{N}}^* \colon [\PP,\PP]\naif \to \GG$ has the universal property of a group completion and finishes the proof.  
\end{proof}

\begin{theorem}\label{thm:canonical_isomorphism_from_GG}
There is a unique isomorphism 
of groups  
\begin{equation*}
\nc \colon \GG \to [\PP,\PP]^{\AAA}    
\end{equation*} 
such that $\nc \circ \pi_{\mathrm{N}}^* = \nu_{\PP}$.  
The homomorphism $\nc$ sends $[g_{u,v}]$ to the unique element $\gamma_{u,v}$ that satisfies $\nu_{\PP}^*([X/u]) = \gamma_{u,v} \oplus^{\AAA} \nu_{\PP}([X/v])$ 
and $[\pi]$ to $[\id]$. 
Moreover, $\nc$ and the homomorphism 
\[
\psi \colon [\PP,\PP]^{\AAA} \to \GG \subseteq [\J,\PP]\naif
\]
are mutual inverses to each other.  
\end{theorem}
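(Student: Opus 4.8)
The plan is to deduce the entire statement formally from the fact that both $\pi_{\mathrm{N}}^*$ and $\nu_{\PP}$ are group completions of the same commutative monoid $[\PP,\PP]\naif$, so that the real content has already been established in proposition \ref{prop:GG_is_group_completion}. First I would invoke the universal property of the group completion: by proposition \ref{prop:GG_is_group_completion} the monoid morphism $\pi_{\mathrm{N}}^* \colon [\PP,\PP]\naif \to \GG$ is a group completion, and by Cazanave's theorem \cite{Caz} so is $\nu_{\PP} \colon [\PP,\PP]\naif \to [\PP,\PP]^{\AAA}$. Since the group completion of a fixed commutative monoid is unique up to unique isomorphism, being a left adjoint and hence an initial object in the category of monoid maps from $[\PP,\PP]\naif$ to abelian groups, there is a unique group homomorphism $\nc \colon \GG \to [\PP,\PP]^{\AAA}$ with $\nc \circ \pi_{\mathrm{N}}^* = \nu_{\PP}$, and it is automatically an isomorphism. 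This gives the first assertion, including uniqueness.

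Next I would identify $\nc$ on the generators of $\GG$ listed in definition \ref{def:subgroup_GG}. Evaluating $\nc \circ \pi_{\mathrm{N}}^* = \nu_{\PP}$ on $[X/1]$ and using lemma \ref{lem:g_u1_oplus_pi} together with the fact that $[g_{1,1}] = (1,0)$ is the neutral element by lemma \ref{lemma:guv_plus_gvu} forces $\nc([\pi]) = \nu_{\PP}([X/1]) = [\id]$. Evaluating instead on $[X/u]$ and again applying lemma \ref{lem:g_u1_oplus_pi} gives $\nc([g_{u,1}]) = \nu_{\PP}([X/u]) - [\id] = \gamma_{u,1}$, and since $[g_{u,v}] \oplus [g_{v,1}] = [g_{u,1}]$ by lemma \ref{lemma:guv_plus_gvu}, one obtains $\nc([g_{u,v}]) = \gamma_{u,1} - \gamma_{v,1} = \nu_{\PP}([X/u]) - \nu_{\PP}([X/v]) = \gamma_{u,v}$. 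By lemma \ref{lem:g_uv_oplus_pi_v} and the discussion preceding definition \ref{def:subgroup_GG}, this is exactly the element $\gamma_{u,v}$ characterised in the statement.

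Finally I would check that $\nc$ and $\psi$ are mutually inverse by the usual uniqueness argument. The composite $\nc \circ \psi$ satisfies $(\nc \circ \psi) \circ \nu_{\PP} = \nc \circ \pi_{\mathrm{N}}^* = \nu_{\PP}$, so by the universal property of the group completion $\nu_{\PP}$ it must equal the identity of $[\PP,\PP]^{\AAA}$; symmetrically, $(\psi \circ \nc) \circ \pi_{\mathrm{N}}^* = \psi \circ \nu_{\PP} = \pi_{\mathrm{N}}^*$, so $\psi \circ \nc$ is the identity of $\GG$ by the universal property of $\pi_{\mathrm{N}}^*$. I do not anticipate a genuine obstacle here: all the substance sits in proposition \ref{prop:GG_is_group_completion}, and the only point needing care is checking that the formulas for $\nc$ on the generators are mutually consistent, which is guaranteed the moment well-definedness of the group completion has been secured.
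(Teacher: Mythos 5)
Your proposal is correct and follows essentially the same route as the paper: the paper's proof likewise derives everything from proposition \ref{prop:GG_is_group_completion}, Cazanave's result that $\nu_{\PP}$ is a group completion, and the universal property of group completion, with the identification of $\nc$ on the generators $[g_{u,v}]$ and $[\pi]$ coming from lemma \ref{lem:g_uv_oplus_pi_v} exactly as you describe. You have merely written out the standard uniqueness argument for the mutual inverses in more detail than the paper does.
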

\begin{proof}
The existence of $\nc$ and its definition is a consequence of Proposition \ref{prop:GG_is_group_completion} and its proof. 
The assertion that $\nc$ is the inverse of $\psi$ follows from the fact that $\nu_{\PP}$ is a group completion proven by Cazanave in \cite[Theorem 3.22]{Caz} and the universal property of group completion.  
\end{proof}

As a particular consequence of Theorem \ref{thm:canonical_isomorphism_from_GG} we get the following result. 

\begin{proposition}\label{prop:GG_0_maps_onto_PP_ato}
The restriction $\nc_0$ of the homomorphism $\nc$ to $\GG_0$ defines an isomorphism of groups 
\begin{equation*}
\nc_0 \colon \GG_0 \xrightarrow{\cong} [\PP,\ato]^{\AAA}. 
\end{equation*}
\end{proposition}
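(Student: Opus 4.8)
The plan is to bootstrap this from theorem~\ref{thm:canonical_isomorphism_from_GG}, which already supplies the group isomorphism $\nc \colon \GG \xrightarrow{\cong} [\PP,\PP]^{\AAA}$, by checking that $\nc$ is compatible with the degree homomorphisms and then restricting to kernels.

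First I would verify that $\deg \circ \nc = \deg$ as homomorphisms $\GG \to \bbZ$. Since both sides are group homomorphisms it suffices to check this on the generators $[\pi]$ and $[g_{u,v}]$ of $\GG$ from definition~\ref{def:subgroup_GG}: one has $\deg([\pi]) = 1 = \deg([\id]) = \deg(\nc([\pi]))$, while $\deg([g_{u,v}]) = 0$ because $g_{u,v}$ factors through $\ato$, and $\deg(\nc([g_{u,v}])) = \deg(\gamma_{u,v}) = \deg(\nu_{\PP}([X/u])) - \deg(\nu_{\PP}([X/v])) = 1 - 1 = 0$.

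Next I would note that $\GG_0$ is precisely the kernel of $\deg|_{\GG}$: by the proof of proposition~\ref{prop:GG_is_group_completion} one has $\GG \cong \GG_0 \oplus \bbZ$ with the free summand generated by $[\pi]$, and $\GG_0 \subseteq [\J,\ato]\naif$ consists of degree-$0$ classes; on the other side, $[\PP,\ato]^{\AAA}$ is the kernel of $\deg \colon [\PP,\PP]^{\AAA} \to \bbZ$ by Morel's exact sequence recorded in the middle row of diagram~\eqref{eq:diagram_rank_and_deg_and_GW}. It follows that $\nc$ carries $\GG_0$ into $[\PP,\ato]^{\AAA}$, so $\nc_0$ is well defined, and one obtains a morphism of short exact sequences
\[
\xymatrix{
1 \ar[r] & \GG_0 \ar[d]_-{\nc_0} \ar[r] & \GG \ar[d]_-{\nc}^-{\cong} \ar[r]^-{\deg} & \bbZ \ar[d]_-{\id}^-{\cong} \ar[r] & 1 \\
1 \ar[r] & [\PP,\ato]^{\AAA} \ar[r] & [\PP,\PP]^{\AAA} \ar[r]^-{\deg} & \bbZ \ar[r] & 1.
}
\]
The five lemma then forces $\nc_0$ to be an isomorphism of groups. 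Equivalently and more concretely, $\nc_0$ is injective because $\nc$ is, and it is surjective because the classes $\gamma_{u,v} = \nc([g_{u,v}])$ generate $[\PP,\ato]^{\AAA}$, as recalled before definition~\ref{def:subgroup_GG}.

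I do not anticipate a genuine obstacle here: every substantive ingredient --- the isomorphism $\nc$, the splitting $\GG \cong \GG_0 \oplus \bbZ$, and the fact that the classes $\gamma_{u,v}$ generate the degree-$0$ subgroup of $[\PP,\PP]^{\AAA}$ --- is already in place. The one point that deserves a moment's care is the identification of $\GG_0$ with the \emph{full} kernel of $\deg$ on $\GG$ (rather than a proper subgroup of it), which is exactly what the splitting from the proof of proposition~\ref{prop:GG_is_group_completion} provides.
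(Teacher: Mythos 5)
Your proof is correct and follows essentially the same route as the paper: the paper also fits $\nc_0$ and $\nc$ into a morphism of short exact sequences with $\bbZ$ on the right and concludes by the five-lemma. Your extra checks (that $\nc$ commutes with $\deg$ on generators and that $\GG_0$ is the full kernel of $\deg|_{\GG}$ via the splitting $\GG \cong \GG_0 \oplus \bbZ$) merely make explicit what the paper leaves implicit.
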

\begin{proof}
The elements $\gamma_{u,v}$ are of degree $0$, and hence they lie in the subgroup $[\PP,\ato]^{\AAA}$. As a consequence, the homomorphism $\nc$ and its restriction $\nc_0$ fit in the following commutative diagram of abelian groups. 
\begin{align*}
\xymatrix{
1 \ar[r] & \GG_0 \ar[d]_-{\nc_0} \ar[r] & \GG \ar[d]_-{\nc}^-{\cong} \ar[r] & \bbZ \ar[d]^-{\cong} \ar[r] & 1 \\
1 \ar[r] & [\PP, \ato]^{\AAA} \ar[r] & [\PP, \PP]^{\AAA}   \ar[r]  & \bbZ \ar[r] & 1 
}
\end{align*} 
Since the middle and right-most maps are isomorphisms, the assertion follows from the five-lemma. 
\end{proof}

The existence of the isomorphisms $\GG \xrightarrow{\nc} [\PP,\PP]^{\AAA} \xleftarrow{\ff} [\J,\PP]\naif$ does not imply that $\GG$ equals $[\J,\PP]\naif$. 
However, we make the following conjecture on the a priori subgroups $\GG_0$ and $\GG$. 

\begin{conjecture}\label{conj:GG_0_and_GG_are_the_whole_group}
The inclusions $\GG_0 \subseteq [\J,\ato]\naif$ and $\GG \subseteq [\J,\PP]\naif$ are equalities.  
\end{conjecture}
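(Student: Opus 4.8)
The plan is to peel off the ``top'' copy of $\bbZ$ and reduce the conjecture to a statement about degree~$0$ maps, then settle that statement over finite fields by a cardinality count and isolate the obstruction in the general case. First I would record that $\GG$ is abelian and, by Definition~\ref{def:subgroup_GG}, generated by $\GG_0$ together with $[\pi]$; since $\deg([\pi])=1$, the degree map restricts to a short exact sequence $0 \to \GG_0 \to \GG \to \bbZ \to 0$ mapping into $0 \to [\J,\ato]\naif \to [\J,\PP]\naif \to \bbZ \to 0$ (the latter coming from the proof of Theorem~\ref{thm:group_structure}), and the induced map on the $\bbZ$-cokernels is the identity because $[\pi]$ generates both. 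A diagram chase then shows $\GG = [\J,\PP]\naif$ if and only if $\GG_0 = [\J,\ato]\naif$. So the entire conjecture collapses to the assertion that the inclusion $\GG_0 \subseteq [\J,\ato]\naif$ is an equality.

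For a finite field $k=\bbF_q$ I would then exploit that both sides are abstractly isomorphic to one and the same \emph{finite} group. Indeed, $\cc_0$ of Theorem~\ref{thm:degree_zero_iso} is an isomorphism $[\J,\ato]\naif \xrightarrow{\cong} [\PP,\ato]^{\AAA}$, while $\nc_0$ of Proposition~\ref{prop:GG_0_maps_onto_PP_ato} is an isomorphism $\GG_0 \xrightarrow{\cong} [\PP,\ato]^{\AAA}$. By Morel's computation (Remark~\ref{rem:Morel_shows_K1MW_gives_degree_0_maps}) one has $[\PP,\ato]^{\AAA} \cong K_1^{MW}(\bbF_q)$, and $K_1^{MW}(\bbF_q)$ is finite: it fits in the exact sequence $0 \to I^2(\bbF_q) \to K_1^{MW}(\bbF_q) \to \bbF_q^\times \to 0$ with $I^2(\bbF_q)=0$, hence it is cyclic of order $q-1$. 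Therefore $\GG_0$ is a subgroup of $[\J,\ato]\naif$ of the same finite cardinality and so equals it, and the reduction above yields $\GG = [\J,\PP]\naif$ over every finite field. I would carry out the explicit identification of $K_1^{MW}(\bbF_q)$ in its own section, both to make this count fully concrete and because it is of independent interest.

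For a general field $k$ the same route demands $\cc_0(\GG_0) = [\PP,\ato]^{\AAA}$, and since $\cc_0$ is injective this is equivalent to the equality $\cc_0|_{\GG_0} = \nc_0$. Combining $g_{u,1} \oplus \pi = \pi_{\mathrm{N}}^*(X/u)$ (Lemma~\ref{lem:g_u1_oplus_pi}), $\cc \circ \pi_{\mathrm{N}}^* = \nu_{\PP}$ (Lemma~\ref{lem:cc_piN*_is_a_monoid_morphism}), and $\cc([\pi]) = [\id]$ (Proposition~\ref{prop:image_of_100u} together with the isomorphism~\eqref{eq:Mor_PtoP_GW}), one sees that $\cc_0([g_{u,1}]) = \nc_0([g_{u,1}])$ is precisely the statement that $\cc$ is additive on the pair $[g_{u,1}], [\pi]$ — a special case of Conjecture~\ref{conj:main_conjecture}. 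Alternatively, one could try to compute the class $\cc_0([g_{u,1}]) \in K_1^{MW}(k)$ directly from the unimodular row $(x + \tfrac{1}{u}w,\,(u-1)y)$, hoping to land on the Milnor--Witt symbol $[u]$, after which generation of $K_1^{MW}(k)$ by these symbols would finish the argument. The hard part is the $\eta$-divisible piece of $K_1^{MW}(k)$: the invariants we fully control — the motivic Brouwer degree and Cazanave's resultant — factor through $\GW(k)=K_0^{MW}(k)$ and through the Milnor quotient $K_1^{MW}(k) \to K_1^M(k)=k^\times$, so they pin $\cc_0([g_{u,1}])$ down only modulo $I^2(k)$; detecting the $I^2$-contribution, equivalently checking $\cc_0([g_{u,1}]) = [u]$ on the nose rather than modulo $I^2(k)$, lies beyond our present tools, which is why the statement for general fields remains a conjecture.
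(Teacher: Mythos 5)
The statement you are proving is stated in the paper as a conjecture, and the paper establishes it only for finite fields (theorem \ref{thm:GG0_generates_whole_group_for_finite_fields}); your proposal does the same and is correct as far as it goes, following essentially the paper's route. Your reduction of the whole conjecture to the degree-$0$ assertion $\GG_0 = [\J,\ato]\naif$ via the two short exact sequences under $\deg$ is exactly remark \ref{rem:proving_GG0_is_enough_for_conjecture}, and your finite-field argument --- $\GG_0(\bbF_q)$ and $[\J,\ato]\naif$ are both isomorphic to the finite group $K_1^{MW}(\bbF_q)$, so an inclusion of one in the other is an equality by counting --- is the paper's proof verbatim. The one genuinely different sub-step is the computation of $K_1^{MW}(\bbF_q)$: you invoke the exact sequence $0 \to I^2(\bbF_q) \to K_1^{MW}(\bbF_q) \to \bbF_q^{\times} \to 0$ together with $I^2(\bbF_q)=0$, whereas proposition \ref{prop:calculation_of_K1MW_and_GG0_finite_fields} works directly from the presentation in definition \ref{def:Milnor_Witt_K}, exhibiting a non-square $s$ with $1-s$ also a non-square to kill the $\eta$-terms by hand. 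Your route is shorter but imports Morel's fiber-product description of $K_*^{MW}$, which needs a word of care in characteristic $2$; the paper avoids this by treating $q$ even separately, where every unit is a square. (One small imprecision: surjectivity of $\cc_0|_{\GG_0}$ onto $[\PP,\ato]^{\AAA}$ is implied by, but not literally equivalent to, $\cc_0|_{\GG_0}=\nc_0$.) Your diagnosis of why the general case resists --- the available invariants only pin down $\cc_0([g_{u,1}])$ modulo $I^2(k)$, so one cannot yet verify $\cc_0([g_{u,1}])=[u]$ on the nose --- is consistent with the paper, which leaves the general statement open and records in remark \ref{rem:unique_isos_if_completion_holds} only what would follow from conjecture \ref{conj:main_conjecture}.
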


We will show in Theorem \ref{thm:GG0_generates_whole_group_for_finite_fields} in Section \ref{sec:Milnor_Witt_K_theory_and_degree_0} that Conjecture \ref{conj:GG_0_and_GG_are_the_whole_group} is true whenever $k=\bbF_q$ is a finite field.  
This follows from an explicit computation of  $\GG_0$ and $K_1^{\mathrm{MW}}(\bbF_q)$, the first Milnor--Witt K-theory of $\bbF_q$.

\begin{remark}\label{rem:proving_GG0_is_enough_for_conjecture}
It follows from the structure of the group $\GG$ as a product of $\GG_0$ and $\{n[\pi] \st n \in \bbZ\}$ that in order to prove Conjecture \ref{conj:GG_0_and_GG_are_the_whole_group} it suffices to show that the inclusion $\GG_0 \subseteq [\J,\ato]\naif$ is an equality, i.e., 
that the set of homotopy classes $[g_{u,v}]$ for all $u,v \in k^{\times}$ generates the group $[\J,\ato]\naif$.  
\end{remark}

\begin{remark}\label{rem:unique_isos_if_completion_holds}
If Conjecture     \ref{conj:GG_0_and_GG_are_the_whole_group} is true, then the group homomorphism $\psi \colon [\PP,\PP]^{\AAA} \to [\J,\PP]\naif$, induced by the fact that $\nu_{\PP}$ is a group completion, is an isomorphism 
and it agrees with $\ff^{-1}$, the inverse of the isomorphism of Theorem \ref{thm:group_structure}.  
We note, however, that this does not yet imply that the bijection $\cc = (\pi^*_{\AAA})^{-1} \circ \nu \colon  [\J, \PP]\naif \to [\PP, \PP]^{\AAA}$ is a group homomorphism. 
\end{remark}


\subsection{Milnor--Witt K-theory and morphisms in degree 0}
\label{sec:Milnor_Witt_K_theory_and_degree_0}

Our final goal is to prove Conjecture \ref{conj:GG_0_and_GG_are_the_whole_group} for finite fields.  
For the proof we use the Milnor--Witt K-theory  of a field which we now recall from \cite[Definition 3.1]{Morel12}. 

\begin{definition}\label{def:Milnor_Witt_K}
The Milnor--Witt K-theory of the field $k$, denoted $K_*^{\mathrm{MW}}(k)$, is the graded associative ring generated by symbols $[u]$ in degree $1$ for $u \in k^\times$ and the  symbol $\eta$ in degree $-1$ subject to the following relations:
\begin{enumerate}
    \item For each $u \in k^\times \setminus \{1\}$, $[u].[1-u] = 0$.
    \item For each pair $u,v \in (k^\times)^2$, $[uv] = [u] + [v] + \eta.[u].[v]$. 
    \item For each $u \in k^\times$, $\eta.[u]=[u].\eta$.
    \item Let $h:= \eta.[-1]+2$. 
    Then $\eta.h=0$. 
\end{enumerate}
\end{definition}

\begin{remark}
\label{rem:kmw_properties}
It follows directly from the defining relations for $K_*^{\mathrm{MW}}(k)$ that $[1]=0$ and $\eta.[u^2]=0$ for each $u \in k^\times$. See \cite[\S3.1]{Morel12} for a proof and other basic properties of Milnor--Witt K-theory.
\end{remark}

Recall that $\GG_0:=\langle [g_{u,v}] \st u,v\in k^{\times} \rangle \subseteq [\J,\ato]\naif$ denotes the subgroup generated by the homotopy classes of  the maps $g_{u,v}$. 
In this subsection we write $\GG_0(k)$ and $\GG(k)$ for the groups $\GG_0$ and $\GG$, respectively, to emphasize the dependency of the base field $k$.

\begin{proposition}\label{prop:GG0_is_isomorphic_to_K1MW_abstract}
For every field $k$, there is an isomorphism $\GG_0(k) \cong K_1^{\mathrm{MW}}(k)$.
\end{proposition}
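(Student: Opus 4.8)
The natural approach is to build the isomorphism directly from the defining presentation of $K_1^{MW}(k)$. Recall that $K_1^{MW}(k)$ is generated by the symbols $[u]$ for $u\in k^{\times}$ subject to the Steinberg relation $[u].[1-u]=0$ (in degree $2$, so this is automatic in degree $1$ — we must be careful) and, crucially for degree $1$, the relation $[uv]=[u]+[v]+\eta.[u].[v]$ coming from multiplicativity together with the consequences $[1]=0$. I would first set up a candidate homomorphism $K_1^{MW}(k)\to \GG_0(k)$ by sending the generator $[u]$ to the class $[g_{u,1}]$. To see this is well defined I must check that the images of the defining relations hold in $\GG_0(k)$. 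The relation $[1]=0$ translates to $[g_{1,1}]=0$, which holds since $g_{1,1}=(1,0)$ by Lemma \ref{lemma:guv_plus_gvu}. The multiplicativity relation $[uv]=[u]+[v]+\eta.[u].[v]$ is the one requiring real work; in degree $1$ the term $\eta.[u].[v]$ is an honest element (it lives in degree $1$), so I cannot ignore it. Here I would use the relation (Remark \ref{rem:kmw_properties}) that $\eta.[v^2]=0$, which says $[u v^2]=[u]+[v^2]$, matching Lemma \ref{lem:gu1_summing}: $[g_{u,1}]\oplus[g_{v,1}]=[g_{uv,1}]$ whenever $v$ is a square. This strongly suggests that the correct identification is $[g_{u,1}]\leftrightarrow$ the image of $[u]$ under $K_1^{MW}(k)$, and that the full multiplicativity relation is encoded once one tracks how non-square $u,v$ interact. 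The cleanest route is likely \emph{not} to verify relations by hand but to invoke a known presentation of $K_1^{MW}(k)$ as a group: by Morel's work (and the standard computation of $\KMW_*$), $K_1^{MW}(k)$ is generated by symbols $[u]$, $u\in k^{\times}$, with relations generated exactly by $[1]=0$ and $[uv]=[u]+[v]+\eta[u][v]$, and one has the fiber-product description $K_1^{MW}(k)\cong K_1^M(k)\times_{I/I^2} I$ (or equivalently $\GW$-flavored: $K_1^{MW}(k)\cong \GW(k)_0\times_{k^\times/k^{\times 2}}k^\times$ after suitable normalization).

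\textbf{Key steps.} First I would recall/cite the presentation of $K_1^{MW}(k)$ and in particular its description via the pullback square relating it to $K_1^M(k)=k^\times/?$ — more precisely $K_1^M(k)=k^\times$ — and the Witt/$I$-adic piece, i.e. the exact sequence $0\to I^2\to K_1^{MW}(k)\to k^\times\to 0$ or the fiber product $K_1^{MW}(k)\cong \{(a,b)\in k^\times\times I : \bar a = \bar b \in I/I^2\}$. Second, I would identify $\GG_0(k)$ with the same object. On the one hand, by Remark \ref{rem:MM0_completes_to_GG0}, $\GG_0$ is the group completion of the monoid $\MM_0$ generated by $[g_{u,1}]$, and by Theorem \ref{thm:canonical_isomorphism_from_GG} and Proposition \ref{prop:GG_0_maps_onto_PP_ato}, $\nc_0\colon \GG_0 \xrightarrow{\cong} [\PP,\ato]^{\AAA}$. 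By Remark \ref{rem:Morel_shows_K1MW_gives_degree_0_maps}, $[\PP,\ato]^{\AAA}\cong K_1^{MW}(k)$. Composing these gives the desired isomorphism $\GG_0(k)\cong K_1^{MW}(k)$ essentially for free — and moreover it is canonical. So in fact the slick proof is: $\GG_0 \xrightarrow[\cong]{\nc_0} [\PP,\ato]^{\AAA} \xrightarrow[\cong]{} K_1^{MW}(k)$, where the second isomorphism is Morel's. That is the proof. The only thing to verify is that $\GG_0$ is genuinely a subgroup mapping isomorphically — but Proposition \ref{prop:GG_0_maps_onto_PP_ato} already states $\nc_0$ is an isomorphism of groups, so there is nothing left to do.

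\textbf{Main obstacle.} Given the machinery already assembled, there is essentially no obstacle: the statement is a formal consequence of Proposition \ref{prop:GG_0_maps_onto_PP_ato} (which gives $\GG_0\cong[\PP,\ato]^{\AAA}$) and Remark \ref{rem:Morel_shows_K1MW_gives_degree_0_maps} (which gives $[\PP,\ato]^{\AAA}\cong K_1^{MW}(k)$). Thus the proof is a one-line composition of two known isomorphisms. If one wanted the isomorphism \emph{explicitly} on generators — sending $[g_{u,v}]$ to something concrete in $K_1^{MW}(k)$ — then the genuine work would be tracing Morel's isomorphism through the identification $\pi_1^{\AAA}(\ato)_{-1}\cong K_1^{MW}$ and checking that $[g_{u,1}]\mapsto [u]$ (equivalently, that the class $\gamma_{u,1}$ corresponds to $[u]$ under $\deg^{\AAA}$-style bookkeeping, cf. Proposition \ref{prop:image_of_100u} where $\cc[\pi_{\mathrm{N}}^*(X/u)]$ has $\GW$-part $\langle u\rangle$). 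I would include that explicit identification as a remark after the abstract proof, but the proof of the proposition itself is simply: $\GG_0(k) \cong [\PP,\ato]^{\AAA} \cong K_1^{MW}(k)$, the first isomorphism by Proposition \ref{prop:GG_0_maps_onto_PP_ato} and the second by Remark \ref{rem:Morel_shows_K1MW_gives_degree_0_maps}.
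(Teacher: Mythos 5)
Your proposal is correct and, in its final form, is exactly the paper's proof: compose the isomorphism $\nc_0 \colon \GG_0(k) \xrightarrow{\cong} [\PP,\ato]^{\AAA}$ of proposition \ref{prop:GG_0_maps_onto_PP_ato} with Morel's isomorphism $[\PP,\ato]^{\AAA} \cong K_1^{MW}(k)$ recalled in remark \ref{rem:Morel_shows_K1MW_gives_degree_0_maps}. The initial generators-and-relations plan is an unnecessary detour that you rightly abandon (the paper carries out that explicit identification only later, in proposition \ref{prop:mapping_to_K1MW}, and only for special fields).
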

\begin{proof}
By Proposition \ref{prop:GG_0_maps_onto_PP_ato}, we have an isomorphism $\nc_0 \colon \GG_0(k) \xrightarrow{\cong} [\PP,\ato]^{\AAA}$. 
As recalled in Remark \ref{rem:Morel_shows_K1MW_gives_degree_0_maps}, the work of Morel in \cite[\S7.3]{Morel12} implies that there is an isomorphism of groups $[\PP,\ato]^{\AAA} \cong K_1^{\mathrm{MW}}(k)$.  
The composition of these isomorphisms yields the assertion. 
\end{proof}

\begin{lemma}\label{lemma:calculation_of_K1MW_finite_fields_q_odd}
Let $k=\bbF_q$ be a finite field of odd characteristic. Let $v_1,v_2$ be non-squares in $\bbF_{q}^{\times}$. 
Then we have $[v_1v_2] = [v_1] + [v_2]$ in $K_1^{\mathrm{MW}}(\bbF_q)$.  
\end{lemma}
\begin{proof}
Since $q$ is odd, the kernel of the squaring homomorphism has two elements, $-1$ and $1$, 
i.e., $\bbF_{q}^\times / \bbF_{q}^{\times2} \cong \Z/2\Z$.
Because $1$ is a square, the set $\bbF_{q}\setminus \{0,1\}$ contains more non-squares than squares. 
Construct pairs $(t,1-t)$ from elements $t\in \bbF_{q}\setminus \{0,1\}$, and observe that there must exist at least one non-square $t$ such that $1-t$ is also a non-square. 
For the rest of the proof we fix $t$ to be one such non-square. 
By relation (1) in $K_1^{\mathrm{MW}}(\bbF_{q})$, we have $[t].[1-t]=0$. 
Let $v_1,v_2$ be non-squares in $\bbF_{q}^{\times}$. 
By relation (2) of Definition \ref{def:Milnor_Witt_K} we have  
\begin{align*}
[v_1v_2]&=[v_1]+[v_2]+\eta.[v_1].[v_2] 
\end{align*}
Hence to prove the assertion of the lemma it suffices to prove $\eta.[v_1].[v_2]=0$ in $K_1^{\mathrm{MW}}(\bbF_{q})$. 
Since $\bbF_{q}^\times / \bbF_{q}^{\times2} \cong \Z/2\Z$, there exist units $c_1$ and $c_2$ such that $c_1^2t = v_1$ and $c_2^2(1-t) = v_2$. 
Hence we get 
\begin{align*}
\eta.[v_1].[v_2] & = \eta.[c_1^2t].[c_2^2(1-t)] \\
&= \eta.([c_1^2]+[t]).([c_2^2] + [1-t])
\end{align*}
where the second equality follows from the fact, for every non-square $v$ and every square $c^2$, we have $[vc^2]=[v]+[c^2]$. 
Since we have $\eta.[c_i^2]=0$ by Remark \ref{rem:kmw_properties} and $[t].[1-t]=0$, 
we can conclude $\eta.[v_1].[v_2]=0$ which proves the lemma.    
\end{proof}

\begin{proposition}\label{prop:calculation_of_K1MW_and_GG0_finite_fields}
Let $k=\bbF_q$ be a finite field. 
Then $K_1^{\mathrm{MW}}(\bbF_q)$ is a finite cyclic group of order $q-1$. 
\end{proposition}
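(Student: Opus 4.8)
The statement is a computation of $K_1^{MW}(\bbF_q)$, so the strategy is to use the known structure of Milnor--Witt K-theory and specialize to a finite field. First I would recall the fundamental exact sequence relating $K_n^{MW}$ to Milnor K-theory and Witt groups: for any field $k$ there is a short exact sequence
\begin{equation*}
0 \to I^{n+1}(k) \to K_n^{MW}(k) \to K_n^M(k) \to 0,
\end{equation*}
where $I(k)$ denotes the fundamental ideal of the Witt ring $W(k)$ and $K_n^M$ is Milnor K-theory (see \cite[\S3.1, Theorem 3.7]{Morel12} and \cite{Morel12} for the identification of the kernel of $K_n^{MW}\to K_n^M$ with $I^{n+1}$). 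Taking $n=1$ gives $0 \to I^2(\bbF_q) \to K_1^{MW}(\bbF_q) \to K_1^M(\bbF_q) \to 0$.

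**Key steps.** Next I would plug in the classical facts about finite fields. One has $K_1^M(\bbF_q) \cong \bbF_q^\times$, which is cyclic of order $q-1$. The Witt ring of a finite field is well known: if $q$ is odd, $W(\bbF_q) \cong \bbZ/4$ when $q \equiv 3 \pmod 4$ and $W(\bbF_q) \cong \bbZ/2[\bbF_q^\times/(\bbF_q^\times)^2]$, i.e. $\cong \bbZ/2 \times \bbZ/2$, when $q \equiv 1 \pmod 4$; in either case the fundamental ideal $I(\bbF_q)$ has order $2$ and $I^2(\bbF_q) = 0$. If $q$ is even, $W(\bbF_q) = \bbZ/2$ and again $I^2(\bbF_q)=0$. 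So in all cases $I^2(\bbF_q) = 0$, and the exact sequence collapses to an isomorphism $K_1^{MW}(\bbF_q) \xrightarrow{\cong} K_1^M(\bbF_q) \cong \bbF_q^\times$, a cyclic group of order $q-1$. Alternatively, and perhaps more in the spirit of the paper, one could argue directly: $K_1^{MW}(\bbF_q)$ is generated by the symbols $[u]$, $u \in \bbF_q^\times$; the relation $[uv] = [u] + [v] + \eta [u][v]$ together with $\eta[u^2] = 0$ (remark \ref{rem:kmw_properties}) and the fact that $\eta$ acts through the Witt ring should let one show that the map $u \mapsto [u]$ induces a well-defined surjection $\bbF_q^\times \to K_1^{MW}(\bbF_q)$, and then one checks injectivity using the rank/discriminant invariants or the comparison with $\GW(\bbF_q)$.

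**Main obstacle.** The real content is verifying that $I^2(\bbF_q) = 0$, or equivalently that there are no nontrivial "higher" relations beyond the multiplicativity relation — that is, showing the candidate presentation of $K_1^{MW}(\bbF_q)$ as a cyclic group of order $q-1$ has no further collapsing or, conversely, is not too large. This amounts to the computation of the Witt ring of a finite field, which is classical (see \cite{MilnorHusemoller}, \cite{Lam}), but requires care to handle the cases $q$ even, $q \equiv 1 \pmod 4$, and $q \equiv 3 \pmod 4$ uniformly. I expect the cleanest writeup routes everything through the fundamental exact sequence and cites the standard Witt ring computation, so the proof is short; the bookkeeping with the three residue cases is the only mildly delicate point.
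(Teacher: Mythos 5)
Your proof is correct, but it takes a genuinely different route from the paper. You reduce the statement to the short exact sequence $0 \to I^{2}(\bbF_q) \to K_1^{MW}(\bbF_q) \to K_1^M(\bbF_q) \to 0$ together with the classical fact that the Witt ring of a finite field has $I^2 = 0$, so that $K_1^{MW}(\bbF_q) \cong K_1^M(\bbF_q) \cong \bbF_q^\times$. The paper instead argues entirely from the presentation of $K_*^{MW}$ in definition \ref{def:Milnor_Witt_K}: when $q$ is even every unit is a square, so $\eta.[u].[v]=0$ and the relation collapses to $[uv]=[u]+[v]$; when $q$ is odd it produces a non-square $s$ with $1-s$ also a non-square, and uses the Steinberg relation $[s].[1-s]=0$ together with $\eta.[c^2]=0$ to show $\eta.[v_1].[v_2]=0$ for any two non-squares $v_1,v_2$, so that again $[v_1v_2]=[v_1]+[v_2]$ and the group is cyclic generated by $[u]$ for a multiplicative generator $u$. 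Your approach buys brevity and automatically yields the exact order $q-1$ (the paper's generator-and-relation argument, as written, gives the upper bound $(q-1)[u]=0$ and implicitly relies on the surjection onto $K_1^M(\bbF_q)$ for the lower bound); the paper's approach buys self-containedness relative to the defining relations, which is what it actually needs later when transporting the same manipulations to the classes $[g_{u,v}]$ in proposition \ref{prop:mapping_to_K1MW}. Two small caveats on your writeup: Morel's identification of the kernel of $K_n^{MW}\to K_n^M$ with $I^{n+1}$ is stated for characteristic $\neq 2$, so for $q$ even you should either invoke the appropriate characteristic-$2$ version or simply note that every unit is a square and argue directly (as you sketch and as the paper does); and the reference you cite for the Witt ring computation should be one actually available in the bibliography, e.g.\ \cite{MilnorHusemoller}.
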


\begin{proof}
First we assume that $q$ is even. 
Then the squaring homomorphism is surjective, and hence every unit is a square. 
Fix $u$ to be a multiplicative generator of $\bbF_q^\times$. 
It follows from \cite[Lemma 3.6 (1)]{Morel12} that $K_1^{\mathrm{MW}}(\bbF_q)$ is generated by the elements $[v]$ for $v\in k^{\times}$, which are subject to the relation $[vv']=[v]+[v']$ for all $v,v' \in \bbF_q^\times$. 
The fact that $u^{q-1}=1$ yields the result that $K_1^{\mathrm{MW}}(\bbF_q)$ is cyclic of order $q-1$ generated by the symbol $[u]$. 

Now we assume that $q$ is odd. 
Let $u$ be a multiplicative generator of $\bbF_{q}^\times$. 
Using induction and Lemma \ref{lemma:calculation_of_K1MW_finite_fields_q_odd} we get $[u^n] = n[u]$ for all $n\ge 1$. 
Since $\bbF_q^{\times}$ is cyclic of order $q-1$, this shows that $K_1^{\mathrm{MW}}(\bbF_{q})$ is cyclic of order $q-1$ generated by the symbol $[u]$. 
\end{proof}

\begin{theorem}\label{thm:GG0_generates_whole_group_for_finite_fields}
Let $k=\bbF_q$ be a finite field. 
Then Conjecture \ref{conj:GG_0_and_GG_are_the_whole_group} is true, i.e., the inclusions $\GG_0(\bbF_q) \subseteq [\J,\ato]\naif$ and $\GG(\bbF_q) \subseteq [\J,\PP]\naif$ are equalities.
\end{theorem}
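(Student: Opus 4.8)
The plan is to combine the abstract structure already in place with the two explicit computations of Proposition \ref{prop:calculation_of_K1MW_and_GG0_finite_fields} and Proposition \ref{prop:GG0_is_isomorphic_to_K1MW_abstract}. By Remark \ref{rem:proving_GG0_is_enough_for_conjecture} it suffices to prove the statement for $\GG_0(\bbF_q) \subseteq [\J,\ato]\naif$, since the claim for $\GG(\bbF_q) \subseteq [\J,\PP]\naif$ follows formally from the product decomposition $\GG \cong \GG_0 \oplus \{n[\pi]\}$ together with the short exact sequence for $[\J,\PP]\naif$ in diagram \eqref{eq:diagram_group_morphism}. So the whole proof reduces to showing that the inclusion $\GG_0(\bbF_q) \hookrightarrow [\J,\ato]\naif$ is an equality.

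First I would invoke the isomorphisms already available: by Remark \ref{rem:Morel_shows_K1MW_gives_degree_0_maps} (Morel's computation) combined with Theorem \ref{thm:degree_zero_iso}, we have $[\J,\ato]\naif \cong [\PP,\ato]^{\AAA} \cong K_1^{MW}(k)$, and by Proposition \ref{prop:GG0_is_isomorphic_to_K1MW_abstract} we also have $\GG_0(k) \cong K_1^{MW}(k)$. Moreover, Proposition \ref{prop:GG_0_maps_onto_PP_ato} tells us that the inclusion $\GG_0 \subseteq [\J,\ato]\naif$ becomes, under these identifications, precisely the isomorphism $\nc_0$ onto $[\PP,\ato]^{\AAA}$ — so in fact the inclusion is \emph{a priori} already an isomorphism in complete generality, not just over finite fields! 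If that reading of Proposition \ref{prop:GG_0_maps_onto_PP_ato} is correct, then the finite-field hypothesis is not even needed and the theorem is immediate. The more conservative route, which is likely what the authors intend, is to argue by a counting/finiteness argument: show that the subgroup $\GG_0(\bbF_q)$ already has order $q-1$, and that $[\J,\ato]\naif \cong K_1^{MW}(\bbF_q)$ has order $q-1$ by Proposition \ref{prop:calculation_of_K1MW_and_GG0_finite_fields}, so a finite subgroup of the same finite order must be the whole group.

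Concretely, the steps I would carry out are: (1) reduce to the $\GG_0$ statement via Remark \ref{rem:proving_GG0_is_enough_for_conjecture}; (2) quote Proposition \ref{prop:calculation_of_K1MW_and_GG0_finite_fields} to get $|[\J,\ato]\naif| = |K_1^{MW}(\bbF_q)| = q-1$, using the chain of isomorphisms $[\J,\ato]\naif \cong [\PP,\ato]^{\AAA} \cong K_1^{MW}(\bbF_q)$; (3) show $|\GG_0(\bbF_q)| \geq q-1$, either directly from Propositions \ref{prop:gu1_distinct} and \ref{prop:GG0_is_isomorphic_to_K1MW_abstract} (the classes $[g_{u,1}]$ for $u\in\bbF_q^\times$ are pairwise distinct, giving $q-1$ distinct elements, and they lie in $\GG_0$), or by noting $\GG_0(\bbF_q) \cong K_1^{MW}(\bbF_q)$ abstractly; (4) conclude $\GG_0(\bbF_q) = [\J,\ato]\naif$ since a subgroup of order $\geq q-1$ inside a group of order $q-1$ is everything; (5) deduce $\GG(\bbF_q) = [\J,\PP]\naif$ from the five-lemma applied to the two short exact sequences with quotient $\bbZ$ (as in the proof of Proposition \ref{prop:GG_0_maps_onto_PP_ato}, but now with the left vertical map an isomorphism). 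The main obstacle, if any, is bookkeeping: one must make sure that the composite isomorphism $\GG_0(\bbF_q) \cong K_1^{MW}(\bbF_q) \cong [\J,\ato]\naif$ of the various propositions is genuinely realized by the inclusion map (so that equality of orders forces equality of subgroups), rather than merely an abstract isomorphism of groups that happens to have the right cardinality — but Proposition \ref{prop:GG_0_maps_onto_PP_ato} is exactly the statement that makes this compatible, so this should go through without trouble.
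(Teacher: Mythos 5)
Your ``conservative route'' is exactly the paper's proof: reduce to $\GG_0$ via remark \ref{rem:proving_GG0_is_enough_for_conjecture}, observe via propositions \ref{prop:GG0_is_isomorphic_to_K1MW_abstract} and \ref{prop:calculation_of_K1MW_and_GG0_finite_fields} that $\GG_0(\bbF_q)$ and $[\J,\ato]\naif \cong [\PP,\ato]^{\AAA} \cong K_1^{MW}(\bbF_q)$ are finite groups of the same cardinality $q-1$, and conclude that an inclusion of finite groups of equal order is an equality. That argument is correct and is the one the authors give.

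One caveat: your aside that proposition \ref{prop:GG_0_maps_onto_PP_ato} already forces $\GG_0 = [\J,\ato]\naif$ over \emph{any} field is a misreading. That proposition only asserts that $\nc_0 \colon \GG_0 \to [\PP,\ato]^{\AAA}$ (the restriction of the group-completion isomorphism $\nc$) is an isomorphism; it does \emph{not} say that $\nc_0$ agrees with the restriction of $\cc_0$ to $\GG_0$. That identification is essentially the content of the open conjectures \ref{conj:main_conjecture} and \ref{conj:GG_0_and_GG_are_the_whole_group}, which is why the paper does not claim the result for general $k$. Relatedly, your closing worry about whether the composite isomorphism is ``realized by the inclusion'' is moot for the cardinality argument: a subgroup of a finite group of the same order is the whole group, no compatibility of the isomorphisms required. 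So drop the aside, keep the counting argument, and the proof stands.
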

\begin{proof}
By Remark \ref{rem:proving_GG0_is_enough_for_conjecture} it suffices to prove the assertion for $\GG_0(\bbF_q)$. 
By Propositions  \ref{prop:GG0_is_isomorphic_to_K1MW_abstract} and  \ref{prop:calculation_of_K1MW_and_GG0_finite_fields}, both $\GG_0(\bbF_q)$ and $K_1^{\mathrm{MW}}(\bbF_q)$ are finite groups of the same cardinality. 
Since $[\PP,\ato]^{\AAA}$ and $K_1^{\mathrm{MW}}(\bbF_q)$ are isomorphic and since $\cc_0 \colon [\J,\ato]\naif \to [\PP,\ato]^{\AAA}$ is an isomorphism, $[\J,\ato]\naif$ is a finite group of the same cardinality as $\GG_0(\bbF_q)$ as well. 
Hence $\GG_0(\bbF_q) \subseteq [\J,\ato]\naif$ is an inclusion of finite groups of the same cardinality. 
This implies that the inclusion $\GG_0(\bbF_q) \subseteq [\J,\ato]\naif$ is an equality. 
\end{proof}

We conclude this section with the following observation. 
While Proposition \ref{prop:GG0_is_isomorphic_to_K1MW_abstract} shows that there is an isomorphism between $\GG_0(k)$ and $K_1^{MW}(k)$, the proof of Proposition \ref{prop:calculation_of_K1MW_and_GG0_finite_fields} suggests that the following map provides a concrete isomorphism. 
We consider this an interesting observation about $K_1^{\mathrm{MW}}(k)$ that arises from our work on maps $\J \to \ato$.

\begin{proposition}\label{prop:mapping_to_K1MW}
Let $k$ be one of the following fields: a quadratically closed field, a finite field, or $\bbR$. 
Then the assignment $[u] \mapsto [g_{u,1}]$ defines an isomorphism $\kappa \colon K_1^{\mathrm{MW}}(k) \to \GG_0(k)$.  
\end{proposition}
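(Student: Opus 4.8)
The goal is to promote the abstract isomorphism $\GG_0(k) \cong K_1^{MW}(k)$ of Proposition \ref{prop:GG0_is_isomorphic_to_K1MW_abstract} into an explicit one, defined by $[u] \mapsto [g_{u,1}]$, for the three listed classes of fields. The natural strategy is to construct a well-defined homomorphism $\kappa \colon K_1^{MW}(k) \to \GG_0(k)$ by checking the Milnor--Witt relations on the proposed images $[g_{u,1}]$, and then to argue that $\kappa$ is surjective (which is essentially immediate, since the $[g_{u,1}]$ together with differences generate $\GG_0$ by Remark \ref{rem:MM0_completes_to_GG0} and Lemma \ref{lemma:guv_plus_gvu}) and injective (via a cardinality or rank count that uses the restricted list of fields). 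Because $K_1^{MW}(k)$ is generated in degree $1$ by the symbols $[u]$, to define $\kappa$ it suffices to send $[u] \mapsto [g_{u,1}]$ and verify that this respects the degree-$1$ consequences of the defining relations of $K_*^{MW}(k)$: namely the Steinberg relation $[u] + [1-u]$-type identity in degree $1$ and the twisted additivity $[uv] = [u] + [v] + \eta.[u].[v]$. Here I would use \cite[Lemma 3.6]{Morel12}, which gives a presentation of $K_1^{MW}(k)$ by generators $[u]$ and relations; on the nose the key relations to check are $[g_{u,1}] \oplus [g_{1,1}] = [g_{u,1}]$ (trivial, as $g_{1,1} = (1,0)$), and, crucially, the behaviour of $[g_{uv,1}]$ versus $[g_{u,1}] \oplus [g_{v,1}]$, which is controlled by Lemma \ref{lem:gu1_summing}: $[g_{u,1}] \oplus [g_{v,1}] = [g_{uv,1}]$ \emph{whenever $v$ is a square}. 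The $\eta$-correction term in Milnor--Witt additivity precisely accounts for the failure of this when $v$ is a non-square, so the compatibility check must track how $[g_{u,1}] \oplus [g_{v,1}]$ differs from $[g_{uv,1}]$ by an element detected by $\eta$; transporting across $\nc_0$ and the isomorphism $[\PP,\ato]^{\AAA} \cong K_1^{MW}(k)$ of Remark \ref{rem:Morel_shows_K1MW_gives_degree_0_maps}, one identifies this discrepancy with $\eta.[u].[v]$.

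\textbf{Steps in order.} First I would record that by Remark \ref{rem:MM0_completes_to_GG0}, $\GG_0(k)$ is the group completion of the submonoid $\MM_0$ generated by the $[g_{u,1}]$, so $\GG_0(k)$ is generated as a group by $\{[g_{u,1}] : u \in k^\times\}$ — hence any homomorphism out of $\GG_0$ agreeing with the prescribed values is unique, and any homomorphism $K_1^{MW}(k) \to \GG_0(k)$ with $[u] \mapsto [g_{u,1}]$ is automatically surjective. Second, I would establish the well-definedness of $\kappa$: using the presentation of $K_1^{MW}(k)$ from \cite[Lemma 3.6]{Morel12}, it suffices to check that the assignment respects the relations expressed purely in degree $1$. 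For the three special field classes this simplifies drastically: when every unit is a square (so in particular over $\bbF_{2^r}$ in Proposition \ref{prop:calculation_of_K1MW_and_GG0_finite_fields}'s even case), $K_1^{MW}(k)$ is just the additive group $k^\times$ with $[uv] = [u] + [v]$, and Lemma \ref{lem:gu1_summing} directly gives $[g_{u,1}] \oplus [g_{v,1}] = [g_{uv,1}]$, so $\kappa$ is well-defined on the nose; for general finite fields of odd order one uses the argument in the proof of Proposition \ref{prop:calculation_of_K1MW_and_GG0_finite_fields} (choose a non-square $s$ with $1-s$ also a non-square, so $[s].[1-s] = 0$) to show that the relations still collapse to additivity after one organizes generators suitably, matched on the $\GG_0$ side by Lemmas \ref{lem:square_scale} and \ref{lemma:guv_plus_gvu}; over $\bbR$ one uses $\bbR^\times / \bbR^{\times 2} \cong \Z/2$ and the explicit structure of $K_1^{MW}(\bbR)$ (and of $\GG_0(\bbR) \cong [\PP,\ato]^{\AAA} \cong K_1^{MW}(\bbR)$) together with a direct check that $\kappa$ sends $[-1]$ to the appropriate $2$-torsion-plus-infinite-part class. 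Third, having $\kappa$ well-defined and surjective, I would obtain injectivity: for finite fields, $\GG_0(\bbF_q)$ and $K_1^{MW}(\bbF_q)$ are finite of the same order $q-1$ by Propositions \ref{prop:GG0_is_isomorphic_to_K1MW_abstract} and \ref{prop:calculation_of_K1MW_and_GG0_finite_fields}, so a surjective homomorphism between them is an isomorphism; for fields where every unit is a square, both sides are canonically $k^\times$ and $\kappa$ is visibly identity-like; for $\bbR$, one compares with the known $K_1^{MW}(\bbR) \cong \Z \oplus \Z/2$ (or the relevant explicit form) and checks $\kappa$ is surjective hence — since the abstract iso $\GG_0(\bbR) \cong K_1^{MW}(\bbR)$ forces equal (finitely generated, with matching torsion) structure — an isomorphism, or alternatively one simply invokes that a surjective endomorphism-up-to-iso of such a group compatible with the rank/signature invariants is bijective.

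\textbf{Main obstacle.} The delicate point is the odd finite field case and the $\bbR$ case, where $\kappa$ is \emph{not} well-defined by the naive additivity $[g_{u,1}] \oplus [g_{v,1}] = [g_{uv,1}]$ — that identity holds (via Lemma \ref{lem:gu1_summing}) only when $v$ is a square, and otherwise the Milnor--Witt relation carries the genuinely nontrivial $\eta.[u].[v]$ term. So the real content is to verify that the $\GG_0$-side relations among the $[g_{u,v}]$ (which all descend, via $\nc_0$, from relations in $[\PP,\ato]^{\AAA} \cong K_1^{MW}(k)$) match exactly the Milnor--Witt relations under $[u] \leftrightarrow [g_{u,1}]$, i.e.\ that the ``defect'' $[g_{u,1}] \oplus [g_{v,1}] \ominus [g_{uv,1}]$ equals $\nc_0^{-1}(\eta.[u].[v])$. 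I would handle this not by a brute-force homotopy computation (which Remark \ref{q:explicit_matrix_commutation} warns is hard) but by transporting everything through the isomorphism chain $\GG_0(k) \xrightarrow{\nc_0} [\PP,\ato]^{\AAA} \cong K_1^{MW}(k)$ of Proposition \ref{prop:GG_0_maps_onto_PP_ato} and Remark \ref{rem:Morel_shows_K1MW_gives_degree_0_maps}, and tracking where $[g_{u,1}]$ goes: by Lemma \ref{lem:g_u1_oplus_pi} we have $[g_{u,1}] \oplus [\pi] = \pi_{\mathrm{N}}^*([X/u])$, and by Proposition \ref{prop:image_of_100u} the class $\cc(\pi_{\mathrm{N}}^*([X/u])) = \nu_{\PP}([X/u])$ has invariant $(\langle u\rangle, u)$ — so under the standard identification $[\PP,\ato]^{\AAA} \cong K_1^{MW}(k)$, $[g_{u,1}]$ corresponds to the Milnor--Witt class $[u]$ (this is Morel's identification of degree-$0$ parts of $\GW$-valued degrees with Milnor--Witt symbols). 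Once this correspondence is pinned down, well-definedness and injectivity of $\kappa$ follow formally, and the special-field hypotheses enter only to guarantee that the abstract isomorphism of Proposition \ref{prop:GG0_is_isomorphic_to_K1MW_abstract} is realized by this \emph{particular} explicit map — which for finite fields is the cardinality argument, for square-saturated fields is the collapse to $k^\times$, and for $\bbR$ is the $\Z/2$-plus-$\Z$ bookkeeping.
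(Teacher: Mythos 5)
Your proposal has the right overall shape---generators and relations on both sides, with surjectivity essentially free and injectivity by counting---but it leaves the two genuinely hard points unproved, and the device you introduce to handle them does not close the gap. Your ``main obstacle'' paragraph proposes to transport the problem through $\nc_0$ and ``Morel's identification'' so that $[g_{u,1}]$ corresponds to the symbol $[u]$ in $K_1^{MW}(k)$. But the paper only knows the image of $\gamma_{u,1}=\nu_{\PP}([X/u])-\nu_{\PP}([X/1])$ in $\GW(k)\times_{k^\times/k^{\times 2}}k^\times$ (proposition \ref{prop:image_of_100u}); it never establishes that the sheaf-theoretic isomorphism $[\PP,\ato]^{\AAA}\cong K_1^{MW}(k)$ of remark \ref{rem:Morel_shows_K1MW_gives_degree_0_maps} sends $\gamma_{u,1}$ to the symbol $[u]$. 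That compatibility is essentially the statement you are trying to prove, shifted one level up; and if it were available, the restriction to the three listed field classes would be superfluous, since your transport argument would then work over any field---which should have been a warning sign. The paper's actual proof deliberately avoids this identification and instead verifies, field class by field class, that the explicit presentations of $K_1^{MW}(k)$ and $\GG_0(k)$ match under $[u]\leftrightarrow[g_{u,1}]$.

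In the case-by-case part you never produce the one relation that carries all the content for odd $q$: well-definedness of $\kappa$ on the cyclic group $K_1^{MW}(\bbF_q)\cong\bbZ/(q-1)$ requires $[g_{u^n,1}]=n[g_{u,1}]$ for a multiplicative generator $u$, hence in particular $[g_{u,1}]\oplus[g_{u,1}]=[g_{u^2,1}]$ with $u$ a \emph{non-square}. Lemma \ref{lem:gu1_summing} gives this only when one factor is a square, and ``organizing generators suitably'' does not supply the missing case. The paper obtains it from the identity $\langle u\rangle+\langle u\rangle=\langle 1,1\rangle$ in $\GW(\bbF_q)$, carried into $[\PP,\PP]\naif$ by \cite[Corollary 3.10]{Caz} and then into $\GG_0$ via proposition \ref{prop:pi*_Xu1_to_X_un_and_oplus} and lemma \ref{lem:g_uv_oplus_pi_v}; without this step neither $(q-1)[g_{u,1}]=0$ nor the cardinality argument can get started. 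Finally, your real case rests on the wrong group: $K_1^{MW}(\bbR)$ is not $\bbZ\oplus\bbZ/2$ but $\bbZ\oplus\bbR_{>0}$ (every class is uniquely $n[-1]+[u]$ with $u>0$), and for such a group a surjective endomorphism need not be injective, so the fallback ``surjective hence bijective'' argument fails. The paper instead puts every element of $\GG_0(\bbR)$ in the normal form $n[g_{-1,1}]\oplus[g_{u,1}]$ with $u>0$, using lemmas \ref{lem:square_scale}, \ref{lem:gu1_summing} and \ref{lemma:guv_plus_gvu} together with proposition \ref{prop:gu1_distinct}, and matches generators and relations directly.
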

\begin{proof}
We will show that $K_1^{\mathrm{MW}}(k)$ and $\GG_0(k)$ are generated by the classes $[u]$ and $[g_{u,1}]$, respectively, and that these generators satisfy exactly the same type of relations. 
This implies that $\kappa$ is both a well-defined homomorphism and an isomorphism. 
We will prove the claim by looking at each type of field separately. 

First we assume that $k$ is a quadratically closed field, i.e., a field where every unit is a square. 
The group $K_1^{\mathrm{MW}}(k)$ is generated by the elements $[u]$ and the relation $[uv]=[u]+[v]$ 
for all $u,v \in k^{\times}$. 
To prove that $\kappa$ is an isomorphism we need to show that $\GG_0(k)$ is generated by the classes $[g_{u,1}]$ subject to the  relation $[g_{uv,1}]=[g_{u,1}] \oplus [g_{v,1}]$. 
Since every unit in $k$ is a square, we get $[g_{u,v}]=[g_{u/v,1}]$ by Lemma \ref{lem:square_scale}. 
Hence $\GG_0(k)$ is generated by elements $[g_{u,1}]$. 
By Proposition \ref{prop:gu1_distinct} we know that $[g_{u,1}] \neq [g_{v,1}]$ for $u \neq v\in k^{\times}$.   
By Lemma \ref{lem:gu1_summing} we get the relation $[g_{u,1}] \oplus [g_{v,1}] = [g_{uv,1}]$. 
Thus, the map sending $[u]$ to $[g_{u,1}]$ induces a homomorphism which is surjective and injective. 
Hence $\kappa$ is an isomorphism. 

For $k=\bbF_q$, Proposition \ref{prop:calculation_of_K1MW_and_GG0_finite_fields} shows 
that $K_1^{\mathrm{MW}}(\bbF_q)$ is generated by the symbol $[u]$ for a multiplicative generator $u \in \bbF_q^\times$. 
When $q$ is even, every unit is a square, and in this case $\kappa$ is an isomorphism. 
So we assume that $q$ is odd, and will now show that every element in $\GG_0(\bbF_q)$ can be written in the form $m[g_{u,1}]\oplus [g_{u^{2m'},1}]$ for some $m,m'\in \bbZ$. 
We will use that every square in $\bbF_q$ is equal to an even power of the generator $u\in \bbF_q^\times$, and distinguish three cases: 
Assume first $v_1,v_2 \in \bbF_q^{\times}$ are squares. 
By Lemma \ref{lem:square_scale} we then have $[g_{v_1,v_2}]=[g_{v_1/v_2,1}]=[g_{u^{2m}},1]$ for some $m$. 
Second, if $v_1$ is not a square and $v_2$ is a square, then $v_1/v_2 = u^{2m+1}$ for some $m\in \Z$. Then by Lemma \ref{lem:square_scale} and \ref{lem:gu1_summing} we know 
$[g_{v_1,v_2}]= [g_{u^{2m+1},1}]=[g_{u,1}]\oplus [g_{u^{2m},1}]$. 
Note that $[g_{v_2,v_1}] = -[g_{v_1,v_2}] = -[g_{u,1}]\oplus [g_{u^{-2m},1}]$. 
Third, assume that both $v_1$ and $v_2$ are non-squares in $\bbF_q^{\times}$. 
Since $\bbF_{q}^\times / \bbF_{q}^{\times2} \cong \Z/2\Z$, 
we can find an $m\in \bbZ$ such that $v_1/v_2= u^{2m}$. 
We have $[g_{v_1,v_2}]=[g_{u\cdot u^{2m},u}]$ by Lemma \ref{lem:square_scale} and scaling by the square $u/v_2$. 
We can now apply Lemma \ref{lemma:guv_plus_gvu} and then Lemma \ref{lem:gu1_summing} to get 
\begin{equation*}
[g_{u\cdot u^{2m},u}]=[g_{u\cdot u^{2m},1}]\oplus [g_{1,u}]= [g_{u,1}]\oplus [g_{ u^{2m},1}]\oplus [g_{1,u}] = [g_{ u^{2m},1}].
\end{equation*}

To conclude the argument we note that, for $v_1,v_2 \in \bbF_q^{\times}$ with $v_1+v_2\neq 1$, there is the relation $\langle v_1 \rangle + \langle v_2 \rangle = \langle v_1+v_2 \rangle  + \langle (v_1+v_2)v_1v_2 \rangle $ in $\GW(\bbF_q)$. 
For $s$ and $1-s$ in $\bbF_q^{\times}$, this gives $\langle s \rangle + \langle 1-s \rangle = \langle 1 \rangle + \langle s(1-s) \rangle = \langle 1,1 \rangle$. 
In particular, since $u, s,$ and $1-s$ all differ by squares and hence $\langle u \rangle = \langle s \rangle = \langle 1-s \rangle$ in $\GW(\bbF_q)$, 
we have $\langle u \rangle + \langle u \rangle = \langle 1,1 \rangle = \langle u^2,1 \rangle$ in $\GW(\bbF_q)$.     
By \cite[Corollary 3.10]{Caz} this relation implies  
$[X/u] \oplus\naif [X/u] = [X/u^2] \oplus\naif [X/1]$ in $[\PP,\PP]\naif$. 
By Proposition \ref{prop:pi*_Xu1_to_X_un_and_oplus} and Lemma \ref{lem:g_uv_oplus_pi_v},  this implies  the equality $[g_{u,1}]\oplus [g_{u,1}] = [g_{u^2,1}]$ in $\GG_0(\bbF_q)$. 
Iterating this argument, we get $(q-1)[g_{u,1}]=[g_{u^{q-1},1}]=[g_{1,1}]$. 
Since $[g_{v_1},1] \neq [g_{v_2},1]$ for $v_1 \neq v_2 \in \bbF_q^{\times}$ by Proposition \ref{prop:gu1_distinct}, this implies that  $\GG_0(\bbF_q)$ is cyclic of order $q-1$ generated by $[g_{u,1}]$. 
Hence the map $[u] \mapsto [g_{u,1}]$ is a well-defined homomorphism which is surjective and injective. 
Thus $\kappa$ is an isomorphism in this case as well. 

Finally, we assume $k=\bbR$. 
First we determine the generators and relations for $K_1^{\mathrm{MW}}(\bbR)$. 
For $u> 0$, we have $[-u]=[-1]+[u] + \eta.[-1].[u] = [-1]+[u]$ and $-[u]=[1/u]$.  
Thus every element in $K_1^{\mathrm{MW}}(\bbR)$ can be written as $n[-1]+[u]$ with $u>0$ for some $n\in \bbZ$ subject to the relation $(n[-1]+[u])+(m[-1]+[v])=(n+m)[-1]+[uv]$. 
Next we show that $\GG_0(\bbR)$ has analogous generators and relations.  
Assume $u,v>0$. 
Since $v$ is a square, Lemmas \ref{lem:square_scale} and \ref{lem:gu1_summing} imply the identities
\begin{equation*}
[g_{u,v}]  = [g_{u/v,1}], ~ \text{and} ~ [g_{-u,v}] = [g_{-u/v,1}] 
= [g_{-1,1}]  \oplus [g_{u/v,1}]. 
\end{equation*}
Using that $v$ is a square, Lemma \ref{lem:square_scale} yields the following identity
\begin{equation*}
[g_{u,-v}] 
= [g_{u/v,-1}] 
= [g_{u/v,1}] \oplus [g_{1,-1}] 
= -[g_{-1,1}] \oplus [g_{u/v,1}]
\end{equation*}
where we have $[g_{1,-1}] = -[g_{-1,1}]$ by Lemma \ref{lemma:guv_plus_gvu}.
Finally, using Lemma \ref{lem:square_scale} and \ref{lem:gu1_summing} we get 
\begin{equation*}
[g_{-u,-v}] 
= [g_{-u/v,-1}] 
= [g_{-u/v,1}]\oplus [g_{1,-1}] 
= [g_{-1,1}] \oplus [g_{u/v,1}] \oplus [g_{1,-1}] 
= [g_{u/v,1}].
\end{equation*}
This implies every element of $\GG_0(\bbR)$ can be written as a sum $n[g_{-1,1}]\oplus [g_{u,1}]$ with $u>0$ and $n\in \bbZ$. 
By Proposition \ref{prop:gu1_distinct} we know that $[g_{u,1}]\neq [g_{v,1}]$ for $u\neq v \in \bbR^\times$. 
By Lemma \ref{lem:gu1_summing} we get the relation $\left(n[g_{-1,1}]\oplus [g_{u,1}]\right) \oplus \left(m[g_{-1,1}]\oplus [g_{v,1}]\right)= (n+m)[g_{-1,1}] \oplus [g_{uv,1}]$ when $u,v > 0$. 
Hence the map $[u] \mapsto [g_{u,1}]$ is a well-defined homomorphism which is surjective and injective. 
Thus $\kappa$ is an isomorphism in this case. 
This finishes the proof.
\end{proof}

\appendix 

\section{Affine representability for pointed spaces and homotopies}\label{sec:appendix}

In this section, we discuss the results of Asok, Hoyois, and Wendt in \cite{AHW1}, \cite{AHW2}, and how we apply them. 
While the definition of our proposed group operation on $[\J, \PP]\naif$ in Definition \ref{def:full_group} is independent of motivic homotopy theory and the results of \cite{AHW2},  
we use the affine representability results of \cite{AHW2} to compare our group operation with the conventional group structure on $[\PP, \PP]^{\AAA}$. 
A minor technical point to overcome is that Asok, Hoyois, and Wendt work in the unpointed motivic homotopy category, whereas we need the analogous results in the pointed setting. 
The purpose of this appendix is to explain how the pointed analogs can be deduced. 
To keep the presentation brief, we use the conventions and notation of the papers \cite{AHW1} and \cite{AHW2}. 
We thank Marc Hoyois for helpful comments. 

Recall that we denote by $\Sm_k$ the category of smooth finite type $k$-schemes. 
Let $\sPrek$ denote the category of simplicial presheaves on $\Sm_k$. 
Let $\sPrepk$ denote the category of pointed simplicial presheaves on $\Sm_k$. 
We refer to an object in $\sPrek$ (respectively in $\sPrepk$) as a (pointed) motivic space.  
We equip with the Nisnevich-local $\AAA$-model structure as in \cite{AHW2}. 
For a motivic space $\calY$, let $\Sing^{\AAA} \calY$ denote the singular functor defined in \cite[\S 4.1]{AHW1},  see also \cite[page 88]{MV99}. 
If $\calY$ is pointed by a morphism $y \colon \Spec{k} \to \calY$,  
then the pointed singular functor $\Singp^{\AAA} \calY$ is defined as the fiber over $y$. 
More precisely, let $X$ be a pointed smooth $k$-scheme pointed by the morphism $x \colon \Spec{k} \to X$, then 
$\Singp^{\AAA} \calY$ is determined by the pullback square of simplicial sets 
\begin{equation}\label{eq:pullback_square_sSets_appendix}
\xymatrix{
(\Singp^{\AAA} \calY)(X) \ar[r] \ar[d] &  (\Sing^{\AAA} \calY)(X) \ar[d]^{x^*} \\
\mathrm{point} = (\Sing^{\AAA} \Spec{k})(\Spec{k}) \ar[r]^-{y_*} & (\Sing^{\AAA} \calY)(\Spec{k}).
}
\end{equation}
We note that, on $0$-simplices, $x$ induces induces a map of sets $x^* \colon \sPrek(X,\calY) \to \sPrek(\Spec{k},\calY)$. 
Hence, $(\Singp^{\AAA} \calY)(X)_0$ is the set  $\sPrepk(X,\calY)$ of pointed morphisms $X \to \calY$. On $1$-simplices, $x$ induces a map of sets $x^* \colon \sPrek(X \times \AAA,\calY) \to \sPrek(\AAA,\calY)$. 
Hence, $(\Singp^{\AAA} \calY)(X)_1$ is the set of pointed naive $\AAA$-homotopies of pointed morphisms $X \to \calY$. 

\begin{remark}\label{rem:naive_htpy_classes_and_pointed_pi0_appendix}
In particular, if $\calY=Y$ is represented by a pointed smooth finite type $k$-scheme $Y$, the set $\pi_0((\Singp^{\AAA} Y)(X))$ is the set of pointed naive homotopy classes of pointed morphisms $X \to Y$ described in Section \ref{sec:naive_htpy_relation}, that is,
\begin{align*}
\pi_0((\Singp^{\AAA} Y)(X)) = [X,Y]\naif.
\end{align*}
\end{remark}

We recall the following definition from \cite{AHW2}: 

\begin{definition}\label{def:A1_naive_appendix} 
\cite[Definition 2.1.1]{AHW2}
Let $\calF \in \sPrek$ and let $\calF \to \widetilde{\calF}$ be a fibrant replacement in the $\AAA$-model structure on $\sPrek$. 
There is a canonical map $\Sing^{\AAA}\calF \to \widetilde{\calF}$ that is well-defined up to simplicial homotopy equivalence. 
Then $\calF \in \sPrek$ is called \emph{$\AAA$-naive} if the map $(\Sing^{\AAA}\calF)(X) \to \widetilde{\calF}(X)$ is a weak equivalence of simplicial sets for every affine smooth finitely presented $k$-scheme $X$. 
\end{definition}

We will now show how the unpointed notion of $\AAA$-naivity of Definition \ref{def:A1_naive_appendix} translates to the pointed setting. 

\begin{proposition}\label{prop:appendix_main_result}
Let $\calY \in \sPrepk$ be a pointed motivic space. 
Assume that the underlying unpointed motivic space $\calY$ is $\AAA$-naive. 
Then, for every affine pointed smooth finitely presented $k$-scheme $X$, the canonical map $\pi_0((\Singp^{\AAA}\calY)(X)) \xrightarrow{\cong} [X,\calY]^{\AAA}$ is a bijection.   
\end{proposition}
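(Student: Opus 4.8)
The plan is to deduce the pointed statement from the unpointed $\AAA$-naivity of $\calY$ by a fibration argument applied to the pullback square \eqref{eq:pullback_square_sSets_appendix}. First I would recall that $\AAA$-naivity of the underlying unpointed space $\calY$ means that for every affine smooth $k$-scheme $X$ the map $(\Sing^{\AAA}\calY)(X) \to \widetilde{\calY}(X)$ is a weak equivalence of simplicial sets, where $\calY \to \widetilde{\calY}$ is an $\AAA$-fibrant replacement; applying this both to $X$ and to $\Spec{k}$ gives weak equivalences in the top and bottom rows of a map of squares from \eqref{eq:pullback_square_sSets_appendix} to the analogous square built from $\widetilde{\calY}$. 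I would then note that $[X,\calY]^{\AAA} = \pi_0 \operatorname{Map}_{\point}(X_+, \widetilde{\calY})$, and that the pointed mapping space in the model category $\Spcp(k)$ fits into its own homotopy pullback square expressing it as the homotopy fiber of $x^* \colon \widetilde{\calY}(X) \to \widetilde{\calY}(\Spec{k})$ over the basepoint $y$.

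The key technical point is that $x^* \colon (\Sing^{\AAA}\calY)(X) \to (\Sing^{\AAA}\calY)(\Spec{k})$ is a \emph{Kan fibration} of simplicial sets, so that the strict pullback \eqref{eq:pullback_square_sSets_appendix} computing $(\Singp^{\AAA}\calY)(X)$ is already a homotopy pullback. This holds because $\Sing^{\AAA}\calY$ is a simplicial presheaf whose values are Kan complexes (after the standard fibrant replacement, or because $\Sing^{\AAA}$ of a simplicially fibrant object is sectionwise Kan), and the map is induced by the cofibration of smooth schemes obtained from the $k$-point $x$; more concretely one uses that $X$ is affine so that $x\colon \Spec k \to X$ is a split monomorphism on rings, giving a section and hence showing $x^*$ is even a trivial fibration onto its image component — in any case a fibration suffices. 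Similarly, on the $\widetilde{\calY}$ side, $x^*\colon \widetilde{\calY}(X)\to\widetilde{\calY}(\Spec k)$ is a fibration because $\widetilde{\calY}$ is $\AAA$-fibrant hence in particular satisfies descent and sends the cofibration $\Spec k \hookrightarrow X$ to a fibration.

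Putting this together: I would form the map of strict pullback squares induced by $\Sing^{\AAA}\calY \to \widetilde{\calY}$; the maps on the three outer corners are weak equivalences (two by $\AAA$-naivity, one being the identity on the point); both squares are homotopy pullbacks by the fibration remarks above; hence the induced map on the fourth corner, namely $(\Singp^{\AAA}\calY)(X) \to \operatorname{hofib}_y\bigl(\widetilde{\calY}(X)\to\widetilde{\calY}(\Spec k)\bigr)$, is a weak equivalence of simplicial sets. Taking $\pi_0$ and identifying the homotopy fiber with the pointed mapping space gives the desired bijection $\pi_0((\Singp^{\AAA}\calY)(X)) \xrightarrow{\cong} [X,\calY]^{\AAA}$, where on the left $\pi_0$ of the pointed singular functor is exactly the set of pointed naive homotopy classes by Remark \ref{rem:naive_htpy_classes_and_pointed_pi0_appendix}.

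I expect the main obstacle to be the careful justification that the relevant maps of simplicial sets are fibrations — in particular checking that $x^*\colon \widetilde{\calY}(X) \to \widetilde{\calY}(\Spec k)$ is a fibration and that $\Sing^{\AAA}\calY$ may be taken sectionwise Kan without disturbing the square — so that the strict pullbacks in \eqref{eq:pullback_square_sSets_appendix} genuinely compute homotopy fibers. Once that is in place, the comparison of homotopy fibers along a weak equivalence of fibration sequences is formal, and the identification of the homotopy fiber with $[X,\calY]^{\AAA}$ is the standard description of pointed mapping spaces. A secondary point to address is well-definedness of the "canonical map" in the statement, which is just the composite $(\Singp^{\AAA}\calY)(X) \to \widetilde{\calY}$-fiber followed by passage to $\pi_0$, and one should remark it is independent of the choice of fibrant replacement up to the usual simplicial homotopy, exactly as in Definition \ref{def:A1_naive_appendix}.
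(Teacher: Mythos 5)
Your proposal follows essentially the same route as the paper: one argues that $x^*\colon (\Sing^{\AAA}\calY)(X) \to (\Sing^{\AAA}\calY)(\Spec{k})$ is a Kan fibration using the section induced by the structure morphism $X \to \Spec{k}$, so that the strict pullback defining $(\Singp^{\AAA}\calY)(X)$ computes the homotopy fiber, and then compares the two fiber sequences via the $\AAA$-naivity weak equivalences at the affine schemes $X$ and $\Spec{k}$ before taking $\pi_0$. The only overstatement is your parenthetical claim that the section makes $x^*$ a trivial fibration onto its image component --- a section gives no acyclicity --- but you only use that it is a fibration, which is exactly what the paper's proof uses.
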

\begin{proof}
Let $(X,x)$ be a pointed smooth $k$-scheme, and  
let $p \colon X \to \Spec{k}$ denote the canonical morphism. 
Then $p$ induces a map $p^* \colon (\Sing^{\AAA}\calY)(\Spec{k}) \to (\Sing^{\AAA}\calY)(X)$ of simplicial sets such that $x^* \circ p^*$ is the identity on $(\Sing^{\AAA}\calY)(\Spec{k})$. 
This shows that the map $x^*$ is a Kan fibration. Since the Kan--Quillen model structure on simplicial sets is right proper, this implies that 
$(\Singp^{\AAA}\calY)(X)$ is the homotopy fiber of $x^*$. 

Let $\calY \to R_{\AAA}\calY$ be a fibrant replacement of $\calY$ in the $\AAA$-model structure on $\sPrepk$. 
After forgetting the basepoint, $R_{\AAA}\calY$ is fibrant in the $\AAA$-model structure on the category $\sPrek$ of unpointed motivic spaces. 
Since the singular functor preserves $\AAA$-fibrations, $\Sing^{\AAA}R_{\AAA}\calY$ is fibrant and we may assume $\widetilde{\calY} = \Sing^{\AAA}R_{\AAA}\calY$. 
Moreover, we get the following commutative diagram of simplicial sets which, by the above argument, is a morphism of homotopy fiber sequences for every pointed smooth $k$-scheme $(X,x)$. 
\begin{equation}\label{eq:homotopy_fiber_diagram_appendix}
\xymatrix{
(\Singp^{\AAA}\calY)(X) \ar[r] \ar[d] & (\Singp^{\AAA}R_{\AAA}\calY)(X) \ar[d] \\
(\Sing^{\AAA}\calY)(X) \ar[r] \ar[d]_-{x^*} & (\Sing^{\AAA}R_{\AAA}\calY)(X) \ar[d]^-{x^*} \\
(\Sing^{\AAA}\calY)(\Spec{k}) \ar[r] & (\Sing^{\AAA}R_{\AAA}\calY)(\Spec{k}) 
}
\end{equation}

Now we assume that the underlying simplicial presheaf of $\calY$ is $\AAA$-naive and that  $X$ is affine.  
Since $\calY$ is $\AAA$-naive and both $X$ and $\Spec{k}$ are affine, the horizontal maps in the middle and at the bottom are weak equivalences of simplicial sets. 
Thus, since Diagram \eqref{eq:homotopy_fiber_diagram_appendix} is a morphism of homotopy fiber sequences, the top horizontal map is a weak equivalence of simplicial sets. 
Hence it induces a bijection on $\pi_0$. 
Since $\pi_0((\Singp^{\AAA}R_{\AAA}\calY)(X)) = [X,\calY]^{\AAA}$, this proves the assertion. 
\end{proof}

\begin{lemma}\label{lemma:J_is_A1_naive_appendix}
The smooth $k$-schemes $\J$ and $\PP$ are $\AAA$-naive. 
\end{lemma}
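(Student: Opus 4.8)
The plan is to deduce $\AAA$-naivity of $\J$ and $\PP$ from the affine representability results of Asok, Hoyois, and Wendt rather than to prove it from scratch. First I would recall that $\PP$ is $\AAA$-naive: this is essentially the content of \cite[Theorem 4.2.1]{AHW2} applied to $\PP = \bbP^1$, since $\bbP^1$ is a smooth proper $k$-scheme admitting an affine torsor representation, and more directly $\PP$ is built from line bundles with sections so that $\mathrm{Sm}_k(-,\bbP^1)$ is the relevant functor on $\mathrm{Pic}$; alternatively, one invokes that $\bbP^n$ is $\AAA$-naive as an instance of the affine representability theorem for $\mathrm{Pic}$, see also \cite[\S 4]{AHW2} and the discussion of $\bbA^1$-naive spaces there. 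The cleanest reference is that $\bbP^1$ is $\AAA$-naive because it represents line bundles with a pair of generating sections and the relevant moduli functor is $\AAA$-invariant on affines; I would cite \cite[Theorem 4.2.1]{AHW2} for this.

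Next I would handle $\J$. The key point is that $\pi \colon \J \to \PP$ exhibits $\J$ as an affine torsor bundle over $\PP$, hence $\pi$ is an $\AAA$-weak equivalence, and moreover $\pi$ is a torsor under a vector bundle (a special case of an affine-bundle morphism). The claim then follows from the fact that $\AAA$-naivity is detected after such a torsor morphism: concretely, for any affine smooth $X$, a morphism $X \to \J$ is the same as a morphism $X \to \PP$ together with a lift along the torsor, and since the torsor is under a vector bundle over an affine base (any map from affine $X$ factors the torsor into a vector bundle torsor over $X$, which is trivial, so lifts exist and are unique up to $\AAA$-homotopy), the map $\mathrm{Sing}^{\AAA}\J(X) \to \mathrm{Sing}^{\AAA}\PP(X)$ is a weak equivalence. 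Combined with $\pi$ being an $\AAA$-weak equivalence and $\PP$ being $\AAA$-naive, this gives that $\J$ is $\AAA$-naive. The precise statement I would invoke is \cite[Theorem 2.3.2]{AHW2} or the analogous ``affine torsor bundles over $\AAA$-naive spaces are $\AAA$-naive'' principle; alternatively \cite[Example 2.1.3 or Proposition 2.3.x]{AHW2}.

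I would then package this as follows: \emph{Proof.} By \cite[Theorem 4.2.1]{AHW2}, $\PP$ is $\AAA$-naive. The morphism $\pi \colon \J \to \PP$ realizes $\J$ as an affine torsor bundle over $\PP$ in the sense of \cite[\S 2.3]{AHW2}; in particular it is a Zariski-locally trivial torsor under a vector bundle. By \cite[Theorem 2.3.2]{AHW2} (affine vector bundle torsors over $\AAA$-naive bases are $\AAA$-naive, and the total space of such a torsor over an $\AAA$-naive base is $\AAA$-naive), $\J$ is $\AAA$-naive. $\qed$

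The main obstacle I expect is pinning down the exact citation: the paper \cite{AHW2} phrases its affine representability results in terms of $\mathcal{A}^1$-naivety of specific moduli functors ($\mathrm{Pic}$, $\mathrm{Gr}$, $GL_n$, $SL_n$, vector bundles) rather than a clean black-box ``$\bbP^1$ is $\AAA$-naive'' statement, so I would need to check whether $\PP$'s naivety is stated there directly or must be assembled from the $\mathrm{Pic}$ computation together with the observation that $\mathrm{Sm}_k(-,\bbP^1)$ is $\mathrm{Pic}$ with a choice of two generating sections. Likewise, the statement that the total space of an affine torsor bundle over an $\AAA$-naive space is again $\AAA$-naive may need to be extracted from the proof techniques of \cite[\S 2]{AHW2} (using that torsors under vector bundles on affines are trivial and that $\AAA$-naivety is a local-on-affines and homotopy-invariant condition) rather than quoted verbatim. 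Everything else is formal: $\AAA$-naivety is stable under $\AAA$-weak equivalence between objects one of which is already known naive, once the torsor lifting argument supplies the comparison on $\mathrm{Sing}^{\AAA}$.
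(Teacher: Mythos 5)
Your proposal runs the deduction in the opposite direction from the paper, and the direction you chose leaves the base case unsupported. The paper first proves that $\J$ is $\AAA$-naive \emph{directly}: it exhibits an explicit isomorphism between $\J$ and the smooth affine quadric $Q_2 = \{xy = z(z+1)\}$ (via $x \mapsto z$, $y \mapsto -y$, $z \mapsto -x$), and $Q_2$ is $\AAA$-naive by \cite[Theorem 4.2.2]{AHW2}. Only then does it invoke \cite[Lemma 4.2.4]{AHW2} --- an affine torsor bundle is $\AAA$-naive if and only if its base is --- in the direction ``total space naive $\Rightarrow$ base naive'' to conclude that $\PP$ is $\AAA$-naive. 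You instead start from the claim that $\PP$ is $\AAA$-naive and push it up the torsor to $\J$. The torsor-transfer step itself is fine (it is exactly the ``if and only if'' of \cite[Lemma 4.2.4]{AHW2}, just used in the other direction), but your starting point is not established: \cite[Theorem 4.2.1]{AHW2} is the statement that groups such as $\SL_n$ are $\AAA$-naive (the present paper cites it for $\SL_2$), not a statement about $\bbP^1$, and there is no black-box ``$\bbP^1$ is $\AAA$-naive'' result in \cite{AHW2} to quote.

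Your fallback argument for $\PP$ is circular. Saying that $\bbP^1$ ``represents line bundles with a pair of generating sections and the relevant moduli functor is $\AAA$-invariant on affines'' is not a proof: the $\AAA$-invariance on smooth affines of the functor ``line bundle together with two generating sections, modulo naive homotopy'' is precisely the content (on $\pi_0$) of the assertion that $\PP$ is $\AAA$-naive. Homotopy invariance of $\Pic$ on affines does not by itself control the choice of generating sections, and this is where the genuine mathematical input lies. The fix is to reverse your argument: prove $\AAA$-naivety for the affine scheme first, where \cite{AHW2} actually supplies a theorem (the quadric $Q_2 \cong \J$), and then descend along the affine torsor bundle $\pi \colon \J \to \PP$ to get the statement for $\PP$.
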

\begin{proof}
Let $Q_2$ be the smooth affine quadric
over $\bbZ$ defined by $xy = z(z + 1)$. 
By \cite[Theorem 4.2.2]{AHW2}, $Q_2$ is $\AAA$-naive. 
The scheme endomorphism of $\Spec{\bbZ[x,y,z]}$ given by the ring homomorphism defined by sending $x \mapsto z,\, y \mapsto -y,\, z \mapsto -x$ induces an isomorphism $Q_2 \cong \calJ$. 
Hence $\calJ$ is $\AAA$-naive. 
By \cite[Lemma 4.2.4]{AHW2} an affine torsor bundle over a base space is $\AAA$-naive if and only if the base space is $\AAA$-naive.   
Since $\calJ $ is $\AAA$-naive and an affine torsor bundle over $\PP$, it follows that $\PP$ is $\AAA$-naive.
\end{proof}

\begin{proposition}\label{prop:pointed_naive_htpy_conclusion_appendix}
The canonical map $\nu \colon [X,\PP]\naif \xrightarrow{\cong} [X,\PP]^{\AAA}$ is a bijection  for every affine pointed finitely presented smooth $k$-scheme $X$.   
\end{proposition}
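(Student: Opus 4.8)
The plan is to obtain Proposition~\ref{prop:pointed_naive_htpy_conclusion_appendix} as an essentially immediate consequence of Proposition~\ref{prop:appendix_main_result} together with Lemma~\ref{lemma:J_is_A1_naive_appendix}, so that no genuinely new argument is needed here; the work has been done in setting up those two statements.

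First I would observe that $\PP$, pointed at $\infty = [1:0]$, is a pointed motivic space whose underlying unpointed simplicial presheaf is $\AAA$-naive by Lemma~\ref{lemma:J_is_A1_naive_appendix}. Hence Proposition~\ref{prop:appendix_main_result} applies with $\calY = \PP$ and gives, for every affine pointed smooth $k$-scheme $X$, a bijection
\[
\pi_0\bigl((\Singp^{\AAA}\PP)(X)\bigr) \xrightarrow{\;\cong\;} [X,\PP]^{\AAA}.
\]
By Remark~\ref{rem:naive_htpy_classes_and_pointed_pi0_appendix}, the source of this map is precisely $[X,\PP]\naif$, the set of pointed naive homotopy classes of pointed morphisms $X \to \PP$ in the sense of Section~\ref{sec:naive_htpy_relation}. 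Composing these two identifications yields a bijection $[X,\PP]\naif \to [X,\PP]^{\AAA}$, and it remains only to verify that this bijection is the canonical map $\nu$.

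The compatibility check is a routine unwinding of definitions: the bijection of Proposition~\ref{prop:appendix_main_result} is induced by the canonical map $\Sing^{\AAA}\PP \to \widetilde{\PP}$ to an $\AAA$-fibrant replacement, restricted to homotopy fibers over the basepoint, and on $0$-simplices this sends a pointed morphism $f \colon X \to \PP$ to its image in $\widetilde{\PP}(X)$, whose path component is by definition the $\AAA$-homotopy class $[f]^{\AAA}$. This is exactly the map $\nu$, so the two agree and the bijection is as asserted.

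I do not expect a real obstacle in this proposition itself; it is a two-line deduction. The genuine content is upstream: Lemma~\ref{lemma:J_is_A1_naive_appendix} relies on the Asok--Hoyois--Wendt result that the affine quadric $Q_2$ is $\AAA$-naive and on the transfer of $\AAA$-naivety along affine torsor bundles, while Proposition~\ref{prop:appendix_main_result} rests on identifying the pointed singular construction as a homotopy fiber and comparing the two homotopy fiber sequences in diagram~\eqref{eq:homotopy_fiber_diagram_appendix}. Given those inputs, the only residual subtlety is the bookkeeping that matches the various ``canonical'' comparison maps, which is purely formal.
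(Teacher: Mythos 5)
Your proposal is correct and follows exactly the paper's own proof: invoke Lemma \ref{lemma:J_is_A1_naive_appendix} to see that $\PP$ is $\AAA$-naive, apply Proposition \ref{prop:appendix_main_result} with $\calY = \PP$, and identify the source with $[X,\PP]\naif$ via Remark \ref{rem:naive_htpy_classes_and_pointed_pi0_appendix}. The extra paragraph checking that the resulting bijection is the canonical map $\nu$ is a harmless (and reasonable) addition that the paper leaves implicit.
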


\begin{proof}
The proposition follows from Remark \ref{rem:naive_htpy_classes_and_pointed_pi0_appendix} and Proposition \ref{prop:appendix_main_result}, since Lemma \ref{lemma:J_is_A1_naive_appendix} shows $\PP$ is $\AAA$-naive. 
\end{proof}

For $X=\J$, the Proposition \ref{prop:pointed_naive_htpy_conclusion_appendix} yields the comparison of the sets $[\J,\PP]\naif$ and $[\J,\PP]^{\AAA}$ of pointed homotopy classes that we wanted.


\section{Facts about the resultant}\label{sec:appendix_resultants}

In Sections \ref{sec:pointed_maps_p1_to_p1_and_j_to_p1} and  \ref{sec:computations_for_degree_0_maps} we used the following facts about the resultant for which we now provide references or proofs.  
All results can  be found or deduced from \cite[Chapter IV]{Bourbaki2003}. 

Throughout this section we let $S$ be an integral domain and let $A=a_nX^n + a_{n-1}X^{n-1} + \ldots + a_0$ and $B=b_nX^n + b_{n-1}X^{n-1} + \ldots + b_0$ be polynomials over $S$ in the indeterminate $X$.

\begin{definition}
\label{def:sylvesterAndRes}
    The Sylvester matrix $\Syl(A,B)$ is defined as follows
    \begin{equation*}
        \begin{pmatrix}
            a_{n} & 0 & \ldots & 0 & b_n & 0 & \ldots & 0\\ 
            a_{n-1} & a_{n} & & \vdots & b_{n-1} & b_n & & \vdots\\
            \vdots & \vdots & \ddots & \vdots & \vdots & \vdots & \ddots \\
            a_1 & a_2 & \hdots & a_{n} & b_1 & b_2 & \hdots & b_n \\
            a_0 & a_1 & \hdots & a_{n-1} & b_0 & b_1 & \hdots & b_{n-1}  \\
            0 & a_0 & \hdots & a_{n-2} & 0 & b_0 & \hdots & b_{n-2} \\ 
            \vdots & & \ddots & \vdots & \vdots & & \ddots & \vdots \\ 
        0 & 0 & \hdots  & a_0 & 0 & 0 & \hdots  & b_0
            
        \end{pmatrix}.
    \end{equation*}
    and we define $\res(A,B) := \det(\Syl(A,B)).$
\end{definition}

\begin{lemma}[Remark 4 on page IV.76 in \cite{Bourbaki2003}]
\label{lem:resultant_bezout}
Assume $\res(A,B)\in S^\times$. 
Then there exist polynomials $U,V \in S[X]$ such that $AU+BV=1$.
\end{lemma}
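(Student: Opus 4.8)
The plan is to deduce this from the classical fact that $\res(A,B)$ lies in the ideal $(A,B)\subseteq S[X]$. Granting this, write $\res(A,B)=AU_0+BV_0$ with $U_0,V_0\in S[X]$; since $\res(A,B)\in S^\times$, the polynomials $U=\res(A,B)^{-1}U_0$ and $V=\res(A,B)^{-1}V_0$ lie in $S[X]$ and satisfy $AU+BV=1$, as required.

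To prove $\res(A,B)\in(A,B)$ I would argue with the Sylvester matrix directly. Let $S[X]_{<m}$ denote the free $S$-module of polynomials of degree $<m$, and consider the $S$-linear map
\[
\Phi\colon S[X]_{<n}\oplus S[X]_{<n}\longrightarrow S[X]_{<2n},\qquad (U,V)\mapsto AU+BV .
\]
With respect to the monomial bases $(X^{n-1},\dots,X,1)$ on each summand of the source and $(X^{2n-1},\dots,X,1)$ on the target, the matrix of $\Phi$ agrees, up to a permutation of rows, with the Sylvester matrix $\Syl(A,B)$, so $\det(\Phi)=\pm\res(A,B)$. Applying Cramer's rule in the form $\mathrm{adj}\!\big(\Syl(A,B)\big)\cdot\Syl(A,B)=\det\!\big(\Syl(A,B)\big)\cdot I_{2n}$ and evaluating against the basis vector corresponding to the constant polynomial $1$ in the target produces a vector with entries in $S$ whose associated pair $(U_0,V_0)\in S[X]_{<n}\oplus S[X]_{<n}$ satisfies $AU_0+BV_0=\pm\res(A,B)$. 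Absorbing the sign yields $\res(A,B)\in(A,B)$, and in fact gives $U,V$ with $\deg U<n$ and $\deg V<n$.

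The only point requiring care is the purely bookkeeping identification of the matrix of $\Phi$ with $\Syl(A,B)$ together with the tracking of the correct sign; there is no genuine obstacle here, which is why \cite{Bourbaki2003} records the statement merely as a remark. If one prefers to avoid the Sylvester matrix, an alternative is to run a Euclidean-style induction on $\min(\deg A,\deg B)$ using the multiplicativity properties of the resultant; over a general integral domain this is slightly delicate because leading coefficients must be cleared at each stage, so I would favour the Cramer's-rule computation above.
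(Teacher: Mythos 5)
Your argument is correct: the identification of the Sylvester matrix with the map $(U,V)\mapsto AU+BV$ on polynomials of degree $<n$, followed by the adjugate identity to produce $U_0,V_0\in S[X]$ with $AU_0+BV_0=\pm\res(A,B)$ and division by the unit, is the standard proof of this fact. The paper does not prove the lemma itself but simply cites Bourbaki's Remark 4 on page IV.76, whose content is exactly the B\'ezout identity you establish, so your proposal coincides with the intended (referenced) argument.
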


\begin{lemma}[Remark 1 on page IV.76 in \cite{Bourbaki2003}] \label{lem:reverse_order_resultant}
Let $\tilde A = a_n + a_{n-1}X + \ldots + a_0X^n$ and $\tilde B = b_n + b_{n-1}X + \ldots + b_0X^n$ be the reversed polynomials of $A$ and $B$. 
If $\res(A,B)\in S^\times$, then $\res(\tilde A, \tilde B)= (-1)^n \res(A,B) \in S^\times$.
\end{lemma}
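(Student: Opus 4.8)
The plan is to prove the identity $\res(\tilde A, \tilde B) = (-1)^n \res(A,B)$ directly at the level of Sylvester matrices, and then read off the unit statement. Recall from Definition \ref{def:sigma} that $\res(A,B) = \det \Syl(A,B)$, where $\Syl(A,B)$ is the $2n \times 2n$ matrix whose first $n$ rows are the successive right-shifts of the coefficient row $(a_n, a_{n-1}, \ldots, a_0)$ and whose last $n$ rows are the successive right-shifts of $(b_n, b_{n-1}, \ldots, b_0)$. Reversing a polynomial of formal degree $n$ simply reverses its coefficient row, so $\Syl(\tilde A, \tilde B)$ has exactly the same shape but with $(a_0, a_1, \ldots, a_n)$ and $(b_0, b_1, \ldots, b_n)$ in place of $(a_n, \ldots, a_0)$ and $(b_n, \ldots, b_0)$.

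The key step is to exhibit $\Syl(\tilde A, \tilde B)$ as a row-and-column permutation of $\Syl(A,B)$ and track the sign. First reverse the order of all $2n$ columns, i.e. apply the permutation $j \mapsto 2n+1-j$; this is a product of $n$ disjoint transpositions, so it multiplies the determinant by $(-1)^n$. A short index computation shows that after this operation the former row $k$ of the $A$-block (with $a_n$ in column $k$, \ldots, $a_0$ in column $k+n$) becomes the row with $a_0$ in column $n+1-k$, \ldots, $a_n$ in column $2n+1-k$; that is, it equals row $n+1-k$ of $\Syl(\tilde A, \tilde B)$, and similarly for the $B$-block. So the $A$-block rows and the $B$-block rows each appear in reversed order, while the $A$-block still precedes the $B$-block. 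Reversing the order of the $n$ rows within each of the two blocks then produces $\Syl(\tilde A, \tilde B)$ exactly; each block reversal contributes a sign $(-1)^{\lfloor n/2\rfloor}$, and performing it on both blocks contributes $(-1)^{2\lfloor n/2\rfloor} = 1$. Taking determinants gives $\res(\tilde A, \tilde B) = (-1)^n \res(A,B)$.

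Finally, since $(-1)^n$ is a unit in $S$, the hypothesis $\res(A,B) \in S^\times$ immediately yields $\res(\tilde A, \tilde B) = (-1)^n\res(A,B) \in S^\times$, as claimed. The only genuinely fiddly part is the bookkeeping of the column- and row-permutation signs, but once the index shifts above are written out explicitly the whole argument is routine; alternatively one may simply invoke \cite[Remark 1, page IV.76]{Bourbaki2003}, from which the statement is quoted.
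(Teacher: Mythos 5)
Your identity and its proof are correct, but note that the paper does not prove this lemma at all: it is stated purely as a citation to Bourbaki (Remark 1 on page IV.76), consistent with the appendix's announcement that it provides ``references or proofs.'' What you have supplied is therefore a self-contained verification that the paper omits. Your bookkeeping checks out: with $\Syl(A,B)$ the $2n\times 2n$ matrix built from the shifted coefficient rows, reversing all $2n$ columns is a product of $n$ disjoint transpositions (sign $(-1)^n$), and your index computation correctly identifies the resulting row $k$ of each block with row $n+1-k$ of the corresponding block of $\Syl(\tilde A,\tilde B)$; the two in-block row reversals each have sign $(-1)^{\lfloor n/2\rfloor}$, so their product is $1$, giving $\res(\tilde A,\tilde B)=(-1)^n\res(A,B)$ (one can sanity-check $n=1$: $a_0b_1-a_1b_0=-(a_1b_0-a_0b_1)$). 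Your proof in fact establishes the identity unconditionally, with the hypothesis $\res(A,B)\in S^\times$ used only to read off the unit conclusion, which is marginally stronger than the statement as phrased. The only thing your direct argument ``costs'' relative to the citation is length; what it buys is that the appendix would then be genuinely self-contained on this point rather than deferring to Bourbaki.
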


\begin{lemma}[Remark 5 on page IV.77 in \cite{Bourbaki2003}]
\label{lem:resultant_conservation_elementary_action} 
Assume $\res(A,B)\in S^\times$. 
Let $C \in S[X]$ be a polynomial such that $\deg(A)\geq \deg(BC)$. 
Then we have  $\res(A + BC, B)=\res(A,B)$.
\end{lemma}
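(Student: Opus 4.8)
The plan is to view the resultant, as in Definition \ref{def:sigma}, as the determinant of the Sylvester matrix, or equivalently as the determinant of the $S$-linear multiplication map
\[
\mu_{A,B}\colon S[X]_{<n}\times S[X]_{<n}\longrightarrow S[X]_{<2n},\qquad (U,V)\longmapsto UA+VB,
\]
where $n$ is the common formal degree of $A$ and $B$ and $S[X]_{<d}$ is the free $S$-module of polynomials of degree $<d$. The matrix of $\mu_{A,B}$ in the monomial bases is $\Syl(A,B)$ up to transposition, so $\det\mu_{A,B}=\res(A,B)$. The hypothesis $\deg(BC)\le\deg A$ guarantees that $A+BC$ again has formal degree $\le n$, so that $\res(A+BC,B)$ is defined, and moreover that $A+BC$ has the same leading coefficient and the same vanishing pattern of top coefficients as $A$. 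The proof then splits into the case where $C$ is a scalar — which is the only case actually needed in Section \ref{sec:computations_for_degree_0_maps}, where $C=\tfrac{u-1}{u}yT$ — and the general case.

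First suppose $C=c\in S$. Consider the $S$-module endomorphism $\tau_c$ of $S[X]_{<n}\times S[X]_{<n}$ given by $(U,V)\mapsto(U,V+cU)$. It is unipotent and block-triangular with identity diagonal blocks, hence an automorphism with $\det\tau_c=1$. A direct computation gives $\mu_{A,B}\circ\tau_c=\mu_{A+cB,B}$, since $\mu_{A,B}(U,V+cU)=UA+(V+cU)B=U(A+cB)+VB$. Taking determinants yields $\res(A+cB,B)=\res(A,B)$. Concretely, this is the statement that adding $c$ times the $i$-th $B$-row of $\Syl(A,B)$ to the $i$-th $A$-row produces $\Syl(A+cB,B)$ without altering the determinant.

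For general $C$, I would first reduce to the case where $S$ is an algebraically closed field: once the formal degree bounds are fixed, the asserted equality $\res(A+BC,B)=\res(A,B)$ is an identity between two polynomials with integer coefficients in the coefficients of $A$, $B$ and $C$, so it suffices to verify it after specialising those coefficients into an arbitrary algebraically closed field $\Omega$. Over $\Omega$ one discards the trivial degenerate cases ($B=0$, $\deg B=0$, or $BC=0$) and then invokes the classical product formula $\res(P,B)=\varepsilon\cdot\prod_{\beta}P(\beta)$, the product running over the roots $\beta$ of $B$ in $\Omega$ with multiplicity and $\varepsilon$ a monomial in the leading coefficients of $P$ and $B$. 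Since $B(\beta)=0$ at each root, $(A+BC)(\beta)=A(\beta)$; and since $\deg(BC)\le\deg A$ keeps the leading coefficient of $A$ unchanged, the factor $\varepsilon$ is the same for $A$ and for $A+BC$. Hence $\res(A+BC,B)=\res(A,B)$. The only genuine work here is the bookkeeping of which powers of the leading coefficients of $A$ and $B$ enter $\varepsilon$ when the formal degrees exceed the actual degrees — this is the step I expect to be the main obstacle — and one may instead simply appeal to Bourbaki (Algèbre IV, §6, Remark 5) for that combinatorial input.
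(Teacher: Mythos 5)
The paper offers no argument for this lemma beyond the citation to Bourbaki, so the only question is whether your argument is sound. Your scalar case is: identifying $\res(A,B)$ with the determinant of the multiplication map $\mu_{A,B}$ on $S[X]_{<n}\times S[X]_{<n}$ and precomposing with the unipotent automorphism $(U,V)\mapsto(U,V+cU)$ correctly yields $\res(A+cB,B)=\res(A,B)$ for any $c\in S$ (the unit hypothesis is not even needed for this), and you are right that this is the only instance the paper uses: in proposition \ref{prop:gu1_oplus_acts_on_caz} the multiplier $\frac{u-1}{u}yT$ is a constant in the variable $\XX$.

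Your general case, however, has a genuine gap, and in fact the statement with ``$\deg(A)\geq\deg(BC)$'' read as actual degrees is false for non-constant $C$. Take both formal degrees equal to $2$, $A=X^2+1$, $B=X$, $C=X$, so that $\deg(BC)=2=\deg(A)$; then $A+BC=2X^2+1$, and a direct computation of the two $4\times 4$ Sylvester determinants gives $\res(A+BC,B)=2\,\res(A,B)\neq\res(A,B)$, even though $\res(A,B)$ is a unit. The step that fails is precisely your assertion that ``$\deg(BC)\le\deg A$ keeps the leading coefficient of $A$ unchanged'': when equality holds, the coefficient of $X^{\deg A}$ in $BC$ is added to the leading coefficient of $A$, the normalising monomial $\varepsilon$ in the product formula changes, and the two resultants differ by exactly that factor. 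Your row-operation argument is immune to this because it never sees leading coefficients, but it genuinely requires $C\in S$: for $\deg C\geq 1$ the map $(U,V)\mapsto(U,V+CU)$ no longer preserves $S[X]_{<n}\times S[X]_{<n}$. The correct hypothesis for general $C$ is $\deg C\le p-q$, where $p$ and $q$ are the \emph{formal} degrees of $A$ and $B$ entering the Sylvester matrix; with $p=q=n$ as in this appendix that again forces $C$ to be constant. So the honest conclusion is that the lemma should be, and for every use in the paper can be, restricted to constant $C$ --- which your first argument already establishes cleanly.
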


\begin{lemma}\label{lem:resultantconservation}
Assume $\res(A,B)\in S^\times$ and that $A$ is monic. 
Then we have 
\begin{equation*}
\res\left(AX - \frac{1}{u}B, uA\right)=-u\cdot \res(A,B) ~ \text{for all} ~ u \in S^\times.
\end{equation*}
\end{lemma}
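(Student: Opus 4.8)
The plan is to compute the resultant directly from its behaviour on the roots of $A$, after reducing to a convenient universal situation. Since both sides are polynomial expressions in the coefficients of $A$ and $B$ and in $u^{\pm 1}$, and the Sylvester determinant commutes with ring maps, it suffices to verify the identity after base change from $S$ to an algebraic closure of its fraction field; there I may assume $A$ splits, say $A=\prod_{j=1}^{n}(X-\alpha_j)$ with $n=\deg A$. First I would record the degree bookkeeping: because $A$ is monic, $P:=AX-\frac1u B$ is monic of degree $n+1$, while $Q:=uA$ has degree $n$ and leading coefficient $u$.

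The main step is to evaluate $\res(P,Q)$ through the product formula over the roots of $Q$. Using $\res_{n+1,n}(P,Q)=(-1)^{(n+1)n}\,\mathrm{lc}(Q)^{\,n+1}\prod_{j=1}^{n}P(\alpha_j)$ together with $(-1)^{(n+1)n}=1$, the entire computation rests on the identity $P(\alpha_j)=\alpha_jA(\alpha_j)-\frac1u B(\alpha_j)=-\frac1u B(\alpha_j)$, valid because $\alpha_j$ is a root of $A$. Since $A$ is monic we have $\res(A,B)=\prod_{j=1}^{n}B(\alpha_j)$, and substituting gives
\[
\res(P,Q)=u^{\,n+1}\prod_{j=1}^{n}\Bigl(-\tfrac1u B(\alpha_j)\Bigr)=(-1)^{n}\,u\,\res(A,B).
\]
An equivalent route, cleaner for tracking the sign, is a direct Sylvester-matrix manipulation based on the relation $B=XQ-uP$: scaling by $u$ the $n$ rows of $\Syl(P,Q)$ coming from $P$ and subtracting the adjacent $Q$-rows turns them into rows of $-B$, which identifies $u^{\,n}\res(P,Q)$ with $(-1)^{n}\res_{n+1,n}(B,Q)=(-1)^{n}u^{\,n+1}\res(A,B)$ and recovers the same value.

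The one genuinely delicate point, and the main obstacle, is the sign. Both routes produce $(-1)^{\deg A}u\,\res(A,B)$, and the factor $(-1)^{\deg A}$ is intrinsic: in the root picture it is the product of the $\deg A$ factors $-\frac1u$, and in the Sylvester picture it is the $\deg A$-fold row flip. Hence the printed formula $-u\,\res(A,B)$ is the value precisely when $\deg A$ is odd, whereas for even $\deg A$ the value is $+u\,\res(A,B)$. Only the resulting unit property is actually invoked in Proposition~\ref{prop:gu1_oplus_acts_on_caz} (where $\res(A,B)$ is a unit by Lemma~\ref{lem:resultant_conservation_elementary_action}), so the sign discrepancy is immaterial there; nonetheless I would prove the sharp identity $\res(AX-\frac1u B,uA)=(-1)^{\deg A}u\,\res(A,B)$, of which the displayed formula is the odd-degree special case.
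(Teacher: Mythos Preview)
Your argument is correct and takes a genuinely different route from the paper. The paper proceeds by brute-force manipulation of the Sylvester matrix for $(AX - \tfrac{1}{u}B, A)$: after extracting the factor $u^{n+1}$ it performs two cofactor expansions along pivot entries $c_{n+1}=1$, interleaved with column subtractions exploiting the identity $c_i - a_{i-1} = -\tfrac{1}{u}b_i$, and then attempts to reorganise the result into $\Syl(A,B)$ via scalings and column permutations. Your approach via the product formula over the roots of $A$ (after passing to an algebraic closure of the fraction field of $S$) bypasses all of this bookkeeping and is both shorter and more transparent; the alternative Sylvester reduction you sketch, based on $B = XQ - uP$, is likewise cleaner than the paper's.

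You have also caught a genuine error: the correct identity is $\res(AX-\tfrac{1}{u}B,\,uA)=(-1)^{\deg A}\,u\,\res(A,B)$, so the paper's stated sign is right only when $\deg A$ is odd. A one-line check confirms this: for $A=X^2$, $B=1$, $u=1$ one has $\res_{3,2}(X^3-1,\,X^2)=P(0)^2=1$, not $-1$. In the paper's argument the slip is in the final accounting of column scalings and permutations. Your remark that only the unit property of this resultant is used in Proposition~\ref{prop:gu1_oplus_acts_on_caz} is correct, so the error does not affect the paper's applications; but your sharper statement is the one that should be recorded.
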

\begin{proof}
The strategy of the proof is as follows: We determine the Sylvester matrix for the pair $\left(AX - \frac{1}{u}B, uA\right)$ and will then use elementary row and column operations to confirm that it has the determinant claimed. 

First note that $\res\left(AX - \frac{1}{u}B, uA\right) = u^{n+1}\res\left(AX - \frac{1}{u}B, A\right)$. 
Let $A=\sum_{i=0}^n a_iX^i$, and $B=\sum_{i=0}^n b_iX^i$. 
Let $c_i = a_{i-1}-\frac{1}{u}b_i$ for $0\leq i \leq {n+1}$ and set $a_{-1} = b_{n+1} = 0$. Then $AX - \frac{1}{u}B = \sum_{i=0}^{n+1}c_iX^i$. 
The Sylvester matrix for the pair  $\left(AX - \frac{1}{u}B, A\right)$ is 
    \begin{equation*}
        \begin{pmatrix}
            c_{n+1} & 0 & \ldots & 0 & 0 & 0 & \ldots & 0\\ 
            c_n & c_{n+1} & & \vdots & a_n & 0 & & \vdots\\
            \vdots & \vdots & \ddots & \vdots & \vdots & \ddots & \\
            c_1 & c_2 & \hdots & c_{n+1} & a_1 & \hdots & a_n & 0 \\
            c_0 & c_1 & \hdots & c_n & a_0 & \hdots & a_{n-1} & a_n  \\
            0 & c_0 & \hdots & c_{n-1} & 0 & a_0 & \hdots & a_{n-1} \\ 
            \vdots & & \ddots & \vdots & \vdots & & \ddots & \vdots \\ 
        0 & 0 & \hdots  & c_0 & 0 & 0 & \hdots  & a_0
            
        \end{pmatrix}.
    \end{equation*}
Since $c_{n+1}=1$, we can remove the first row and first column to obtain a submatrix with the same determinant, namely
    \begin{equation*}
        \begin{pmatrix}
            c_{n+1} & 0 & \ldots & 0 & a_{n} & 0 & \ldots &0& 0\\ 
            c_n & c_{n+1} & & \vdots & a_{n-1} & a_n & \ddots& & \vdots\\
            \vdots &\vdots  & \ddots & \vdots & \vdots & \ddots &\ddots& \vdots& \\
            c_2 & c_3 & \hdots & c_{n+1} & a_1 & a_2 &\hdots & a_n & 0 \\
            c_1 & c_2 & \hdots & c_n & a_0 & a_1 & \hdots &a_{n-1} & a_n  \\
            c_0 & c_1 & \hdots & c_{n-1} & 0 & a_0 & \hdots & a_{n-2} & a_{n-1}  \\ 
            \vdots & c_0 & \ddots & \vdots & \vdots & & \ddots & \vdots &\vdots \\  
            0 & 0 &  \ddots & c_1 & 0 & 0 & \hdots  & a_0 & a_1\\
            0 & 0 & \hdots  & c_0 & 0 & 0 & \hdots  & 0& a_0
            
        \end{pmatrix}.
    \end{equation*}
Subtracting column $1$ from column $n+1$ yields
    \begin{equation*}
        \begin{pmatrix}
            c_{n+1} & 0 & \ldots & 0 & 0 & 0 & \ldots &0& 0\\ 
            c_n & c_{n+1} & & \vdots & \frac{1}{u}b_n & a_n & \ddots&\ddots& 0\\
            \vdots &\vdots  & \ddots & \vdots & \vdots & \vdots &\ddots& \\
            c_2 & c_3 & \hdots & c_{n+1} & \frac{1}{u}b_2& a_2 &\hdots & a_n & 0 \\
            c_1 & c_2 & \hdots & c_n & \frac{1}{u}b_1 & a_1 & \hdots &a_{n-1} & a_n  \\
            c_0 & c_1 & \hdots & c_{n-1} & \frac{1}{u}b_0 & a_0 & \hdots & a_{n-2} & a_{n-1}  \\ 
            \vdots & c_0 & \ddots & \vdots & \vdots & & \ddots & \vdots &\vdots \\  
            0 & 0 &  \ddots & c_1 & 0 & 0 & \hdots  & a_0 & a_1\\
            0 & 0 & \hdots  & c_0 & 0 & 0 & \hdots  & 0& a_0
            
        \end{pmatrix}.
    \end{equation*}
Once again, the determinant of this matrix is the same as that of the submatrix where the first row and first and column removed. We remove them and obtain
    \begin{equation*}
        \begin{pmatrix}
            c_{n+1} & 0 & \ldots & 0& \frac{1}{u}b_n & a_{n} & 0 & \ldots &0& 0\\ 
            c_n & c_{n+1} & & \vdots & \frac{1}{u}b_{n-1}& a_{n-1} & a_n & \ddots& & \vdots\\
            \vdots &\vdots  & \ddots &  \vdots &\vdots& \vdots & \ddots &\ddots& \vdots& \\
            c_3 & c_4 & \hdots & c_{n+1}&\frac{1}{u}b_2 & a_2 & a_3 &\hdots & a_n & 0 \\
            c_2 & c_3 & \hdots & c_n & \frac{1}{u}b_1& a_1 & a_2 & \hdots &a_{n-1} & a_n  \\
            c_1 & c_2 & \hdots & c_{n-1} & \frac{1}{u}b_0&a_0& a_1 & \hdots & a_{n-2} & a_{n-1}  \\ 
            c_0 & c_1 & \hdots & c_{n-2} & 0&0 & a_0 &\hdots  & a_{n-3} & a_{n-2}\\
            \vdots & c_0 & \ddots & \vdots & \vdots & && \ddots & \vdots &\vdots \\  
            0 & 0 &  \ddots & c_1 & 0 & 0 &0& \hdots  & a_0 & a_1\\
            0 & 0 & \hdots  & c_0 & 0 & 0 &0& \hdots  & 0& a_0
        \end{pmatrix}.
    \end{equation*}
We subtract column $n+i$ from column $i$ for each $i<n$, and the result is that at each entry $c_i$, we get instead $c_i-a_{i-1}= -\frac{1}{u}b_i$. 
    \begin{equation*}
        \begin{pmatrix}
            -\frac{1}{u}b_{n+1} & 0 & \ldots & 0& \frac{1}{u}b_n & a_{n} & 0 & \ldots &0& 0\\ 
            -\frac{1}{u}b_n & -\frac{1}{u}b_{n+1} & & \vdots & \frac{1}{u}b_{n-1}& a_{n-1} & a_n & \ddots& & \vdots\\
            \vdots &\vdots  & \ddots &  \vdots &\vdots& \vdots & \ddots &\ddots& \vdots& \\
            -\frac{1}{u}b_3 & -\frac{1}{u}b_4 & \hdots & -\frac{1}{u}b_{n+1}&\frac{1}{u}b_2 & a_2 & a_3 &\hdots & a_n & 0 \\
            -\frac{1}{u}b_2 & -\frac{1}{u}b_3 & \hdots & -\frac{1}{u}b_n & \frac{1}{u}b_1& a_1 & a_2 & \hdots &a_{n-1} & a_n  \\
            -\frac{1}{u}b_1 & -\frac{1}{u}b_2 & \hdots & -\frac{1}{u}b_{n-1} & \frac{1}{u}b_0&a_0& a_1 & \hdots & a_{n-2} & a_{n-1}  \\ 
            -\frac{1}{u}b_0 & -\frac{1}{u}b_1 & \hdots & -\frac{1}{u}b_{n-2} & 0&0 & a_0 &\hdots  & a_{n-3} & a_{n-2}\\
            \vdots & -\frac{1}{u}b_0 & \ddots & \vdots & \vdots & && \ddots & \vdots &\vdots \\  
            0 & 0 &  \ddots & -\frac{1}{u}b_1 & 0 & 0 &0& \hdots  & a_0 & a_1\\
            0 & 0 & \hdots  & -\frac{1}{u}b_0 & 0 & 0 &0& \hdots  & 0& a_0
        \end{pmatrix}
    \end{equation*}
    
Then multiplying the first $n$ columns by $u$ and applying a cyclic permutation of the $n$ first columns yields the Sylvester matrix of the pair $(B,A)$. 
The sign of the permutation is $(-1)^{n-1}$. 
Interchanging column $i$ with $n+i$ for all $i\leq n$ yields $(A,B)$, and this needed another permutation of sign $(-1)^{n-1}$, so the signs cancel out.
\end{proof}


\section{Testing compatibility via real realization and signatures} 
\label{sec:evidence}

Now we provide the additional evidence for Conjecture \ref{conj:main_conjecture} and a positive answer to Question \ref{question:tilde_pi} referred to in Section \ref{sec:proposed_group_structure}.

We assume that $k$ is a subfield of $\R$. 
Let $\Hop(k)$ denote the homotopy category of pointed smooth $k$-schemes and let $\Hop$ be the homotopy category of pointed topological spaces. 
By \cite{MV99} sending a smooth $k$-scheme $X$ to the topological space $X(\R)$ equipped with its usual structure of a smooth manifold extends to a functor $\Re \colon \Hop(k) \fd \Hop$, see also  \cite[page 14]{asok2020motivic} and \cite[Section 5.3]{dugger2004topological}.

For a smooth map $f$ between oriented compact smooth manifolds of the same dimension, let $\degt(f) \in \Z$ denote the topological Brouwer degree of $f$. 
In \cite{Morel_ICM} Morel describes the analog of the topological degree map in $\AAA$-homotopy theory. 
For endomorphisms of $\PP$ it defines a homomorphism 
\[
\deg^{\AAA} \colon [\bbP^1, \bbP^1]^{\AAA} \to \GW(k).
\]
Let $f \colon \PP \to \PP$ be a morphism. 
Since we assume that $k$ is a subfield of $\R$, 
we can form the real realization $\Re(f) \colon \PP(\R) \to \PP(\R)$. 
Following Morel, the signature, denoted $\mathrm{sgn}$, of the quadratic form given by the $\AAA$-Brouwer degree of $f$ equals the topological Brouwer degree of $\Re(f)$, i.e., 
\begin{align}\label{eq:signature_degA1_deg_of_real}
\mathrm{sgn}\left(\deg^{\AAA}(f)\right) = \degt(\Re(f)).
\end{align}
We note that, in some form, this was also shown by Eisenbud, Levine, and  Teissier in \cite[Theorem 1.2]{eisenbud1977algebraic} for the local degree of maps between real affine spaces. 
The latter approach has been incorporated into the motivic theory by Kass and Wickelgren \cite{KassWickelgren}.

The motivic Brouwer degree map $\deg^{\AAA}$ is a homomorphism for the conventional group structure $\oplus^{\AAA}$ on $[\bbP^1, \bbP^1]^{\AAA}$, and the signature is additive. 
Hence, for morphisms $f, g \colon \PP \to \PP$ and their sum $f \oplus^{\AAA} g$ in $[\bbP^1,\bbP^1]^{\AAA}$, Equation
\eqref{eq:signature_degA1_deg_of_real} implies 
\begin{align}\label{eq:singature_degree_is_additive}
\degt\left(\Re\left(f \oplus^{\AAA} g\right)\right) 
& = \mathrm{sgn}\left(\deg^{\AAA}\left(f \oplus^{\AAA} g \right) \right)\\
\nonumber & = \degt(\Re(f)) + \degt(\Re(g)).
\end{align}
We will now use this fact to test the compatibility of the action of Definition \ref{def:group_action_homotopy} and thereby of Definition \ref{def:full_group} with the conventional group structure in the following way. 

The real points $\J(\R)$ of $\J$ form a surface in $\R^3$ given by the equation $x(1-x)-yz=0$.  
The intersection with the plane defined by $y=z$ is the circle given by the set of points satisfying $x(1-x)-y^2=0$. 
Its center is the point $(1/2,0,0)\in \R^3$. 
We parameterize this circle via the map $\gamma\colon  \mathbb{S}^1 \to \J(\R)$ given  by 
\[
\gamma\colon  \theta \td \Big( 1/2+ \cos (\theta)/2, \sin (\theta)/2, \sin (\theta)/2 \Big).
\]

The real realization of $\PP$ is the topological real projective line $\mathbb{RP}^1$. 
Hence, for a morphism $f \colon \J \to \PP$, we may form the composition $\Re(f) \circ \gamma$ which is a smooth map $\mathbb{S}^1 \to \mathbb{RP}^1$. We can then apply the topological Brouwer degree to the composition $\Re(f) \circ \gamma$. 
Since the real realization of a naive homotopy induces a homotopy of maps between  topological spaces, 
this induces a well-defined map 
\begin{align*}
\degt (\Re(-) \circ \gamma) \colon [\J, \PP]\naif & \longrightarrow \Z \\
f ~ ~ ~ & \longmapsto \degt(\Re(f) \circ \gamma). 
\end{align*}

\begin{lemma}\label{lemma:top_degree_signature_commute}
The following diagram commutes. 
\begin{align}\label{eq:top_degree_signature_commute}
\xymatrix{
\ar@/_2.5pc/[dd]_-{\cc} [\J, \PP]\naif 
\ar[d]_-{\nu} \ar@/^1pc/[drr]^-{~ ~ ~ \degt\left(\Re(-)\circ \gamma\right)} & & \\
[\J, \PP]^{\AAA} \ar[d]^-{(\pi^*)^{-1}} \ar[rr]^-{\degt\left(\Re(-)\circ \gamma\right)} & & \bbZ \\ 
[\bbP^1, \bbP^1]^{\AAA} \ar@/^/[u]^-{\pi^*} \ar@/_1pc/[urr]_-{~ ~ ~ \degt\left(\Re(-)\right)} & & 
}   
\end{align}
\end{lemma}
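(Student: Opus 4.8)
The plan is to decompose the diagram~\eqref{eq:top_degree_signature_commute} into its two elementary triangles. The left-hand curved arrow requires nothing new, since $\cc=(\pi^*)^{-1}\circ\nu$ is the very definition of $\cc$ in~\eqref{eq:jpn_ppa}; so it suffices to check commutativity of the upper triangle, with vertices $[\J,\PP]\naif$, $[\J,\PP]^{\AAA}$ and $\bbZ$, and of the lower triangle, with vertices $[\PP,\PP]^{\AAA}$, $[\J,\PP]^{\AAA}$ and $\bbZ$.

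The upper triangle I would treat as a formality. Real realization is a functor on $\Hop(k)$, hence it sends $\AAA$-homotopic morphisms to homotopic continuous maps, and precomposition with $\gamma$ together with the topological Brouwer degree are both homotopy invariant. Since $\nu$ carries a naive homotopy class to the corresponding $\AAA$-homotopy class, both composites in the triangle send a morphism $f\colon\J\to\PP$ to the same integer $\degt(\Re(f)\circ\gamma)$; all that has to be observed is that each of the two arrows into $\bbZ$ is well defined on its own quotient, which for the arrow out of $[\J,\PP]\naif$ is precisely the remark, recorded just before the lemma, that the real realization of a naive homotopy is a topological homotopy.

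For the lower triangle I would take a pointed morphism $h\colon\PP\to\PP$ representing a given class in $[\PP,\PP]^{\AAA}$. Then $\pi^*[h]=[h\circ\pi]$ and $\Re(h\circ\pi)=\Re(h)\circ\Re(\pi)$ by functoriality, so multiplicativity of the topological Brouwer degree under composition gives $\degt(\Re(h\circ\pi)\circ\gamma)=\degt(\Re(h))\cdot\degt(\Re(\pi)\circ\gamma)$, and the lower triangle reduces to the single assertion $\degt(\Re(\pi)\circ\gamma)=1$. To establish this I would use the explicit description of $\pi=[s_0,s_1]$, which on the chart $D(x)$ is $[x:y]$ and on $D(w)$ is $[z:w]$: substituting the point $\gamma(\theta)$ with coordinates $x=(1+\cos\theta)/2$, $y=z=(\sin\theta)/2$, $w=(1-\cos\theta)/2$ and using $1+\cos\theta=2\cos^2(\theta/2)$ and $\sin\theta=2\sin(\theta/2)\cos(\theta/2)$ gives $\Re(\pi)(\gamma(\theta))=[\cos(\theta/2):\sin(\theta/2)]$ for every $\theta$ (the value at $\theta=\pi$ being read off on the chart $D(w)$). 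Identifying $\mathbb{S}^1=\bbR/2\pi\bbZ$ and $\mathbb{RP}^1=\bbR/\pi\bbZ$ via the angular coordinate, this is the map $\theta\mapsto\theta/2$, a continuous bijection and hence a homeomorphism wrapping once around $\mathbb{RP}^1$, so its topological degree is $\pm1$, and $+1$ for the standard orientations.

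The only genuine obstacle I foresee is the bookkeeping with orientations: one must fix once and for all the orientation of $\mathbb{S}^1$ used in the definition of $\degt(\Re(-)\circ\gamma)$ and the orientation of $\mathbb{RP}^1$ for which $\degt$ of $\Re(\id_\PP)$ equals $1$, and verify that the homeomorphism $\Re(\pi)\circ\gamma$ is orientation-preserving for this pair, so that the sign in the last step comes out $+1$ rather than $-1$. Everything else --- functoriality of $\Re$, homotopy invariance and multiplicativity of $\degt$ --- is standard.
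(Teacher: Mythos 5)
Your proposal is correct and follows essentially the same route as the paper: the upper triangle is handled by functoriality of real realization and homotopy invariance of the topological degree, and the lower triangle is reduced to the single computation that $\Re(\pi)\circ\gamma$ is an orientation-preserving degree-one map of circles, which the paper carries out in example \ref{ex:realization pi} exactly as you do (your half-angle simplification to $[\cos(\theta/2):\sin(\theta/2)]$ is the same map the paper writes as $[1+\cos\theta:\sin\theta]$). The only cosmetic difference is that you invoke multiplicativity of $\degt$ where the paper says that postcomposition with an orientation-preserving diffeomorphism preserves degree; these are the same observation.
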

\begin{proof}
To prove the assertion it suffices to show that both parts of the diagram commute. 
The functor $\Re$ commutes with the canonical map $\nu \colon [\J, \PP]\naif \to [\J, \PP]^{\AAA}$. 
This implies that the upper part commutes. 
We verify in Example \ref{ex:realization pi} that for $\pi \colon \J \to \PP$ the composite map $\Re(\pi) \circ \gamma \colon \mathbb{S}^1 \to \mathbb{RP}^1$ is an orientation preserving diffeomorphism. 
Now let $f \colon \PP \to \PP$ be a morphism.  
Since the composition with $\Re(\pi) \circ \gamma$ preserves degrees, we obtain the identity 
\begin{align*}
\degt(\Re(f \circ \pi) \circ \gamma) = \degt(\Re(f) \circ \Re(\pi) \circ \gamma) = \degt(\Re(f)).   
\end{align*}
This implies that the lower part of Diagram \eqref{eq:top_degree_signature_commute} commutes and finishes the proof. 
\end{proof}

This implies the following necessary condition for the compatibility of the operations $\oplus$ and $\oplus^{\AAA}$: 

\begin{proposition}\label{prop:nec_condition}
Assume that $\cc$ is a group homomorphism.  
Then we have 
\begin{align*}
\degt(\Re(\cc(f \oplus g))) 
= \degt(\Re(f)\circ \gamma) + \degt(\Re(g)\circ \gamma). 
\end{align*}
\end{proposition}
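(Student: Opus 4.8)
The statement follows by combining Lemma \ref{lemma:top_degree_signature_commute} with the additivity of the topological Brouwer degree under the conventional group structure on $[\PP,\PP]^{\AAA}$.

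\begin{proof}
Assume $\cc \colon [\J,\PP]\naif \to [\PP,\PP]^{\AAA}$ is a group homomorphism, i.e., $\cc(f\oplus g) = \cc(f) \oplus^{\AAA} \cc(g)$. Applying the real realization functor to this identity and then the topological Brouwer degree gives
\begin{equation*}
\degt\left(\Re\left(\cc(f\oplus g)\right)\right) = \degt\left(\Re\left(\cc(f) \oplus^{\AAA} \cc(g)\right)\right).
\end{equation*}
By \eqref{eq:singature_degree_is_additive}, applied to the morphisms $\cc(f), \cc(g) \colon \PP \to \PP$, the right-hand side equals $\degt(\Re(\cc(f))) + \degt(\Re(\cc(g)))$. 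On the other hand, Lemma \ref{lemma:top_degree_signature_commute} states that the outer triangle in diagram \eqref{eq:top_degree_signature_commute} commutes, so $\degt(\Re(\cc(h))) = \degt(\Re(h)\circ\gamma)$ for every $h \in [\J,\PP]\naif$; in particular this holds for $h=f$ and $h=g$. Combining these identities yields
\begin{equation*}
\degt\left(\Re\left(\cc(f\oplus g)\right)\right) = \degt(\Re(f)\circ\gamma) + \degt(\Re(g)\circ\gamma),
\end{equation*}
which is the assertion.
\end{proof}
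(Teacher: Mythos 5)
Your proof is correct and follows exactly the same three steps as the paper's own argument: apply the homomorphism assumption, invoke the additivity identity \eqref{eq:singature_degree_is_additive} for $\oplus^{\AAA}$, and conclude via the commutativity of diagram \eqref{eq:top_degree_signature_commute} from lemma \ref{lemma:top_degree_signature_commute}. No differences worth noting.
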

\begin{proof}
The assumption that $\cc$ is a group homomorphism implies  
 \begin{align*}
\degt(\Re(\cc(f \oplus g))) 
= \degt(\Re(\cc(f) \oplus^{\AAA} \cc(g))). 
\end{align*}
Identity \eqref{eq:singature_degree_is_additive} implies 
 \begin{align*}
\degt(\Re(\cc(f) \oplus^{\AAA} \cc(g))) 
= \degt(\Re(\cc(f))) + \degt(\Re(\cc(g))).  
\end{align*}
Commutativity of Diagram \eqref{eq:top_degree_signature_commute} implies 
 \begin{align*}
\degt(\Re(\cc(f))) + \degt(\Re(\cc(g)))  = \degt(\Re(f)\circ \gamma) + \degt(\Re(g)\circ \gamma). 
\end{align*}
Putting these identities together yields the assertion. 
\end{proof}

As a special case, we get the following necessary condition for the compatibility of $\oplus$ with the conventional group structure:   
\begin{corollary}\label{cor:concrete_obstruction}
Let $F \colon \J \to \PP$ be a pointed morphism. 
Then, if $\cc$ is a group homomorphism, 
we must have 
\begin{align*}
\degt(\Re(F\oplus \pi)\circ \gamma)  
= \degt(\Re(F)\circ \gamma) + 1.
\end{align*}
\end{corollary}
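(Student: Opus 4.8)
The plan is to deduce this statement as a direct specialization of proposition~\ref{prop:nec_condition}. First I would set $g = \pi$ in proposition~\ref{prop:nec_condition}, which (under the assumption that $\cc$ is a group homomorphism) yields
\begin{equation*}
\degt(\Re(\cc(F\oplus \pi))) = \degt(\Re(F)\circ \gamma) + \degt(\Re(\pi)\circ \gamma).
\end{equation*}
So the whole statement reduces to two bookkeeping points: replacing $\degt(\Re(\cc(F\oplus\pi)))$ on the left by $\degt(\Re(F\oplus\pi)\circ\gamma)$, and evaluating $\degt(\Re(\pi)\circ\gamma)$ on the right.

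For the left-hand side, I would invoke the commutativity of diagram~\eqref{eq:top_degree_signature_commute} in lemma~\ref{lemma:top_degree_signature_commute}: the outer triangle there says precisely that $\degt(\Re(-)\circ\gamma) = \degt(\Re(\cc(-)))$ as maps $[\J,\PP]\naif \to \bbZ$, because $\cc = (\pi^*)^{-1}\circ\nu$ and the diagram commutes around the composite $\cc$. Applying this to the class $F\oplus\pi$ gives $\degt(\Re(F\oplus\pi)\circ\gamma) = \degt(\Re(\cc(F\oplus\pi)))$, which is exactly the rewriting I need. For the right-hand side, I would use the computation already promised in example~\ref{ex:realization pi}: there it is verified that $\Re(\pi)\circ\gamma \colon \mathbb{S}^1 \to \mathbb{RP}^1$ is an orientation-preserving diffeomorphism, hence $\degt(\Re(\pi)\circ\gamma) = 1$. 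Substituting these two facts into the displayed equation above produces
\begin{equation*}
\degt(\Re(F\oplus\pi)\circ\gamma) = \degt(\Re(F)\circ\gamma) + 1,
\end{equation*}
which is the claim.

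There is essentially no obstacle here: the corollary is a purely formal consequence of material already assembled, and the only ``content'' is the degree-one computation for $\pi$, which is deferred to example~\ref{ex:realization pi} and is a concrete check on an explicit parameterized circle. The one point that merits a sentence of care is making sure the specialization $g = \pi$ is legitimate — $\pi$ is a pointed morphism $\J\to\PP$ and proposition~\ref{prop:nec_condition} is stated for arbitrary pointed morphisms $f,g\colon\J\to\PP$, so this is immediate. I would keep the write-up to three or four sentences: cite proposition~\ref{prop:nec_condition} with $g=\pi$, cite lemma~\ref{lemma:top_degree_signature_commute} to rewrite the left side, cite example~\ref{ex:realization pi} for $\degt(\Re(\pi)\circ\gamma)=1$, and conclude.
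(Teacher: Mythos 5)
Your proposal is correct and matches the paper's intent exactly: the paper presents this corollary as an immediate specialization of proposition~\ref{prop:nec_condition} with $g=\pi$, with the left-hand side rewritten via the commutativity of diagram~\eqref{eq:top_degree_signature_commute} and the value $\degt(\Re(\pi)\circ\gamma)=1$ supplied by example~\ref{ex:realization pi}. You have simply made explicit the two bookkeeping steps the paper leaves implicit, and both are justified correctly.
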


In the following section we will apply Corollary \ref{cor:concrete_obstruction} in a concrete case in Example \ref{ex:realization_signature_3_map}. 

Moreover, we exclude a potential alternative to the operation $\oplus$ of Definition \ref{def:group_action_homotopy} in Example \ref{ex:group_action_justification}.    

\begin{remark}\label{rem:concrete_obstruction_tilde_pi} 
Let $F \colon \J \to \PP$ again be a pointed morphism. 
Assume that Question \ref{question:tilde_pi} has a positive answer, i.e., assume that $\tilde{\pi}$ is naively homotopic to $-\pi$. 
Then Proposition \ref{prop:nec_condition} shows that, if $\cc$ is a group homomorphism, then we must expect to get 
\begin{align}\label{eq:top_degree_and_tilde_pi}
\degt(\Re(F\oplus \tilde{\pi})\circ \gamma) 
= \degt(\Re(F)\circ \gamma) - 1. 
\end{align}
Note that, since we do not know whether $\tilde{\pi}$ is naively homotopic to $-\pi$, Equation \eqref{eq:top_degree_and_tilde_pi} may fail to hold for some $F$ even though $\cc$ is a group homomorphism. 

However, we confirm Formula  \eqref{eq:top_degree_and_tilde_pi} in a concrete case in Example \ref{ex:realization_F_acts_on_pi_tilde}. 
\end{remark} 

We will now compute the topological degrees and thereby the signatures of several maps and apply the previous observations. 
For the following computations we will often identify the ring $R$ with the ring $k[x,y,z]/(x(1-x)-yz)$ where it is convenient.

\begin{example}\label{ex:degree_of_real_of_F}
Consider the morphism $g_{1,-1} \colon  \J \fd \PP$ defined by the unimodular row $(2x-1, 2y)$. 
Its real realization is the map $\Re (g_{1,-1})\colon \Re(\J) \fd \Re(\PP)$ defined by
\begin{align*}
\Re (g_{1,-1})\colon  (x, y, z) \td [2x-1: 2y]. 
\end{align*}
Precomposing with $\gamma$ gives
\begin{align*} 
\Re (g_{1,-1}) \circ \gamma \colon \theta \td 
\left[\cos (\theta):\sin (\theta)\right],
\end{align*}
which is the usual double cover of $\mathbb{RP}^1$ by $\mathbb{S}^1$ and has topological Brouwer degree 2. 
\end{example}

As explained in Section \ref{sec:pointed_maps_p1_to_p1_and_j_to_p1}, a morphism $f\colon  \J \to \PP$ may be described by gluing together partially defined maps on open subsets. 
In the following examples we will define a morphism
$\Re(f)\circ \gamma\colon  \mathbb{S}^1\to \mathbb{RP}^1$ 
by gluing $\Re(f|_{D(x)})\circ \gamma \colon \gamma\inv(\Re(D(x))) \to \mathbb{RP}^1$ and 
$\Re(f|_{D(1-x)})\circ \gamma \colon \gamma\inv(\Re(D(1-x))) \to \mathbb{RP}^1$ 
on their overlaps in the respective domains. 

\begin{example}
\label{ex:realization pi}
The real realization of $\pi \colon  \J \fd \PP$ is defined on $\Re(D(x))$ by
\begin{align*}
\Re (\pi|_{D(x)})\colon  (x, y, z) &\td [x:y], 
\end{align*}
and on $\Re(D(1-x))$ by
\begin{align*}
\Re (\pi|_{D(1-x)})\colon  (x, y, z) &\td [z:1-x]. 
\end{align*}
Precomposing with $\gamma$ gives
\begin{align*} 
\Re (\pi|_{D(x)}) \circ \gamma \colon \theta
&\td 
[ 1/2+ \cos (\theta)/2 : \sin (\theta)/2], \\
\Re (\pi|_{D(1-x)}) \circ \gamma \colon \theta
&\td 
[ \sin (\theta)/2: 1/2- \cos (\theta)/2],
\end{align*}
which glue together to give a map of degree 1:  
\begin{equation*}
    (\Re (\pi) \circ \gamma) (\theta) = \begin{cases}[1+\cos(\theta):\sin(\theta)] & \theta \neq \pi, \\
    [0:1] & \theta = \pi. \end{cases}
\end{equation*}
This shows that $\Re (\pi) \circ \gamma$ is an orientation preserving diffeomorphism and has topological Brouwer degree $1$.  
\end{example}

In the following example we test the necessary condition of Corollary \ref{cor:concrete_obstruction} in a concrete case. 

\begin{example}\label{ex:realization_signature_3_map}
Recall that the unimodular row $g_{1,-1} = (2x-1, 2y)$ can be augmented to the following matrix with determinant 1:
\begin{equation*}
m_{1,-1} = \begin{pmatrix}
2x-1 &-2z\\
2y & 2x-1
\end{pmatrix}.
\end{equation*} 
The group action of Definition \ref{def:group_action_homotopy} yields the map $F := g_{1,-1} \oplus \pi = (2x-1,-2z:2y,2x-1)_1$. 

Taking real realization and 
precomposing with $\gamma$ yields the map $\Re (F) \circ \gamma \colon \mathbb{S}^1 \to \mathbb{RP}^1$  
given by 
\begin{equation*}
    (\Re (F) \circ \gamma) (\theta) = \begin{cases}[\cos(\theta)+\cos(2\theta):\sin(\theta)+\sin(2\theta)] & \theta \neq \pi, \\
    [0:1] & \theta = \pi. \end{cases}
\end{equation*}
The topological degree of this map is $3$.  

Hence our computation confirms that 
\begin{align*}
\degt(\Re(F) \circ \gamma) = 3 = 2 + 1 = \degt(\Re(g_{-1,1})\circ \gamma) + \degt(\Re(\pi)\circ \gamma),     
\end{align*}
as required for the compatibility of $\oplus$ with $\oplus^{\AAA}$. 
\end{example}

The next example confirms that the signature of the motivic Brouwer degree of $\tilde{\pi}$ has the value $-1$ as expected if Question \ref{question:tilde_pi} has a positive answer. 

\begin{example}
\label{ex:realization_pi_tilde}
The real realization of the morphism $\tilde\pi = (1,0:0,-1)_{-1} \colon \J \to \PP$ is defined on $\Re(D(x))$ and $\Re(D(1-x))$ respectively by
$\Re(D(x))$ by
\begin{align*}
\Re (\tilde\pi|_{D(x)})\colon  (x, y, z) &\td [x:-z], \\
\Re (\tilde\pi|_{D(1-x)})\colon  (x, y, z) &\td [y:-1+x]. 
\end{align*}
Precomposing with $\gamma$ gives
\begin{align*} 
\Re (\tilde\pi|_{D(x)}) \circ \gamma \colon \theta
&\td 
[ 1/2+ \cos (\theta)/2 : -\sin (\theta)/2], \\
\Re (\tilde\pi|_{D(1-x)}) \circ \gamma \colon \theta
&\td 
[ \sin (\theta)/2: -1/2+ \cos (\theta)/2],
\end{align*}
which glue together to give a map of topological Brouwer degree $-1$.  
\end{example}

Now we confirm that Identity \eqref{eq:top_degree_and_tilde_pi} of Remark \ref{rem:concrete_obstruction_tilde_pi} does hold in an example. 

\begin{example}
\label{ex:realization_F_acts_on_pi_tilde} 
Consider the unimodular row $F=(2x-1,2z)$ which can be augmented to the following matrix   
\begin{equation*}
    M = \begin{pmatrix}
2x-1 &-2y\\
2z & 2x-1
\end{pmatrix}
\end{equation*} 
with determinant $1$.  
Note that $F$ is homotopic to $g_{1,-1}$ by Lemma \ref{lem:ABeqUV}. 
We let $F$ act on $\tilde \pi$ via the action of Definition \ref{def:group_action_homotopy}. 
This yields the map $L := F \oplus \tilde{\pi} = (2x-1,2y:2z,-2x+1)_{-1}$. 
Precomposing its real realization with $\gamma$ yields
the same map as in Example \ref{ex:realization pi} where we showed it has topological Brouwer degree $1$. 

Hence our computation confirms  
\begin{align*}
\degt(\Re(L) \circ \gamma) = 1 = 2 - 1  =  \degt(\Re(F)\circ \gamma) +  \degt(\Re(\tilde{\pi})\circ \gamma), 
\end{align*}
as required in Remark \ref{rem:concrete_obstruction_tilde_pi}. 
\end{example}

\begin{example}
\label{ex:group_action_justification} 
Consider now an alternative action $\boxplus$ of $[\J, \PP]\naif_0$ on $[\J, \PP]\naif$ defined as follows. For $[(A,B)] \in [\J, \ato]\naif$ and $[s_0,s_1] \in [\J, \PP]\naif_n$, extend the unimodular row $(A,B)$ to a matrix $M$ in $\SL_2(R)$ and define
\[
[(A,B)] \boxplus [s_0,s_1] := [M^T\cdot (s_0,s_1)^T].
\]

Again we look at the unimodular row $F=(A,B) = (2x-1, 2z)$ and the matrix $M$ of Example \ref{ex:realization_F_acts_on_pi_tilde}. 
The action $\boxplus$ of $F$ on $\pi$ yields the morphism $H := F \boxplus \pi = (2x-1, 2z : -2y, 2x-1)_1=(1,0:0,-1)_1$. 
Taking real realization and precomposing with $\gamma$ yields 
the map $\Re (H) \circ \gamma \colon \mathbb{S}^1 \to \mathbb{RP}^1$ given by 
\begin{equation*}
(\Re (H) \circ \gamma) (\theta) = 
\begin{cases}[1+\cos(\theta):-\sin(\theta)] & \theta \neq \pi, \\
[0:1] & \theta = \pi. 
\end{cases}
\end{equation*}
This map has topological Brouwer degree $-1$. Hence our computation shows 
\begin{align*}
\degt(\Re(F \boxplus \pi)\circ \gamma) = -1 \neq 3 =\degt(\Re(F) \circ \gamma) + \degt(\Re(\pi) \circ \gamma).     
\end{align*}
Thus, by the analogous statement of Corollary \ref{cor:concrete_obstruction} for $\boxplus$, we see that $\boxplus$ cannot be used to define an operation compatible with the conventional group structure. 
\end{example}


\bibliography{bibliography}

\begin{thebibliography}{}
\providecommand\bibmarginpar{\leavevmode\marginpar}
\def\urlstyle#1{{\tt #1}}

\bibitem{Ananyevskiy}
\textbf{A Ananyevskiy}, \href{http://dx.doi.org/10.4310/HHA.2016.v18.n1.a11}
  {\emph{On the relation of special linear algebraic cobordism to {W}itt
  groups}}, Homology Homotopy Appl. 18 (2016) 204--230

\bibitem{arkowitz2011introduction}
\textbf{M Arkowitz}, \href{http://dx.doi.org/10.1007/978-1-4419-7329-0}
  {\emph{Introduction to homotopy theory}}, Universitext, Springer, New York
  (2011)

\bibitem{AsokFasel}
\textbf{A Asok}, \textbf{J Fasel}, \href{http://dx.doi.org/10.4171/jems/1156}
  {\emph{Euler class groups and motivic stable cohomotopy}}, J. Eur. Math. Soc.
  (JEMS) 24 (2022) 2775--2822With an appendix by Mrinal Kanti Das

\bibitem{asok2020motivic}
\textbf{A Asok}, \textbf{J Fasel}, \textbf{B Williams},
  \href{http://dx.doi.org/doi.org/10.1007/s00222-019-00907-z} {\emph{Motivic
  spheres and the image of the Suslin--Hurewicz map}}, Inventiones mathematicae
  219 (2020) 39--73

\bibitem{AHW1}
\textbf{A Asok}, \textbf{M Hoyois}, \textbf{M Wendt},
  \href{http://dx.doi.org/10.1215/00127094-0000014X} {\emph{Affine
  representability results in {$\Bbb A^1$}-homotopy theory, {I}: vector
  bundles}}, Duke Math. J. 166 (2017) 1923--1953

\bibitem{AHW2}
\textbf{A Asok}, \textbf{M Hoyois}, \textbf{M Wendt},
  \href{http://dx.doi.org/10.2140/gt.2018.22.1181} {\emph{Affine
  representability results in {$\Bbb A^1$}-homotopy theory, {II}: {P}rincipal
  bundles and homogeneous spaces}}, Geom. Topol. 22 (2018) 1181--1225

\bibitem{AtiyahMacdonald}
\textbf{M\,F Atiyah}, \textbf{I\,G Macdonald}, \emph{Introduction to
  commutative algebra}, Addison-Wesley Publishing Co., Reading,
  Mass.-London-Don Mills, Ont. (1969)

\bibitem{BHeta}
\textbf{T Bachmann}, \textbf{M\,J Hopkins},
  \href{https://doi.org/10.48550/arXiv.2005.06778} {\emph{$\eta$-periodic
  motivic stable homotopy theory over fields}}, preprint, arXiv:2005.06778v4
  (2020)

\bibitem{BWEuler}
\textbf{T Bachmann}, \textbf{K Wickelgren},
  \href{http://dx.doi.org/10.1017/S147474802100027X} {\emph{Euler classes:
  six-functors formalism, dualities, integrality and linear subspaces of
  complete intersections}}, J. Inst. Math. Jussieu 22 (2023) 681--746

\bibitem{Bourbaki2003}
\textbf{N Bourbaki}, \href{http://dx.doi.org/10.1007/978-3-642-61698-3_1}
  {\emph{Polynomials and rational fractions}}, from: ``Algebra II: Chapters
  4--7'', Springer Berlin Heidelberg, Berlin, Heidelberg (2003)  1--105

\bibitem{cazanave2009theorie}
\textbf{C Cazanave}, \emph{Th{\'e}orie homotopique des sch{\'e}mas d'Atiyah et
  Hitchin}, PhD thesis, Universit{\'e} Paris-Nord-Paris XIII (2009)

\bibitem{Caz}
\textbf{C Cazanave}, \href{http://dx.doi.org/10.24033/asens.2172}
  {\emph{Algebraic homotopy classes of rational functions}}, Annales
  scientifiques de l' École Normale Supérieure 45 (2012) 511--534

\bibitem{dugger2004topological}
\textbf{D Dugger}, \textbf{D\,C Isaksen},
  \href{http://dx.doi.org/doi.org/10.1007/s00209-003-0607-y} {\emph{Topological
  hypercovers and $\AAA$-realizations}}, Mathematische Zeitschrift 246 (2004)
  667--689

\bibitem{eisenbud1977algebraic}
\textbf{D Eisenbud}, \textbf{H\,I Levine},
  \href{http://dx.doi.org/10.2307/1971156} {\emph{An algebraic formula for the
  degree of a {$C^{\infty }$} map germ}}, Ann. of Math. (2) 106 (1977) 19--44

\bibitem{Gersten}
\textbf{S\,M Gersten}, \href{http://dx.doi.org/10.1016/0021-8693(71)90098-6}
  {\emph{Homotopy theory of rings}}, J. Algebra 19 (1971) 396--415

\bibitem{Hartshorne}
\textbf{R Hartshorne}, \emph{Algebraic geometry}, Graduate Texts in
  Mathematics, No. 52, Springer-Verlag, New York-Heidelberg (1977)

\bibitem{KassWickelgren}
\textbf{J\,L Kass}, \textbf{K Wickelgren},
  \href{http://dx.doi.org/10.1215/00127094-2018-0046} {\emph{The class of
  {E}isenbud-{K}himshiashvili-{L}evine is the local {$\mathbf{A}^1$}-{B}rouwer
  degree}}, Duke Math. J. 168 (2019) 429--469

\bibitem{KassWickelgren2}
\textbf{J\,L Kass}, \textbf{K Wickelgren},
  \href{http://dx.doi.org/10.1016/j.laa.2019.12.041} {\emph{A classical proof
  that the algebraic homotopy class of a rational function is the residue
  pairing}}, Linear Algebra Appl. 595 (2020) 157--181

\bibitem{LevineRaksit}
\textbf{M Levine}, \textbf{A Raksit},
  \href{http://dx.doi.org/10.2140/ant.2020.14.1801} {\emph{Motivic
  {G}auss-{B}onnet formulas}}, Algebra Number Theory 14 (2020) 1801--1851

\bibitem{MilnorHusemoller}
\textbf{J Milnor}, \textbf{D Husemoller}, \emph{Symmetric bilinear forms},
  Ergebnisse der Mathematik und ihrer Grenzgebiete, Band 73, Springer-Verlag,
  New York-Heidelberg (1973)

\bibitem{Morel_ICM}
\textbf{F Morel}, \emph{{$\Bbb A^1$}-algebraic topology}, from: ``International
  {C}ongress of {M}athematicians. {V}ol. {II}'', Eur. Math. Soc., Z\"{u}rich
  (2006)  1035--1059

\bibitem{Morel12}
\textbf{F Morel}, \href{http://dx.doi.org/10.1007/978-3-642-29514-0}
  {\emph{{$\mathbb{A}^1$}-algebraic topology over a field}}, volume 2052 of
  \emph{Lecture Notes in Mathematics}, Springer, Heidelberg (2012)

\bibitem{MV99}
\textbf{F Morel}, \textbf{V Voevodsky},
  \href{http://www.numdam.org/item?id=PMIHES_1999__90__45_0} {\emph{{${\bf
  A}^1$}-homotopy theory of schemes}}, Inst. Hautes \'{E}tudes Sci. Publ. Math.
   (1999) 45--143 (2001)

\bibitem{Switzer}
\textbf{R\,M Switzer},
  \href{http://dx.doi.org/doi.org/10.1007/978-3-642-61923-6} {\emph{Algebraic
  topology---homotopy and homology}}, Die Grundlehren der mathematischen
  Wissenschaften, Band 212, Springer-Verlag, New York-Heidelberg (1975)

\bibitem{Kbook}
\textbf{C\,A Weibel}, \href{http://dx.doi.org/10.1090/gsm/145} {\emph{The
  {$K$}-book}}, volume 145 of \emph{Graduate Studies in Mathematics}, American
  Mathematical Society, Providence, RI (2013)

\end{thebibliography}
\bibliographystyle{gtart}

\end{document}